\let\oldwidetilde\widetilde
\protected\def\widetilde{\oldwidetilde}
\let\oldtocsection=\tocsection
\let\oldtocsubsection=\tocsubsection
\let\oldtocsubsubsection=\tocsubsubsection
\renewcommand{\tocsection}[2]{\hspace{0em}\oldtocsection{#1}{#2}}
\renewcommand{\tocsubsection}[2]{\hspace{1em}\oldtocsubsection{#1}{#2}}
\renewcommand{\tocsubsubsection}[2]{\hspace{2em}\oldtocsubsubsection{#1}{#2}}
\theoremstyle{definition}
\newtheorem{nul}{}[section]
\newtheorem{dfn}[nul]{Definition}
\newtheorem{rmk}[nul]{Remark}
\newtheorem{cnstr}[nul]{Construction}
\newtheorem{ntn}[nul]{Notation}
\newtheorem{exm}[nul]{Example}
\newtheorem{rec}[nul]{Recollection}
\newtheorem{wrn}[nul]{Warning}
\newtheorem{qst}[nul]{Question}
\newtheorem*{dfn*}{Definition}
\newtheorem*{axm*}{Axiom}
\newtheorem*{ntn*}{Notation}
\newtheorem*{exm*}{Example}
\newtheorem*{exr*}{Exercise}
\newtheorem*{int*}{Intuition}
\newtheorem*{qst*}{Question}
\newtheorem*{rmk*}{Remark}
\theoremstyle{plain}
\newtheorem{challenge}[nul]{Challenge}
\newtheorem{thm}[nul]{Theorem}
\newtheorem{prop}[nul]{Proposition}
\newtheorem{lem}[nul]{Lemma}
\newtheorem{cnj}[nul]{Conjecture}
\newtheorem{cor}[nul]{Corollary}
\newtheorem*{thm*}{Theorem}
\newtheorem*{prop*}{Proposition}
\newtheorem*{cor*}{Corollary}
\newtheorem*{lem*}{Lemma}
\newtheorem*{cnj*}{Conjecture}
\DeclareMathOperator*{\colim}{colim}
\DeclareMathOperator{\Cof}{Cof}
\DeclareMathOperator{\Map}{Map}
\DeclareMathOperator{\Ext}{Ext}
\DeclareMathOperator{\Spec}{\mathrm{Spec}}
\DeclareMathOperator{\Mod}{Mod}
\def\H{\mathrm{H}}
\def\h{\mathrm{h}}
\def\tors{\mathrm{tors}}
\def\varpitel{\varpi_{\scaleto{\mathrm{T}(h)}{6pt}}}
\def\varpimot{\varpi_{\scaleto{\mathrm{mot}}{4.2pt}}^c}
\def\varpifg{\varpi_{\mathrm{fg}}}
\def\A{\mathbb{A}}
\def\C{\mathbb{C}}
\def\E{\mathbb{E}}
\def\F{\mathbb{F}}
\def\G{\mathbb{G}}
\def\N{\mathbb{N}}
\def\Q{\mathbb{Q}}
\def\Ss{\mathbb{S}}
\def\Z{\mathbb{Z}}
\def\CC{\mathcal{C}}
\def\DD{\mathcal{D}}
\def\AA{\mathcal{A}}
\def\Sq{\mathrm{Sq}}
\def\BP{\mathrm{BP}}
\def\Mfg{\mathcal{M}_{\mathrm{fg}}}
\def\ANE{{}^{\scaleto{\mathrm{AN}}{4pt}}\!E}
\def\AE{{}^{\scaleto{\mathrm{A}}{4pt}}\!E}
\def\gbp{g_{\scaleto{\BP}{3.9pt}}}
\def\U{\mathcal{U}}
\def\V{\mathcal{V}}
\def\rank{\mathrm{rank}}
\def\A{\mathcal{A}}
\def\CU{\mathrm{CU}}
\def\CA{\mathcal{CA}}
\def\Sp{\mathrm{Sp}}
\newcommand{\SHC}{\mathcal{SH}(\mathbb{C})}
\def\Id{\mathrm{Id}}
\newcommand{\pullback}{\arrow[dr, phantom, "\lrcorner", very near start]}
\newcommand\xqed[1]{%
  \leavevmode\unskip\penalty9999 \hbox{}\nobreak\hfill
  \quad\hbox{#1}}
\newcommand\tqed{\xqed{$\triangleleft$}}
\newcommand{\NB}[1]{\todo[color=gray!40]{#1}}
\newcommand{\TODO}[1]{\todo[color=red]{#1}}
\newcommand{\NB}[1]{}
\newcommand{\TODO}[1]{}
\renewcommand{\todo}[1]{}
\renewcommand{\todo}[1]{}
\let\@wraptoccontribs\wraptoccontribs
\title{How Big are the Stable Homotopy Groups of Spheres?}
\date{\today}
\author[Robert Burklund]{Robert Burklund,\\ \MakeLowercase{with an appendix joint with} Andrew Senger}
\address{Department of Mathematics, MIT, Cambridge, MA, USA}
\email{burklund@mit.edu}
\address{Department of Mathematics, Harvard University, Cambridge, MA, USA}
\email{senger@math.harvard.edu}
\begin{document}


  

\begin{abstract}
  The stable homotopy groups of spheres are a well-studied,
  but poorly understood repository of homotopical information.
  Often, they are considered in some highly structured sense.
  In this article we take the opposite tack, asking the titular question.
  From this viewpoint, spectral sequences become upper bounds
  and constructions, such as the Greek letter elements,
  become simple lower bounds.

  We show that
  the $p$-torsion exponent of the stable stems grows sublinearly in $n$
  and
  the $p$-rank of the $E_2$-page of the Adams spectral sequence
  grows as $\exp(\Theta( \log(n)^3))$.
  Together these bounds provide the first subexponential bound on the size of the stable stems.
  In the other direction we prove that a certain, precise, version of the failure of the telescope conjecture would imply that the upper bound provided by the Adams $E_2$-page is essentially sharp---answering the titular question:
    
  


  \vspace{0.1cm}
  \begin{quote} \centering
    As big as the fate of the telescope conjecture demands.
  \end{quote}
  \vspace{0.1cm}
  
  In an appendix joint with Andrew Senger, we consider the unstable analog of this question. Bootstrapping from the stable case, we prove that the size of the $p$-local homotopy groups of spheres is bounded by $\exp(O(\log(n)^3))$, providing the first subexponential bound on the unstable stems. 
  
\end{abstract}

\maketitle

\setcounter{tocdepth}{1}
\tableofcontents
\vbadness 5000


\section{Introduction}
\label{sec:intro}
Serre's finiteness theorem tells us that, aside from $\pi_0$, the stable homotopy groups of spheres are finite \cite{SerreFinite}.
Despite substantial subsequent effort into understanding the structure of the stable homotopy category,
little progress has been made towards accurately estimating the size of these finite groups.
In this article we focus on two measures of size
\[ \tors_p(\pi_n\Ss) \coloneqq \min_k \{ k\ |\ p^{k} \cdot \pi_n\Ss_{(p)} = 0 \} \quad\text{ and }\quad \rank_p(\pi_n\Ss) \]
and begin the process of isolating exactly what is at stake in this question.




\subsection{Torsion bounds}\ 

Torsion bounds on homotopy groups have a rich history in the unstable setting,
beginning with work of James and Toda \cite{JTor, TTor} and 
culminating in the celebrated Cohen--Moore--Niesendorfer theorem \cite{CMN1,CMN2,N3}
which provides optimal, \emph{uniform}, bounds on the $p$-torsion exponent of the homotopy groups of spheres at odd primes.


Stably our knowledge is much less decisive.
From the unstable bound we learn that at odd primes
\[ \tors_p\left(\pi_n\Ss \right) \leq \frac{1}{2}n + O(1). \]
The first natively stable torsion bound was proved by Adams in \cite[Ch.6 Remark 1]{Ad:SHT}.
Therein he considered his eponymous spectral sequence, which has signature
\[ \H^{s,t}(\A) \cong \AE_2^{s,t} \Longrightarrow \pi_{t-s}\Ss_p \]
and provides a filtration on $\pi_n\Ss_p$ whose associated graded consists of sub-quotients of $\H^{s,t}(\A)$. Since the cohomology of the Steenrod algebra is an $\F_p$ vector space, Adams concluded that the torsion exponent of $\pi_n\Ss_p$ is at most the number of non-zero terms on the $E_\infty$-page contributing to that degree. As a corollary of the vanishing line on the $E_2$-page from \cite{AdamsPer} he obtained the bound 
\[ \tors_2\left( \pi_n \Ss \right) \leq \frac{1}{2}n + O(1). \]
The analogous vanishing line at odd primes is due to Luilevicius \cite{liul} and provides the bound
\[ \tors_p\left( \pi_n \Ss \right) \leq \frac{1}{2p-2}n + O(1). \]


Although the stable torsion bound has been improved several times since then,
each of these improvements proceeds in the same way:
by further confining the number of non-zero terms on
the Adams $E_\infty$-page contributing to a single degree. 
In \cite{DM3}, Davis and Mahowald used their technique of $\mathrm{bo}$-resolutions to give a measure of control over the $v_1$-local region in the Adams spectral sequence. 
As a consequence they obtained the bound
\[ \tors_2\left( \pi_n \Ss \right) \leq \frac{3}{10} n + O(\log(n)). \]
The corresponding bound at odd primes is due to Gonzalez
\[ \tors_p\left( \pi_n \Ss \right) \leq \frac{2p-1}{(2p-2)(p^2 - p - 1)} n + O(\log(n)) \]
appearing in \cite{Gonz00}.
The next improvement to the linear term appeared in \cite[Appendix B]{boundaries} where the author sharpened the analysis of the $E_\infty$ vanishing curve of the $\BP\langle 1 \rangle$-based Adams spectral sequence in order to obtain the bound
\[ \tors_p\left( \pi_n \Ss \right) \leq \frac{2p-1}{(2p-2)(2p^2 - 2)}n + o(n). \]
In this paper we offer the following theorem, which completely eliminates the linear term in the stable torsion bound.

\begin{thm} \label{main-thm}
  The $p$-torsion exponent of $\pi_n\Ss$ grows sublinearly in $n$.
\end{thm}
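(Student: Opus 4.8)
The plan is to push the pattern implicit in the historical record---replacing the coefficient ring $\mathrm{HF}_p$ by $\mathrm{H}\Z_{(p)}$, by $\mathrm{bo}$, by $\BP\langle 1\rangle$ steadily lowers the linear slope---all the way up the chromatic filtration, by running the $\BP\langle n\rangle$-based Adams spectral sequence with $n$ a free parameter. For each $n\ge 0$ the ring $\BP\langle n\rangle$ is connective with $\pi_0=\Z_{(p)}$, so $\Ss_p$ is $\BP\langle n\rangle$-nilpotent complete and
\[ E_2^{s,t}=\Ext^{s,t}_{\BP\langle n\rangle_*\BP\langle n\rangle}(\BP\langle n\rangle_*,\BP\langle n\rangle_*)\Longrightarrow \pi_{t-s}\Ss_p \]
converges (this is the standard nilpotent-completeness statement for connective coefficient rings with $\pi_0=\Z_{(p)}$; if one prefers it converges to $\pi_*\Ss_{(p)}$ the $p$-torsion is unchanged). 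Exactly as in Adams' original argument, this puts a finite filtration on $\pi_m\Ss_p$ whose graded pieces are subquotients of the $E_\infty$-page; since these are finitely generated $\Z_{(p)}$-modules,
\[ \tors_p(\pi_m\Ss)\ \le\ \sum_{s\ge 0}\tors_p\bigl(E_\infty^{s,m+s}\bigr). \]

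The engine is a height-$n$ vanishing line. Generalizing the input of Adams and Liulevicius at $n=0$ (slope $\tfrac{1}{|v_1|}=\tfrac{1}{2p-2}$) and of Davis--Mahowald and \cite{boundaries} at $n=1$ (slope asymptotically $\tfrac{1}{|v_2|}$), I would prove that the $\BP\langle n\rangle$-Adams $E_2$-page vanishes above a line of slope $c_n\asymp\tfrac{1}{|v_{n+1}|}=\tfrac{1}{2(p^{n+1}-1)}$ with an intercept $b_n$ that is explicit (at worst, say, polynomial) in $p^n$ and independent of $m$; the structural input is the shape of the Hopf algebroid $\BP\langle n\rangle_*\BP\langle n\rangle$, with a change-of-rings/algebraic-approximation step pinning the region near the line to the first non-invertible generator $v_{n+1}$. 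One further subtlety must be handled: $E_2$ is only $\Z_{(p)}$-linear (unlike the classical $\mathrm{HF}_p$-Adams $E_2$), so I would run the $v_0$-Bockstein spectral sequence on $E_2$ and check that its $\F_p$-linear input page obeys the same vanishing line---so that the $\F_p$-linearization of $E_\infty$ in stem $m$ still has at most $c_n m + b_n$ nonzero columns. Since $\BP\langle n\rangle_*$ is $p$-torsion-free, $v_0$-tower lengths are controlled by connectivity, so at worst this linearization costs an extra factor which is $O(\log m)$ uniformly in $n$.

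Combining, for each fixed $n$ we obtain $\tors_p(\pi_m\Ss)\le c_n m + b_n$ (or $\le c_n m\log m + b_n$ in the weaker linearization scenario), whence $\limsup_{m\to\infty}\tors_p(\pi_m\Ss)/m\le c_n$. Letting $n\to\infty$ and using $c_n\asymp p^{-(n+1)}\to 0$ gives $\tors_p(\pi_m\Ss)=o(m)$ outright in the first scenario; in the second one instead chooses $n=n(m)$ growing slowly enough that $p^{n(m)}$ outpaces the linearization factor (e.g. $p^{n(m)}\asymp(\log m)^2$), which again yields $o(m)$.

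The main obstacle is the height-$n$ vanishing line made \emph{uniform in $n$}: one needs both that its slope genuinely decays like $p^{-n}$ rather than stalling, and that its intercept $b_n$ (together with the $v_0$-Bockstein correction) grows slowly enough in $n$ to survive the limit---the earlier $n\le 1$ arguments are already delicate and do not obviously organize into an $n$-uniform statement. A secondary point is the convergence and strong finiteness of the $\BP\langle n\rangle$-Adams spectral sequence for the sphere, for all $n$ simultaneously. An alternative organization, which trades the $\BP\langle n\rangle$-tower for the input theory of height-$n$ ring spectra of $\mathrm{EO}$/$\tmf$-type, is to run the same column count on the monochromatic layers $M^f_n\Ss$ of the finite chromatic tower and sum over $n$; the bookkeeping is parallel.
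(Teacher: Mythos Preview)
Your proposal is a natural extrapolation of the historical record, but as you yourself acknowledge, it is a strategy with obstacles rather than a proof, and the obstacles are more serious than your phrasing suggests. The claimed $E_2$ vanishing line of slope $\asymp |v_{n+1}|^{-1}$ for the $\BP\langle n\rangle$-Adams spectral sequence is not established: the connectivity of $\overline{\BP\langle n\rangle}$ is $2p-4$ for every $n \geq 1$ (at odd primes), so the naive $E_1$ vanishing line has slope $\tfrac{1}{2p-3}$ independent of $n$, and any improvement requires understanding the Hopf-algebroid $\Ext$ in a way that, even at $n=1$, the Davis--Mahowald, Gonz\'alez and \cite{boundaries} arguments do not achieve (their slopes are strictly worse than $|v_2|^{-1}$) and do not obviously organize into a uniform-in-$n$ statement. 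Your $v_0$-Bockstein step is likewise a genuine gap: the assertion that ``$v_0$-tower lengths are controlled by connectivity'' and hence are $O(\log m)$ is precisely the nontrivial torsion bound that needs proof, and connectivity alone does not supply it.

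The paper takes a different route that sidesteps both obstacles. Rather than climbing the $\BP\langle n\rangle$ tower, it works directly with the Adams--Novikov spectral sequence and supplies the two missing ingredients separately. For the vanishing input it invokes a strong form of the nilpotence theorem of Hopkins--Smith: the ANSS $E_\infty$ vanishing curve $\gbp(m)$ is sublinear---this is an $E_\infty$ statement with no elementary substitute. For the torsion of individual $E_2$ terms it proves the new result that $(\ell^u - 1) \cdot \ANE_2^{s,2u} = 0$ for every $\ell \in \Z_{(p)}^\times$, whence $\tors_p(\ANE_2^{s,2u}) \lesssim 1 + |u|_p$. The argument is short and geometric: the natural $[\ell]$-automorphisms of formal groups assemble into a trivialization of the $\G_m(\Z_{(p)})$-action on $\Mfg^{\omega=1}$. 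Summing this logarithmic torsion bound over the at most $\gbp(m)$ contributing filtrations yields $\tors_p(\pi_m\Ss) \lesssim \gbp(m) + \log_p(m)$. In effect, the paper replaces your hoped-for uniform-in-$n$ vanishing line with the nilpotence theorem, and your $v_0$-Bockstein heuristic with an explicit Adams-operation-style argument on $\Mfg$.
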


Our method is a modification of Adams' method where we instead analyze the Adams--Novikov spectral sequence.
The benefit of doing this is that (an alternative form of) the nilpotence theorem of \cite{DHS} tells us that the number of terms which contribute to a single stem grows sublinearly in $n$.
However, unlike the Adams $E_2$-page the $E_2$-page of the Adams--Novikov spectral sequence is not an $\F_p$ vector space. The improvement which makes \Cref{main-thm} possible is a novel torsion bound on the Adams--Novikov $E_2$-page which we prove in \Cref{prop:tor-bound}.

\begin{rmk}
  The proof strategy used in \Cref{main-thm} would only produce a sharp bound if there existed degrees where almost every possible hidden extension by $p$ in the Adams--Novikov spectral sequence occurred.
  Although this seems improbable, the author knows no technique which could rule this out.

  Conversely, the current lower bound of $\log_p(n)$ comes from the image of $J$.
  In view of \Cref{prop:tor-bound}, in order to beat this lower bound one would need a construction
  which systematically produces large numbers of hidden extensions in the Adams--Novikov spectral sequence.  
  \tqed
\end{rmk}

\subsection{Rank bounds}\ 

In contrast, much less has been said about the ranks of the stable stems
and no bound which makes use of the special features of the stable setting has previously appeared.
As with torsion bounds, the $E_2$-page of the Adams spectral sequence provides a simple, though in this case inexplicit, upper bound\footnote{Note that we consider the Adams $E_2$-page as a module over its polynomial subalgebra $\F_p[q_0]$ where $q_0$ is the class detecting $p$.}
\[ \mathrm{rank}_p(\pi_n\Ss) \leq \mathrm{rank}_{\F_p[q_0]}\left( \oplus_{t-s=n} \H^{s,t}(\A) \right). \]




In order to make this bound explicit we must estimate the growth of the cohomology of the Steenrod algebra.
Although this cohomology is incredibly complicated a simple upper bound is provided by the $E_1$-page of the May spectral sequence.
This being a polynomial algebra,
it is relatively relatively straightforward to estimate its growth,
proving the following theorem.


\begin{thm} \label{thm:stems-bound}
  Through the May $E_1$-page we have the following bound on the rank of the stable stems
  \[ \log\left( \mathrm{rank}_p(\pi_n\Ss) \right)\leq \log \begin{pmatrix} \text{rank of Adams} \\  E_2 \text{-page} \end{pmatrix} \leq \log \begin{pmatrix} \text{rank of May} \\ E_1 \text{-page} \end{pmatrix} =  \Theta(\log(n)^3) . \]
  In particular the rank of the stable stems grows subexponentially.
\end{thm}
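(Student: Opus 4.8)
The plan is to reduce the theorem to a generating-function estimate for the May $E_1$-page and then to match it from below with an explicit family of classes. The May spectral sequence has $E_1$-page a polynomial algebra $\F_p[h_{i,j}]$ (for $p=2$; with the obvious exterior/polynomial modification at odd primes) where $h_{i,j}$ sits in internal degree $t = 2^j(2^i-1)$ and cohomological degree $s=1$, so it contributes $2^j(2^i-1) - 1$ to the stem $t-s$. First I would set up the Poincaré series $\prod_{i \geq 1, j \geq 0}(1-x^{2^j(2^i-1)-1})^{-1}$, restricting to the generators with $2^j(2^i-1) - 1 \le n$ since higher ones cannot contribute to stem $n$, and take $\log$ of the coefficient in degree $n$. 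The standard heuristic is that the number of generators of weight $\le n$ is $\Theta(\log(n)^2)$ — there are $\Theta(\log n)$ choices for each of $i$ and $j$ — and a polynomial algebra on $N$ generators of bounded-above weight has partition-type growth giving a $\log$ of the degree-$n$ coefficient of size roughly $N \log n$. Plugging $N = \Theta(\log(n)^2)$ yields $\Theta(\log(n)^3)$. I would make this precise by a two-sided estimate: for the upper bound, bound the number of monomials of degree $n$ in these generators by a crude product/partition count; for the lower bound, exhibit enough distinct monomials of degree exactly $n$ (e.g.\ use the generators $h_{i,0}$ and $h_{1,j}$, whose weights are $2^i - 2$ and $2^j - 1$, and count monomials supported on a $\Theta(\log n)$-sized sub-collection whose weights have roughly balanced $2$-adic structure). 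The first two inequalities in the displayed chain are immediate: the Adams $E_2$-page bounds $\rank_p(\pi_n\Ss)$ because the Adams filtration realizes $\pi_n\Ss_p$ as an iterated extension of subquotients of $\bigoplus_{t-s=n}\H^{s,t}(\A)$ viewed as an $\F_p[q_0]$-module (so the $\F_p$-rank of the abutment is at most the $\F_p[q_0]$-rank of the $E_2$-page), and the May $E_1$-page bounds the Adams $E_2$-page because $\H^{*,*}(\A)$ is a subquotient of $\F_p[h_{i,j}]$.

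The organization of the argument would be: (1) record the reduction of $\rank_p(\pi_n\Ss)$ and $\rank_{\F_p[q_0]}(\oplus \H^{s,t}(\A))$ to $\rank_{\F_p}$ of the relevant May $E_1$-degree; (2) prove the upper bound $\log(\text{May } E_1 \text{ in stem } n) = O(\log(n)^3)$ by counting monomials; (3) prove the matching lower bound $\Omega(\log(n)^3)$. Step (1) needs a remark about the $q_0$-tower: passing to $\F_p[q_0]$-rank on the $E_2$-page is exactly what is needed so that the polynomial $q_0$-action (which detects the $p$-divisibility) is quotiented out, and one should note that this only helps the upper bound, it does not affect the asymptotics of the May count since $q_0 = h_{1,0}$-type classes contribute only one extra generator.

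The main obstacle is the lower bound in step (3): an upper bound on a polynomial algebra's growth is soft, but to conclude $\Theta$ rather than merely $O$ one must show the May $E_1$-page genuinely has this much stuff in a single stem, and more importantly that enough of it is not killed — but in fact for \emph{this} statement we only need the May $E_1$-page itself to be large, since the claim is an estimate on the rank of the $E_1$-page, not on what survives. So the real content of the lower bound is purely combinatorial: given $n$, choose a set $S$ of May generators with $|S| = \Theta(\log(n)^2)$ and all weights $\le n/2$, say, and show the number of monomials in the variables of $S$ landing in degree exactly $n$ is $\exp(\Omega(\log(n)^3))$. The delicate point is controlling collisions — distinct monomials having the same total weight is not a problem (we are counting a single stem), but one must ensure the chosen weights admit $\exp(\Omega(\log(n)^3))$ many representations of $n$, which I would handle by selecting generators whose weights are, up to bounded error, $\{2^a : a \le \log_2 n\}$ together with $\{ j \cdot 2^a\}$-type refinements so that writing $n$ in terms of them is like counting restricted partitions of a number of size $n$ into $\Theta(\log n)$ distinct part-sizes each used $O(n/2^a)$ times — a count that is readily bounded below by $\exp(\Omega(\log(n)^3))$ via a direct construction (freely choosing multiplicities of the $\Theta(\log n)$ smallest generators within a window and absorbing the remainder into one large generator).
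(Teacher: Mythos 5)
Your approach matches the paper's in its essentials: the first two inequalities are exactly the subquotient-over-$\F_p[q_0]$ argument you describe (both Adams and May differentials are $q_0$-linear because $q_0 = h_{1,0}$ is a permanent cycle), and the $\Theta(\log(n)^3)$ estimate is obtained by counting monomials in a polynomial algebra with roughly $k$ generators at each degree scale $p^k$ — the paper delegates this to \Cref{lem:may-basic} and its flexible variant \Cref{cor:may-flexible}.

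Where your sketch diverges — and where it has a gap — is the lower bound. The paper works throughout with the \emph{cumulative} rank $\varpi^c_\A(n)$ (classes in stems $\leq n$), not the single-stem rank. This is not merely notational: it makes the lower bound soft, since one tensors together the degree-$\leq n$ parts of the single-generator factors and uses the containment $A^1_{\leq n} \otimes \cdots \otimes A^h_{\leq n} \subseteq (A^1 \otimes \cdots \otimes A^h)_{\leq hn}$ of \Cref{eqn:tensor-pieces}, then passes from sample points back to all $n$ by the density observation \Cref{rmk:point-sampling}. This entirely avoids the single-stem ``collision'' analysis you flag as the delicate point. Your proposed direct construction for a fixed stem — ``freely choosing multiplicities of the $\Theta(\log n)$ smallest generators within a window and absorbing the remainder into one large generator'' — does not deliver enough: with only $\Theta(\log n)$ free generators, each with multiplicity at most polynomial in $n$, the monomial count is at most $\exp(O(\log(n)^2))$, a full power of $\log n$ short. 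To reach $\exp(\Omega(\log(n)^3))$ at a fixed stem you need to exploit the full $\Theta(\log(n)^2)$ generators distributed across all scales, at which point the single absorption step no longer closes the argument and you are effectively reproving the cumulative estimate. If you want a pointwise statement at $p=2$, the cheap fix is to note that multiplication by the polynomial generator $h_{1,1}$ (in stem $1$) is injective and $\F_2[q_0]$-linear, so the single-stem rank is monotone in $n$ and the cumulative estimate transfers; but the paper sidesteps this by stating \Cref{thm:steenrod-precise} cumulatively from the start.
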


In \Cref{sec:steenrod} we will examine the cohomology of the Steenrod algebra more closely,
proving that the bound provided by the May $E_1$-page is close to sharp.

\begin{thm} \label{thm:Adams-E2-bound}
  The rank of the cohomology of the Steenrod algebras grow as
  \[ \log \left( \mathrm{rank}_{\F_p[q_0]}\left( \bigoplus_{t-s \leq n} \H^{s,t}(\A) \right) \right) = \Theta(\log(n)^3). \]
\end{thm}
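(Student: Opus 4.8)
The plan is to prove the two bounds — the upper bound and the lower bound — separately. For the upper bound, I would simply combine \Cref{thm:stems-bound} with a comparison between $\bigoplus_{t-s=n}\H^{s,t}(\A)$ and $\bigoplus_{t-s\le n}\H^{s,t}(\A)$. Since the Adams $E_2$-page is concentrated in a region bounded by the Adams vanishing line (slope roughly $1/(2p-2)$ in the relevant range), for each fixed stem $m$ only $O(m)$ values of $s$ contribute, so $\mathrm{rank}_{\F_p[q_0]}\left(\bigoplus_{t-s\le n}\H^{s,t}(\A)\right) \le \sum_{m\le n} (\text{number of }s) \cdot \max_s \mathrm{rank}\, \H^{s,s+m}$, which is at most $n^2$ times the largest single-stem rank through stem $n$. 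Taking logarithms, the polynomial prefactors contribute only $O(\log n)$, which is swallowed by the $\Theta(\log(n)^3)$ estimate of \Cref{thm:stems-bound}. This gives the upper bound $\log(\cdots) = O(\log(n)^3)$.

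For the lower bound I would exhibit, in stems at most $n$, an explicit $\F_p[q_0]$-free summand of $\H^{*,*}(\A)$ of the requisite size. The natural source is the May spectral sequence itself: the May $E_1$-page is a polynomial algebra on generators $h_{i,j}$ (at $p=2$; $h_{i,j}$ and $b_{i,j}$ and exterior classes at odd primes), and the subalgebra generated by a suitably chosen sparse family of these generators — those of lowest internal degree in each Adams filtration — has no differentials out of it and survives to $E_\infty$, giving a polynomial subalgebra of $\H^{*,*}(\A)$. The key point is to choose the generators so that the resulting polynomial algebra is \emph{free over $\F_p[q_0]$} (i.e. the $h_{1,0}$-tower direction is split off cleanly) and so that its Poincar\'e series through stem $n$ already has logarithm $\Omega(\log(n)^3)$. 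This is exactly the combinatorial computation underlying the lower half of \Cref{thm:stems-bound}: counting monomials of total degree $\le n$ in a polynomial algebra whose $k$-th generator has degree $\sim 2^{?}\cdot k$ (more precisely, generators indexed by pairs $(i,j)$ with degree growing like $2^i p^j$), the number of such monomials is $\exp(\Theta(\log(n)^3))$ because there are $\Theta(\log(n)^2)$ generators below degree $n$ and the exponent vectors are constrained by a volume of size $\Theta(\log(n))$ in each coordinate on average.

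Concretely I would proceed in four steps. \emph{Step 1:} recall the structure of the May $E_1$-page and fix the sparse generating family $S_n = \{ \text{May generators of internal degree} \le n\}$; compute $|S_n| = \Theta(\log(n)^2)$. \emph{Step 2:} show the subalgebra $\F_p[S_n]$ (or its odd-primary analog, remembering the exterior generators contribute only bounded-exponent factors) injects into $\H^{*,*}(\A)$ — this uses that these generators are permanent cycles in the May SS and are not hit, for which I would cite the standard structure of the May differentials (e.g. $d(h_{i,j})$ involves only products, so the lowest generators are cycles) and an explicit splitting off the $q_0$-direction. \emph{Step 3:} bound the Poincar\'e series of $\F_p[S_n]$ restricted to $t-s\le n$ from below by $\exp(\Omega(\log(n)^3))$ via a direct lattice-point count. \emph{Step 4:} combine with the upper bound from the first paragraph to conclude $\Theta(\log(n)^3)$. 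I expect the main obstacle to be \emph{Step 2}: ensuring that the chosen polynomial subalgebra genuinely detects a free $\F_p[q_0]$-submodule of the Adams $E_2$-page rather than being partially annihilated by May differentials or collapsing modulo $q_0$-multiplications, and formalizing the comparison with the $q_0$-Bockstein tower. The degree bookkeeping in Steps 1 and 3 is routine but must be done carefully enough to see that it is the \emph{same} $\Theta(\log(n)^3)$ on both sides.
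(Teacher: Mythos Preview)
Your upper bound argument is essentially the paper's approach (May $E_1$-page, then combinatorics), though the paper handles the cumulative versus single-stem issue directly in its valuation framework rather than via a separate vanishing-line count.

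Your lower bound has a genuine gap at Step~2. The assertion that a polynomial subalgebra $\F_p[S_n]$ on the May generators injects into $\H^{*,*}(\A)$ is not justified and is in fact false in the form you state it: although each individual $h_{i,j}$ has May $d_1$ landing in decomposables, this says nothing about the survival of \emph{products}, and the May spectral sequence has a great many differentials among monomials in the $h_{i,j}$. Already at $p=2$ the relations $h_j h_{j+1}=0$ in $\H^{*,*}(\A)$ show that even the subalgebra on the $h_{1,j}$ alone does not inject as a polynomial algebra. No large explicit polynomial subalgebra of $\H^{*,*}(\A)$ of the requisite size is known, and exhibiting one directly would itself be a substantial result. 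Your Step~2 is thus not a bookkeeping obstacle but the entire content of the lower bound.

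The paper sidesteps this obstacle completely. Rather than locating permanent cycles in the May spectral sequence, it works with the auxiliary $\A$-comodules $y(h)$, whose $q_h$-\emph{localized} cohomology was computed by Mahowald--Ravenel--Shick and is an explicit polynomial algebra. By lifting polynomial generators from the localized cohomology across a known isomorphism range (coming from a vanishing line for $y(h)/q_h$), one obtains a large polynomial subalgebra in $\H^{*,*}(\A;\, y(h))$. The bound on $\H^{*,*}(\A)$ itself then follows from the Atiyah--Hirzebruch-type inequality $\varpi^c_\A(y(h),n) \leq h^c_\A(y(h),n)\cdot \varpi^c_\A(n)$, together with the fact that $y(h)$, being a subobject of $\A$, has only $\exp(O(\log(n)^2))$ cells through degree $n$. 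Optimizing over $h$ yields the lower-bound constant $K_2$.
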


Together these theorems imply that the totality of the differentials in the May spectral sequence only manages to modify the implicit constant on the $\log(n)^3$ term and not the fundamental growth rate.
This state of affairs might be summarized by the following counter-intuitive slogan:
\begin{quote} \centering \emph{The May spectral sequence has few differentials.} \end{quote}

Building on this we examine the Adams--Novikov spectral sequence, which has signature
\[ \H^{s}(\Mfg;\,\omega^{\otimes u}) \cong \ANE_2^{s,2u} \Longrightarrow \pi_{2u-s}\Ss_{(p)}, \]
in \Cref{sec:mfg}.
Again we arrive at the same bound.

\begin{thm} \label{thm:AN-E2-bound}
  The rank of the $E_2$-page of the Adams--Novikov spectral sequence grows as
  \[ \log \left( \mathrm{rank}_p\left( \bigoplus_{2u-s \leq n} \H^{s}(\Mfg;\,\omega^{\otimes u}) \right) \right) = \Theta(\log(n)^3). \]
\end{thm}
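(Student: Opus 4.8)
The plan is to compare the Adams--Novikov $E_2$-page with the May $E_1$-page of the Steenrod algebra through the algebraic Novikov spectral sequence, using \Cref{thm:stems-bound} for the upper bound and a survival statement modelled on \Cref{sec:steenrod} for the lower bound. First I would reduce modulo $p$. The short exact sequence of $\BP_*\BP$-comodules $0\to\BP_*\xrightarrow{\,p\,}\BP_*\to\BP_*/p\to 0$ gives a Bockstein long exact sequence relating the Adams--Novikov $E_2$-page $\H^{s}(\Mfg;\omega^{\otimes u})=\Ext^{s,2u}_{\BP_*\BP}(\BP_*,\BP_*)$ to $\Ext^{s,2u}_{\BP_*\BP}(\BP_*,\BP_*/p)$, the Adams--Novikov $E_2$-page of $\Ss/p$. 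Since the $p$-rank of a finitely generated $\Z_{(p)}$-module equals its $\F_p$-dimension after reduction mod $p$, and $\dim_{\F_p}(A/p)=\dim_{\F_p}(A[p])$ for every finite abelian $p$-group, an elementary count along this long exact sequence gives
\[
  \tfrac{1}{2} G_n-O(1)\;\le\;\rank_p\Big(\bigoplus_{2u-s\le n}\H^{s}(\Mfg;\omega^{\otimes u})\Big)\;\le\;G_n,\qquad\text{where }\ G_n:=\dim_{\F_p}\Big(\bigoplus_{2u-s\le n}\Ext^{s,2u}_{\BP_*\BP}(\BP_*,\BP_*/p)\Big).
\]
Hence it suffices to prove that $\log G_n=\Theta(\log(n)^3)$, i.e. the statement with $\Ss/p$ in place of $\Ss$.

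The Adams--Novikov $E_2$-page of $\Ss/p$ is the cohomology of the cobar complex of the Hopf algebroid $(\BP_*/p,\BP_*\BP/p)$, where $\BP_*/p=\F_p[v_1,v_2,\dots]$ and $\BP_*\BP/p=\F_p[v_1,v_2,\dots]\otimes\F_p[t_1,t_2,\dots]$ as graded $\F_p$-algebras. I would filter this complex by May weight --- equivalently, combine the May and algebraic Novikov spectral sequences --- obtaining a spectral sequence converging to it whose $E_1$-page is assembled from a ``$t$-part'' and a ``$v$-part''. The $t$-part is the May $E_1$-page of the Hopf algebra $\F_p[t_1,t_2,\dots]$, which as a bigraded algebra is a subalgebra of the classical May $E_1$-page of the Steenrod algebra; by \Cref{thm:stems-bound} its rank in topological degrees $\le n$ is $\exp(O(\log(n)^3))$. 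The $v$-part is built from the $v_i$ by the algebraic Novikov mechanism; since $|v_i|=2(p^i-1)$, only $O(\log n)$ of the $v_i$, hence only $O(\log(n)^2)$ of their May powers, lie in degrees $\le n$, so the rank of the $v$-part there is at most that of a free graded-commutative algebra on $O(\log(n)^2)$ generators, namely $\exp(O(\log(n)^3))$. The product bounds the $E_1$-page, and hence its abutment, by $\exp(O(\log(n)^3))$, giving the upper bound $\log G_n\le O(\log(n)^3)$.

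For the lower bound I would transport the polynomial subalgebra $R$ witnessing the lower bounds of \Cref{thm:stems-bound} and \Cref{thm:Adams-E2-bound}. Since that subalgebra can be taken among the relevant $h_{i,j}$ (and, at odd primes, the $b_{i,j}$) --- dropping one boundary column of May generators affects the constant but not the rate $\Theta(\log(n)^3)$ --- it lies in the May $E_1$-page of $\F_p[t_1,t_2,\dots]$, which embeds as a split subalgebra of our $E_1$-page by the splitting of the Hopf-algebroid map $(\F_p,\F_p[t_1,t_2,\dots])\to(\BP_*/p,\BP_*\BP/p)$. One then needs $R$ to inject into the $E_\infty$-page: it is already inert under the classical May differentials by \Cref{sec:steenrod}, no differential can land in it since it sits at the bottom of the algebraic Novikov filtration, and choosing the generators of $R$ in low cohomological degree --- where the algebraic Novikov spectral sequence is sparse for degree reasons --- makes them permanent cycles, a property closed under products. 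This gives $G_n\ge\dim_{\F_p}R_{\le n}=\exp(\Omega(\log(n)^3))$, hence $\log G_n\ge\Omega(\log(n)^3)$.

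The main obstacle is precisely this last survival statement: controlling the interaction of $R$ with the differentials present in the algebraic Novikov spectral sequence but not in the classical May spectral sequence --- the ones recording the $v_i$-torsion of the Adams--Novikov page. As with the Steenrod algebra, this should be an instance of the slogan that the May spectral sequence has few differentials: adjoining the variables $v_i$ perturbs only the implicit constant on $\log(n)^3$, not the growth rate itself.
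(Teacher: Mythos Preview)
Your upper bound is essentially the paper's argument, reorganized. The paper also uses the algebraic Novikov spectral sequence, whose $E_0$-page is $\F_p[q_0,q_1,\dots]\otimes\H^{*,*}(\mathcal P)$, to bound $\varpifg^c(n)$ above by the product of the size of $\H^{*}(\mathcal P)$ (which is $\exp(\Theta(\log(n)^3))$ by the Steenrod-algebra bound) and the size of the polynomial algebra on the $q_i$ (which is only $\exp(\Theta(\log(n)^2))$ by \Cref{cor:steenrod-flexible}). Your detour through $\BP_*/p$ rather than $V(\infty)=\BP_*/(p,v_1,\dots)$ is harmless but unnecessary.

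Your lower bound, however, has a genuine gap---the one you yourself flag as ``the main obstacle''---and the paper avoids it entirely rather than confronting it. There is no explicit polynomial subalgebra $R\subset\H^*(\A)$ or $R\subset{}^{\mathrm{May}}E_1$ in \Cref{sec:steenrod} that you can transport: the lower bound on $\varpi_\A^c(n)$ there is \emph{indirect}, obtained from a polynomial subalgebra in $\H^*(\A;y(h))$ together with the Atiyah--Hirzebruch-style valuation inequality $\varpi_\A^c(y(h),n)\le h_\A^c(y(h),n)\cdot\varpi_\A^c(n)$. So the object you propose to shepherd through the algebraic Novikov spectral sequence does not exist in the form you assume, and your sparsity heuristic (``low cohomological degree'') does not pin down any concrete survival statement.

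The paper's trick is to run the same valuation inequality on the $\Mfg$ side. The sheaf $V(\infty)$ has the cell structure of an exterior algebra on classes in degrees $2p^k-1$, hence at most $n$ cells through degree $n$, and therefore
\[
  \varpifg^c(V(\infty),n)\ \le\ h_{\mathrm{fg}}^c(V(\infty),n)\cdot\varpifg^c(n)\ \le\ n\cdot\varpifg^c(n).
\]
By change of rings $\varpifg^c(V(\infty),n)$ equals the cumulative size of $\H^*(\mathcal P)$, which in turn compares to $\varpi_\A^c(n)$ up to a factor of $\exp(O(\log(n)^2))$ via the Cartan--Eilenberg spectral sequence (or, at $p=2$, the doubling isomorphism). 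This gives the lower bound with no survival analysis whatsoever: you never track a single class, only sizes. That is the idea you are missing.
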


In order to evaluate whether the upper bounds we have given are \emph{good} we need to discuss how fast we expect the rank of the stable stems to grow.
Based on an analysis of the likely fate of the telescope conjecture
we were led to the following, rather jarring, conjecture.

\begin{cnj}
  The Adams spectral sequence has few differentials in the sense that 
  \[ \log \left( \rank_p( \pi_n\Ss ) \right) = \Theta(\log(n)^3). \]
\end{cnj}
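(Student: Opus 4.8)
\emph{Proof proposal.}
The upper bound $\log\left(\rank_p(\pi_n\Ss)\right)\le O(\log(n)^3)$ is exactly \Cref{thm:stems-bound} (and is visible again on the Adams--Novikov side via \Cref{thm:AN-E2-bound}), so the content of the conjecture is the matching lower bound $\log\left(\rank_p(\pi_n\Ss)\right)\ge\Omega(\log(n)^3)$. The plan is to extract this from a precise, quantitative failure of the telescope conjecture: the slogan ``the Adams spectral sequence has few differentials'' should be essentially equivalent to the assertion that the $E_2$-rank which the chromatic filtration makes available at each height descends to $\pi_*\Ss$ without catastrophic cancellation, and the telescope conjecture governs exactly how much rank is made available.

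First I would make the stratification underlying \Cref{thm:stems-bound} explicit. Using that the May $E_1$-page is polynomial on the classes $h_{i,j}$, one checks that in stems $\le p^h$ the rank is concentrated on the ``height $\le h$'' sub-polynomial algebra (the $h_{i,j}$ with $i\le h$, a restriction which is automatic once the internal degree is at most $\sim p^h$), and that a saddle-point count gives this piece log-rank $\Theta(h^3)$ over one periodicity window of width $2(p^h-1)$; the full $\Theta(\log(n)^3)$ is then recovered by taking $h\sim\log_p n$. The same height stratification appears on the Adams--Novikov $E_2$-page through the chromatic filtration of $\H^{s}(\Mfg;\omega^{\otimes u})$. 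Thus the conjecture amounts to asserting that, at each height $h$, the $v_h$-periodic contributions to $\pi_n\Ss$ realize a positive fraction of this available rank, the sum over $h\le\log_p n$ being dominated by the top height.

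Next I would attempt to realize these contributions geometrically. The $v_h$-periodic part of $\pi_*\Ss$ is assembled, via chromatic convergence and the monochromatic fiber sequences $M_h^f\Ss=\fib(L_h^f\Ss\to L_{h-1}^f\Ss)$, from the groups $\pi_* M_h^f\Ss$. The precise hypothesis I would impose is a quantitative failure of the telescope conjecture of the form: across one $v_h$-periodicity window, $\rank_p\left(\pi_* M_h^f\Ss\right)$ is bounded below by a fixed fraction of the rank of the height-$h$ portion of the Adams (equivalently Adams--Novikov) $E_2$-page in the same range, i.e.\ $\exp(\Omega(h^3))$. Granting this, together with the statement that these monochromatic classes are not annihilated when reassembled into $\pi_n\Ss$ --- neither by the $L_h^f\to L_{h-1}^f$ towers nor under comparison with the Adams filtration --- one obtains $\rank_p(\pi_n\Ss)\ge\sum_{h\le\log_p n}\exp(\Omega(h^3))$, whose logarithm is $\Omega\big((\log_p n)^3\big)=\Omega(\log(n)^3)$.

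The main obstacle --- and the reason this is stated only as a conjecture --- is precisely the two hypotheses just granted. First, one needs a failure of the telescope conjecture that is simultaneously quantitative and uniform in the height $h$ (and, ideally, in $p$); the known failures exhibit the phenomenon at particular $(p,h)$ but yield no lower bound on $\rank_p\pi_* M_h^f\Ss$ that grows with $h$. Second, and more seriously, one must rule out large-scale cancellation in the chromatic reassembly, which is literally the assertion that the Adams spectral sequence ``has few differentials'': the $E_2$-rank supplied by the telescope failure must descend to an $E_\infty$-rank of comparable size. As noted in the remark following \Cref{main-thm}, no current technique produces --- or rules out --- such systematic families of differentials, so this second step is at present entirely conjectural.
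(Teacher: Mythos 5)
This statement is a conjecture; the paper does not prove it unconditionally. The closest thing the paper has to a proof is \Cref{thm:tel-to-sphere} in \Cref{sec:telescope}, which establishes the lower bound conditionally on the precise Conjectures \ref{conj:mrs-phase-1} and \ref{conj:mrs-collapse}. Your proposal captures the right intuition (upper bound is \Cref{thm:stems-bound}; the lower bound should come from a quantitative failure of the telescope conjecture), but the route you sketch is genuinely different from the paper's, and your route has a gap that the paper's route is specifically engineered to avoid.

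The paper does not go through the chromatic fracture of the sphere at all. It never touches $L_h^f\Ss$, $M_h^f\Ss$, or chromatic reassembly. Instead, it works with the non-compact $\E_1$-ring spectra $y(h)$ (Thom spectra over $\Omega J_{p^h-1}S^2$), and the mechanism for transferring a lower bound on $\pi_*y(h)$ to a lower bound on $\pi_*\Ss$ is the valuation inequality of \Cref{cor:AH},
\[
  \varpi^c(y(h),n) \leq h^c(y(h),n)\cdot\varpi^c(n) \leq h^c(\F_p,n)\cdot\varpi^c(n),
\]
together with the fact (\Cref{exm:steenrod-size}) that $\log h^c(\F_p,n) = \Theta(\log(n)^2)$ is of lower order. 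This is the crucial pivot: if an auxiliary spectrum is built from few cells, then large homotopy in the auxiliary spectrum forces large homotopy of the sphere, and no reassembly along a tower is needed. The telescopic input enters via \Cref{lem:tel-makes-classes}, which shows that permanent cycles in the $q_h$-localized Adams spectral sequence for $y(h)$ lift to permanent cycles in the unlocalized Adams spectral sequence for $y(h)$ once the Adams filtration is high enough (above the line where the localization map on the $E_2$-page is an isomorphism, the range computed in \Cref{lem:yn-size}). The conditional hypotheses are then the very concrete Conjectures \ref{conj:mrs-phase-1} and \ref{conj:mrs-collapse} about the localized Adams $E_\infty$-page of $y(h)$, after which \Cref{lem:yn-conj-size} and \Cref{cor:yn-einf-flexible} give the $\exp(K_1\log(n)^3)$ lower bound on $\varpi^c(y(h),n)$.

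Your route instead passes through the monochromatic layers $M_h^f\Ss$ and requires that the rank produced at each height descend to $\pi_n\Ss$ without ``catastrophic cancellation'' in the reassembly along the $L_h^f\to L_{h-1}^f$ towers. You correctly flag this as the unresolved step, but this is not merely a technical loose end in the paper's argument that you have made explicit: it is an \emph{additional} obstacle that the paper's strategy never encounters. Lifting from the localized Adams sseq of $y(h)$ to the unlocalized Adams sseq of $y(h)$ (\Cref{lem:tel-makes-classes}) is a one-step comparison of filtrations on a single spectrum, controlled by an explicit isomorphism range; it is not a reassembly across infinitely many chromatic layers of the sphere, for which no such range exists. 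A second, smaller, issue: $M_h^f\Ss$ is not a compact spectrum and is built from infinitely many sphere cells, so even a strong lower bound on $\rank_p(\pi_*M_h^f\Ss)$ does not by itself pass to a lower bound on $\rank_p(\pi_*\Ss)$ via the valuation framework in the way that $y(h)$ does, because $h^c(M_h^f\Ss,n)$ is not small. If you want to rescue the chromatic-fracture version, you would need both a quantitative anti-telescope conjecture at each height and a new device to suppress cancellations in the reassembly --- whereas the paper's choice of $y(h)$ collapses both problems into the single localization range estimate of \Cref{lem:yn-size}.
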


In \Cref{sec:motivic} we verify the analog of this conjecture for the $p$-complete motivic stable stems over $\C$.

\begin{thm} \label{thm:mot-size-intro}
  The growth of the $p$-complete motivic stable stems over $\C$ is described by 
  \[ \log \left( \rank_{\Z_p[\tau]}\left( \oplus_{s \leq n} \pi_{s,*}^{\C}(\Ss_p) \right) \right) = \Theta(\log(n)^3). \]
\end{thm}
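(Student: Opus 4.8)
\emph{Proof strategy.} The plan is to sandwich the quantity $\rank_{\Z_p[\tau]}\big(\bigoplus_{s\le n}\pi_{s,*}^{\C}(\Ss_p)\big)$ between $\exp(\Omega(\log(n)^3))$ and $\exp(O(\log(n)^3))$ by exploiting the $C\tau$-philosophy for the cellular, $p$-complete, $\C$-motivic stable category. Write $\Ss_p^{\C}$ for the $p$-complete $\C$-motivic sphere. Recall (Gheorghe--Wang--Xu; Pstragowski; Burklund--Hahn--Senger) that this category is equivalent to $\mathrm{MU}$-based synthetic spectra, that $\pi_{*,*}(\Ss_p^{\C}/\tau)$ is the Adams--Novikov $E_2$-page $\bigoplus_u\H^{*}(\Mfg;\omega^{\otimes u})$ — with a class of $\H^{s}(\Mfg;\omega^{\otimes u})$ contributing in motivic stem $2u-s$ — and that the $\tau$-Bockstein spectral sequence, equivalently the $\C$-motivic Adams--Novikov spectral sequence,
\[
\Big(\bigoplus_u\H^{*}(\Mfg;\omega^{\otimes u})\Big)[\tau]\ \Longrightarrow\ \pi_{*,*}(\Ss_p^{\C}),
\]
computes the motivic homotopy groups from this algebraic input.

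For the upper bound I would argue from the cofiber sequence $\Sigma^{0,-1}\Ss_p^{\C}\xrightarrow{\ \tau\ }\Ss_p^{\C}\to\Ss_p^{\C}/\tau$. Its long exact sequence yields, in each bidegree, a short exact sequence
\[
0\to\pi_{s,w}(\Ss_p^{\C})/\tau\to\pi_{s,w}(\Ss_p^{\C}/\tau)\to\pi_{s-1,w+1}(\Ss_p^{\C})[\tau]\to 0,
\]
where $[\tau]$ denotes $\tau$-torsion; summing the left inclusion over all weights $w$ exhibits $\pi_{s,*}(\Ss_p^{\C})/\tau$ as a $\Z_p$-submodule of the finitely generated $\Z_p$-module $\pi_{s,*}(\Ss_p^{\C}/\tau)$. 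A submodule of a finitely generated module over the discrete valuation ring $\Z_p$ needs no more generators than the ambient module, and graded Nakayama gives $\rank_{\Z_p[\tau]}(\pi_{s,*}\Ss_p^{\C})=\dim_{\F_p}\pi_{s,*}(\Ss_p^{\C})/(p,\tau)=\rank_p\big(\pi_{s,*}(\Ss_p^{\C})/\tau\big)$, so that, summing over $s\le n$ and reindexing,
\[
\rank_{\Z_p[\tau]}\Big(\bigoplus_{s\le n}\pi_{s,*}\Ss_p^{\C}\Big)\ \le\ \rank_p\Big(\bigoplus_{2u-s\le n}\H^{s}(\Mfg;\omega^{\otimes u})\Big),
\]
and \Cref{thm:AN-E2-bound} bounds the logarithm of the right-hand side by $O(\log(n)^3)$. (Alternatively one may stack the $\C$-motivic May spectral sequence — whose $E_1$-page is $\F_p[\tau]$ tensored onto the classical May $E_1$-page — on the $\C$-motivic Adams spectral sequence and invoke \Cref{thm:stems-bound}.)

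The lower bound is the heart of the matter: it asserts that the spectral sequence above \emph{has few differentials}, so that its $E_\infty$-page — equivalently the image of $\pi_{*,*}(\Ss_p^{\C})$ in the Adams--Novikov $E_2$-page, which by the exact sequence above has as many $\F_p$-generators as $\bigoplus_{s\le n}\pi_{s,*}(\Ss_p^{\C})$ has $\Z_p[\tau]$-generators — still has logarithmic rank $\Omega(\log(n)^3)$. I would produce the classes multiplicatively, mirroring the proof of \Cref{thm:stems-bound}: it suffices to find $\Theta(\log(n)^2)$ classes in $\bigoplus_u\H^{*}(\Mfg;\omega^{\otimes u})$ whose motivic stems are spread out enough that the number of monomials in them of total stem at most $n$ is $\exp(\Omega(\log(n)^3))$ — the same ``number of generators times available depth'' optimization that makes the May $E_1$-count a cube — such that each lifts to a permanent cycle of the $\tau$-Bockstein spectral sequence and the subring they generate remains faithfully detected modulo $(p,\tau)$. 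Since $\pi_{*,*}(\Ss_p^{\C})\to\pi_{*,*}(\Ss_p^{\C}/\tau)$ is a ring map, once the generators lift so does the entire subring they generate, and algebraic independence modulo $(p,\tau)$ then feeds directly into the lower bound. The natural candidates for the generators are the Adams--Novikov avatars $b_{i,j}$ of the polynomial generators of the May $E_1$-page: these lie in even cohomological degree, and there are $\Theta(\log(n)^2)$ of them in stems at most $n$.

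The step I expect to be the genuine obstacle is the survival claim for (products of) the $b_{i,j}$. This is exactly where the trigrading of the $\C$-motivic category earns its keep: the weight records the Adams--Novikov filtration and pins down the bidegree of every potential $\tau$-Bockstein differential far more rigidly than in the classical Adams--Novikov spectral sequence, and I would run a sparsity argument in the $(\text{stem},\text{weight},\text{cohomological degree})$-lattice — organized through the synthetic-spectra formalism — to show that in a suitable range these differentials have no possible target. A secondary subtlety, to be dispatched along the way, is the interaction between $p$-divisibility and $\tau$-divisibility: one must check that the subring generated by the $b_{i,j}$ does not collapse upon reduction modulo $(p,\tau)$, which I would deduce from the polynomiality of the corresponding classes on the $E_1$-page of the algebraic Novikov spectral sequence.
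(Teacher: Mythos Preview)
Your upper bound is correct and is essentially the paper's argument: both use that $\pi_{s,*}(\Ss_p^{\C})/\tau$ injects into $\pi_{s,*}(C\tau)$ (equivalently, that Morel connectivity rules out $\tau$-divisible elements), together with the identification $\pi_{*,*}(C\tau)\cong\H^*(\Mfg;\omega^{\otimes *})$ and the bound of \Cref{thm:AN-E2-bound}.

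Your lower bound misidentifies the problem and, as a result, leaves a genuine gap. You write that the lower bound ``asserts that the spectral sequence above has few differentials'' and then set out to construct explicit permanent cycles $b_{i,j}$, admitting that their survival is ``the genuine obstacle'' and offering only a sketched sparsity argument. No such survival statement is needed. The paper's lower bound is a one-line application of the subadditivity of the valuation $\varpimot$ to the cofiber sequence $\Ss_p\to C\tau\to\Sigma^{1,-1}\Ss_p$:
\[
\varpifg^c(n)=\varpimot(C\tau,\,n)\ \leq\ \varpimot(\Ss_p,\,n)+\varpimot(\Sigma^{1,-1}\Ss_p,\,n)\ \leq\ 2\,\varpimot(n).
\]
The conceptual point you are missing is already contained in your own short exact sequence
\[
0\to\pi_{s,*}(\Ss_p^{\C})/\tau\to\pi_{s,*}(C\tau)\to\pi_{s-1,*}(\Ss_p^{\C})[\tau]\to 0.
\]
Each class of the Adams--Novikov $E_2$-page is accounted for either by a permanent cycle (contributing to $\pi_{s,*}(\Ss_p^{\C})/\tau$) or by a $\tau$-torsion element of the sphere one stem down (the source of the differential that kills it). Both kinds of classes contribute $\Z_p[\tau]$-generators to $\pi_{*,*}(\Ss_p^{\C})$, so the $\Z_p[\tau]$-rank of the motivic sphere is at least half the $\Z_p$-rank of the $E_2$-page regardless of which differentials fire. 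Your attempt to prove survival of specific classes is attacking a much harder problem than the theorem requires.
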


The decisive factor in the motivic setting is the deformation parameter $\tau$ which closely links the cellular $p$-complete motivic category over $\C$ to the moduli of formal groups.\footnote{See \cite{ctau} for a precise incarnation of this link.}
Through this link we are able to lean heavily on \Cref{thm:AN-E2-bound} in proving \Cref{thm:mot-size-intro}.



\subsection{Known lower bounds}\ 

Adams' work on the image of $J$ provided the first infinite family of elements in the homotopy groups of spheres \cite{AdamsJIV}. This family consists\footnote{At $p=2$ things are more complicated, but not in a way that affects our conclusions.} of a copy of $\Z/p^{k+1}$ in each degree $n$ which is congruent to $-1$ modulo $2p-2$ where $k = v_p(n+1)$.
Altogether this family contributes $O(1)$ to the average rank of the $p$-local stable stems.
In fact, this lower bound remains essentially the state-of-the-art. 
In light of this we issue the following challenge:

\begin{challenge}
  Construct, or otherwise prove the existence of, a family of classes in the $p$-local stable stems with average size greater than $O(1)$. 
\end{challenge}

If instead of average rank we look at maximum rank,
then Oka's work on the divided beta family provides the first example of super-constant rank in the stable stems.

\begin{exm}
  \label{exm:oka}
  As a output of his study of ring structures on generalized Moore spectra in \cite{Okabeta},
  Oka constructed a collection of linearly independent classes $\beta_{tp^n/s} \in \pi_*\Ss$
  for $p \geq 5$, $t \geq 2$, $n \geq 3$ and $1 \leq s \leq 2^{n-2}p$.  
  Although this family only averages out to a constant number of generators in each degree,
  it is distributed less smoothly than the image of $J$.
  From this family we obtain the lower bound
  \[ \log\log(n) \lesssim \max_{j \leq n}\left( \mathrm{rank}_p( \pi_j \Ss) \right) . \]
  \tqed
\end{exm}

The author was quite surprised to find that not only are these the best lower bounds in the literature,
but even after substantial effort we were unable to improve the situation.
By this we do not mean to suggest that the stable stems are likely to be small, but instead to highlight how little is known.










\subsection{Conjectural lower bounds}\ 

It has long been recognized that if the telescope conjecture fails
it must fail violently.
In \Cref{sec:telescope},
drawing upon a web of conjectures by Mahowald, Ravenel and Schick
detailing the expected fate of the telescope conjecture,
we will bring this intuition to bear on the size of the stable stems.
What we find is that if the telescope conjecture fails in the expected way,
then in doing so it produces a sufficient supply of classes to saturate the upper bound provided by the Adams $E_2$-page.

\begin{thm} \label{thm:telescope-bound}
  If we assume that the telescope conjecture fails in the precise sense detailed by Conjectures \ref{conj:mrs-phase-1} and \ref{conj:mrs-collapse}, then
  \[ \log \left( \rank_p\left( \pi_n\Ss \right) \right) = \Theta ( \log(n)^3 ) . \]  
\end{thm}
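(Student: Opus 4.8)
The plan is to sandwich $\log(\rank_p(\pi_n\Ss))$ between two copies of $\Theta(\log(n)^3)$. The upper bound is unconditional and already in hand: the Adams $E_2$-page surjects onto the associated graded of the Adams filtration on $\pi_n\Ss_p$, so $\rank_p(\pi_n\Ss) \leq \rank_{\F_p[q_0]}\big(\bigoplus_{t-s=n}\H^{s,t}(\A)\big)$, and summing over $t-s\le n$ and invoking \Cref{thm:Adams-E2-bound} gives $\log(\rank_p(\pi_n\Ss)) \leq \Theta(\log(n)^3)$. So the entire content is the matching \emph{lower} bound: conditional on Conjectures \ref{conj:mrs-phase-1} and \ref{conj:mrs-collapse}, the telescopic / $v_n$-periodic contributions to $\pi_*\Ss$ must already have rank of order $\exp(\Theta(\log(n)^3))$ in a positive-density set of stems.

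The strategy I would follow is to localize the problem one chromatic height at a time. Fix a height $h$ and consider the $v_h$-periodic family governed by the $h$-th monochromatic layer, i.e.\ by $\pi_*$ of the telescope $L_{\mathrm{T}(h)}\Ss$ (or of a type-$h$ finite complex). The "phase 1" conjecture (Conjecture \ref{conj:mrs-phase-1}) should describe an algebraic approximation to this telescope — morally an algebraic telescope or a localized Adams-type $E_2$-page built from the relevant Morava-stabilizer or $\mathrm{bo}$-type resolution — whose rank in internal degree $\sim n$ grows polynomially in $n$ with a degree that increases with $h$; the "collapse" conjecture (Conjecture \ref{conj:mrs-collapse}) then guarantees that this algebraic input survives to $\pi_*$, so that these classes are genuinely nonzero and linearly independent in the stable stems. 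Carrying this out requires: (i) extracting from Conjectures \ref{conj:mrs-phase-1}–\ref{conj:mrs-collapse} a clean lower bound of the shape $\rank_p(\pi_n L_{\mathrm{T}(h)}\Ss) \gtrsim n^{c(h)}$ valid for $n$ in a cofinal set, with $c(h)\to\infty$; (ii) transporting this from the telescopic localization back to $\Ss$ itself, using that the $v_h$-periodic classes in a type-$h$ finite complex have bounded preimages in $\pi_*\Ss$ (the finite complex has cells in a bounded range, so a rank-$r$ subspace in degree $n$ of the complex produces a rank-$\gtrsim r$ subspace in a bounded window around $n$ in $\pi_*\Ss$); and (iii) assembling the heights: at stem $n$ one may use any height $h$ with $c(h)$ as large as the bookkeeping allows, and the constraint linking the usable height to $n$ is precisely the chromatic/telescopic "thick subcategory" bound — a type-$h$ complex with cells through dimension $n$ exists only for $h \lesssim \log(n)$ (this is the same arithmetic that produces the $\log(n)^3$ in \Cref{thm:Adams-E2-bound}: roughly, summing $n^{c(h)}$ over $h \lesssim \log(n)$ with $c(h)$ itself of polynomial-in-$h$ size gives $\exp(\Theta(\log(n)^3))$).

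Concretely the steps, in order, are: first, restate Conjectures \ref{conj:mrs-phase-1} and \ref{conj:mrs-collapse} in the form of an explicit lower bound on the rank of a localized Adams $E_2$-page together with a collapse statement; second, deduce a lower bound on $\rank_p \pi_n L_{\mathrm{T}(h)}\Ss$; third, compare the telescope with the finite-complex computation and propagate nonvanishing down to $\pi_n\Ss$, being careful that the classes coming from distinct heights, or from distinct parts of the $v_h$-periodic family, remain linearly independent (here one uses the chromatic/Adams-filtration separation, exactly as in the unconditional upper bound where the May filtration organizes the count); fourth, optimize over $h$ as a function of $n$ and match the arithmetic to \Cref{thm:Adams-E2-bound} to conclude $\log(\rank_p(\pi_n\Ss)) \geq \Theta(\log(n)^3)$ on a set of stems of density one, which combined with the upper bound yields the claimed $\Theta(\log(n)^3)$.

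The main obstacle is step three, the linear-independence bookkeeping across heights. It is cheap to produce \emph{many} classes at a single height from the failure of the telescope conjecture, but to add the counts across $h \lesssim \log(n)$ one must certify that a class detected by the height-$h$ telescope is not accidentally killed by, or coincident with, a class from height $h'\ne h$, and that the transfer from a type-$h$ finite complex back to the sphere does not collapse the rank. I expect this to be handled by a filtration argument — chromatic filtration to separate the heights, and then, within each height, the internal Adams(-type) filtration of the localized spectral sequence to separate the generators — so that the total rank is at least the sum of the per-height, per-filtration ranks, with only a bounded multiplicative loss that is absorbed into the implied constant. The remaining ingredients (the vanishing-line arithmetic, the polynomial-growth estimate for the localized $E_2$-page) are the same kind of counting already carried out for \Cref{thm:stems-bound} and \Cref{thm:Adams-E2-bound}.
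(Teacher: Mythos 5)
Your overall arc (MRS conjectures give a large telescopic $E_\infty$-page at each height, push that information back to unlocalized homotopy via an auxiliary spectrum, then a cell-count bound transfers to $\pi_*\Ss$, and finally optimize over $h$) matches the paper's, but you have misdiagnosed where the difficulty lies, and the step you gloss over is precisely the crux. Your step (ii) asserts that $v_h$-periodic classes in a type-$h$ finite complex have ``bounded preimages'' in the unlocalized homotopy, but convergence of the localized Adams sseq alone gives no control on the degree in which a lift lives: telescopic homotopy is periodic, so a class could a priori only become visible in $\pi_*X$ in arbitrarily high stem. The paper's bridge is \Cref{lem:tel-makes-classes}: because the localization map $\AE_2^{s,t}(X)\to \AE_2^{s,t}(X)[q_h^{-1}]$ is an \emph{isomorphism above a vanishing line} (\Cref{lem:yn-size}), the filtered map $\mathrm{F}^s\pi_kX\to\mathrm{F}^s\pi_kX[v^{-1}]$ is forced to be surjective above that line for all $r\le\infty$, which is what pins down the stem in which the lift occurs. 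Nothing in your sketch supplies this quantitative lifting, and it does not follow from the cell-count argument in your parenthetical (which is the \emph{next} step, $\pi_*X\Rightarrow\pi_*\Ss$, via the analogue of \Cref{cor:AH}).

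Conversely, the issue you flag as ``the main obstacle'' --- linear independence of classes across heights --- simply does not arise in the paper's argument. For each $n$ one uses a \emph{single} height $y(h)$, with $h$ chosen as a function of $n$, and takes a maximum over $h$ rather than a sum (see \Cref{lem:yn-einf-basic}); the max alone already produces $\exp(\Theta(\log(n)^3))$, so there is no cross-height bookkeeping to do. Two further points of departure: the auxiliary spectrum must be $y(h)$ specifically (the MRS conjectures are statements about $y(h)$, not about $L_{T(h)}\Ss$ or a generic type-$h$ finite complex, and $y(h)$ is not finite --- it is almost compact with $\exp(\Theta(\log(n)^2))$ cells through degree $n$, which is what the error term in \Cref{thm:tel-to-sphere} absorbs); and the correct lower-bound constant is $K_1$, matching the upper bound only up to the implied constant in $\Theta(\log(n)^3)$ rather than giving a density-one set of stems saturating the Adams $E_2$ bound exactly.
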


In \Cref{sec:telescope} we will also explain how this relationship is robust in the sense that partial progress towards proving these conjectures will produce large numbers of elements in the stable stems.
As a consequence it becomes clear exactly what is at stake when we ask,
``How are big the stable stems?''
The size of the stable stems is a reflection of the fate of the telescope conjecture.

\subsection{Beyond torsion and rank}\ 

Torsion exponents and rank are only the beginning.
If we encode $p$-local finite abelian groups as tableaux where each column of height $k$ corresponds to a $\Z/p^k$ summand, then we have only discussed two measures of the shape of a tableau: height and width.
There are many other natural questions we might ask.
How does the number of blocks grow as a function of $n$?
What is the ratio of the longest row to the second longest row?
After an appropriate normalization do these tableaux converge to a limiting shape?
What shape?
All these might be viewed as shadows of the more fundamental,

\begin{qst} \label{qst:process}
  Is there a simple random process which produces tableaux with the same statistics and asymptotics as those obtained from the stable homotopy groups of spheres?
\end{qst}

Although a positive answer to this question is certainly out of reach, a random process which conjecturally meets the requirements of \Cref{qst:process} would already provide a great wealth of conjectures and expectations about the behavior of the stable stems---conjectures and expectations which are so sorely lacking.

\subsection{Integral bounds}
\label{subsec:integral}\ 

Up to this point we have been content to $p$-localize and consider torsion exponents and rank separately. This is somewhat unsatisfactory as it leaves the possibility that there may be emergent behavior in the integral case not visible at any individual prime.
This is not the case.

\begin{thm}
  The size of the stable stems satisfies the bound
  \[ \log\log |\pi_n\Ss| \lesssim \log(n)^3. \]
\end{thm}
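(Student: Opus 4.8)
The plan is to reduce the integral statement to the prime-local bounds already in hand and then sum over primes, the point being that only $O(n/\log n)$ primes contribute to $\pi_n\Ss$ and that each contributes at most $\exp(O(\log(n)^3))$ to $\log|\pi_n\Ss|$. First recall that for $n>0$ Serre finiteness gives $|\pi_n\Ss| = \prod_p |\pi_n\Ss_{(p)}|$, the product over all primes. Since the first $p$-torsion class in the stable stems is $\alpha_1\in\pi_{2p-3}\Ss$, we have $\pi_n\Ss_{(p)}=0$ whenever $0<n<2p-3$; thus only primes $p\le (n+3)/2$ occur in the product, for each of which $\log p = O(\log n)$, and $\sum_{p\le (n+3)/2}\log p = O(n)$ by Chebyshev's estimate $\prod_{p\le x}p\le 4^x$.

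Next I bound a single factor uniformly in $p$. Write $|\pi_n\Ss_{(p)}| = p^{\ell_p(n)}$; encoding $\pi_n\Ss_{(p)}$ as a tableau, $\ell_p(n)$ is the number of blocks, so $\ell_p(n)\le \rank_p(\pi_n\Ss)\cdot \tors_p(\pi_n\Ss)$. The first factor is controlled by the Adams $E_2$-page: as recorded in the introduction, $\rank_p(\pi_n\Ss) \le \rank_{\F_p[q_0]}\!\big(\oplus_{t-s=n}\H^{s,t}(\A)\big) \le \rank_{\F_p[q_0]}\!\big(\oplus_{t-s\le n}\H^{s,t}(\A)\big)$, and by \Cref{thm:Adams-E2-bound} (equivalently, \Cref{thm:stems-bound} through the May $E_1$-page) the last quantity is $\exp(O(\log(n)^3))$ — crucially with a constant independent of $p$, since the mod $p$ Steenrod algebra only becomes sparser as $p$ grows: in stems $\le n$ the May $E_1$-page is a polynomial–exterior algebra on $O\big((\log_p n)^2\big)=O(\log(n)^2)$ generators, and its $q_0$-reduced rank is $\exp(O(\log(n)^3))$ regardless of $p$. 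The second factor is bounded crudely by the Adams vanishing line, $\tors_p(\pi_n\Ss)\le \tfrac{n}{2p-2}+O(1)\le n+O(1)$ uniformly in $p$ (far weaker than \Cref{main-thm}, but all that is needed here). Hence $\ell_p(n)\le \exp(O(\log(n)^3))$ uniformly in $p$.

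Combining the two steps,
\[ \log|\pi_n\Ss| \;=\; \sum_{p\le (n+3)/2}\ell_p(n)\log p \;\le\; \Big(\sum_{p\le (n+3)/2}\log p\Big)\cdot \max_p \ell_p(n) \;=\; O(n)\cdot \exp(O(\log(n)^3)) \;=\; \exp(O(\log(n)^3)), \]
and taking logarithms once more yields $\log\log|\pi_n\Ss| = O(\log(n)^3)$, which is the assertion. The only step that requires genuine care — rather than formal manipulation — is the uniformity in $p$ of the rank bound of \Cref{thm:Adams-E2-bound}; granting that, the argument shows that the integral bound is literally assembled from the $p$-local ones with no loss, confirming that no emergent integral phenomenon occurs.
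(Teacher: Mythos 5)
Your argument is correct and follows essentially the same route as the paper's: reduce to the $p$-local statement, use $\alpha_1\in\pi_{2p-3}\Ss$ to cut down to $O(n)$ primes, bound torsion crudely by a polynomial in $n$, bound rank via the May $E_1$-page with the observation that the bound is uniform in $p$ (indeed strictest at $p=2$), and absorb the polynomial overhead into $\exp(O(\log(n)^3))$. The only slip is terminological: if $|\pi_n\Ss_{(p)}|=p^{\ell_p(n)}$ then $\ell_p(n)$ is the total number of boxes in the tableau, not the number of blocks, though the bound $\ell_p(n)\le\rank_p(\pi_n\Ss)\cdot\tors_p(\pi_n\Ss)$ you use is of course still correct.
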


\begin{proof}
  In order to bound the size of integral stems we need to bound the $p$-local size for all $p$ simultaneously.
  Using that $\alpha_1 \in \pi_{2p-3}\Ss$ is the first $p$-torsion element we only need to look at a finite number of primes at a time.
  Using the easy torsion bound (that $p^n$ acts by zero on $\pi_n\Ss_{(p)}$ for $n > 0$)
  we can reduce to understanding the $p$-ranks.
  Finally, the bound on the $p$-ranks we proved was based on analyzing the size of the May $E_1$-page
  and an examination of the degrees the polynomial generators on this $E_1$-page
  lets us conclude that this bound becomes stricter as $p$ becomes larger
  (i.e. the largest potential rank is at $p=2$).
  Altogether, we obtain 
\begin{align*}
  \log |\pi_n\Ss|
  &= \sum_{p} \log |\pi_n\Ss_{(p)}|
    = \sum_{p \leq n} \log |\pi_n\Ss_{(p)}|
    \leq \sum_{p \leq n} \log(p) \cdot n \cdot \mathrm{rank}_p( \pi_n\Ss )  \\
  &\leq \log(n) n^2 \cdot \exp( \Theta(\log(n)^3) )
  =  \exp( \Theta(\log(n)^3) ). \qedhere
\end{align*}
\end{proof}

\subsection{Unstable bounds}\ 

In the unstable context, the growth of homotopy groups has received more attention.
The sharpest bounds on the ranks of the unstable homotopy groups of a space known before the present work
are due to Boyde \cite{Boyde}, who improved on an earlier bound of Henn \cite{Henn-bound}.
Notably, the bounds of Boyde and Henn are exponential, and it has been an open question whether the ranks of unstable homotopy groups of spheres grow subexponentially \cite[Question 1.7]{Huang-Wu}.

In \Cref{sec:unstable}, which is joint with Andrew Senger,
we prove the following bound on the unstable homotopy groups of a simply-connected space of finite type.


\begin{thm}[Burklund--Senger]\label{thm:app-main}
  Let $X$ denote a simply-connected space of finite type. Then there is a bound
  \[\log \left( \rank_p ( \pi_n X ) \right) \leq  \log \left(\sum_{i=1} ^{n-1} \rank_{\F_p} (\H_{i} (\Omega X;\, \F_p)\right) + O(\log(n)^3).\]
\end{thm}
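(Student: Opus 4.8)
The plan is to bootstrap from the stable result \Cref{thm:stems-bound} using a Postnikov-style induction together with the EHP-type devissage coming from the James fibration, packaged so that the only genuinely global input is the stable bound $\log(\rank_p \pi_n \Ss) = O(\log(n)^3)$. First I would reduce to the case of a sphere, or more precisely to spaces built from spheres: by a skeletal/Postnikov induction one can attempt to control $\pi_* X$ in terms of $\pi_*$ of spheres weighted by the cells of $X$, but the cleaner bookkeeping device is the loop space homology $\H_*(\Omega X;\F_p)$, which is exactly the quantity appearing on the right-hand side. The point of phrasing the bound in terms of $\H_*(\Omega X;\F_p)$ rather than cells is that $\Omega X$ is a loop space, so its homology is a (possibly non-commutative) Hopf algebra, and one has good structural control; moreover for a simply-connected finite type $X$ the dimension of $\bigoplus_{i<n}\H_i(\Omega X;\F_p)$ grows at most exponentially, which is what forces the $O(\log(n)^3)$ error term (rather than something worse) when it gets fed through a logarithm.

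The key steps, in order, would be: (1) Set up the induction on the Postnikov/Whitehead tower of $X$, or equivalently filter $\Omega X$ by a bar-type or James-type filtration whose associated graded pieces are wedges of spheres (or suspensions of known spaces); reduce the claim for $X$ to the claim for spheres $S^m$ with $m$ in a controlled range, with a multiplicity governed by $\rank_{\F_p}\H_i(\Omega X;\F_p)$. (2) For spheres themselves, invoke \Cref{thm:stems-bound} / \Cref{main-thm}: stably, $\log(\rank_p \pi_k \Ss) = O(\log(k)^3)$ and the torsion exponent is sublinear; combine these with the classical fact that $\pi_n S^m$ for $m \le n$ differs from a stable stem only through a bounded ($O(\log n)$ many, via the James/EHP exact sequences) number of metastable corrections, each itself an unstable homotopy group in a strictly smaller range, so that an induction on $n-m$ closes. (3) Assemble: each of the $\le \exp(O(n))$-many contributing pieces in the loop-space filtration contributes at most $\exp(O(\log(n)^3))$ to the rank, and the total number of filtration stages relevant to $\pi_n$ is polynomial in $n$; taking logarithms, $\log \rank_p \pi_n X \le \log\!\big(\sum_{i<n}\rank_{\F_p}\H_i(\Omega X;\F_p)\big) + O(\log(n)^3)$, since $\log(\text{poly}(n)) = O(\log n)$ is absorbed into the error and the $\exp(O(\log(n)^3))$ factors contribute additively-in-the-log as $O(\log(n)^3)$.

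I expect the main obstacle to be step (2): controlling the \emph{unstable} homotopy of spheres in the metastable and beyond-metastable range with only an $O(\log(n)^3)$ loss. The James fibration $S^m \to \Omega S^{m+1} \to \Omega S^{2m+1}$ and the EHP sequences give exact sequences relating $\pi_n S^m$, $\pi_n S^{m+1}$ and $\pi_{n} S^{2m+1}$ (2-locally) or the analogous $p$-primary James-Toda towers at odd primes, but iterating these from the stable range down to an arbitrary $m \le n$ naively costs a factor for each stage, and one must check that the number of stages is only $O(\log n)$ and that ranks are subadditive along the exact sequences without accumulating more than a polynomial overhead. The clean way to handle this is to run the induction on $n$ (not on $n-m$) and to use that $\Omega S^{2m+1}$-type terms live in a strictly smaller loop-space-homology budget, so the inductive hypothesis applies with room to spare; the torsion-vs-rank separation from \Cref{main-thm} is what prevents the error from degrading to a power of $n$. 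A secondary, more technical obstacle is making the "filtration of $\Omega X$ with spherical associated graded" precise for a general simply-connected finite-type $X$ (as opposed to a suspension or a wedge of spheres) — here I would use the James construction / the cobar or bar filtration on $\H_*(\Omega X;\F_p)$ together with the fact that, $p$-locally, a finite-type simply-connected space is built from spheres and Moore spaces by a finite tower of principal fibrations, and track how each attaching map affects the loop-space homology dimension count.
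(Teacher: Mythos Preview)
Your high-level strategy---reduce unstable to stable via an EHP-type recursion---matches the paper's, but the implementation diverges substantially and your proposal has two genuine gaps.

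First, your step (1) is the more serious problem. There is no ``James-type filtration of $\Omega X$ with spherical associated graded'' for a general simply-connected finite-type $X$; Postnikov towers give Eilenberg--MacLane fibers, not spheres, and cell attachments do not interact well with $\pi_*$. The paper avoids this entirely by working at the level of Ext rather than homotopy: it applies the \emph{unstable Adams spectral sequence} to $\Omega X$, then Miller's Grothendieck spectral sequence (for the coalgebra-primitives functor) to reduce to Ext in the category $\mathcal{U}$ (resp.\ $\mathcal{V}$ at odd primes) of unstable $\mathcal{A}$-comodules. The Milnor--Moore theorem on the coalgebra structure of $\H_*(\Omega X;\F_p)$ forces Miller's spectral sequence to be concentrated in $t=0,1$, and one is left needing to bound $\Ext_{\mathcal{U}}^{s,t}(\Sigma^n\F_p)$ for each $n$, with multiplicity governed by $P(\H_*(\Omega X;\F_p);t)$. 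This is the clean replacement for your proposed geometric filtration.

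Second, for your step (2), the EHP recursion is a \emph{tree}, not a linear chain of $O(\log n)$ stages, and the combinatorics must be controlled precisely. The paper runs the \emph{algebraic} EHP sequence in $\mathcal{U}$ (resp.\ $\mathcal{V}$), which expresses $\Ext_{\mathcal{U}}(\Sigma^n\F_p)$ in terms of $\Ext_{\mathcal{U}}(\Sigma^{n+1}\F_p)$ and $\Ext_{\mathcal{U}}(\Sigma^{2n+1}\F_p)$. Unwinding this recursion down to the stable range produces a sum indexed by ``completely unadmissible'' sequences $(i_1,\dots,i_k)$ with $i_s > 2i_{s+1}$ and $i_k \ge n$; the key lemma is that the generating function for such sequences is bounded by the Poincar\'e series $P(\mathcal{A};t)$ of the Steenrod algebra itself (via the bijection with admissible monomials after shifting indices by $1$). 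This gives $\varpi_{\mathcal{U}}(\Sigma^n\F_p;t) \le P(\mathcal{A};t)\cdot \varpi_{\mathcal{A}}(t)\cdot t^n$, and since $\log P(\mathcal{A};t)$ contributes only $O(\log(n)^2)$ cumulatively, the stable Ext bound of $O(\log(n)^3)$ dominates. Your proposal does not isolate this combinatorial step, and the vague claim that ``the number of stages is $O(\log n)$'' would not by itself close the induction. Note also that the paper feeds in the bound on the cohomology of the Steenrod algebra (stable Ext), not directly the bound on $\pi_*\Ss$; working at the Ext level throughout is what makes the algebraic EHP applicable.
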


An immediate consequence of \Cref{thm:app-main} is that the ranks of the unstable homotopy groups of a sphere grow no faster than $\exp(O(\log(n))^3)$, hence subexponentially.
\Cref{thm:app-main} verifies a conjecture of Henn \cite[Conjecture on p. 237]{Henn-bound},
which asserts that the exponential term in the growth of $\rank_p (\pi_n (X))$ is \emph{equal} to that of $\rank_{\F_p} (\H_i (\Omega X; \F_p))$ for a simply-connected finite-type space $X$.\footnote{Note that \Cref{thm:app-main} only gives an inequality in one direction. The reverse inequality is due to Iriye \cite{Iriye}.}

The key novelty in the proof of \Cref{thm:app-main} is the use of an EHP-type reduction to the stable setting where we now have the subexponential bounds from the body of the paper available.
Previously, the flow of information had been in the other direction with stable bounds typically being proved as stabilizations of unstable bounds proved by other means.

In \Cref{sec:un-tor-bd}, we consider the state of knowledge on torsion bounds for unstable homotopy groups.
While the Cohen--Moore--Neisendorfer theorem provides optimal bounds on the $p$-torsion exponents of unstable homotopy groups of spheres for $p$ odd, much less is known for more general spaces.
The state-of-the-art is Barratt's theorems from \cite{Barratt},
which focus on spaces which are suspensions.
As a simple consequence of Goodwillie calculus, we prove the following bound
which has no such restriction.

\begin{thm}[Burklund--Senger]
  If $X$ is $s$-connected with $s \geq 1$
  and the identity map on $\Sigma^\infty X$ has order $p^m$, then
  \[ \tors_p\left( \pi_nX \right) \leq \frac{(m+1)}{s}n. \]
\end{thm}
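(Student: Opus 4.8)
The plan is to run Adams' strategy of bounding torsion by counting the layers of a convergent tower, but with the Goodwillie tower of the identity functor $\Id\colon \Spaces_* \to \Spaces_*$ in place of a spectral sequence. For a simply-connected $X$ (which is guaranteed by $s\geq 1$) this tower converges, $X \simeq \lim_j P_j X$, with $j$-th layer
\[ D_j X \coloneqq \fib\bigl(P_j X \to P_{j-1} X\bigr) \simeq \Omega^\infty \bigl( (\partial_j \Id) \wedge (\Sigma^\infty X)^{\wedge j} \bigr)_{h\Sigma_j}, \]
where $\partial_j\Id$ is the $j$-th derivative of the identity, a spectrum with $\Sigma_j$-action which is $(1-j)$-connective (its homology is concentrated in degree $1-j$), and $\partial_1\Id \simeq \Ss$. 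Convergence equips $\pi_n X$ with a finite filtration whose associated graded pieces are subquotients of the groups $\pi_n D_j X$, so the theorem reduces to: (a) bounding the number of $j$ for which $\pi_n D_j X$ can be nonzero, and (b) bounding the $p$-torsion exponent of each $\pi_* D_j X$.

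For (a) I would use a connectivity estimate. Since $X$ is $s$-connected, $\Sigma^\infty X$ is $(s+1)$-connective, hence $(\Sigma^\infty X)^{\wedge j}$ is $j(s+1)$-connective and $(\partial_j\Id)\wedge(\Sigma^\infty X)^{\wedge j}$ is $((1-j)+j(s+1)) = (js+1)$-connective; homotopy $\Sigma_j$-orbits preserve connectivity, so $D_j X$ is an $js$-connected space, i.e.\ $\pi_i D_j X = 0$ for $i \leq js$. In particular $\pi_n D_j X = 0$ once $js \geq n$, and, together with Goodwillie's convergence theorem, $\pi_n X \cong \pi_n P_N X$ for any $N$ with $Ns \geq n+1$. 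Hence the only layers contributing to $\pi_n X$ are the $D_j X$ with $1 \leq j < n/s$, of which there are at most $\lceil n/s\rceil - 1 \leq n/s$.

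For (b) I would propagate the torsion hypothesis through the symmetric monoidal structure. By assumption $p^m\cdot\id_{\Sigma^\infty X}=0$, and since smashing is additive in each variable,
\[ p^m \cdot \id_{(\Sigma^\infty X)^{\wedge j}} = \bigl(p^m\cdot\id_{\Sigma^\infty X}\bigr)\wedge \id_{(\Sigma^\infty X)^{\wedge (j-1)}} = 0, \]
so $(\Sigma^\infty X)^{\wedge j}$ is $p^m$-torsion; smashing with $\partial_j\Id$ and applying the exact (hence additive) functor $(-)_{h\Sigma_j}$ keeps the result $p^m$-torsion, and therefore $p^m\cdot\pi_* D_j X = 0$ for every $j\geq 1$.

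Combining (a) and (b), $\pi_n X \cong \pi_n P_N X$ is an iterated extension of at most $n/s$ abelian groups each annihilated by $p^m$, and an iterated extension of $\ell$ such groups is annihilated by $p^{\ell m}$; thus $\tors_p(\pi_n X) \leq m\cdot(\text{number of contributing layers}) \leq \tfrac{m}{s}n \leq \tfrac{m+1}{s}n$, the final (deliberately non-optimal) inequality absorbing the ceiling functions in the layer count. There is no serious obstacle here — which is why it deserves to be called a simple consequence of Goodwillie calculus — but the one point that must be handled with care is the quantitative bookkeeping: one must invoke the correct connectivity of the derivatives $\partial_j\Id$ and of the layers $D_j X$, together with convergence of the Goodwillie tower of the identity on simply-connected spaces, in order to see that it is $n/s$, and not $n$, layers that matter.
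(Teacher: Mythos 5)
Your overall strategy is the paper's: run Adams' torsion-counting argument on the Goodwillie tower of the identity, use connectivity of the layers to show only $j < n/s$ stages contribute, and feed in a torsion bound on each layer. Your connectivity bookkeeping in step (a) is correct and matches the paper. The gap is in step (b), and it is a real one.

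You argue that $p^m \cdot \id_{(\Sigma^\infty X)^{\wedge j}} = 0$ by writing
\[ p^m\cdot\id_{(\Sigma^\infty X)^{\wedge j}} = \bigl(p^m\cdot\id_{\Sigma^\infty X}\bigr)\wedge \id_{(\Sigma^\infty X)^{\wedge (j-1)}}, \]
and then pass this through the additive functor $(-)_{h\Sigma_j}$. But this smash decomposition of the identity is only $\Sigma_1\times\Sigma_{j-1}$-equivariant, not $\Sigma_j$-equivariant, so it establishes $p^m\cdot\id = 0$ only in $\Sp$, not in $\Fun(B\Sigma_j,\Sp)$. The additivity of $(-)_{h\Sigma_j}$ you invoke is additivity of a functor \emph{out of the equivariant category}, so what you actually need is that the identity of $(\Sigma^\infty X)^{\wedge j}$, viewed as a $\Sigma_j$-equivariant endomorphism, has order dividing $p^m$. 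That does not follow from the underlying statement: the map $\pi_0\bigl(\End((\Sigma^\infty X)^{\wedge j})^{h\Sigma_j}\bigr) \to \pi_0\bigl(\End((\Sigma^\infty X)^{\wedge j})\bigr)$ need not be injective, and an element equal to zero downstairs may be a nonzero class of positive homotopy-fixed-point filtration upstairs. Consequently the equivariant order can be strictly larger than $p^m$.

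This is precisely what the paper's Lemma~\ref{lem:norm-order} supplies and what your argument silently skips: there the identity on $(\Sigma^\infty X)^{\otimes j}$ is shown to have order $p^{m+v_p(j)}$ in $\Sp^{\Sigma_j}$, via a Burnside-ring computation that restricts to a $p$-Sylow of $\Sigma_j$ and tracks how each $C_p$-norm can raise the order by a factor of $p$. Summing $m+v_p(j)$ over $0 < j < n/s$ is what produces the $(m+1)n/s$ in the theorem; your argument produces the suspiciously better $mn/s$, and that ``improvement'' is exactly the symptom of the missing equivariance. To repair the proof you need the paper's norm lemma (or some substitute controlling the equivariant torsion of the smash powers), not merely the non-equivariant additivity of the smash product.
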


%
%
%
%
%

\subsection{Conventions}
\label{subec:cnv}\ 

Outside of the brief Subsection \ref{subsec:integral}
we work locally at fixed prime $p$.
In the introduction we have avoided giving precise constants for brevity.
Going forward, our statements will be more precise and three constants recur often enough
that we define them here:

\begin{align}
  \label{eqn:constants} K_1 &= \frac{2}{75 \log(p)^2} & K_2 &= \frac{9 + 4\sqrt{2}}{294 \log(p)^2} & K_3 &= \frac{1}{6\log(p)^2} \\[3mm] 
  \nonumber K_1 &\approx \frac{0.0267}{\log(p)^2} & K_2 &\approx \frac{0.0499}{\log(p)^2} & K_3 &\approx \frac{0.1667}{\log(p)^2}.
\end{align}

\subsection{Acknowledgments}\ 

This paper contains a number of results first announced in the summer of 2020.
We apologize for the long delay and to all the people who were told this would appear ``soon'' along the way.
The author would like to thank
Ishan Levy,
Mike Hopkins,
Haynes Miller,
Piotr Pstr\k{a}gowski,
Doug Ravenel,
Tomer Schlank,
Jonathan Tidor and
Roger Van Peski
for helpful conversations related to this work.

During the course of this work, Andrew Senger was supported by NSF Grant DMS-2103236.

\section{$p$-torsion exponents are sublinear}
\label{sec:torexp}

In this section we prove that the $p$-torsion exponent of the stable stems grows sublinearly in $n$.
In fact, what we prove here is the more precise \Cref{thm:sublinear-tor-precise} which identifies the source of this sublinear upper bound with the $E_\infty$ vanishing curve of the Adams--Novikov sseq. Before proceeding we briefly review what is known and expected of this curve.

\begin{dfn}
  Let $\gbp(n)$ denote the $E_\infty$ vanishing curve of the Adams--Novikov sseq
  \[ \gbp(n) \coloneqq \max \{ s \ |\ \ANE_\infty^{s, k+s} \neq 0 \text{ for some } k \leq n \}. \]
  As we will not consider vanishing curves associated to other ring spectra
  we will sometimes abuse notation, writing $g(n)$ for $\gbp(n)$.
  \tqed
\end{dfn}

A strong form of the nilpotence theorem of \cite{DHS} first worked out by Hopkins and Smith shows that the function $\gbp(n)$ grows sublinearly\footnote{A proof of this result has gone on to appear in \cite{Akhil}}.
Based on heuristic arguments related to the telescope conjecture they made the following conjecture on the leading order asymptotics of this function.

\begin{cnj}[Hopkins--Smith vanishing conjecture] \label{cnj:HS-vc}
  \[ \gbp(n) = n^{\frac{1}{2} + o(1)}. \]
\end{cnj}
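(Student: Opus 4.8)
I should say at the outset that I do not know how to prove this conjecture; it is a genuinely open problem, and what follows is a description of the shape an eventual proof would take and where the difficulty is concentrated. The conjecture amounts to two separate estimates: a lower bound $\gbp(n) \ge n^{1/2 - o(1)}$ and an upper bound $\gbp(n) \le n^{1/2 + o(1)}$. The only unconditional input is the known sublinearity $\gbp(n) = o(n)$ coming from the Hopkins--Smith strong form of the nilpotence theorem, and the gap between ``sublinear'' and ``$n^{1/2+o(1)}$'' is exactly what has to be closed from both sides.

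For the upper bound, the plan is to promote the qualitative nilpotence theorem to a quantitative statement. A nonzero class on $\ANE_\infty^{s,k+s}$ yields some $\alpha \in \pi_k\Ss_{(p)}$ of Adams--Novikov filtration precisely $s$; unwinding the chromatic spectral sequence, $\alpha$ is detected by a class in $\Ext^0$ of a monochromatic layer of height $j \le s$, so it lies in a $v_j$-periodic family. One then wants a bound of the form: a $v_j$-periodic family realized on the sphere cannot occupy a stem much smaller than (a prime-dependent constant times) $j^2$, the square coming from balancing the dimension of a $v_j$-self-map against the number of cells of a minimal type-$j$ complex, the latter controlling how much Adams--Novikov filtration the family can pick up when compressed onto $\Ss$. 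Turning this heuristic into a theorem is the principal obstacle: outside the Smith--Toda range we have essentially no quantitative control over the degrees of $v_j$-self-maps or over the minimal cell structure of type-$j$ complexes, and the backward chromatic argument discards precisely the differential and extension data that the conjecture is about.

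For the lower bound one must exhibit permanent cycles of Adams--Novikov filtration $\gtrsim n^{1/2}$ in stem $n$. This cannot come from the classical Greek-letter or divided-$\beta$ families of \Cref{exm:oka}, which live in a bounded range of filtrations; as the thesis of this paper indicates, the required supply of high-filtration survivors is exactly what a sufficiently violent failure of the telescope conjecture would produce. Concretely, the plan would be to take generalized Moore complexes $V$ of type $j$ with roughly $j$ cells, equip them with $v_j$-self-maps, and transport the resulting periodic homotopy down to $\Ss$; under the expected failure of the telescope conjecture (Conjectures \ref{conj:mrs-phase-1} and \ref{conj:mrs-collapse}) these families survive to $E_\infty$ in Adams--Novikov filtration growing with $j$, and optimizing the choice of $j$ against the stem in which the family appears reproduces the $n^{1/2}$ growth.

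In summary, \Cref{cnj:HS-vc} sits at the crossroads of the telescope conjecture: the lower bound requires a mechanism for manufacturing large families of high-filtration permanent cycles, which is precisely the open Challenge raised in the introduction, while the sharp upper bound requires a quantitative nilpotence theorem that is not presently available. Any real progress on either half would have to come from advances on the telescope conjecture itself, and so I regard the conjecture as out of reach by current methods rather than as something with a plausible short proof.
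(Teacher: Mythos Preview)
You have correctly identified that this is a conjecture, not a theorem: the paper does not prove \Cref{cnj:HS-vc} and never claims to. It is stated as the Hopkins--Smith vanishing conjecture, attributed to heuristic arguments related to the telescope conjecture, and left open. So there is no ``paper's own proof'' to compare your proposal against, and your opening disclaimer is exactly right.

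Your discussion of the shape of the problem is broadly consonant with the paper's framing. The paper's \Cref{rmk:partial-partial}-adjacent remark (the one immediately following the conjecture) makes the same link you do between the upper bound and a quantitative Nishida nilpotence theorem, and \Cref{sec:telescope} develops the connection between high-filtration survivors and the failure of the telescope conjecture that you invoke for the lower bound. One small correction of emphasis: your lower-bound sketch suggests that Conjectures \ref{conj:mrs-phase-1} and \ref{conj:mrs-collapse} would yield permanent cycles of Adams--Novikov filtration growing like $n^{1/2}$, but the paper only extracts from those conjectures a lower bound on the \emph{number} of classes (\Cref{thm:tel-to-sphere}), not on their Adams--Novikov filtration; the mechanism in \Cref{lem:tel-makes-classes} tracks Adams filtration rather than Adams--Novikov filtration, and the paper does not claim that the MRS conjectures by themselves imply the lower half of \Cref{cnj:HS-vc}.
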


\begin{rmk}
  At this point we pause to point out that
  this conjecture would imply a strong, quantitative form of the Nishida nilpotence theorem:
  for every $\alpha$ in degree $n>0$ 
  \[ \alpha^{n^{1 + o(1)}} = 0. \]
  Conversely, one approach to proving \Cref{cnj:HS-vc} is through quantitative nilpotence theorems.
  \tqed
\end{rmk}

It is in terms of the sublinear function $\gbp(n)$ that we bound the $p$-torsion exponent of the stable stems.

\begin{thm} \label{thm:sublinear-tor-precise}
  The $p$-torsion exponent of the stable stems is bounded by the following linear combination of $\gbp(n)$ and a logarithmic error term
  \begin{align*}
    \tors_2\left( \pi_n\Ss \right) &\leq \frac{5}{4} \gbp(n) + \log_2(n) + 2 & &\text{ and } \\
    \tors_p\left( \pi_n\Ss \right) &\leq \frac{p}{2(p-1)^2} \gbp(n) + \log_p(n) + 1 & &\text{ for } p \text{ odd}. 
  \end{align*}                                                                                          
  In particular, since $\gbp(n)$ grows sublinearly
  the $p$-torsion exponent of the stable stems grows sublinearly as well.
\end{thm}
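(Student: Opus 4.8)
The plan is to follow Adams' original strategy, but with the Adams--Novikov spectral sequence (ANSS) in place of the Adams spectral sequence. Recall that the ANSS provides a filtration on $\pi_n\Ss_{(p)}$ whose associated graded consists of subquotients of the $E_\infty$-page in total degree $n$. The filtration has length at most $\gbp(n) + 1$, since $\ANE_\infty^{s,n+s} = 0$ once $s > \gbp(n)$. If the $E_2$-page (hence every $E_r$-page) were an $\F_p$-vector space, we could immediately conclude $\tors_p(\pi_n\Ss) \leq \gbp(n) + 1$, which is sublinear by the strong nilpotence theorem. The obstruction is that the ANSS $E_2$-page, being $\H^*(\Mfg; \omega^{\otimes *})$, genuinely has higher $p$-torsion (e.g.\ the image of $J$ already produces $\Z/p^{k+1}$'s in filtration $1$). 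So the real content is controlling how much torsion each filtration quotient can itself contribute.

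First I would prove the key input, \Cref{prop:tor-bound}: a bound on the $p$-torsion exponent of the groups $\H^s(\Mfg; \omega^{\otimes u})$ in terms of $s$. The expected shape is that $\tors_p$ of the ANSS $E_2$-page in filtration $s$ grows like a constant times $s$ (roughly $s/2$ at $p=2$, and $\tfrac{p}{2(p-1)^2} \cdot (\text{something linear in } s)$ at odd primes), reflecting the fact that the only mechanism for building up $p$-power torsion in the cohomology of $\Mfg$ is iterated extensions along the $v_0 = p$ direction, and the height filtration / chromatic structure limits how many such extensions can stack up before a class is forced to be $v_0$-divisible or to die. Concretely I would stratify $\Mfg$ by height and use that on each open stratum $\mathcal{M}^{\leq n}_{\mathrm{fg}}$ the relevant cohomology is built from Morava stabilizer cohomology, which has bounded $p$-torsion, then reassemble via the associated descent/Bockstein spectral sequence, counting how the torsion exponent can grow as we pass between strata. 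This is the step I expect to be the main obstacle: making the height-by-height torsion accounting precise enough to extract the explicit constants $\tfrac54$ and $\tfrac{p}{2(p-1)^2}$, rather than merely a linear bound with an unspecified slope.

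Next I would combine this with the filtration-length bound. If a class $x \in \pi_n\Ss_{(p)}$ has ANSS filtration $s_0$, then $p^{a(s_0)} x$ lies in filtration $> s_0$, where $a(s_0)$ is the torsion exponent of the subquotient of $\ANE_\infty$ in filtration $s_0$ contributing to stem $n$; iterating over the at most $\gbp(n)+1$ nonzero filtrations and summing the per-filtration torsion exponents gives
\[
  \tors_p(\pi_n\Ss) \;\leq\; \sum_{s=0}^{\gbp(n)} a(s).
\]
Since each $a(s)$ is bounded by a fixed constant (from \Cref{prop:tor-bound}, the subquotients of $E_\infty$ being subquotients of $E_2$ have no more torsion than the $E_2$-page — one must check that passing to subquotients under the differentials does not increase the torsion exponent, which follows from the snake lemma applied to $\ker$ and $\coker$ of $d_r$), the sum is bounded by a constant times $\gbp(n)$. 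The logarithmic error term $\log_p(n)$ should come from the bottom filtrations, where the naive per-filtration constant is wasteful and one instead imports the precise image-of-$J$ description, or alternatively from the edge of the vanishing region; I would isolate it by treating filtrations $0$ and $1$ (the image of $J$ range) separately using the explicit $v_p(n+1)$ computation, which contributes exactly the $\log_p(n)$.

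Finally, the concluding sentence — that the $p$-torsion exponent grows sublinearly — is then immediate: $\gbp(n)$ is sublinear by the strong form of the nilpotence theorem (\Cref{cnj:HS-vc} is not needed, only that $\gbp(n) = o(n)$, which is a theorem), and $\log_p(n) = o(n)$, so the right-hand side of \Cref{thm:sublinear-tor-precise} is $o(n)$. The only care needed is that all the implicit constants in \Cref{prop:tor-bound} are genuinely independent of $u$ (equivalently of the stem $n$), depending only on the filtration $s$; this uniformity is what makes the term-by-term sum collapse to a clean multiple of $\gbp(n)$.
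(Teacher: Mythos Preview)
Your overall architecture---bound the per-filtration torsion on the ANSS $E_2$-page, then sum over filtrations up to $\gbp(n)$---is exactly what the paper does. But there is a genuine gap in the key step, and it stems from a misidentification of \emph{which} variable controls the torsion.

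The paper's \Cref{prop:tor-bound} does not give a bound depending on the filtration $s$; it gives a bound depending only on the \emph{weight} $u$. Specifically, it shows that $(\ell^u - 1) \cdot \ANE_2^{s,2u} = 0$ for every $\ell \in \Z_{(p)}^\times$, which translates (\Cref{rmk:tor-bound-simple}) to $\tors_p(\ANE_2^{s,2u}) \leq 1 + |u|_p$ when $(p-1) \mid u$ and $0$ otherwise. The proof is not a height-stratification argument: it is the observation that the $\G_m$-action on $\Mfg^{\omega = 1}$, restricted to the constant subgroup $\Z_{(p)}^\times \subset \G_m$, is trivializable (because $[\ell] \colon \G \to \G$ is a natural automorphism acting as $\ell$ on the Lie algebra). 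Equivalently, Adams operations $\psi^\ell$ act on $\ANE_2^{*,2u}$ both trivially and by $\ell^u$, forcing $(\ell^u - 1)$ to annihilate.

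Your height-stratification sketch is a different route and is left vague at precisely the point where it would need to be sharp; more seriously, your text is internally inconsistent about its output. You first say the per-filtration torsion ``grows like a constant times $s$'' (which would sum to $\Theta(\gbp(n)^2)$, not $\gbp(n)$), and later assert ``each $a(s)$ is bounded by a fixed constant'' (already false on the $1$-line, where image-of-$J$ classes have torsion exponent $\sim \log_p(n)$). Neither matches what actually holds.

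With the correct $u$-dependent bound in hand, the summation is also different from what you propose. For stem $n$ and filtration $s$ the weight is $u = (n+s)/2$, so as $s$ runs from $1$ to $\gbp(n)$ the contributing weights (those with $(p-1) \mid u$) run over an interval of integers of length about $\gbp(n)/(2p-2)$, each contributing $1 + |i|_p$. The elementary estimate $\sum_{i=a+1}^b (1 + |i|_p) \leq \tfrac{p}{p-1}(b-a) + \log_p(b)$ then gives the main term $\tfrac{p}{2(p-1)^2}\gbp(n)$, and the $\log_p(n)$ error arises from the single largest $|i|_p$ in the range---not from a separate treatment of filtrations $0$ and $1$.
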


As discussed in the introduction, we prove this theorem by bounding the torsion exponent of the Adams--Novikov $E_2$-page and then using $\gbp(n)$ to bound the number of terms on the $E_\infty$-page contributing to each stable stem.

\begin{prop} \label{prop:tor-bound}
  $ (\ell^u - 1) \cdot \ANE_2^{s,2u} = 0 $ for any $\ell \in \Z_{(p)}^\times$.
\end{prop}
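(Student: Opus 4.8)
The plan is to exploit the fact that the Adams--Novikov $E_2$-page is the cohomology of the moduli stack of formal groups with coefficients in the line bundle $\omega^{\otimes u}$, i.e. $\ANE_2^{s,2u} \cong \H^s(\Mfg;\,\omega^{\otimes u})$. The key point is that a unit $\ell \in \Z_{(p)}^\times$ acts naturally on the whole picture: multiplication by $\ell$ is an automorphism of the formal multiplicative group, or more relevantly, the assignment $x \mapsto \ell^{-1}\cdot(\ell x)$ under the $[\ell]$-series gives a way to see that $\ell$ acts on $\omega$ by the scalar $\ell$. Concretely, $\omega$ is the line bundle whose fiber at a formal group $F$ is the cotangent space at the identity, $\omega_F = \mathfrak{m}/\mathfrak{m}^2$; rescaling a coordinate $x \mapsto \ell x$ rescales the generator $dx$ of $\omega_F$ by $\ell$, hence rescales a section of $\omega^{\otimes u}$ by $\ell^u$.

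The first step is to make this into an honest automorphism. I would consider the grading/$\G_m$-action on $\Mfg$ coming from rescaling coordinates on formal groups: there is an action of $\G_m$ (or even just the monoid $\mathbb{A}^1$, but the invertible part suffices) on $\Mfg$, and under this action $\omega$ is the weight-one character. Equivalently, $\Mfg$ is the quotient of $\Spec L$ (with $L$ the Lazard ring) by a group scheme whose associated graded piece includes the $\G_m$ of strict-isomorphism-trivializations; pulling back to $\Z_{(p)}$-points of $\G_m$, i.e. to $\ell \in \Z_{(p)}^\times$, gives an automorphism $\phi_\ell$ of $\Mfg$ together with an isomorphism $\phi_\ell^* \omega^{\otimes u} \cong \omega^{\otimes u}$ which on global sections is multiplication by $\ell^u$. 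But $\phi_\ell$, being an automorphism of the stack $\Mfg$ covering the identity on the coarse space (it is an ``inner'' automorphism implemented by an element of the automorphism groups of the formal groups themselves — the isomorphism $x \mapsto \ell x$ from $F$ to its pushforward), acts as the identity on cohomology $\H^s(\Mfg;\,-)$. Therefore the induced map on $\H^s(\Mfg;\,\omega^{\otimes u})$ is simultaneously the identity and multiplication by $\ell^u$, so $(\ell^u - 1)$ annihilates the group.

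The step I expect to be the main obstacle — or at least the one requiring care — is justifying that $\phi_\ell$ really does act trivially on cohomology. The cleanest route is to observe that $\phi_\ell$ is naturally isomorphic to the identity functor on the groupoid of formal groups: for each formal group $(R, F)$ the coordinate change $x \mapsto \ell x$ is an isomorphism from $F$ to $\phi_\ell(F)$, and these isomorphisms are natural in $R$, so they assemble into a natural transformation $\id \Rightarrow \phi_\ell$ of the functor $\mathrm{CAlg} \to \mathrm{Gpd}$ presenting $\Mfg$. A natural isomorphism of presenting groupoid-valued functors induces the identity on cohomology (this is just the statement that $2$-isomorphic maps of stacks induce the same map on cohomology, applied to the stack $\Mfg$). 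One must check that this natural transformation is compatible with the identification $\phi_\ell^*\omega \cong \omega$ up to the scalar $\ell$; this is the computation ``$x \mapsto \ell x$ sends $dx \mapsto \ell\, dx$,'' which is where the factor of $\ell^u$ — rather than, say, $\ell^{2u}$ or $1$ — enters, and getting this bookkeeping exactly right (including the normalization of $\omega$ versus $\omega^{\otimes 2} = \pi_2$ of the relevant complex-oriented ring) is the only delicate part. With that in hand the proposition is immediate, and in particular taking $\ell = 1 + p$ (so $v_p(\ell^u - 1) = v_p(u) + 1$ for $p$ odd) recovers the quantitative torsion statement on $\ANE_2^{s,2u}$ that feeds into \Cref{thm:sublinear-tor-precise}.
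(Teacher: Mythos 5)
Your framework — identifying $\ANE_2^{s,2u}$ with $\H^s(\Mfg;\,\omega^{\otimes u})$ and looking for a natural automorphism that realizes the weight action so that it can also be trivialized — is the same as the paper's, and you do name the essential ingredient (the $[\ell]$-series) at the start. But the argument you actually run has a gap at exactly the step you flag as the delicate one.

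The issue is that you trivialize the wrong action with the wrong morphism. The coordinate change $x \mapsto \ell x$ is not an automorphism of the formal group $F$; it is an isomorphism of formal group \emph{laws} $F \to F^\ell$ covering the \emph{identity} of the underlying formal group. So the natural transformation $\id \Rightarrow \phi_\ell$ that it assembles into on $\Mfg$ is just the tautological statement that the rescaling $\G_m$-action on $\Mfg$ (as opposed to on $\Mfg^{\omega=1}$, or equivalently on $\Spec L$) is trivial. That statement holds for the entire group scheme $\G_m$, carries no $p$-local content, and produces no constraint on the graded pieces $\H^s(\Mfg;\,\omega^{\otimes u})$ — one can see this must be so by testing the same argument on a split line bundle of weight $u$ over $\Spec \Z_{(p)}$, where no torsion appears. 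Concretely, your ``the induced map is simultaneously the identity and multiplication by $\ell^u$'' compares two different endomorphisms of cohomology, built from two different identifications $\phi_\ell^*\omega^{\otimes u} \cong \omega^{\otimes u}$ (one via the natural isomorphism, one via the equivariant structure); their agreement is precisely what has to be proved, not a bookkeeping check.

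What does the work — and what the paper uses — is that over a $\Z_{(p)}$-algebra every formal group is canonically a formal $\Z_{(p)}$-module, so there is a genuine natural \emph{endomorphism} $[\ell]\colon \G \to \G$ of the formal group itself (not a coordinate change), and $[\ell]$ acts on $\omega_\G$ by the scalar $\ell$. These $[\ell]$'s supply a natural isomorphism from the $\ell$-action on $\Mfg^{\omega=1}$ (the $\G_m$-torsor where the grading lives) to the identity, which is the actual content; equivalently, they assemble into a $\mathrm{B}\Z_{(p)}^\times$-action on $\Mfg$ making $\Mfg \to B\G_m$ equivariant, whence the $\G_m$-action on $\Mfg^{\omega=1}$ factors through $\G_m/\Z_{(p)}^\times$. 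This is where the restriction to $\ell \in \Z_{(p)}^\times$ genuinely enters. Replacing $x \mapsto \ell x$ by $[\ell]$ throughout your ``cleanest route'' paragraph repairs the argument and brings it in line with the paper's proof.
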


In order to make \Cref{prop:tor-bound} more usable we restate it in the following explicit form.

\begin{cor} \label{rmk:tor-bound-simple}
  \begin{align*}
    \tors_2\left( \ANE_2^{s,2u} \right)
    &\leq
      \begin{cases}
        1 & u \text{ odd},\\
        2 + |u|_2 & u \text{ even}
      \end{cases} & &\text{ and } \\
    \tors_p\left( \ANE_2^{s,2u} \right)
    &\leq
      \begin{cases}
        0 & u \not\equiv 0 \pmod{p-1}, \\        
        1 + |u|_p & u \equiv 0 \pmod{p-1}
      \end{cases} & &\text{ for } p \text{ odd}.
  \end{align*}  
\end{cor}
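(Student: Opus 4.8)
The plan is to deduce \Cref{rmk:tor-bound-simple} from \Cref{prop:tor-bound} by optimizing the choice of the unit $\ell$. First I would record the elementary mechanism turning a relation ``$(\ell^u-1)\cdot M = 0$'' into a bound on $\tors_p(M)$: since $\ANE_2^{s,2u}$ is a $\Z_{(p)}$-module and every integer prime to $p$ lies in $\Z_{(p)}^\times$, \Cref{prop:tor-bound} applies with any such integer $\ell$; writing $\ell^u - 1 = p^{a} m$ with $p \nmid m$, the factor $m$ is a unit in $\Z_{(p)}$, so $p^{a}\cdot\ANE_2^{s,2u}=0$. This gives
\[ \tors_p\left( \ANE_2^{s,2u} \right) \;\leq\; \min_{\substack{\ell\in\Z\\ p\,\nmid\,\ell}} v_p\!\left(\ell^u-1\right), \]
and the rest is to evaluate this minimum, which is a routine exercise in $p$-adic valuations.

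For $p$ odd I would take $\ell$ to be a primitive root modulo $p^2$ (hence modulo every $p^k$). If $u\not\equiv 0\pmod{p-1}$ then $\ell^u\not\equiv 1\pmod p$, the valuation is $0$, and \Cref{prop:tor-bound} forces $\ANE_2^{s,2u}=0$ --- which is the familiar sparseness of the Adams--Novikov $E_2$-page. If $(p-1)\mid u$, write $u=(p-1)u'$; since $p\mid\ell^{p-1}-1$, the lifting-the-exponent lemma gives $v_p(\ell^u-1)=v_p(\ell^{p-1}-1)+v_p(u')$, and $v_p(u')=v_p(u)$ because $p\nmid p-1$. The choice of primitive root makes $v_p(\ell^{p-1}-1)=1$, which is also the minimum possible value, so $\tors_p\leq 1+|u|_p$.

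For $p=2$ I would use the $p=2$ form of lifting the exponent. When $u$ is odd, taking $\ell=-1$ gives $v_2(\ell^u-1)=v_2(-2)=1$, minimal since $\ell^u-1$ is always even, so $\tors_2\leq 1$. When $u$ is even, for any odd $\ell$ one has $v_2(\ell^u-1)=v_2(\ell-1)+v_2(\ell+1)+v_2(u)-1$; of the two consecutive even numbers $\ell\pm 1$ one has valuation exactly $1$ and the other valuation at least $2$, so $v_2(\ell-1)+v_2(\ell+1)\geq 3$ with equality for, say, $\ell\equiv 3\pmod 8$, yielding $\tors_2\leq 2+|u|_2$.

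I do not expect any serious obstacle here: all the real content sits in \Cref{prop:tor-bound}, which we take as given. Within the deduction the only places needing care are the correct form of the lifting-the-exponent lemma at the prime $2$ and the verification that the claimed minima are attained (existence of a primitive root mod $p^2$ for odd $p$, and of an odd integer $\equiv 3\pmod 8$). If one wished to avoid citing lifting the exponent, the valuation identities can instead be proved directly by induction on $v_p(u)$, but this only lengthens the argument.
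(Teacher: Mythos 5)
Your deduction is correct, and since the paper states the corollary without any argument (treating it as an explicit restatement of \Cref{prop:tor-bound}), the route you take---minimizing $v_p(\ell^u-1)$ over $\ell$ via primitive roots and the lifting-the-exponent identities, with the usual caveats at $p=2$---is exactly the intended one. The details all check out, including the edge observations about $\ell\equiv 3\pmod 8$, the choice $\ell=-1$ for odd $u$, and the fact that $p\nmid(p-1)$ so $v_p(u')=v_p(u)$.
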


\begin{rmk}
  The classes on the $1$-line of the Adams--Novikov $E_2$-term saturate the bound from \Cref{rmk:tor-bound-simple} implying that it is sharp.
  \tqed
\end{rmk}

\begin{rec}
  Quillen's work on complex cobordism allows us describe the Adams--Novikov $E_2$-page as the cohomology of the moduli of formal groups.\footnote{Note that as we are working $p$-locally throughout this paper $\Mfg$ lives over $\Spec(\Z_{(p)})$ and classifies formal groups over $p$-local rings.} Specifically, we have a line bundle $\omega$ on the moduli of formal groups, $\Mfg$, given by sending a formal group $\G$ to the inverse of its Lie algebra and an isomorphism
  \[ \ANE_2^{s,2u} \cong \mathrm{H}^{s}( \Mfg;\, \omega^{\otimes u} ). \]
  \tqed
\end{rec}

For our purposes it will be convenient to reformulate this isomorphism slightly.
If we pass to the stack $\Mfg^{\omega=1}$ of formal groups equipped with a trivialization of their Lie algebra which sits as a $\G_m$-torsor over $\Mfg$, then we have isomorphisms
\[ \ANE_2^{s,*} \cong \mathrm{H}^{s}( \Mfg^{\omega=1} ) \cong \oplus_u \mathrm{H}^{s}( \Mfg;\, \omega^{\otimes u} ) \]
where the $u$-grading on the cohomology is recorded by the $\mathbb{G}_m$-action coming from rescaling the trivialization. In order words, $a \in \G_m(X)$ acts on an element $x \in \mathrm{H}^{s}( (\Mfg)_X;\, \omega^{\otimes u} )$ by sending it to $a^{u}x$. 
Specializing this formula to $\G_m(\Z_{(p)})$
we see that in order to prove \Cref{prop:tor-bound} it will suffice to
give a trivialization of the action of $\mathbb{G}_m(\Z_{(p)})$ on $\Mfg^{\omega=1}$.

\begin{prop}
  The restriction of the $\mathbb{G}_m$ action on $\Mfg^{\omega=1}$ to the discrete group
  $\mathbb{G}_m(\Z_{(p)})$ is trivializable.
\end{prop}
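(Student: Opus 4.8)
The plan is to produce an explicit, natural-in-$X$ trivialization of the $\mathbb{G}_m(\Z_{(p)})$-action on the groupoid of formal groups with trivialized Lie algebra. Concretely, for a $p$-local ring $R$ and $a \in \mathbb{G}_m(R) = R^\times$, the action sends a formal group law $F$ (the trivialization of the Lie algebra amounting to a choice of coordinate) to the conjugate formal group law $a^{-1} F(ax, ay)$. To trivialize this action I need, for each $a$, a natural isomorphism from the identity functor on $\Mfg^{\omega=1}$ to ``conjugate by $a$'', i.e. a natural choice of power series $\phi_{a,F}(x) = ax + \cdots$ intertwining $F$ with $a^{-1}F(a-,a-)$, and these must satisfy the cocycle condition $\phi_{ab,F} = \phi_{a, (b\text{-conjugate of }F)} \circ \phi_{b,F}$ so that they assemble into an action of $B\mathbb{G}_m(\Z_{(p)})$ — equivalently a trivialization.

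The key observation is that such a system of isomorphisms is exactly the data of a natural \emph{$\mathbb{G}_m(\Z_{(p)})$-equivariant coordinate} on the universal formal group, which is the same as a natural logarithm-type normalization that is compatible with scaling. Here is where I would use the fact that we have restricted to the \emph{discrete} group of units rather than all of $\mathbb{G}_m$: the obstruction to globally rigidifying a coordinate lives in the higher cohomology of $\Mfg$ with coefficients in the relevant sheaf of automorphisms (power series of the form $x + O(x^2)$), and restricting the structure group to a discrete group kills the scheme-theoretic part of this obstruction. More concretely, I expect the cleanest route is: first, base-change to a suitable cover or work on the cover $\Spec(L)$ of $\Mfg$ by Lazard's ring where a universal coordinate exists tautologically and carries a universal $\mathbb{G}_m$-action; second, observe that the $\mathbb{G}_m$-action on $L$ is the standard grading, under which $\mathbb{G}_m(\Z_{(p)}) = \Z_{(p)}^\times$ acts by rescaling the degree-$2n$ piece by $\ell^n$; third, descend, checking that the descent datum is $\mathbb{G}_m(\Z_{(p)})$-equivariant so that the trivialization passes to the stack.

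The genuinely load-bearing step — and the one I would expect to be the main obstacle — is verifying that the cocycle condition holds and that it is genuinely natural in $X$, i.e. that the choices one makes on a cover actually glue to give a trivialization on the stack $\Mfg^{\omega=1}$ itself rather than merely an abstract vanishing of cohomology. The subtlety is that a trivialization of the action is the same as a splitting of a certain gerbe/torsor, and one must exhibit the splitting, not just argue its existence. I would handle this by working with a functorial model: take the logarithm of the formal group after rationalization (available universally over $\Mfg \times \Spec \Q$) as a guide, but since we need something integral, instead use Araki/Hazewinkel-type generators of the classifying ring which are manifestly $\mathbb{G}_m$-graded and for which the discrete subgroup $\Z_{(p)}^\times$ acts through a scaling that visibly factors through the free action on a polynomial generator in each weight. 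The upshot is that the $\mathbb{G}_m(\Z_{(p)})$-action on $\Mfg^{\omega=1}$ is identified with the action of $\Z_{(p)}^\times$ on a (graded) space where it acts freely on a contractible factor, hence the action is trivializable; feeding this into the cohomological consequence $\ANE_2^{s,2u} \cong \mathrm{H}^s(\Mfg^{\omega=1})$ with its $\mathbb{G}_m$-weight decomposition recovers $(\ell^u - 1)\cdot \ANE_2^{s,2u} = 0$ as in \Cref{prop:tor-bound}.
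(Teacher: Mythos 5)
Your setup is correct but you never actually produce the trivialization, which is the entire content of the proposition. You correctly reduce to exhibiting, for each $\ell \in \Z_{(p)}^\times$, a natural strict isomorphism $\phi_{\ell,F}$ from $F$ to its $\ell$-rescaled conjugate, together with the cocycle condition. But you then defer to obstruction theory, Lazard-ring covers, logarithms, and Hazewinkel-type generators without any of these routes landing the construction, and the closing sentence about ``acting freely on a contractible factor'' does not imply trivializability (for a nontrivial group a free action is never equivalent to a trivial one, so this cannot be the mechanism). You also suggest producing a $\Z_{(p)}^\times$-equivariant coordinate on the universal formal group, i.e. a $\Z_{(p)}^\times$-fixed section of $\Mfg^{\omega=1} \to \Mfg$, but no such section exists: the action rescales every trivialization of the Lie algebra, so only $\ell = 1$ can fix one. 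And logarithms are only available rationally, so cannot furnish an integral trivialization.

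The missing idea is elementary. Because every ring in sight is a $\Z_{(p)}$-algebra, each formal group $\G$ carries a canonical formal $\Z_{(p)}$-module structure, and for $\ell \in \Z_{(p)}^\times$ the multiplication-by-$\ell$ series $[\ell]_F(x) = \ell x + O(x^2)$ is a natural automorphism of $\G$ with derivative $\ell$. Then $\phi_\ell(x) \coloneqq \ell^{-1}[\ell]_F(x) = x + O(x^2)$ is a strict isomorphism from $F$ to $\ell^{-1}F(\ell x, \ell y)$, and the relation $[\ell_1]_F \circ [\ell_2]_F = [\ell_1 \ell_2]_F$ gives the cocycle condition automatically. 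This is precisely the datum you asked for and requires no cover, no descent, and no appeal to vanishing of obstruction groups. The paper packages the same construction geometrically: the automorphisms $[\ell]$ give a homomorphism $\Z_{(p)}^\times \to \Aut(\id_{\Mfg})$, i.e. a $\mathrm{B}\Z_{(p)}^\times$-action on $\Mfg$, the Lie-algebra map $\Mfg \to B\G_m$ is equivariant (since $[\ell]$ acts by $\ell$ on the Lie algebra), and the resulting pullback square shows the $\G_m$-action on $\Mfg^{\omega=1}$ is restricted from a $\G_m/\Z_{(p)}^\times$-action, so $\Z_{(p)}^\times$ acts trivially. The restriction to the discrete group $\G_m(\Z_{(p)})$ rather than all of $\G_m$ is used exactly here: $[\ell]$ only makes sense for $\ell$ in the ground ring $\Z_{(p)}$, not for an arbitrary unit of an arbitrary $R$.
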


We thank Piotr Pstragowski for suggesting that the following proof can be simplified by working geometrically.

\begin{proof}
  Since we are working with $p$-local rings
  every formal group is automatically a formal $\Z_{(p)}$-module.
  This provides us with natural automorphisms $[\ell] : \G \to \G$ 
  for each $\ell \in \Z_{(p)}^\times$ and formal group $\G$.
  Taken over all choices of $\ell \in \Z_{(p)}^\times$, these natural automorphisms assemble into
  an action of $\mathrm{B}\Z_{(p)}^\times$ on $\Mfg$.
  Multiplication by $\ell$ on $\G$ acts as multiplication by $\ell$ on the Lie algebra of $\G$,
  therefore we can upgrade the map
  \[ \Mfg \to B\G_m \]
  sending a formal group to its Lie algebra to a $\mathrm{B}\Z_{(p)}^\times$-equivariant map
  where $\ell \in \Z_{(p)}^\times$ acts on a line bundle $\mathcal{L}$ as multiplication by $\ell$.
  Taking quotients by these actions we obtain a pullback square
  \begin{center}
    \begin{tikzcd}
      \Mfg \ar[r] \ar[d] \pullback & \Mfg/\mathrm{B}\Z_{(p)}^\times \ar[d] \\
      B\G_{m} \ar[r] & B\left(\G_{m}/\Z_{(p)}^\times \right)
    \end{tikzcd}
  \end{center}
  which witnesses that the $\G_m$ action on $\Mfg^{\omega=1}$ is restricted from a $\G_{m}/\Z_{(p)}^\times$ action.
  Consequently, $\G_m(\Z_{(p)})$ acts trivially on $\Mfg^{\omega=1}$ as desired.
  \qedhere

\end{proof}

With the bound on the $p$-torsion exponent in the Adams--Novikov $E_2$-page proved we turn to proving \Cref{thm:sublinear-tor-precise}.

\begin{proof}[Proof (of \Cref{thm:sublinear-tor-precise}).]  
  Since $\ANE_\infty^{s,2u}$ is a subquotient of $\ANE_2^{s,2u}$
  we can obtain a bound on the $p$-torsion exponent of $\pi_n\Ss$
  by amalgamating the bounds from \Cref{prop:tor-bound} as $s$ ranges between $1$\footnote{We start with 1 because the Adams--Novikov 0-line for the sphere is empty outside $t=0$.} and $g(n)$.
  Using the explicit form of these bounds from \Cref{rmk:tor-bound-simple} we obtain
  \begin{align*}
    \tors_2\left( \pi_n\Ss \right)
    &\leq \sum_{s=1}^{g(n)} \tors_2\left( \ANE_2^{s,n+s} \right)
      \leq \sum_{i = \left\lfloor \frac{n}{2} \right\rfloor + 1}^{\frac{n+g(n)}{2}}
      \left( 1 + \left| i \right|_2 + \begin{cases} 1 & 2|i \\ 0 & 2\!\!\!\not| i \end{cases} \right)
                                                      & & \text{and} \\
    \tors_p\left( \pi_n\Ss \right)
    &\leq \sum_{s=1}^{g(n)} \tors_p\left( \ANE_2^{s,n+s} \right)
      \leq \sum_{i = \left\lfloor \frac{n}{2p-2} \right\rfloor + 1}^{\frac{n+g(n)}{2p-2}} \left( 1 + \left| i \right|_p \right) & & \text{for } p \text{ odd}. \\
  \end{align*}                                                                                                                                   
  In order to complete the proof we need to simplify the final term.
  We do this separately at $p=2$ and at odd primes.
  Starting with $p=2$, we use \Cref{lem:some-counting} to obtain
  \begin{align*}
    \sum_{i = \left\lfloor \frac{n}{2} \right\rfloor + 1}^{\frac{n+g(n)}{2}}
    \left( 1 + \left| i \right|_2 +
    \begin{cases}
      1 & 2|i \\
      0 & 2\!\!\!\not| i
    \end{cases} \right)
        &\leq \left( 2\left( \frac{n+g(n)}{2} - \left\lfloor \frac{n}{2} \right\rfloor \right)
          + \log_2\left( \frac{n+g(n)}{2} \right) \right) \\
        &\quad+ \left| \left\{ \text{even } i \in \left[ \left\lfloor \frac{n}{2} \right\rfloor + 1,\ \frac{n+g(n)}{2} \right] \right\} \right|. 
  \end{align*}
  Using the fact that $g(n) \leq n$ (which is already visible on the $E_2$-page)
  we can further simplify
  \begin{align*}
    \tors_2\left( \pi_n\Ss \right)
    &\leq  2\left( \frac{n+g(n)}{2} - \left\lfloor \frac{n}{2} \right\rfloor \right)
      + \log_2\left( \frac{n+g(n)}{2} \right) 
      + \left\lceil \frac{1}{2} \left( \frac{n+g(n)}{2} - \left\lfloor \frac{n}{2} \right\rfloor \right) \right\rceil \\
    &\leq  g(n) + 1 
      + \log_2\left( n \right) 
      + \left( \frac{g(n)}{4}  + \frac{n}{4} - \frac{1}{2} \left\lfloor \frac{n}{2} \right\rfloor \right) + \frac{3}{4} \\
    &\leq  \frac{5}{4} g(n) + \log_2\left( n \right) + 2.
  \end{align*}
  At odd primes we follow the same procedure: using \Cref{lem:some-counting} then simplifying.
  \begin{align*}
    \sum_{i = \left\lfloor \frac{n}{2p-2} \right\rfloor + 1}^{\frac{n+g(n)}{2p-2}}
    \left( 1 + \left| i \right|_p \right)
    &\leq \frac{p}{p-1} \left( \frac{n+g(n)}{2p-2} - \left\lfloor \frac{n}{2p-2} \right\rfloor \right)
      + \log_p\left(\frac{n+g(n)}{2p-2}  \right) \\
    &\leq \frac{p}{p-1} \left( \frac{g(n)}{2p-2} + 1 \right) + \log_p\left(\frac{n}{p-1} \right) \\
    &\leq \frac{p}{2(p-1)^2} g(n) + \log_p(n) + 1. 
  \end{align*}
\end{proof}

\begin{lem} \label{lem:some-counting}
  For $0 \leq a < b$ we have
  \[ \sum_{i=a+1}^{b} (1 + |i|_p) \leq \frac{p}{p-1}(b-a) + \log_p(b). \]
\end{lem}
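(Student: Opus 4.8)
The plan is to split $\sum_{i=a+1}^{b}(1+|i|_p)$ into the trivial piece $\sum_{i=a+1}^{b} 1 = b-a$ and the valuation piece $\sum_{i=a+1}^{b}|i|_p$, and then to prove the sharper estimate
\[ \sum_{i=a+1}^{b} |i|_p \leq \frac{b-a}{p-1} + \log_p(b). \]
Since $(b-a) + \frac{b-a}{p-1} = \frac{p}{p-1}(b-a)$, adding back the trivial piece gives the lemma at once, so the whole content is in bounding the valuation piece.

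To bound the valuation piece I would recognize it, via Legendre's formula, as a difference of floor sums. Writing $|i|_p = v_p(i)$ and telescoping $b! = a!\cdot\prod_{i=a+1}^{b} i$, we get $\sum_{i=a+1}^{b} v_p(i) = v_p(b!) - v_p(a!) = \sum_{k\geq 1}\bigl(\lfloor b/p^k\rfloor - \lfloor a/p^k\rfloor\bigr)$. Only the indices $k$ with $p^k \leq b$ contribute: when $p^k > b$ we have $0 \leq a < b < p^k$, so both floors vanish. Hence the sum has at most $\lfloor \log_p(b)\rfloor \leq \log_p(b)$ nonzero terms.

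The last step is to bound a single term. Writing $b = q_b p^k + r_b$ and $a = q_a p^k + r_a$ with $0 \leq r_a, r_b < p^k$, one computes $\lfloor b/p^k\rfloor - \lfloor a/p^k\rfloor = q_b - q_a = \frac{b-a}{p^k} + \frac{r_a - r_b}{p^k} < \frac{b-a}{p^k} + 1$. Summing over those $k\geq 1$ with $p^k \leq b$, the main terms contribute at most $(b-a)\sum_{k\geq 1} p^{-k} = \frac{b-a}{p-1}$, while the at most $\log_p(b)$ contributing values of $k$ each supply one copy of the $+1$. This yields $\sum_{i=a+1}^{b} v_p(i) \leq \frac{b-a}{p-1} + \log_p(b)$, and combining with $\sum_{i=a+1}^{b} 1 = b-a$ finishes the argument.

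I do not anticipate a genuine obstacle: the estimate is completely elementary. The only point requiring a little care is the bookkeeping — making sure the $+1$ corrections are summed over only the indices $k$ with $p^k \leq b$ (so that they total $\log_p(b)$ rather than diverging), and observing that the floor corrections point in the favorable direction because $\frac{r_a - r_b}{p^k} \leq \frac{p^k - 1}{p^k} < 1$.
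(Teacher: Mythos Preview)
Your proposal is correct and is essentially the same argument as the paper's: both split off the trivial $(b-a)$, rewrite $\sum_i |i|_p$ as $\sum_{k\geq 1}\#\{i\in(a,b]:p^k\mid i\}$, bound each count by $1+\tfrac{b-a}{p^k}$, and note that only $k\leq \log_p(b)$ contribute. The only cosmetic difference is that you package the double-counting via Legendre's formula and cut off at $\lfloor\log_p b\rfloor$, while the paper writes the count directly and cuts off at $K=\max_i |i|_p$ (which satisfies $K\leq\log_p b$ for the same reason).
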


\begin{proof}
  Let $K$ be the maximum over $a+1 \leq i \leq b$ of $|i|_p$.
  \begin{align*}
  \sum_{i=a+1}^{b} (1 + |i|_p)
    &= (b-a) + \sum_{k = 1}^K \left| \{a+1 \leq i \leq b \text{ such that } p^k \text{ divides } i \} \right| \\
    &\leq (b-a) + \sum_{k = 1}^K \left( 1 + \frac{b-a}{p^k} \right) 
    \leq (b-a) + K + \frac{1}{p-1}(b-a) \\
    &\leq \frac{p}{p-1}(b-a) + \log_p(b).
  \end{align*}
\end{proof}


\section{Valuations on stable categories}
\label{sec:val}

In the introduction we confined our discussion to the sphere and its homotopy groups.
Going forward, we will need to broaden our scope
as our proofs pass through a number of auxiliary objects.
In order to organize our arguments we introduce the notion of a valuation on a stable category.

\begin{dfn} \label{dfn:height}
  A \emph{valuation} on a stable category $\CC$ is a function
  \[ | - | : \CC^\simeq \longrightarrow \Gamma \]  
  with values in an partially ordered
  $\Z[\sigma^{\pm1}]$-module $\Gamma$
  satisfying the following conditions:
  \begin{enumerate}
  \item For each $X$ we have $|X| \geq 0$.
  \item For each $X$ and $Y$ we have $|X \oplus Y| = |X| + |Y|$.
  \item For each cofiber sequence $X \to Y \to Z$ we have $|Y| \leq |X| + |Z|$.
  \item For each $X$ we have $\sigma \cdot |X| = |\Sigma X|$.
  \end{enumerate}
  We say that the valuation $|-|$ is \emph{rotation invariant} if $\sigma$ acts as $1$.  
  \tqed
\end{dfn}

The basic example of a valuation is the Hilbert-Poincare series of a module over a field.

\begin{exm} \label{exm:HP-series}
  Given a field $k$ there is a valuation $|-|$ on $\Mod(k)$ given by sending $V$ to the Laurent series
  \[ \sum_n \rank_k\left(  \pi_nV \right) \cdot t^n \]
  The action of $\sigma$ on Laurent series is given by multiplication by $t$
  and the partial order is by pointwise inequality.
  \tqed
\end{exm}

\begin{exm} \label{exm:val-mapsout}
  Given a stable category $\CC$
  and an object $X \in \CC$ 
  such that $[X,Y]$ is a finitely generated $\Z_{(p)}$-module for every $Y \in \CC$
  we define a valuation
  \begin{align*}
    |-|_X : \CC^{\simeq} \quad &\longrightarrow \quad \Map(\Z, \Z) \\
    Y \quad  &\mapsto \quad \left( n \mapsto \rank_p [\Sigma^n X, Y] \right) 
  \end{align*}
  where the partial order is by pointwise inequality and
  $\sigma$ sends a function $f(n)$ to $f(n-1)$.
  \tqed
\end{exm}

\begin{lem} \label{lem:pullback-val}
  If $F : \CC \to \DD$ is an exact functor of stable categories and $|-|$ is a valuation on $\DD$,
  then $|F(-)|$ is a valuation on $\CC$.
\end{lem}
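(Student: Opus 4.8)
The plan is simply to verify the four conditions of \Cref{dfn:height} for the function $Y \mapsto |F(Y)|$, with each condition reduced to the corresponding condition for $|-|$ on $\DD$ via an elementary preservation property of the exact functor $F$. Condition (1) is immediate, since $|F(Y)| \geq 0$ in $\Gamma$ by hypothesis. Condition (2) holds because an exact functor between stable categories preserves finite biproducts, so $F(X \oplus Y) \simeq F(X) \oplus F(Y)$ and therefore $|F(X \oplus Y)| = |F(X)| + |F(Y)|$; here we use that $|-|$ is defined on $\DD^{\simeq}$, so the identity is insensitive to the choice of representative. Condition (3) holds because $F$ carries a cofiber sequence $X \to Y \to Z$ in $\CC$ to a cofiber sequence $F(X) \to F(Y) \to F(Z)$ in $\DD$, whence $|F(Y)| \leq |F(X)| + |F(Z)|$. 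Finally, condition (4) follows from the natural equivalence $F(\Sigma X) \simeq \Sigma F(X)$, which gives $\sigma \cdot |F(X)| = |\Sigma F(X)| = |F(\Sigma X)|$.

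There is no genuine obstacle here: the entire content is packaged into the meaning of an exact functor of stable categories (preservation of the zero object, of biproducts, and of cofiber sequences, hence of the suspension functor). The only points worth spelling out are the ones noted above---that $|-|$ is a function on equivalence classes so the displayed identities are well posed---and the observation that $|F(-)|$ takes values in the same partially ordered $\Z[\sigma^{\pm 1}]$-module $\Gamma$, so it is a valuation in exactly the sense of \Cref{dfn:height}.
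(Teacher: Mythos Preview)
Your proof is correct and matches the paper's approach: the paper's proof simply notes that an exact functor preserves sums, cofiber sequences, and suspensions, which is exactly the verification you carry out in more detail.
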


\begin{proof}
  Since $F$ is assumed exact it preserves sums, cofiber sequences and suspensions which suffices to check the conditions on a valuation.
\end{proof}

The valuations we consider in this paper are for the most part constructed in a straightforward way using a combination of Examples \ref{exm:HP-series}, \ref{exm:val-mapsout} and \Cref{lem:pullback-val}.

\begin{lem}
  \label{lem:ur-AH}
  Suppose that $\CC$ is a stable category with a valuation $| - |$.
  If $X \in \CC$ is in the thick subcategory generated by $Y$,
  then there exists some $P(\sigma) \in \N[\sigma^{\pm1}]$
  depending on $X$ and $Y$ such that
  \[ |X| \leq P(\sigma) \cdot |Y|. \]
\end{lem}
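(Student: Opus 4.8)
The plan is to show that the collection of objects admitting such a bound is already a thick subcategory. Set
\[ \mathcal{T} \coloneqq \bigl\{\, Z \in \CC \ \big|\ |Z| \leq P(\sigma)\cdot |Y| \text{ for some } P(\sigma) \in \N[\sigma^{\pm1}] \,\bigr\}. \]
Since $|Y| \leq 1\cdot|Y|$ we have $Y \in \mathcal{T}$, so it suffices to check that $\mathcal{T}$ is closed under suspension, desuspension, cofibers, and retracts; the thick subcategory generated by $Y$ is then contained in $\mathcal{T}$, and in particular $X \in \mathcal{T}$. I use throughout that multiplication by a polynomial in $\N[\sigma^{\pm1}]$ preserves the order on $\Gamma$ — i.e.\ $a \leq b$ implies $Q(\sigma)\,a \leq Q(\sigma)\,b$ for $Q \in \N[\sigma^{\pm1}]$ — which holds because $\Gamma$ is a partially ordered $\Z[\sigma^{\pm1}]$-module whose positive cone is closed under the $\N[\sigma^{\pm1}]$-action, consistently with axioms (1) and (4) of \Cref{dfn:height}.

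Closure under shifts is immediate from axiom (4): if $|Z| \leq P(\sigma)\,|Y|$ then $|\Sigma^{\pm1}Z| = \sigma^{\pm1}|Z| \leq \sigma^{\pm1}P(\sigma)\,|Y|$, and $\sigma^{\pm1}P(\sigma)$ again lies in $\N[\sigma^{\pm1}]$. For a cofiber sequence $A \to B \to C$ with two of the three terms in $\mathcal{T}$, apply axiom (3) after rotating as needed: if $A, C \in \mathcal{T}$ then $|B| \leq |A| + |C|$; if $A, B \in \mathcal{T}$ then from the rotation $B \to C \to \Sigma A$ we get $|C| \leq |B| + \sigma|A|$; and if $B, C \in \mathcal{T}$ then from $\Sigma^{-1}C \to A \to B$ we get $|A| \leq \sigma^{-1}|C| + |B|$. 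In each case the bound is a sum of terms $Q(\sigma)\,|Y|$ with $Q \in \N[\sigma^{\pm1}]$, hence again of the required shape.

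It remains to treat retracts. Suppose $X$ is a retract of $W \in \mathcal{T}$, witnessed by $r\colon W \to X$ with section $s\colon X \to W$, and let $A = \fib(r)$, so that $A \to W \xrightarrow{r} X \xrightarrow{\partial} \Sigma A$ is a cofiber sequence. Since $\partial r = 0$ (consecutive maps in a cofiber sequence) and $rs = \id_X$, we get $\partial = \partial r s = 0$; a cofiber sequence with null connecting map splits, so $W \simeq A \oplus X$. Axioms (1) and (2) then give $|X| \leq |A| + |X| = |W| \leq P_W(\sigma)\,|Y|$, so $X \in \mathcal{T}$. This shows $\mathcal{T}$ is thick and finishes the proof. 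The only step that is not a bare invocation of the axioms is the splitting of a cofiber sequence whose quotient map admits a section, together with the routine bookkeeping that every polynomial produced along the way has nonnegative coefficients; I do not anticipate any genuine obstacle.
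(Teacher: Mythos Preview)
Your proof is correct and follows essentially the same approach as the paper: both arguments verify that the class of objects satisfying the desired bound is closed under the three operations (shifts, extensions, splitting idempotents/retracts) that generate a thick subcategory. Your version is simply more explicit, especially in the retract case where you spell out the splitting $W \simeq A \oplus X$ and use axioms (1) and (2) to conclude $|X| \leq |W|$, whereas the paper leaves this to the reader.
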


\begin{proof}
  The given condition on $X$ is equivalent to saying that $X$ can constructed in finitely many steps beginning with $Y$ and applying the operations
  \begin{itemize}
  \item suspend an integer number of times,
  \item take an extension,
  \item split an idempotent.
  \end{itemize}
  The conditions of \Cref{dfn:height} allows us to track how valuation changes under these operations, proving the lemma.
  In fact, $P(\sigma)$ can be taken to record the copies of $\Sigma^k Y$ used in constructing $X$.
\end{proof}

\begin{rmk}
  \Cref{lem:ur-AH} can be viewed as recording a weak Morita invariance property of valuations.
  For example, if $|-|$ is rotation invariant
  and $X$ and $Y$ generate the same thick subcategory of $\CC$,
  then their valuations differ by at most a scalar factor.
  \tqed
\end{rmk}

\subsection{Valuations on spectra}\

Using the language of valuations we can reformulate \Cref{thm:stems-bound}
as a bound on a valuation measuring the rank of the stable stems.

\begin{cnstr} \label{dfn:gen-funcs}
  We can construct two natural valuations
  on the category of compact spectra
  each valued in functions $\Z \to \Z$
  with pointwise inequalities
  \begin{align*}
    \varpi(X,\, n) &\coloneqq \rank_p(\pi_nX) \\
    h(X,\, n) &\coloneqq \rank_p(\H_n(X;\,\F_p)).
  \end{align*}
  For brevity we write $\varpi(n)$ for $\varpi(\Ss,\,n)$.
  \tqed
\end{cnstr}

Working with the valuation $\varpi$ can be somewhat inconvenient as it might vary wildly in $n$.
For example we do not even know whether $\varpi(n)$ is non-zero for all $n \gg 0$.
In order to pave over this issue we will introduce another pair of valuations
$\varpi^c$ and $h^c$ which are cumulative.

\begin{cnstr}
  Since $\varpi$ and $h$ takes values in
  functions which are zero for $n \ll 0$
  we can define valuations
  \[ \varpi^c(X,\, n) \coloneqq \sum_{k \leq n} \varpi(X,\, k) \qquad \text{ and } \qquad h^c(X,\, n) \coloneqq \sum_{k \leq n } h(X,\, k) \]
  which take values in the subset of non-decreasing functions $\Z \to \Z$.
  \tqed
\end{cnstr}

When working with $\varpi^c$ we often suppress rotation using the inequality
\[ \sigma \cdot \varpi^c(X,\,n) \leq \varpi^c(X,\,n). \]


\begin{rmk}
  As we continue it will be important for us to be able to work with certain, well-behaved, infinite objects.
  The natural class of objects to which the definitions of $\varpi$ and $h$ extend are those spectra which are \emph{almost compact} in the sense of \cite[Definition C.6.4.1]{SAG}.
  This condition is equivalent to asking that a spectrum $X$ be bounded below and have only finitely many cells in each degree.
  \tqed
\end{rmk}

Using the existence of minimal cell structures on $p$-local almost compact spectra we obtain the following corollary to \Cref{lem:ur-AH}.

\begin{cor} \label{cor:AH}
  Given an almost compact spectrum $X$
  \[ \varpi(X,\,n) \leq h(X,\,n) * \varpi(n) \]
  where $*$ denotes convolution.
  If $X$ is in addition connective, then 
  \[ \varpi^c(X,\,n) \leq h^c(X,\,n) \cdot \varpi^c(n) \]
  where $\cdot$ denotes pointwise multiplication.
\end{cor}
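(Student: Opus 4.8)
The plan is to feed a minimal cell structure on $X$ into \Cref{lem:ur-AH}, tracking the cells explicitly rather than merely invoking their existence. For a $p$-local almost compact spectrum $X$ the minimal cell structure provides a skeletal filtration $X = \colim_m X^{(m)}$ by finite spectra with $X^{(m)}/X^{(m-1)} \simeq \bigvee \Sigma^m \Ss$, where the number of wedge summands is exactly $h(X,m) = \rank_p \H_m(X;\, \F_p)$ --- this is the defining feature of minimality, that the cellular chain complex has vanishing differential. Each $X^{(m)}$ then lies in the thick subcategory generated by $\Ss$, assembled using $h(X,k)$ copies of $\Sigma^k \Ss$ for $k \leq m$, so \Cref{lem:ur-AH} gives $\varpi(X^{(m)},-) \leq P_m(\sigma)\cdot \varpi$ with $P_m(\sigma) = \sum_{k\leq m} h(X,k)\sigma^k$; unwinding the action of $\sigma$ this reads $\varpi(X^{(m)},n) \leq \sum_{k\leq m} h(X,k)\varpi(n-k)$. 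Equivalently one runs a one-line induction on $m$, feeding the cofiber sequence $X^{(m-1)} \to X^{(m)} \to \bigvee_{h(X,m)}\Sigma^m \Ss$ into conditions (2) and (3) of \Cref{dfn:height}.

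Next I would pass to the colimit. Homotopy groups commute with filtered colimits, and since $\pi_j\Ss = 0$ for $j < 0$, the long exact sequence attached to $X^{(m)} \to X^{(m+1)} \to \bigvee \Sigma^{m+1}\Ss$ shows $\pi_n X^{(m)} \xrightarrow{\sim} \pi_n X^{(m+1)}$ for every $m \geq n+1$; hence $\pi_n X \cong \pi_n X^{(n+1)}$. Therefore $\varpi(X,n) = \varpi(X^{(n+1)},n) \leq \sum_{k\leq n+1} h(X,k)\varpi(n-k) = (h(X,-)*\varpi)(n)$, the $k = n+1$ term dropping out since $\varpi(-1) = 0$. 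This is the first inequality.

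For the connective case I would sum the first inequality over $j \leq n$, interchange the order of summation, and use that $\varpi^c$ is non-decreasing: $\varpi^c(X,n) = \sum_{j\leq n}\varpi(X,j) \leq \sum_k h(X,k)\,\varpi^c(n-k)$. Connectivity of $X$ forces $h(X,k) = 0$ for $k < 0$, while $\varpi^c(n-k) = 0$ once $k > n$, so $\sum_k h(X,k)\varpi^c(n-k) \leq \varpi^c(n)\sum_{0\leq k\leq n} h(X,k) = h^c(X,n)\cdot \varpi^c(n)$, as desired.

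The step I expect to require the most care is the passage to the infinite skeletal colimit: one must confirm that $\varpi(X,-)$, a priori defined on a possibly non-compact spectrum, is genuinely computed by a finite skeleton, and this is precisely where both halves of the almost-compactness hypothesis (bounded below, and finite type) together with the mod-$p$ minimal cell structure are used. Everything else is a mechanical application of the valuation axioms.
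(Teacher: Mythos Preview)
Your proposal is correct and follows exactly the approach the paper intends: the paper states the corollary as an immediate consequence of \Cref{lem:ur-AH} together with the existence of minimal cell structures on $p$-local almost compact spectra, without spelling out details. Your argument fills in those details faithfully, including the passage from finite skeleta to the almost compact colimit, which the paper leaves implicit; your handling of that step via $\pi_n X \cong \pi_n X^{(n+1)}$ is exactly the point where almost-compactness (bounded below, finite type) is used.
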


We highlight that the real power of this corollary isn't in providing an upper bound on the homotopy of a spectrum $X$, but its the ability to provide lower bounds on the homotopy of the sphere using an auxiliary spectrum.

\begin{rmk}
  The two valuations $\varpi$ and $h$ defined in this subsection highlight a dichotomy
  which emerges between
  valuations which measure stems (or some similarly complicated collection of groups)
  and valuations which measure the cells used to construct an object.
  \tqed
\end{rmk}



\subsection{Valuations on $\A$-comodules}\

As preparation for the proof of \Cref{thm:Adams-E2-bound},
which estimates the growth of the cohomology of the Steenrod algebra,
we introduce valuations $\varpi_{\A}$ and $h_{\A}$ on the category of $\A$-comodules,
$\check{\mathcal{D}}(\A)$,
which are analogous to $\varpi$ and $h$.

\begin{cnstr}  
  Given an $X \in \check{\mathcal{D}}(\A)^{\omega}$ let
  \[ \varpi_{\A}(X,\, n) \coloneqq \mathrm{rank}_{\F_p[q_0]}\left( \oplus_{t-s = n} \H^{s,t}(\A;\,X) \right). \]
  Analyzing the long exact sequence on cohomology groups we see that $\varpi_{\A}$ is a valuation with values in functions $\Z \to \Z$ and pointwise inequalities.
  
  As $\varpi_\A$ takes values in
  functions which are zero for $n \ll 0$
  we have a cumulative analog $\varpi^c_\A(X,\, n) \coloneqq \sum_{k\leq n} \varpi_\A(X,\, k)$.
  For brevity we write $\varpi_{\A}(n)$ for $\varpi_{\A}(\F_p, n)$.
  \tqed
\end{cnstr}

If $X$ is a compact spectrum,
then the cohomology of the $\A$-comodule $(\F_p)_*(X)$ is
the $E_2$-page of the Adams sseq for the homotopy groups of $X$
and $q_0$ is a class detecting $p$.
From this we can read off the inequality
\[ \varpi(X,\,n) \leq \varpi_\A((\F_p)_*(X),\, n) \]
which motivated us to introduce $\varpi_\A$.

The fact that $\H^{s,t}(\A)$ vanishes for $t-s <0$ lets us put a weight structure\footnote{See \cite[Section 1.4]{PalmieriBook} for a discussion and construction of this weight structure.} on $\check{\mathcal{D}}(\A)^\omega$ where the weight zero objects are sums of copies of $\Sigma^{-s,s}\F_p$.
On compact objects we can detect the cells of a comodule by tensoring with $\A$ and computing cohomology.\footnote{equivalently this is just looking at the underlying graded $\F_p$-module.}

\begin{cnstr}
  We can package the bidegrees of the cells of a comodule into a valuation 
  \[ h_{\A}(X,\, n) \coloneqq \mathrm{rank} \left( \oplus_{t-s = n} \H^{s,t}(\A;\, \A \otimes X) \right) \]
  which measure the number of cells in $t-s = n$.
  As usual we also have an associated cumulative valuation $h^c(X,\,n)$
  which measures number of cells through $t-s\leq n$.
  \tqed
\end{cnstr}

As in the case of spectra the natural class of objects to which the definitions of $\varpi_{\A}$ and $h_{\A}$ extend are those comodules which are almost compact in the sense that they have only finitely many cells with $t-s \leq n$ for any value of $n$.

Using the existence of minimal cell structures on almost compact objects in $\check{\mathcal{D}}(\A)$
we obtain another corollary of \Cref{lem:ur-AH} analogous to \Cref{cor:AH}.

\begin{cor} \label{ah-bound}
  Given an almost compact $\A$-comodule $X$
  \[ \varpi_\A(X,\,n) \leq h_\A(X,\,n) * \varpi_\A(n). \]
  If in addition $X$ is concentrated in non-negative degrees, then
  \[  \varpi_{\A}^c(X,\,n) \leq h_{\A}^c(X,\,n) \cdot \varpi_{\A}^c(n). \]
\end{cor}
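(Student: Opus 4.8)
The plan is to transcribe the proof of \Cref{cor:AH} to the comodule setting, replacing the minimal cell structures on almost compact spectra by the minimal cell structures on almost compact objects of $\check{\mathcal{D}}(\A)$ coming from the weight structure of \cite[Section 1.4]{PalmieriBook}. The relevant bookkeeping is the same: each cell is a (de)suspension of the unit $\F_p$, and a cell occupying stem $e$ has $h_\A$-valuation concentrated in stem $e$ and $\varpi_\A$-valuation equal to $m \mapsto \varpi_\A(m-e)$, the $\sigma^{e}$-shift of $\varpi_\A(-)$, because the cohomology of a shifted copy of $\F_p$ is the correspondingly reindexed cohomology of the Steenrod algebra.

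First I would establish the convolution bound for compact $X$. Fixing a minimal cell structure, $X$ is obtained from its cells by finitely many extensions, so $X$ lies in the thick subcategory generated by $\F_p$, and (by minimality) the number of cells in stem $e$ equals $h_\A(X,e)$. \Cref{lem:ur-AH} applied with $Y = \F_p$ then produces $P(\sigma) \in \N[\sigma^{\pm 1}]$ with $\varpi_\A(X,-) \le P(\sigma)\cdot\varpi_\A(-)$, and, as in the proof of that lemma, $P(\sigma)$ may be taken to record the shifted cells used to build $X$, namely $P(\sigma) = \sum_{e} h_\A(X,e)\,\sigma^{e}$; this is exactly $\varpi_\A(X,-) \le h_\A(X,-) * \varpi_\A(-)$. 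For almost compact $X$ I would pass to a colimit: let $X_{\le n} \subseteq X$ be the finite subcomplex spanned by the cells in stems $\le n$, which is compact since $X$ is bounded below with finitely many cells in each stem. As $\H^{s,t}(\A;-)$ vanishes in negative stems and commutes with the filtered colimit $X = \colim_n X_{\le n}$, the cohomology in any fixed stem $m$ is already computed by $X_{\le n}$ once $n \ge m$; combining this with the compact-case bound for each $X_{\le n}$ together with $h_\A(X_{\le n},-) \le h_\A(X,-)$ yields the convolution bound for $X$.

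For the cumulative statement, suppose additionally that $X$ is concentrated in non-negative degrees, so $h_\A(X,e) = 0$ for $e < 0$. Summing the convolution bound over stems $\le n$ and interchanging the two summations gives
\[ \varpi_\A^{c}(X,n) \;\le\; \sum_{e} h_\A(X,e)\,\varpi_\A^{c}(n-e). \]
The terms with $e > n$ vanish because $\varpi_\A^{c}$ is supported in non-negative stems, and for $0 \le e \le n$ we have $\varpi_\A^{c}(n-e) \le \varpi_\A^{c}(n)$ since $\varpi_\A^{c}$ is non-decreasing — the cumulative form of $\sigma\cdot\varpi_\A^{c} \le \varpi_\A^{c}$. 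Bounding each surviving summand this way collapses the right-hand side to $h_\A^{c}(X,n)\cdot\varpi_\A^{c}(n)$, as claimed.

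I expect the only delicate point to be the continuity argument in the almost compact case — that Steenrod cohomology commutes with the stem filtration and stabilizes stem-by-stem — while everything else is a direct transcription of the facts for spectra already used in the proof of \Cref{cor:AH}, themselves instances of \Cref{lem:ur-AH}.
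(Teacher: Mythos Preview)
Your proposal is correct and follows essentially the same approach as the paper, which simply states the result as a corollary of \Cref{lem:ur-AH} using the existence of minimal cell structures on almost compact objects in $\check{\mathcal{D}}(\A)$, in direct analogy with \Cref{cor:AH}. You have faithfully spelled out the details the paper leaves implicit, including the colimit argument for the almost compact case and the passage from the convolution bound to the cumulative pointwise bound.
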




\section{The cohomology of the Steenrod algebra}
\label{sec:steenrod}

In this section we analyze the asymptotic growth of the cohomology of the Steenrod algebra in order to prove Theorems \ref{thm:stems-bound} and \ref{thm:Adams-E2-bound}.


\begin{thm} \label{thm:steenrod-precise}
  The rank of the cohomology of the Steenrod algebra grow quasi-polynomially in the topological degree.
  In particular, it satisfies the following pair of bounds:
  \begin{align}
    \log \left( \varpi^c_\A(n) \right) &\leq K_3 \log(n)^3 + O(\log\log(n)\log(n)^2) \\
    \log \left( \varpi^c_\A(n) \right) &\geq K_2 \log(n)^3 + O(\log\log(n)\log(n)^2).
  \end{align}
  where $K_3$ and $K_2$ are the constants given in \cref{eqn:constants}.
\end{thm}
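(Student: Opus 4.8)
The plan is to establish the two inequalities separately: the upper bound from the $E_1$-page of the May spectral sequence, and the lower bound from an explicit polynomial subalgebra of $\H^{*,*}(\A)$ produced via a change-of-rings argument.

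\textbf{Upper bound.} The May spectral sequence converges to $\H^{*,*}(\A)$ and its $E_1$-page is a free graded-commutative $\F_p$-algebra on generators $h_{i,j}$ (and, at odd primes, additional polynomial generators $b_{i,j}$) indexed by $i \geq 1$, $j \geq 0$, with the generator at $(i,j)$ lying in topological degree $\Theta(p^{i+j})$. Since $E_\infty$ is a subquotient of $E_1$ and $q_0 = h_{1,0}$ is one of the polynomial generators, $\varpi^c_\A(n)$ is at most the number of monomials of topological degree $\leq n$ in the polynomial algebra obtained from $E_1$ by deleting $h_{1,0}$; the exterior generators present at odd primes multiply this count by at most $\exp(O(\log(n)^2))$, which is absorbed into the error term. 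I would do the count by organising generators by their scale $w = i+j$: there are $\Theta(w)$ of them at scale $w$, each in topological degree $\Theta(p^w)$, so only scales $w \lesssim \log_p(n)$ contribute. Writing $m_w$ for the total exponent assigned to scale-$w$ generators, the count is $\sum \prod_w \binom{m_w + \Theta(w)}{\Theta(w)}$ over tuples with $\sum_w p^w m_w \leq n$; taking logarithms and maximising over the $m_w$ by Lagrange multipliers gives $\Theta\big(\log(n)(\log_p n)^2\big) - \Theta\big(\log(p)(\log_p n)^3\big)$, with leading term exactly $K_3 \log(n)^3$ and the $O(\log\log(n)\log(n)^2)$ correction coming from the binomial coefficients and the integrality of the $m_w$.

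\textbf{Lower bound.} I would combine \Cref{ah-bound} with the change-of-rings isomorphism. For a sub-Hopf-algebra $B \subseteq \A$ one has $\H^{*,*}(\A;\, \A/\!/B) \cong \H^{*,*}(B)$, so $\varpi_\A(\A/\!/B,\,n)$ equals the $\F_p[q_0]$-rank of $\bigoplus_{t-s=n}\H^{s,t}(B)$, which I abbreviate $\varpi_B(n)$, and hence
\[ \varpi^c_\A(n) \;\geq\; \frac{\varpi^c_B(n)}{h^c(\A/\!/B,\,n)} . \]
When $B$ is a truncated polynomial sub-Hopf-algebra with profile function $h(\cdot)$, the quotient $\A/\!/B$ is a polynomial algebra on one generator per $\xi$-index, so $h^c(\A/\!/B,\,n) = \exp(O(\log(n)^2))$ and is negligible against an $\exp(\Theta(\log(n)^3))$ numerator. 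On the other hand $\H^{*,*}(B)$ contains the $s = 2$ classes $b_{i,j}$ (Bocksteins of the $h_{i,j}$) for $i \geq 1$ and $0 \leq j < h(i)$, which span a genuinely two-parameter family as soon as $h(i) \to \infty$; once one knows these generate a polynomial subalgebra of $\H^{*,*}(B)$, the same layered count as above gives $\varpi^c_B(n) = \exp(\Theta(\log(n)^3))$. Optimising over the profile function --- a constrained optimisation whose solution is the origin of the irrational constant in $K_2$ --- then yields the stated lower bound.

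\textbf{Main obstacle.} The crux is the lower bound, and precisely the assertion that the $b_{i,j}$ generate a polynomial subalgebra of $\H^{*,*}(B)$: on the May $E_1$-page of $B$ they are manifestly polynomial generators, but one must rule out May differentials on their products. The difficulty grows with the slope of the profile function --- a larger $B$ admits more $b_{i,j}$ but leaves more room for differentials --- so this is simultaneously the main technical hurdle and the reason the optimum is nontrivial. I would resolve it by constraining the profile function so that the relevant monomials land in a region of the May spectral sequence of $B$ where differentials vanish for weight reasons, and then check that this constraint is compatible with --- and in fact determines --- the optimum; alternatively, one can argue directly on the $E_\infty$-page of the May spectral sequence of $\A$ and extract a surviving polynomial subalgebra there. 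Everything else --- the two counting optimisations and the error-term bookkeeping --- is routine calculus once the combinatorial set-up is in place.
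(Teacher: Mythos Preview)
Your upper bound is the paper's argument: bound by the May $E_1$-page, organise generators by $i+j$, and do the count (the paper defers the count to \Cref{cor:may-flexible} rather than sketching the Lagrange-multiplier optimisation, but the content is identical).

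For the lower bound your \emph{strategy} matches the paper's --- pick a comodule $M$ with large $\varpi_\A^c(M,n)$, apply \Cref{ah-bound}, and absorb $h_\A^c(M,n)$ into the error --- but the \emph{execution} differs. The paper takes $M = y(h) = \F_p[\zeta_1,\dots,\zeta_h]$ as a sub-comodule-algebra of $\A$, not $\A/\!/B$ for a sub-Hopf-algebra $B$. Crucially, it does not attempt to prove algebraic independence of $b_{i,j}$-type classes directly. Instead it cites the Mahowald--Ravenel--Shick computation of the $q_h$-localised cohomology $\H^{*,*}(\A;y(h))[q_h^{-1}]$, which is already a polynomial algebra, and then lifts the generators $q_h^w b_{i,j}$ (for $w$ large enough) across the localisation map using the vanishing line for $y(h)/q_h$. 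Because the localisation map is a ring map and an isomorphism above the line, and because all products of the lifted generators stay above the line, algebraic independence is inherited automatically. This completely sidesteps the obstacle you identify.

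Your route via sub-Hopf-algebras and profile functions is a genuine alternative, and the paper explicitly acknowledges it (see the remark at the end of \Cref{sec:steenrod}: the argument ``can be simplified by using the maximal elementary quotients of $\A$ in place of $y(h)$''); the paper chose $y(h)$ only because \Cref{lem:yn-size} is reused in \Cref{sec:telescope}. That said, your proposal leaves the obstacle unresolved: ``constrain the profile so monomials land where differentials vanish for weight reasons'' is not yet an argument, and the alternative of working on the May $E_\infty$-page of $\A$ is no easier. There is also a slip in your description: for a profile sub-Hopf-algebra $B$, the quotient $\A/\!/B$ is a \emph{truncated} polynomial algebra, not a polynomial algebra (this only helps you, since $h_\A^c(\A/\!/B,n)$ is then even smaller). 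Finally, you assert the optimisation over profiles reproduces $K_2$; the paper's optimisation is the concrete one in \Cref{lem:yn-e2-basic} over the single parameter $h$, and it is not obvious from your sketch that a general profile optimisation lands on the same constant.
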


The upper bound is easier to prove and comes directly from the $E_1$-page of the May sseq.

\begin{proof}[Proof (of \Cref{thm:steenrod-precise}(1)).]
  We split into cases based on the parity of $p$.
  If $p$ is even, then the the May $E_1$-page is the polynomial algebra
  \[ \F_2[h_{i,j}\ |\ i \geq 1, j \geq 0] \]
  where $h_{i,j}$ is in a tri-degree which contribute to $n = 2^{i+j} - 2^j - 1$.
  Since $h_{1,0}$ detects $q_0$ we have the bound
  \[ \varpi^c_\A(n) = \rank_{\F_2[q_0]} \left( \oplus_{t-s \leq n} \H^{s,t}(\A) \right) \leq \rank_{\F_2[h_{1,0}]} \left( \oplus_{t-s \leq n} {}^{\mathrm{May}}E_1^{s,t,u} \right) \]
  We estimate the growth of the rank of polynomial algebras of this sort in \Cref{app:combo}
  where \Cref{cor:may-flexible} provides the desired upper bound.


  If $p$ is odd, then the May $E_1$-page takes the more complicated form
  \[ \F_p[q_i\ |\ i \geq 0][ h_{i,j}, b_{i,j}\ |\ i \geq 1, j \geq 0] / (h_{i,j}^2) \]
  where the topological degrees are
  $|q_i| = 2p^i - 2$,
  $|h_{i,j}| = 2p^{i+j} - 2p^{j} - 1$ and
  $|b_{i,j}| = 2p^{i+j+1} - 2p^{j+1} - 2$.
  Examining the contribution of each pair $h_{i,j}, b_{i,j}$ we can replace the May $E_1$-page with a polynomial algebra including the $q_i$ together with \emph{polynomial} generators $h_{i,j}$ and the cumulative rank will only increase. Again we can bound the growth of this algebra using \Cref{cor:may-flexible}.
\end{proof}

The remainder of this section is devoted to the, more involved, proof of \Cref{thm:steenrod-precise}(2).
This lower bound comes from the interaction of three relatively simple observations.
The first is that \Cref{ah-bound}
implies we only need to find some $\A$-comodule $X$ with large cohomology.
The second is that although there are very few comodules for which we have a complete description of their cohomology, there exists a surprisingly rich collection for which we understand their $q_h$-localized cohomology---thanks to \cite[Section 2]{MRS}\footnote{The class which we denote $q_h$ is denoted $v_h$ in loc. cit.}.
The final observation is that it is not particularly difficult to lift $q_h$-local cohomology classes to the non-local setting.






\begin{dfn}
  Let $y(h)$ denote the following subalgebra of $\A$
  \begin{align*}
    y(h) &\coloneqq \F_p[\zeta_1,\dots,\zeta_h] \subseteq \A & \quad &(\text{ for } p \text{ even } ) \\
    y(h) &\coloneqq \F_p[\xi_1,\dots,\xi_h]\langle \tau_0, \dots, \tau_{h-1} \rangle \subseteq \A & \quad &(\text{ for } p \text{ odd } )
  \end{align*}
  considered as an $\A$-comodule algebra.
  \tqed
\end{dfn}

Although the cohomology algebra of $y(h)$ is not explicitly known,
its localization at the class $q_h$ in $(1, 2p^h-1)$ has been computed by Mahowald, Ravenel and Schick.

\begin{prop}[{\cite[Equation 2.20]{MRS}}] \label{prop:mrs-e2}
  The localization of the cohomology of $y(h)$ at $q_h$ is given by
  \begin{align*}
    &\F_p[q_h^{\pm 1}, q_{h+1}, \dots, q_{2h}][ h_{i,j} \ |\ i > h,\, 0 \leq j \leq h-1 ] & &\text{ if } p \text{ is even} \\
    &\F_p[q_h^{\pm 1}, q_{h+1}, \dots, q_{2h}][ h_{i,j},\, b_{i,j} \ |\ i > h,\,  0 \leq j \leq h-1 ]/(h_{i,j}^2) & &\text{ if } p \text{ is odd}
    \end{align*}
  where $|q_k| = (1, 2p^k - 1)$,
  $|h_{i,j}| = (1,\ 2p^{i+j} - 2p^{j})$ and
  $|b_{i,j}| = (2, 2p^{1+i+j} - 2p^{j+1})$.
\end{prop}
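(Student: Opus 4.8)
The plan is to reduce to the cohomology of a smaller Hopf algebra by a change of rings and then to run a May-type spectral sequence which, after inverting $q_h$, collapses onto the displayed answer. The comodule $y(h) \subseteq \A$ is a normal sub-Hopf-algebra of the dual Steenrod algebra, so the quotient $B(h) \coloneqq \A /\!\!/ y(h)$ is again a Hopf algebra: at an odd prime $B(h) = \F_p[\bar\xi_{h+1}, \bar\xi_{h+2}, \dots]\otimes E[\bar\tau_h, \bar\tau_{h+1}, \dots]$ with the coproduct induced from Milnor's, and at $p=2$ it is $\F_2[\bar\zeta_{h+1}, \bar\zeta_{h+2},\dots]$. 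Because $\A$ is injective (cofree) as a comodule over any of its quotient Hopf algebras, and $\A\square_{B(h)}\F_p = y(h)$, the cotensor change-of-rings isomorphism gives
\[ \H^{s,*}(\A;\, y(h)) \;\cong\; \Ext^{s,*}_{B(h)}(\F_p,\F_p), \]
under which $q_h$ is identified with the bottom cohomology class $[\bar\tau_h]$ (odd $p$), resp.\ $[\bar\zeta_{h+1}]$ ($p=2$). So it remains to compute $q_h^{-1}\Ext_{B(h)}(\F_p,\F_p)$.

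To get at $\Ext_{B(h)}(\F_p,\F_p)$ I would run the May spectral sequence: filtering $B(h)$ by powers of its augmentation ideal yields a primitively generated associated graded, with May $E_1$-page
\[ \F_p[q_i : i\ge h]\,[h_{i,j},\, b_{i,j} : i\ge h+1,\, j\ge 0]/(h_{i,j}^2) \]
at odd $p$ (polynomial on the $h_{i,j}$, and no $b_{i,j}$, when $p=2$), where $q_i = [\bar\tau_i]$, $h_{i,j} = [\bar\xi_i^{p^j}]$, and $b_{i,j}$ is the transpotence of $h_{i,j}$; the internal degrees come out to exactly $|q_i| = (1, 2p^i-1)$, $|h_{i,j}| = (1, 2p^{i+j}-2p^j)$ and $|b_{i,j}| = (2, 2p^{i+j+1}-2p^{j+1})$ as in the statement. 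The structural point controlling the differentials is that $\bar\xi_n$ (resp.\ $\bar\tau_n$) is primitive in $B(h)$ precisely for $n\le 2h+1$ (resp.\ $n\le 2h$), and the first surviving cross-term of the coproduct of $\bar\xi_n$ or $\bar\tau_n$ for larger $n$ is divisible by $\bar\xi_{h+1}$; hence the leading term of every nonzero May differential is divisible by $q_h$ or by one of the $h_{h+1,j}$.

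Now invert $q_h$. The $q_h$-divisible pieces of the differentials identified above become visible and pair off precisely the excess generators: following leading terms, the first cross-terms give relations of the shape $d(q_{2h+1}) \doteq q_h h_{h+1,h}$ and their iterates, which kill $q_i$ for $i>2h$ together with $h_{i,j}$ (and, at odd $p$, $b_{i,j}$) for $j\ge h$, leaving the subalgebra on $q_h^{\pm 1}, q_{h+1},\dots, q_{2h}$ and on $h_{i,j}, b_{i,j}$ with $i>h$ and $0\le j\le h-1$; a weight/degree count then shows that there is no room for further differentials or multiplicative extensions, so the localized spectral sequence collapses onto the displayed ring. Unwinding the two reindexings (the change of rings, and the May naming of the classes $[\bar\xi_i^{p^j}]$) matches generators and bidegrees with the statement.

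The main obstacle is this last step: one must extract the leading $q_h$-divisible terms of the May differentials from the (induced Milnor) coproduct on $B(h)$, check that the resulting cancellation is a clean bijection between the excess generators, and then supply a vanishing-line argument guaranteeing collapse once those differentials have run — which is exactly the work carried out in \cite[Section 2]{MRS}. A cleaner alternative would be to invert $q_h$ already at the level of Hopf algebras and check that $q_h^{-1}B(h)$ becomes tame enough (primitively generated after one small filtration) that its cohomology can be read off directly.
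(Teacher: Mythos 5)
The paper itself does not prove this proposition; it is stated as a citation to \cite[Equation 2.20]{MRS}, so there is no ``paper's own proof'' to compare with. Your proposal reconstructs a plausible route to the result, and the opening maneuvers look right: $y(h)$ is indeed a sub-Hopf-algebra of $\A$, the quotient $B(h) = \A /\!\!/ y(h)$ has the form you describe, and the cotensor change-of-rings $\Ext_\A(\F_p, y(h)) \cong \Ext_{B(h)}(\F_p, \F_p)$ is the natural first step. The May $E_1$-page you write for $B(h)$ and the primitivity thresholds ($\bar\xi_n$ primitive for $n \le 2h+1$, $\bar\tau_n$ primitive for $n \le 2h$) are also correct.

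However, the heart of the argument --- the differential analysis after inverting $q_h$ --- is too loose to count as a proof, and some of its specific assertions do not work as stated. A $d_1$ of the form $d_1(q_{2h+1}) \doteq q_h h_{h+1,h}$ sends a \emph{polynomial} generator to (a unit times) an \emph{exterior} generator; at an odd prime this does not make the pair cancel cleanly. The Koszul-type subcomplex $\F_p[q_h^{\pm 1}][q_{2h+1}] \otimes E[h_{h+1,h}]$ with this $d_1$ still has $q_{2h+1}^{p}$ and $q_{2h+1}^{p-1}h_{h+1,h}$ surviving to $E_2$, so higher May differentials are mandatory, and the phrase ``and their iterates'' does not specify them. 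Moreover, you say nothing about how the polynomial classes $b_{i,j}$ for $j \geq h$ are eliminated --- these must be hit by something, and your inventory of differentials does not obviously reach them. The concluding ``weight/degree count then shows there is no room for further differentials or multiplicative extensions'' is asserted rather than argued, and given the unresolved $p$-th-power issue above it is not clear that it would succeed without real work. Finally, at $p=2$ the proposition's degree conventions $|q_k| = (1, 2\cdot 2^k - 1)$ and $|h_{i,j}| = (1, 2\cdot 2^{i+j} - 2\cdot 2^{j})$ are shifted relative to the standard May conventions $[\bar\zeta_i^{2^j}]$ (the $j$-index is offset by one, and $q_k$ is the class usually called $h_{k+1,0}$); your proposal waves at ``unwinding the reindexings'' without carrying it out, and your identification of $q_h$ with $[\bar\zeta_{h+1}]$ while also keeping $h_{h+1,0}$ as a separate generator would be a contradiction in the standard indexing, so the bookkeeping needs to be done carefully.

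In short, your sketch correctly identifies the change-of-rings plus localized-May framework, which is consistent with the cited source, but the cancellation pattern, the fate of the $b_{i,j}$ with $j \ge h$, the collapse argument, and the $p=2$ indexing are all deferred or elided. You acknowledge that the full argument lives in \cite[Section 2]{MRS}; that acknowledgement is appropriate, because those are exactly the nontrivial parts of the proof.
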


\begin{rmk}
  The $\A$-comodule which we call $y(h)$ in this section is the $\F_p$-homology of a spectrum $y(h)$ and the $q_h$-local cohomology groups calculated in \cite[Equation 2.20]{MRS} are the the $E_2$-page of the localized Adams sseq which computes $\pi_* (y(h)[v_h^{-1}])$.
  We will return to this point in \Cref{sec:telescope}.
  \tqed
\end{rmk}

Since $y(h)$ is a comodule algebra with a large polynomial subalgebra in its $q_h$-local cohomology
we can produce a large polynomial algebra in $\H^{s,t}(\A;\, y(h))$ by lifting polynomial generators to the non-localized cohomology.





\begin{lem} \label{lem:yn-size}
  The cohomology of $y(h)$ contains a polynomial subalgebra on classes $x_{i,j}$ which live in topological degree $12p^{1+i+j} - 10p^{1+i-h+j} - 2p^{1+j} - 2$ where $i > h$ and $0 \leq j \leq h-1$.
\end{lem}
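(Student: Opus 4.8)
The plan is to produce the desired polynomial subalgebra of $\H^{*,*}(\A;\,y(h))$ by lifting a polynomial subalgebra of the $q_h$-localized cohomology computed in \Cref{prop:mrs-e2}. Concretely, \Cref{prop:mrs-e2} tells us that after inverting $q_h$ the cohomology of $y(h)$ contains the polynomial algebra on the classes $b_{i,j}$ (for $p$ odd) or $h_{i,j}^2$ (for $p$ even) with $i > h$, $0 \le j \le h-1$, together with the invertible class $q_h$. I will use these as my raw material. Since $q_h$-localization is exact and a filtered colimit (along multiplication by $q_h$), any element of the localized cohomology is of the form $q_h^{-N}\cdot \widetilde{z}$ for some honest class $\widetilde{z} \in \H^{*,*}(\A;\,y(h))$; in particular, for a suitable large $N$, the products $q_h^N b_{i,j}$ (resp.\ $q_h^N h_{i,j}^2$) lift to honest classes $x_{i,j} \in \H^{2,*}(\A;\,y(h))$ whose images in the localization are exactly these products. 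These $x_{i,j}$ are the candidates for the generators in the statement.

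The key point is that these lifted classes remain \emph{polynomially independent} in the non-localized cohomology. This is immediate: the localization map $\H^{*,*}(\A;\,y(h)) \to \H^{*,*}(\A;\,y(h))[q_h^{-1}]$ is a ring homomorphism, and it sends any polynomial relation among the $x_{i,j}$ to a polynomial relation among the $q_h^N b_{i,j}$ (resp.\ $q_h^N h_{i,j}^2$) inside the localized ring; but these are algebraically independent there, since $q_h$ is a unit and the $b_{i,j}$ (resp.\ $h_{i,j}^2$) are free polynomial generators in \Cref{prop:mrs-e2}. Hence the subalgebra of $\H^{*,*}(\A;\,y(h))$ generated by the $x_{i,j}$ is genuinely polynomial. (Because we only need a polynomial subalgebra, not a precise description of the whole cohomology, there is no need to control which other classes appear; this is the role played by the ``it is not particularly difficult to lift $q_h$-local cohomology classes'' remark preceding the lemma.)

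The remaining task is the degree bookkeeping, which fixes the uniform exponent $N$ and the resulting topological degree. From \Cref{prop:mrs-e2}, $|q_h| = (1, 2p^h-1)$ and $|b_{i,j}| = (2, 2p^{1+i+j} - 2p^{j+1})$ (for $p$ even, $|h_{i,j}^2| = (2, 2(2p^{i+j} - 2p^j)) = (2, 4p^{i+j}-4p^j)$, which one checks is handled uniformly by the same formula after matching conventions). The stated topological degree $12p^{1+i+j} - 10p^{1+i-h+j} - 2p^{1+j} - 2$ should be read off as $|q_h^N| + |b_{i,j}|$ in the $t-s$ grading for the specific choice $N$ that simultaneously clears denominators for all $(i,j)$ in the range $i>h$, $0\le j\le h-1$; the term $-10p^{1+i-h+j}$ is exactly $5$ copies of the $(t-s)$-degree contribution $2p^{i-h} \cdot (\text{something})$ coming from $q_h^N$ once one solves for $N$, so $N$ will be a fixed small multiple (here forcing the coefficient $5$) of $p^{i-h}$-type quantities — but since the lemma only asserts the existence of such $x_{i,j}$ in the named degree, it suffices to verify the arithmetic identity $|q_h^N| + |b_{i,j}| = 12p^{1+i+j} - 10p^{1+i-h+j} - 2p^{1+j} - 2$ in $t-s$ for the chosen $N$. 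I expect the main obstacle to be purely this: choosing $N$ correctly and checking the degree formula simultaneously at $p=2$ and at odd primes, where the generators $b_{i,j}$ versus $h_{i,j}^2$ and the internal-degree conventions differ; the homological-algebra content (exactness of localization, injectivity of the localization ring map on the polynomial subalgebra) is routine given \Cref{prop:mrs-e2}.
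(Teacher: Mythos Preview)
Your overall strategy matches the paper's: lift polynomial generators from the $q_h$-localized cohomology of \Cref{prop:mrs-e2} and observe that algebraic independence survives since the localization map is a ring homomorphism. That part is fine.

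The genuine gap is in the step you describe as ``degree bookkeeping.'' Your filtered-colimit argument only tells you that \emph{some} power $q_h^N b_{i,j}$ lies in the image of the localization map; it gives no control whatsoever over \emph{which} $N$ suffices. But the lemma asserts the specific degree $12p^{1+i+j} - 10p^{1+i-h+j} - 2p^{1+j} - 2$, which forces the exponent to be exactly $w = 5p^{1+i-h+j}$ (note: this depends on $i$ and $j$, so there is no ``uniform $N$''). You then need to know that this particular $w$ is already large enough for $q_h^w b_{i,j}$ to lift. This is not an arithmetic identity---it is the substantive content of the proof.

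The paper supplies the missing input by establishing an explicit isomorphism range for the localization map: analyzing the May spectral sequence for $y(h)/q_h$ shows that
\[
\H^{s,t}(\A;\,y(h)) \longrightarrow \H^{s,t}(\A;\,y(h))[q_h^{-1}]
\]
is an isomorphism whenever $s-1 \geq (2p^{h+1}-3)^{-1}(t-s)$. One then checks that $q_h^{5p^{1+i-h+j}} b_{i,j}$ lands above this line, giving a \emph{unique} lift in the stated degree. Without such a vanishing-line argument for the $q_h$-torsion, your proof does not establish the degree claimed in the lemma (nor even any effective bound on the degree, which is what the application to \Cref{thm:steenrod-precise}(2) actually requires).
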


\begin{proof}
  Since \Cref{prop:mrs-e2} provides a large polynomial algebra in the localized cohomology groups, all we need to do is find lifts of these polynomial generators.
  By considering the May sseq for $y(h)/q_h$ we can conclude that the localization map induces an isomorphism for $ s-1 \geq (2p^{h+1} - 3)^{-1}(t-s) $.
  This means we can let the generator $x_{i,j}$ be the (unique) lift of $q_h^wb_{i,j}$ where $w = 5p^{1+i-h+j}$ since
  \begin{align*}    
    (5p^{1+i-h+j} + 2) - 1 \geq  (2p^{h+1} - 3)^{-1}\left( 5p^{1+i-h+j}(2p^h - 2) + (2p^{1+i+j} - 2p^{j+1} - 2) \right).
  \end{align*}   
  

\end{proof}

\begin{proof}[Proof (of \Cref{thm:steenrod-precise}(2)).]
  From \Cref{ah-bound} we have lower bounds
  \[ \varpi_{\A}^c(y(h),\,n) \leq h_{\A}^c(y(h),\, n) \cdot \varpi_{\A}^c(n). \]
  Since $y(h)$ is a subobject of $\A$ we also have
  \[ h_{\A}^c(y(h),\, n) \leq h_{\A}^c(\A,\, n) \]
  As $h$ varies we obtain the single inequality
  \[ \max_h \left\{ \varpi_{\A}^c(y(h),\,n) \right\}\leq h_{\A}^c(\A,\, n) \cdot \varpi_{\A}^c(n). \]
  To finish the proof we need to turn the large polynomial subalgebra from \Cref{lem:yn-size} into a lower bound on the cohomology of $y(h)$ and optimize the choice of $h$ as a function of $n$.
  All this is accomplished in \Cref{lem:yn-e2-basic} and \Cref{cor:yn-e2-flexible}.
  In summary we arrive at
  \[ K_2 \log(n)^3 + O(\log\log(n)\log(n)^2) \leq \log (h_{\A}^c(\A,\, n)) + \log (\varpi_{\A}^c(n)). \]
  From \Cref{exm:steenrod-size} we know that $\log (h_{\A}^c(\A,\, n)) = \Theta(\log(n)^2)$ which grows slowly enough that we can subtract it across and conclude.
\end{proof}

\begin{rmk}
  Throughout this section we have made various replacements,
  most notably dropping the polynomial generators $q_{h+i}$ between
  \Cref{prop:mrs-e2} and \Cref{lem:yn-size},
  which might at first suggest that the lower bound in \Cref{thm:steenrod-precise}
  could be improved by a more careful analysis.
  We remark that the flexibility provided by \Cref{cor:yn-e2-flexible}
  is sufficient to conclude that a more careful analysis would not improve the constant $K_2$.
    

  As an example:
  suppose we added in $q_i$ for all $i \geq 0$ not just $h,\dots,2h$.
  Per \Cref{cor:steenrod-flexible} this would amount to an increase of $\Theta(\log(n)^2)$ which is below leading order.
  Similarly, the exterior classes $h_{i,j}$ contribute at most $\Theta(\log(n)^2)$ since 
  a doubly infinite family of exterior generators behaves like a singly infinite family of polynomial generators
  \tqed
\end{rmk}

\begin{rmk}
  The arguments in this section can be simplified by using the maximal elementary quotients of $\A$ in place of $y(h)$.
  However, we would pay for this in \Cref{sec:telescope} where we use \Cref{lem:yn-size} a second time.
  \tqed
\end{rmk}

\section{The cohomology of the moduli of formal groups}
\label{sec:mfg}

Having computed the asymptotic growth of the Adams $E_2$-page,
in this section we turn to the Adams--Novikov $E_2$-page.


\begin{cnstr} \label{dfn:mfg-size}
  As in the previous sections we introduce several valuations,
  this time on $\check{\mathcal{D}}(\Mfg)$, to capture the quantities of interest:
  \begin{align*}
    \varpifg^c(X,\, n)
    &\coloneqq \mathrm{rank}_p\left( \oplus_{2u-s \leq n} \mathrm{H}^{s}(\Mfg;\, \omega^{\otimes u} \otimes X) \right),\\
    h_{\mathrm{fg}}(X,\, n)
    &\coloneqq \mathrm{rank}_p\left( \oplus_{2u-s = n} \mathrm{H}^{s}(\Mfg;\, \omega^{\otimes u} \otimes (\BP_*\F_p) \otimes X) \right),\\
    h_{\mathrm{fg}}^c(X,\, n)
    &\coloneqq \sum_{k \leq n} h_{\mathrm{fg}}(X,\, k).    
  \end{align*}
  
  We use $\varpifg^c(n)$ to measure the size of the cohomology of $\Mfg$ and $h_{\mathrm{fg}}$ to measure the cells used to build a sheaf.
  Much like with $\A$, since the cohomology vanishes for $t-s < 0$ we have a weight structure whose heart consists of shifts of the unit and objects which are almost compact can always be given a minimal cells structure.
  Similarly the analog of \Cref{ah-bound} holds with the same proof.
  \tqed
\end{cnstr}

\begin{thm} \label{thm:mfg-size}
  The cohomology of the moduli of formal groups grows at
  essentially the same rate as the cohomology of the Steenrod algebra.
  Specifically, we have
  \[ \log(\varpifg^c(n)) = \log(\varpi_{\A}^c(n)) + O(\log(n)^2). \]
  which implies the following pair of bounds:
  \begin{align}
    \log (\varpifg^c(n)) &\geq K_2 \log(n)^3 + O(\log\log(n)\log(n)^2), \\
    \log (\varpifg^c(n)) &\leq K_3 \log(n)^3 + O(\log\log(n)\log(n)^2).
  \end{align}
\end{thm}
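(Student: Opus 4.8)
The plan is to relate the cohomology of $\Mfg$ to the cohomology of the Steenrod algebra through the algebraic Adams--Novikov spectral sequence (equivalently, the Cartan--Eilenberg/May-type spectral sequence associated to a suitable filtration), and to check that the comparison only loses $O(\log(n)^2)$ in the log of the cumulative rank. Since both sides are already known (via \Cref{thm:steenrod-precise}) to behave like $\exp(\Theta(\log(n)^3))$, an error term of size $\exp(O(\log(n)^2))$ is below leading order and can simply be absorbed, so the two displayed inequalities follow from the single estimate $\log(\varpifg^c(n)) = \log(\varpi_\A^c(n)) + O(\log(n)^2)$.

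The core input is the algebraic Adams--Novikov spectral sequence with signature
\[
  \H^{s_1}\!\left(\Mfg;\, \omega^{\otimes u} \otimes (\BP_*\F_p)^{\otimes s_2}\right) \Longrightarrow \H^{s_1+s_2, *}(\A),
\]
whose $E_1$- (or $E_2$-) page is built from the cohomology of $\Mfg$ with coefficients in tensor powers of $\BP_*\F_p$. I would argue in two directions. For the upper bound on $\varpifg^c$ in terms of $\varpi_\A^c$: the sub-object inclusion and the change-of-rings identification let one treat $\H^*(\Mfg;\,\omega^{\otimes u})$ as a summand (in low filtration) or, more carefully, as sitting at the bottom of this spectral sequence, so its cumulative rank is bounded by that of the Adams $E_2$-page up to the multiplicative contribution of the "extra" factors of $\BP_*\F_p$. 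The key point is that $\BP_*\F_p$, as a comodule over the relevant Hopf algebroid, is almost compact with cell-counting function $h_{\mathrm{fg}}^c(\BP_*\F_p,\,n) \le \exp(O(\log(n)^2))$ — this is the exact analog of \Cref{exm:steenrod-size} for $\H^*(\Mfg;\, \A\otimes(-))$, provable by the same May-theoretic estimate since $\BP_*/p \cong \F_p[v_1, v_2, \dots]$ is polynomial on generators in degrees $2(p^i-1)$. Feeding this into the $\Mfg$-analog of \Cref{ah-bound} (noted in \Cref{dfn:mfg-size} to hold with the same proof) converts a cell bound of size $\exp(O(\log(n)^2))$ into exactly the claimed $O(\log(n)^2)$ additive error in the logarithm.

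For the reverse direction — bounding $\varpi_\A^c$ in terms of $\varpifg^c$ — I would run the comparison the other way: the Adams $E_2$-page is assembled, via the same spectral sequence, out of $\Mfg$-cohomology with coefficients in the comodules $(\BP_*\F_p)^{\otimes s_2}$, and the total rank through degree $n$ is at most $\varpifg^c(X, n)$ summed against the cell counts of these tensor powers. Since only $O(\log n)$ filtration levels $s_2$ contribute below stem $n$ (the coefficient comodules are concentrated in degrees that grow, bounding $s_2 \lesssim \log n / \log p$), and each tensor power contributes a cell-counting factor of at most $\exp(O(\log(n)^2))$, one gets $\log(\varpi_\A^c(n)) \le \log(\varpifg^c(n)) + O(\log(n)^2)$ as well. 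Combining the two inequalities gives the theorem. I expect the main obstacle to be bookkeeping the coefficient comodules carefully: one must verify that the algebraic Adams--Novikov spectral sequence really does let one bound ranks in both directions (it is a first-quadrant-type spectral sequence, so $E_\infty$ is a subquotient of $E_1$ in one direction, but the reverse bound needs a mild finiteness/convergence input), and that the cell-counting function for $(\BP_*\F_p)^{\otimes s_2}$ genuinely stays at $\exp(O(\log(n)^2))$ uniformly once $s_2 = O(\log n)$ — this is where the polynomial structure of $\BP_*/p$ and a May-type estimate analogous to \Cref{app:combo} must be invoked.
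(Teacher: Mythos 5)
Your high-level plan — reduce the two displayed inequalities to the single estimate $\log(\varpifg^c(n)) = \log(\varpi_\A^c(n)) + O(\log(n)^2)$ and argue that the comparison error is subleading — is exactly right, and you correctly identify that the error budget is $\exp(O(\log(n)^2))$. However, the mechanism you propose for the comparison does not work, for two reasons, one structural and one quantitative.

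\textbf{The structural gap.} Your proposed spectral sequence has $\H^*(\Mfg;\,\omega^u\otimes(\BP_*\F_p)^{\otimes s_2})$ on its $E_1$-page and \emph{converges to} $\H^*(\A)$. Such a spectral sequence only ever yields $\rank(E_\infty)\leq\rank(E_1)$, i.e.\ an upper bound on $\varpi_\A^c$ in terms of $\Mfg$-cohomology with coefficients. The direction you describe first — bounding $\varpifg^c(n)$ from above by $\varpi_\A^c(n)$ by observing that $\H^*(\Mfg;\omega^u)$ ``sits at the bottom'' as the $s_2=0$ row — cannot be extracted from this spectral sequence: the $s_2=0$ row is a piece of $E_1$, and there is no bound on a piece of $E_1$ in terms of $E_\infty$. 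You flag this yourself as needing ``a mild finiteness/convergence input,'' but no such input exists; differentials genuinely can kill arbitrarily large parts of the bottom row. The paper avoids this by routing both comparisons \emph{through the intermediate object} $\H^*(\Mfg;\,\omega^u\otimes V(\infty))\cong\H^{*,2*}(\mathcal{P})$ (change of rings to the polynomial part of the dual Steenrod algebra). In each of the four comparisons, the spectral sequence being invoked points in the correct ``$E_1\supseteq E_\infty$'' direction: the algebraic Novikov sseq computes $\H^*(\Mfg)$ \emph{from} $\H^*(\mathcal{P})$ (so it gives $\varpifg^c(n)\lesssim\varpifg^c(V(\infty),n)$), and the Cartan--Eilenberg sseq computes $\H^*(\A)$ \emph{from} $\H^*(\mathcal{P})$ (so it gives $\varpi_\A(n)\lesssim\varpifg^c(V(\infty),n)$); the reverse inequalities are not obtained from spectral sequences at all but from the cell count of $V(\infty)$ (which has at most $n$ cells through degree $n$, being exterior on classes in doubling degrees) together with the $\A$-comodule analog of \Cref{ah-bound}, and from the doubling isomorphism $\H^{s,t}(\A)\cong\H^{s,2t}(\mathcal{P})$ at $p=2$ (resp.\ the injectivity of the CE edge map at odd $p$). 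This asymmetric treatment — spectral sequence for the upper bounds, cell count and doubling for the lower bounds — is precisely what you are missing.

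\textbf{The quantitative gap.} Even granting your spectral sequence in the direction it \emph{does} yield a bound, the filtration estimate $s_2\lesssim\log(n)/\log(p)$ is wrong. The reduced comodule $\overline{\BP_*\F_p}$ begins in degree $2p-2$, so $(\overline{\BP_*\F_p})^{\otimes s_2}$ begins in degree $(2p-2)s_2$; the number of contributing rows is $s_2\lesssim n/(2p-2)=O(n)$, not $O(\log n)$. Consequently the sum over tensor powers does not stay within the $\exp(O(\log(n)^2))$ budget — the Hilbert series of the full tensor algebra on $\overline{\BP_*\F_p}$ has a pole strictly inside the unit disk, so its cumulative rank grows \emph{exponentially} in $n$. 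Contrast this with the two spectral sequences actually used in the paper: both have $E_0$-page $\F_p[q_0,q_1,\dots]\otimes\H^*(\mathcal{P})$ with $q_i$ in degree $\sim p^i$, and it is precisely the fact that these polynomial generators sit in \emph{doubling} degrees (so the monomial count is $\exp(\Theta(\log(n)^2))$ by \Cref{cor:steenrod-flexible}) that keeps the comparison error subleading. Replacing the polynomial factor $\F_p[q_0,q_1,\dots]$ by a tensor algebra on $\overline{\BP_*\F_p}$, as your spectral sequence would do, blows the error budget.
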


We prove \Cref{thm:mfg-size} by comparing the moduli of formal groups with the moduli of height $\infty$ formal groups and then comparing the latter with the Steenrod algebra. We begin by recalling the standard (non-quantitative) version of these comparisons.

\begin{rec}
  The Thom reduction map factors through the height infinity locus in $\Mfg$ giving us maps of graded Hopf algebroids
  \begin{center}
    \begin{tikzcd}[row sep=tiny]
      (\BP_*,\, \BP_*\BP) \ar[r] & (\F_p,\, \F_p[t_1,\dots]) \ar[r] & (\F_p,\, \AA). \\
      \begin{matrix} \text{moduli of} \\ \text{formal groups} \end{matrix} & \begin{matrix} \text{moduli of height } \infty \\ \text{formal groups} \end{matrix} & \begin{matrix} \text{Steenrod} \\ \text{algebra} \end{matrix}
    \end{tikzcd}
  \end{center}  
  The middle term can be described as
  either the moduli of height $\infty$ formal groups or
  as the polynomial part of the dual Steenrod algebra, $\mathcal{P}$.
  The inclusion $\mathcal{P} \to \A$ extends to a short exact sequence of Hopf algebras
  \[ \mathcal{P} \to \AA \to \mathcal{E} \]
  where $\mathcal{E}$ is a graded Hopf algebra generated by primitive exterior classes $\tau_i$ in degrees of the form $2p^i - 1$.
  Associated to this short exact sequence we have a Cartan--Eilenberg sseq with signature
  \[ \H^{s,t}( \mathcal{P};\, \F_p[q_0,\dots] ) \cong {}^{\scaleto{\mathrm{CE}}{4pt}}\!E_2^{s,t,u} \Longrightarrow \H^{s+u,t}(\A) \]
  
  The first map is the inclusion of the closed substack obtained by killing the regular sequence $p, v_1, \dots$ in $\BP_*$.
  In particular, we have a $\BP_*\BP$-comodule algebra 
  $V(\infty) \coloneqq \BP_*/(p,v_1,\dots)$
  and a change of rings isomorphism
  \begin{align}
    \H^{s}(\Mfg; \omega^{\otimes u} \otimes V(\infty)) \cong \H^{s,2u}(\mathcal{P}) \label{eqn:vinf}
  \end{align}  
  The descent sseq associated to the map of $\BP_*\BP$-comodule algebras $\BP_* \to V(\infty)$ is the algebraic Novikov sseq which has signature
  \[ \H^{s,t}( \mathcal{P};\, \F_p[q_0,\dots] ) \cong {}^{\scaleto{\mathrm{alg.N}}{4.2pt}}\!E_2^{s,t,u} \Longrightarrow \H^{s,t}(\A). \]
  \tqed
\end{rec}

We now proceed to make these comparisons quantitative.

\begin{lem} \label{lem:mfg-lower}
  $ \log(\varpifg^c(V(\infty),\,n)) - \log(n) \leq \log(\varpifg^c(n)) $.
\end{lem}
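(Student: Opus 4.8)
The statement to prove is $\log(\varpifg^c(V(\infty),\,n)) - \log(n) \leq \log(\varpifg^c(n))$, where $V(\infty) = \BP_*/(p, v_1, \dots)$ and $\varpifg^c$ is the cumulative rank valuation on $\check{\mathcal{D}}(\Mfg)$. The plan is to exhibit $V(\infty)$ as an object built cheaply out of the unit $\BP_*$ (equivalently, the structure sheaf $\mathcal{O}_{\Mfg}$) in the derived category of comodules, and then apply the general machinery of Section \ref{sec:val}, in particular the analog of \Cref{cor:AH}/\Cref{ah-bound} recorded in \Cref{dfn:mfg-size}. The point is that $V(\infty)$ is a Koszul-type quotient: it is the colimit (or a limit of finite stages) of the regular sequence $p, v_1, v_2, \dots$ acting on $\BP_*$, so each finite truncation $V(k) = \BP_*/(p,\dots,v_{k-1})$ sits in a cofiber sequence $\Sigma^{|v_k|} V(k) \xrightarrow{v_k} V(k) \to V(k+1)$, and hence $V(k)$ lies in the thick subcategory generated by the unit with a controlled ``cell count.''

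\textbf{Step 1.} First I would set up the cell structure of $V(\infty)$ over the unit. Using \Cref{cor:AH}'s analog for $\Mfg$ (stated in \Cref{dfn:mfg-size}), we have $\varpifg^c(V(\infty),\,n) \leq h_{\mathrm{fg}}^c(V(\infty),\,n) \cdot \varpifg^c(n)$, so it suffices to bound $h_{\mathrm{fg}}^c(V(\infty),\, n) \leq n$, i.e.\ that the total number of cells of $V(\infty)$ in internal degrees contributing through stem $\leq n$ is at most $n$. But $h_{\mathrm{fg}}(V(\infty),\,n)$ is computed by tensoring with $\BP_*\F_p$ and taking cohomology, which just records the underlying graded $\F_p$-module of $V(\infty)$; since $V(\infty) = \F_p[v_1, v_2, \dots]/(\dots) \cong \F_p$ concentrated appropriately — wait, more precisely $V(\infty) = \BP_*/(p, v_1, \dots)$ is $\F_p$ in degree $0$ as a graded ring, but as a $\BP_*\BP$-comodule its minimal cell structure over the unit is governed by the Koszul complex on the $v_i$'s.

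\textbf{Step 2.} The key computation: the minimal cell structure of $V(\infty)$ as an object of $\check{\mathcal{D}}(\Mfg)$ is an exterior (Koszul) complex on classes dual to $p, v_1, v_2, \dots$, placed in bidegrees $(1, 0), (1, |v_1|), (1, |v_2|), \dots$ with $|v_i| = 2(p^i - 1)$. So the cells of $V(\infty)$ contributing through stem $\leq n$ are exactly the squarefree monomials $v_{i_1} \cdots v_{i_r}$ with $\sum_j (2(p^{i_j}-1) - 1) \leq n$ (accounting for the homological shift $s$ via $2u - s$). The number of such monomials is at most... here is where I should be careful: a priori a Koszul complex on infinitely many generators has exponentially many squarefree monomials in a given total degree. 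But the total degrees grow: only $i$ with $2(p^i-1) - 1 \le n$ can appear, i.e.\ $i \lesssim \log_p(n)$, and a squarefree monomial of degree $\le n$ in these uses few factors. A cleaner bound: the generating function is $\prod_{i \ge 0}(1 + t^{|v_i|-1})$ roughly, and its coefficients through degree $n$ sum to $\prod_{i : |v_i| \le n+1}(1 + \#\{\text{something}\})$... The honest claim the lemma wants is the weaker $h_{\mathrm{fg}}^c(V(\infty),\,n) \leq n$, or perhaps $\leq C\cdot n$ with the constant absorbed — I would verify that the number of squarefree monomials in the $v_i$ of stem $\leq n$ is at most $n$ (for $n$ large), which holds because each such monomial is determined by a subset of $\{i : p^i \lesssim n\}$, a set of size $O(\log n)$, subject to a total-degree constraint that forces the subset to be small; a crude count gives at most $2^{O(\log n)} = n^{O(1)}$, and then the $-\log(n)$ on the left of the lemma must be absorbing a $\log(n^{O(1)})$ — so in fact I suspect the lemma as stated uses a sharper linear bound $h^c_{\mathrm{fg}}(V(\infty),n) \le n$, which I would establish by noting that a squarefree monomial of stem $\le n$ with all $|v_{i_j}| - 1 \ge 1$ and distinct has its largest factor $v_{i_r}$ determined by at most $\log_p(n)$ choices and the rest forming a monomial of much smaller degree, inducting to get a polynomial-in-$n$, then checking the exponent is $1$ after the dust settles, or simply citing that the relevant count is $\le n$ directly from the shape of the degrees.

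\textbf{Step 3 (conclusion).} Granting $h_{\mathrm{fg}}^c(V(\infty),\, n) \le n$, I combine with the multiplicativity from \Cref{dfn:mfg-size}: $\varpifg^c(V(\infty),\,n) \le h_{\mathrm{fg}}^c(V(\infty),\,n)\cdot \varpifg^c(n) \le n \cdot \varpifg^c(n)$, and taking $\log$ gives $\log(\varpifg^c(V(\infty),\,n)) \le \log(n) + \log(\varpifg^c(n))$, which rearranges to the claim. \textbf{The main obstacle} is Step 2: pinning down the exact shape of the minimal cell structure of $V(\infty)$ over the structure sheaf of $\Mfg$ — i.e.\ recognizing it as the Koszul complex on $(p, v_1, v_2, \dots)$ — and then getting the counting of squarefree monomials of bounded stem to land at the clean linear bound $\le n$ rather than merely $n^{O(1)}$; the latter would still give the theorem's asymptotics but not the literal statement, so the savings must come from the sparsity of the degrees $|v_i| = 2(p^i - 1)$, which grow geometrically and thus permit only $O(\log n)$ distinct factors and, under the degree constraint, only a polynomial (in fact, after the geometric growth is exploited, essentially linear) number of monomials.
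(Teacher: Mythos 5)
Your approach is exactly the paper's: apply the $\Mfg$-analog of \Cref{ah-bound} to $V(\infty)$ and bound its cell count by $n$. The counting worry you flag in Step 2 is the right thing to worry about (the paper itself simply asserts ``through dimension $n$ this exterior algebra has at most $n$ cells'' without proof), but it resolves in your favor and your instinct about the sparsity of degrees is correct. Two small corrections that close the argument. First, the Koszul generator for $v_i$ lives in bidegree $(s, 2u) = (-1,\,2p^i - 2)$, hence in stem $2u - s = 2p^i - 1$; you wrote $s = 1$ giving $2p^i - 3$, a harmless sign slip. Second, the linear count: for $p = 2$ a cell is indexed by a finite subset $S \subseteq \Z_{\geq 0}$ and sits in stem $\sum_{i\in S}(2^{i+1} - 1) = 2\bigl(\sum_{i\in S}2^i\bigr) - |S|$, so a stem bound of $n$ forces $|S| = O(\log n)$ and hence the binary value $\sum_{i \in S} 2^i \leq (n+|S|)/2 \approx n/2$; since distinct subsets give distinct binary values, the cell count through stem $n$ is at most $n/2 + O(\log n) \leq n$ for $n$ large. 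For odd $p$ the count is smaller still (on the order of $n^{\log 2 / \log p}$). With $h_{\mathrm{fg}}^c(V(\infty),n) \leq n$ established, your Step 3 gives the lemma as stated.
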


\begin{proof}
  As a sheaf on $\Mfg$ the object $V(\infty)$ has cells which look like an exterior algebra on classes in degrees of the form $(-1, 2p^{n} - 2)$ i.e. in topological degree $2p^n -1$. Through dimension $n$ this exterior algebra has at most $n$ cells.
  Applying the analog of \Cref{ah-bound} we obtain the bound
  \[ \varpifg^c(V(\infty),\,n) \leq \h_{\mathrm{fg}}^c(V(\infty),\,n) \cdot \varpifg^c(n) \leq n \cdot \varpifg^c(n). \]
    
\end{proof}

\begin{lem} \label{lem:mfg-upper}
  $ \log(\varpifg^c(n)) \leq \log(\varpifg^c(V(\infty),\,n)) + O(\log(n)^2) $.
\end{lem}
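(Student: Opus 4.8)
The plan is to bound $\varpifg^c(n)$ in terms of $\varpifg^c(V(\infty),\,n)$ by running the descent (algebraic Novikov) spectral sequence backwards: instead of building $V(\infty)$ out of the unit by a Koszul-type filtration, we build the unit out of $V(\infty)$. Concretely, $V(\infty) = \BP_*/(p, v_1, v_2, \dots)$ is obtained from $\BP_*$ by killing the regular sequence $p, v_1, v_2, \dots$, so the Koszul complex on this sequence exhibits (a completed version of) $\BP_*$ as built from $V(\infty)$ by iterated extensions. The problem is that this is an infinite process, so one must truncate: through topological degree $n$ only finitely many of the classes $v_i$ (those with $|v_i| = 2p^i - 2 \leq n$, i.e.\ $i \lesssim \log_p(n)$) are relevant, and killing the remaining ones changes cohomology only in degrees $> n$. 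So the first step is to make precise the claim that, through degree $n$, the unit $\mathcal{O}_{\Mfg}$ lies in the thick subcategory generated by $V(\infty)$ using only $\exp(O(\log n))$-many copies of shifts of $V(\infty)$ — more precisely, the Koszul resolution on $p, v_1, \dots, v_k$ with $k = \lceil \log_p(n) \rceil$ has $2^{k+1} = \exp(O(\log n))$ terms, each a shift of $V(\infty)$, and agrees with the unit through the relevant range.

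The second step is to feed this into the valuation formalism. Applying the analog of \Cref{ah-bound} (as promised in \Cref{dfn:mfg-size}) for $\check{\mathcal{D}}(\Mfg)$, or really just the cumulative multiplicativity from \Cref{lem:ur-AH} together with conditions (2)–(4) of \Cref{dfn:height}, we get
\[ \varpifg^c(n) = \varpifg^c(\mathcal{O},\,n) \leq P(\sigma) \cdot \varpifg^c(V(\infty),\,n), \]
where $P(\sigma) \in \N[\sigma^{\pm 1}]$ records the shifted copies of $V(\infty)$ used, hence has total coefficient sum $\exp(O(\log n))$; since $\sigma$ decreases the cumulative valuation, $P(\sigma) \cdot \varpifg^c(V(\infty),\,n) \leq \exp(O(\log n)) \cdot \varpifg^c(V(\infty),\,n)$. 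Taking $\log$ turns the multiplicative error into an additive $O(\log n)$ — actually better than the claimed $O(\log(n)^2)$, but the weaker bound suffices and leaves room for whatever bookkeeping the truncation requires. One should double-check the degrees: $v_i$ sits in topological degree $2p^i-2$ and cohomological degree $-1$ (it is a ``cell'' of $V(\infty)$ as a sheaf, matching the description in \Cref{lem:mfg-lower} of the exterior-algebra cell structure), so the Koszul complex on $p, v_1, \dots, v_k$ has cells in a range of topological degrees bounded by $\sum_{i\leq k} (2p^i-2) = O(n)$ — wait, this is the subtle point.

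The main obstacle, and the reason the error is stated as $O(\log(n)^2)$ rather than $O(\log n)$, is controlling which truncation actually suffices. Killing $v_1, \dots, v_k$ for $k \approx \log_p(n)$ produces new classes in the Koszul complex in internal degrees up to roughly $2p^k \approx n$, but \emph{products} of these Koszul exterior classes land in degrees up to $\sum_{i=1}^k (2p^i - 2) = O(p^k) = O(n)$, so the top cell of the truncated Koszul complex is in degree $O(n)$, which is fine, but to be safe about the agreement ``through degree $n$'' one may need to take $k$ slightly larger, say $k = O(\log n)$ with a bigger implied constant, and then the number of Koszul terms is $2^{O(\log n)} = n^{O(1)} = \exp(O(\log n))$ still — so in fact the bound really is $O(\log n)$ and a fortiori $O(\log(n)^2)$. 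The genuinely careful point is ensuring the truncated Koszul complex agrees with $\mathcal{O}_{\Mfg}$ in the right range of \emph{cohomological} degrees as well; here one invokes that $\H^s(\Mfg; \omega^{\otimes u})$ vanishes for $s$ large relative to $2u-s$ along a line (the weight/vanishing structure of \Cref{dfn:mfg-size}), so the contributions of the omitted generators $v_{k+1}, v_{k+2}, \dots$ to cohomology in stems $\leq n$ vanish. Assembling these estimates gives the stated inequality, completing the proof.
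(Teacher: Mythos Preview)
Your proposal has a fundamental direction error. The Koszul complex on the regular sequence $p, v_1, v_2, \dots$ is a resolution of $V(\infty)$ by free $\BP_*$-modules: it exhibits $V(\infty)$ as built from shifted copies of the unit, via an \emph{exterior} algebra of cells. That is precisely what is used in \Cref{lem:mfg-lower}, where the cells of $V(\infty)$ are described as ``an exterior algebra on classes in degrees of the form $2p^n-1$.'' It does \emph{not} build the unit from $V(\infty)$, and your sentence ``the Koszul complex on this sequence exhibits (a completed version of) $\BP_*$ as built from $V(\infty)$'' is backwards.

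To build the unit from $V(\infty)$ you need the Koszul-dual picture: the $I$-adic filtration on $\BP_*$, whose associated graded is the \emph{polynomial} algebra $\F_p[q_0,q_1,\dots]$ with $q_i$ in degree $|v_i|=2p^i-2$. This is exactly the algebraic Novikov spectral sequence, which is what the paper invokes. The number of polynomial monomials of degree $\leq n$ in generators of degree $\approx p^i$ grows like $\exp(\Theta(\log(n)^2))$ by the Mahler-type estimate (\Cref{cor:steenrod-flexible}), and that is the true source of the $O(\log(n)^2)$ error term. Your puzzlement about why the error is not $O(\log n)$, and your claimed sharper bound, are both symptoms of having confused exterior Koszul with polynomial Koszul-dual: you cannot get away with $2^{k+1}$ copies, because exterior-vs-polynomial is exactly the asymmetry between the two directions of \Cref{lem:mfg-lower} and \Cref{lem:mfg-upper}.
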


\begin{proof}
  The algebraic Novikov sseq computes the cohomology of the moduli of formal groups from the cohomology of $\mathcal{P}$ and can be constructed to have an $E_0$-page given by
  \[ \F_p[q_0, q_1, \dots ] \otimes \H^{*,*}(\mathcal{P}) \]
  where $q_i$ corresponds to the class $v_i$ in degree $(0, 2p^n - 2)$.
  From this $E_0$-page we obtain an upper bound on $\varpifg^c(n)$ in terms of $\varpifg^c(V(\infty),\,n)$.
  This bound says that $\varpifg^c(n)$ is at most $\varpifg^c(V(\infty),\,n)$
  times the number of monomials in this polynomial algebra of degree at most $n$, which grows like $\exp(\Theta(\log(n)^2))$ by \Cref{cor:steenrod-flexible}. \qedhere
  
\end{proof}

The proofs that the cohomology of $\A$ and the cohomology of $\mathcal{P}$ grow at roughly the same rate are very similar to the proofs which appeared in Lemmas \ref{lem:mfg-lower} and \ref{lem:mfg-upper}.


\begin{lem} \label{lem:A-lower}
  $ \log(\varpifg^c(V(\infty),\,n)) - \log(n) \leq \log(\varpi_{\A}(n)) $.   
\end{lem}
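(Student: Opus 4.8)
The plan is to mirror the proof of \Cref{lem:mfg-lower}, using the Cartan--Eilenberg spectral sequence associated to the short exact sequence of Hopf algebras $\mathcal{P} \to \A \to \mathcal{E}$ in place of the algebraic Novikov spectral sequence. Recall from the change-of-rings isomorphism \eqref{eqn:vinf} that $\varpifg^c(V(\infty),\,n)$ records (cumulatively, in topological degree) the rank of $\H^{s,2u}(\mathcal{P})$ as an $\F_p[q_0,\dots]$-module. So it suffices to compare $\H^{*,*}(\mathcal{P})$ with $\H^{*,*}(\A)$.

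First I would note that $\A$ is built from $\mathcal{P}$ by an exterior extension: the Cartan--Eilenberg spectral sequence has the form
\[ \H^{s,t}(\mathcal{P};\, \H^{u,*}(\mathcal{E})) \Longrightarrow \H^{s+u,t}(\A), \]
and $\H^{*,*}(\mathcal{E})$ is a polynomial algebra on classes $q_i$ (the cohomology of an exterior algebra on primitives $\tau_i$ in degree $2p^i - 1$), one for each $i \geq 0$. Dually, this says $\A$ as a $\mathcal{P}$-comodule has cells that look like an exterior algebra on classes in degrees $\{2p^i - 1\}_{i \geq 0}$, of which there are at most $\log_p(n) + O(1)$ with topological degree $\leq n$, hence at most $n$ cells through dimension $n$ — exactly as in \Cref{lem:mfg-lower}. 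The key point is that $\mathcal{P} \subseteq \A$ is a sub-comodule-algebra (or, geometrically, the moduli of height $\infty$ formal groups is a closed substack of $\Spec$ of the Steenrod algebra), so $\F_p$, viewed as a $\mathcal{P}$-comodule via restriction, has bounded cells, and I can apply the analog of \Cref{ah-bound} (valid here by the weight structure on $\check{\mathcal{D}}(\mathcal{P})$, as invoked in \Cref{dfn:mfg-size}): writing $\varpi_{\mathcal{P}}$ for the valuation measuring $\rank_{\F_p[q_0]}$ of $\H^{*,*}(\mathcal{P})$, we get
\[ \varpi_{\mathcal{P}}^c(\F_p,\,n) \leq h_{\mathcal{P}}^c(\A,\,n) \cdot \varpi_{\mathcal{P}}^c(\F_p\text{ viewed over }\mathcal{P},\,n) \leq n \cdot \varpi_{\A}^c(n), \]
where I have used that tensoring up along $\mathcal{P} \to \A$ and taking cohomology computes the $\A$-cohomology, so the "cells-of-$\A$-as-$\mathcal{P}$-comodule against $\varpi_{\mathcal{P}}$" bound lands inside $\varpi_{\A}^c$. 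Combining with the identification $\varpifg^c(V(\infty),\,n) = \varpi_{\mathcal{P}}^c(\F_p,\,n)$ from \eqref{eqn:vinf} and taking logs yields the claimed inequality $\log(\varpifg^c(V(\infty),\,n)) - \log(n) \leq \log(\varpi_{\A}^c(n))$.

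The main obstacle — really the only place requiring care — is making precise the bookkeeping that passes between three categories ($\check{\mathcal{D}}(\mathcal{P})$, $\check{\mathcal{D}}(\A)$, and via \eqref{eqn:vinf}, $\check{\mathcal{D}}(\Mfg)$) and checking that the "number of cells" estimate for $\A$ as a $\mathcal{P}$-comodule is genuinely $O(n)$ (or even $\exp(O(\log n))$, which is all that is needed after taking logs). Since $\H^{*,*}(\mathcal{E}) = \F_p[q_0, q_1, \dots]$ with $|q_i|$ of topological degree $2p^i - 2$, one might worry the cell count grows; but the relevant count for the lower-bound direction is the number of \emph{exterior} generators $\tau_i$ with degree $\leq n$, which is logarithmic, so the exterior algebra they generate has polynomially-many cells through degree $n$, and $\log$ of that is $O(\log n)$ — subsumed into the $-\log(n)$ slack. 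Everything else is a direct transcription of the argument for \Cref{lem:mfg-lower}, with the algebraic Novikov spectral sequence replaced by the Cartan--Eilenberg spectral sequence.
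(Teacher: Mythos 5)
Your instinct to mirror the proof of \Cref{lem:mfg-lower} via a cell-counting argument is a genuinely different route from what the paper does, and it can be made to work, but as written the key chain of inequalities is circular. You write
\[ \varpi_{\mathcal{P}}^c(\F_p,\,n) \leq h_{\mathcal{P}}^c(\A,\,n) \cdot \varpi_{\mathcal{P}}^c(\F_p\text{ viewed over }\mathcal{P},\,n), \]
but ``$\F_p$ viewed over $\mathcal{P}$'' is just the unit again, so the right-hand factor \emph{is} $\varpi_{\mathcal{P}}^c(\F_p,n)$ and the inequality is vacuous. The subsequent step, that this is $\leq n \cdot \varpi_\A^c(n)$, then secretly asserts $\varpi_{\mathcal{P}}^c(\F_p,n) \leq \varpi_\A^c(n)$ with no argument. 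The issue is that applying \Cref{ah-bound}-style reasoning in $\check{\mathcal{D}}(\mathcal{P})$ to the object $\A$ only bounds $\varpi_{\mathcal{P}}^c(\A,n)$ (which is tiny, since $\A$ is cofree over $\mathcal{P}$) in terms of $\varpi_{\mathcal{P}}^c(\F_p,n)$---the wrong direction. To actually replicate the mechanism of \Cref{lem:mfg-lower} you need to work in $\check{\mathcal{D}}(\A)$ with the object $\mathcal{E} = \A \,\square_{\mathcal{P}}\, \F_p$ (the exterior part), so that the Shapiro-type change-of-rings isomorphism $\H^{s,t}(\A;\,\mathcal{E}) \cong \H^{s,t}(\mathcal{P})$ puts $\H^*(\mathcal{P})$ in the role that equation \eqref{eqn:vinf} gives to $V(\infty)$. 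Then $h_\A^c(\mathcal{E},n) \leq n$ (exterior algebra on $O(\log n)$ generators in degrees $2p^i-1$) and \Cref{ah-bound} in $\check{\mathcal{D}}(\A)$ gives $\varpi_\A^c(\mathcal{E},n) \leq n \cdot \varpi_\A^c(n)$; converting $\rank_{\F_p}$ to $\rank_{\F_p[q_0]}$ costs at most one more factor of $n$, yielding $\log(\varpifg^c(V(\infty),n)) - 2\log(n) \leq \log(\varpi_\A^c(n))$, which is weaker than the lemma's $-\log(n)$ but still sufficient for its use in \Cref{thm:mfg-size}.

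For contrast, the paper proves \Cref{lem:A-lower} by a direct algebraic comparison with no cell-counting at all: at odd primes the Cartan--Eilenberg spectral sequence for $\mathcal{P} \to \A \to \mathcal{E}$ degenerates and the edge map gives an injection $\H^{s,t}(\mathcal{P}) \hookrightarrow \H^{s,t}(\A)$, while at $p=2$ one instead uses the doubling isomorphism $\H^{s,t}(\A) \cong \H^{s,2t}(\mathcal{P})$ and trades $\rank_{\F_p}$ for $\rank_{\F_p[h_1]}$ at the cost of a single factor of $n$. This is tighter than what your cell-counting approach produces, though both suffice.
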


\begin{proof}
  At odd primes the degeneration of the Cartan--Eilenberg sseq provides us with an injective map
  $ \H^{s,t}(\mathcal{P}) \to \H^{s,t}(\AA) $
  from which we obtain the desired lower bound.

  For $p=2$ we use the doubling isomorphism between $\AA$ and $\mathcal{P}$.
  This is an isomorphism
  $ \H^{s,t}(\AA) \cong \H^{s,2t}(\mathcal{P}) $
  from which we have
  \begin{align*}
    \varpifg^c(V(\infty),\,n)
    &= \mathrm{rank}_p\left( \oplus_{2u-s \leq n} \mathrm{H}^{s,2u}(\mathcal{P}) \right) \\
    &\leq n \cdot \mathrm{rank}_{\F_p[h_1]} \left( \oplus_{2u-s \leq n} \mathrm{H}^{s,2u}(\mathcal{P}) \right) \\
    &= n \cdot \mathrm{rank}_{\F_p[h_0]} \left( \oplus_{2u - s \leq n} \mathrm{H}^{s,u}(\A) \right) \\
    &\leq n \cdot \mathrm{rank}_{\F_p[h_0]} \left( \oplus_{u - s \leq n} \mathrm{H}^{s,u}(\A) \right) \\
    &= n \cdot \varpi_{\A}(n)
  \end{align*}
  \qedhere
  
    
\end{proof}

\begin{lem} \label{lem:A-upper}
  $ \log(\varpi_{\A}(n)) \leq \log(\varpifg^c(V(\infty),\,n)) + O(\log(n)^2) $.
\end{lem}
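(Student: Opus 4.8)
The plan is to run the Cartan--Eilenberg spectral sequence in the reverse direction from how it was used in the previous lemma, bounding the target by the source together with a correction term coming from the exterior part $\mathcal{E}$. Recall that this spectral sequence has signature
\[ \H^{s,t}(\mathcal{P};\, \F_p[q_0, q_1, \dots]) \cong {}^{\scaleto{\mathrm{CE}}{4pt}}\!E_2^{s,t,u} \Longrightarrow \H^{s+u,t}(\A), \]
where the $u$-grading records the homological degree in the exterior Hopf algebra $\mathcal{E}$, which is itself an exterior algebra on primitive classes $\tau_i$ in internal degree $2p^i - 1$. Concretely, the $E_1$-page (or $E_2$-page) can be arranged so that it is $\H^{*,*}(\mathcal{P})$ tensored with a polynomial-type contribution from $\Ext_{\mathcal{E}}(\F_p, \F_p)$, which is a divided power / polynomial algebra on classes dual to the $\tau_i$ sitting in degree $(1, 2p^i - 1)$. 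Since a class on the abutment in topological degree $\leq n$ is a subquotient of the $E_2$-page in total topological degree $\leq n$, and $q_0$ on the source corresponds to $q_0$ (i.e. $h_0$) on the target, we obtain
\[ \varpi_{\A}^c(n) \;\leq\; \varpifg^c(V(\infty),\, n)\cdot \#\{\text{monomials in }\Ext_{\mathcal{E}}(\F_p,\F_p)\text{ of topological degree} \leq n\}, \]
where we have used the change-of-rings isomorphism \eqref{eqn:vinf} to identify $\H^{s,2u}(\mathcal{P})$ with $\H^s(\Mfg; \omega^{\otimes u}\otimes V(\infty))$.

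First I would make precise the statement that the Cartan--Eilenberg $E_2$-page is $\H^{*,*}(\mathcal{P})\otimes_{\F_p} \Ext_{\mathcal{E}}^{*,*}(\F_p,\F_p)$ as a bigraded $\F_p[q_0]$-module --- this is standard since $\mathcal{E}$ is a (primitively generated, hence cocommutative) exterior Hopf algebra, so its Ext is a free graded-commutative algebra, and the extension $\mathcal{P}\to\A\to\mathcal{E}$ of Hopf algebras gives exactly this CE spectral sequence. Then I would count monomials: $\Ext_{\mathcal{E}}(\F_p,\F_p)$ has one polynomial generator in each topological degree $2p^i - 1$ for $i\geq 0$ (at odd primes; at $p=2$ the same degrees appear with the appropriate indexing), so the number of monomials of total degree $\leq n$ is exactly the kind of quantity estimated in Appendix \ref{app:combo}. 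By \Cref{cor:steenrod-flexible} this count is $\exp(\Theta(\log(n)^2))$, hence its logarithm is $O(\log(n)^2)$. Taking logs of the displayed inequality gives exactly $\log(\varpi_{\A}(n)) \leq \log(\varpifg^c(V(\infty),\,n)) + O(\log(n)^2)$, noting that $\varpi_{\A}(n)$ and $\varpi_{\A}^c(n)$ differ by at most a factor polynomial in $n$, which is absorbed into the error term.

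The main obstacle I anticipate is purely bookkeeping: matching up the $(s,t,u)$ trigrading on the CE spectral sequence with the $(2u-s)$-style topological grading used in the definition of $\varpifg^c$, and making sure the $\F_p[q_0]$-module structure (rather than raw $\F_p$-dimension) is tracked correctly through the spectral sequence so that the inequality $\varpi_{\A}^c(n)\leq \varpifg^c(V(\infty),n)\cdot(\text{monomial count})$ genuinely holds on the level of $\F_p[q_0]$-ranks. This is the same kind of argument already executed in \Cref{lem:mfg-upper} via the algebraic Novikov spectral sequence --- indeed, the two arguments are formally parallel, with $\mathcal{E}$ playing the role that $\F_p[q_1, q_2, \dots]$ played there --- so the machinery of \Cref{sec:val} (valuations are subadditive in cofiber sequences, hence bounded by $E_2$-pages of spectral sequences) applies directly and no genuinely new idea is needed.
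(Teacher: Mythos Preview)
Your proposal is correct and follows essentially the same approach as the paper: both use the Cartan--Eilenberg spectral sequence associated to $\mathcal{P}\to\A\to\mathcal{E}$, identify its $E_2$-page (the paper phrases it as an $E_0$-page) as $\H^{*,*}(\mathcal{P})\otimes\F_p[q_0,q_1,\dots]$ with $q_i$ in degree $(1,2p^i-1)$, and then invoke \Cref{cor:steenrod-flexible} to bound the monomial count by $\exp(\Theta(\log(n)^2))$. Your extra care about tracking the $\F_p[q_0]$-rank corresponds exactly to the paper's parenthetical ``after dropping $q_0$''.
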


\begin{proof}
  The Cartan--Eilenberg sseq computes the cohomology of the Steenrod algebra from the cohomology of $\mathcal{P}$ and can be constructed to have an $E_0$-page given by
  \[ \F_p[q_0, q_1, \dots ] \otimes \H^{*,*}(\mathcal{P}) \]
  where $q_i$ is in degree $(1, 2p^n - 1)$.
  From this $E_0$-page we obtain an upper bound on $\varpi_{\A}(n)$ in terms of $\varpifg^c(V(\infty),\,n)$.
  This bound says that $\varpi_{\A}(n)$ is at most $\varpifg^c(V(\infty),\,n)$ times the number of monomials in this polynomial algebra of degree at most $n$ (after dropping $q_0$).
  By \Cref{cor:steenrod-flexible} the rank of this polynomial algebra grows like $\exp(\Theta(\log(n)^2))$.
\end{proof}

The first claim in \Cref{thm:mfg-size} follows from Lemmas \ref{lem:mfg-lower}, \ref{lem:mfg-upper}, \ref{lem:A-lower} and \ref{lem:A-upper}. The second claim is a reformulation of the first claim using the bounds from \Cref{thm:steenrod-precise}.

\begin{rmk}
  In our comparison between the Adams and Adams--Novikov $E_2$-pages we first removed all the $v_i$ and then put them back in again. This implies that the presence of the polynomial generators $v_i$ does not impact the leading order term in the growth rate of these $E_2$-pages. More succinctly: the size of any chromatic family is vanishingly small compared with the total number of chromatic families.
  \tqed
\end{rmk}


\section{Motivic stable stems over $\C$}
\label{sec:motivic}
In this section we leverage our understanding of the cohomology of the moduli of formal groups to estimate the size of the $p$-complete bigraded motivic stable stems over $\C$.
Remarkably, unlike in the classical case where only weak lower bounds are known, motivically we can prove our upper bounds are sharp.
Before proceeding we briefly review some motivic homotopy theory to fix notation.

\begin{rec} \label{rec:motivic}
  The category of motivic spectra, $\SHC$, is a stable presentably symmetric monoidal category containing a dualizable object $\Sigma^\infty X$ for each smooth pointed variety $X/\C$ with unit $\Ss$.
  In this category we have a distinguished copy of $\Z^{\oplus 2}$ in the picard group corresponding to the pair of invertible objects $\Sigma \Ss$ and $\Sigma^\infty \G_m$.
  Taking maps in from these picard elements provides us with \emph{bigraded} motivic homotopy groups
  \[ \pi_{s,w}X := \left[ \Sigma^{s-w} (\Sigma^\infty\mathbb{G}_m)^{\otimes w}, X \right] . \]

  We say that $X$ is cellular if it lives in the full subcategory generated under colimits by the bigraded spheres. 
  The structure of the $p$-completion of the cellular motivic category is best understood through the element $\tau \in \pi_{0, -1}\Ss_p$.
  This class is non-nilpotent and acts as a deformation parameter where the special fiber ($\tau = 0$) is algebraic and the generic fiber ($\tau = 1$) is the usual category of $p$-complete spectra.
  Making this precise, the cofiber of $\tau$ is a commutative algebra and the category of cellular $C\tau$-modules is equivalent to an appropriate category of sheaves on the moduli of formal groups \cite{ctau}.
  As a consequence we have an isomorphism of homotopy groups\footnote{As historical note, this isomorphism first appeared as \cite[Proposition 6.2.5]{StableStems}, prefiguring the categorical statement above.}
  \[ \pi_{s,w}^{\C}(C\tau) \cong \H^{2w-s, 2w}(\Mfg)_p. \]
  
  Morel connectivity tells us that $\pi^\C_{s,w}\Ss_p = 0$ for $s < w$ \cite{Morelconn}.
  As a corollary $\Ss_p$ is $\tau$-complete and therefore the $p$-complete motivic stable stems also vanish when $s < 0$. In the $p=2$ case these vanishing results are sharp because of the non-nilpotent elements $\tau \in \pi_{0,-1}^{\C}\Ss_2$ and $\eta \in \pi_{1,1}^{\C}\Ss_2$. 
  \tqed
\end{rec}

\begin{cnstr} \label{cnstr:mot-size}
  We package the the $p$-complete motivic stable stems into a valuation
  \[ \varpimot(X,\,n) \coloneqq \mathrm{rank}_{\Z_p[\tau]} \left( \oplus_{s \leq n} \pi_{s,*}^{\C}(X_p) \right). \]
  As usual we work with a cumulative measure of size in order to make rotation easier.
  \tqed
\end{cnstr}

\begin{thm} \label{thm:mot-size}
  The $p$-complete motivic stable stems grow at essentially the same rate as the cohomology of the moduli of formal groups in the sense that
  \[ \frac{1}{2} \varpifg^c(n) \leq \varpimot(n) \leq \varpifg^c(n). \]
  In particular, through our bounds on the size of the cohomology of the moduli of formal groups we have
  \begin{align}
    \log( \varpimot(n) ) &\geq K_2\log(n)^3 + O(\log\log(n)\log(n)^2), \\
    \log( \varpimot(n) ) &\leq K_3\log(n)^3 + O(\log\log(n)\log(n)^2).
  \end{align}  
\end{thm}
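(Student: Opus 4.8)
The plan is to exploit the $\tau$-deformation directly. The key input is the identification $\pi_{s,w}^{\C}(C\tau) \cong \H^{2w-s,2w}(\Mfg)_p$ from \Cref{rec:motivic}, which tells us that the homotopy of the special fiber is exactly the cohomology of the moduli of formal groups, and hence that its cumulative rank valuation agrees (up to the reindexing between $\varpifg^c$ and the motivic stem grading) with $\varpifg^c(n)$. I would first set up the cofiber sequence $\Sigma^{0,-1}\Ss_p \xrightarrow{\tau} \Ss_p \to C\tau$ and the induced long exact sequence on bigraded homotopy groups. Since $\Ss_p$ is $\tau$-complete (Morel connectivity, as recalled), multiplication by $\tau$ is ``topologically nilpotent'' in the appropriate sense, so the $\Z_p[\tau]$-module structure on $\pi_{*,*}^{\C}\Ss_p$ is controlled by its reduction mod $\tau$. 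The plan is to extract from the long exact sequence the two inequalities: an upper bound $\varpimot(n) \leq \varpifg^c(n)$ coming from the fact that every $\Z_p[\tau]$-module generator of the source must map to a nonzero, $\tau$-torsion-or-free class, all of which are detected in $\pi_{*,*}(C\tau)$; and a lower bound $\tfrac12\varpifg^c(n) \leq \varpimot(n)$ coming from the fact that each class in $\pi_{*,*}(C\tau)$ is either hit from $\pi_{*,*}\Ss_p$ or supports a boundary into it, so at least ``half'' of the special-fiber rank must be accounted for by $\Z_p[\tau]$-generators upstairs.

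Concretely, for the upper bound I would argue as follows. Pick a minimal set of $\Z_p[\tau]$-module generators of $\bigoplus_{s\le n}\pi_{s,*}^{\C}(\Ss_p)$; a generator is either $\tau$-torsion or $\tau$-torsion-free. In either case its image in $\pi_{*,*}(C\tau)$, or the image of a suitable $\tau^k$-multiple, is nonzero (using $\tau$-completeness to rule out infinitely $\tau$-divisible elements), and distinct generators give $\Z_p[\tau]/\tau = \F_p$-independent — hence $\F_p$-independent after the change of rings to $\Z_p[\tau]$ — classes in the special fiber in the same range of stems. This yields $\varpimot(n)\le \varpifg^c(n)$ after matching the grading conventions (the $\oplus_{s\le n}$ on the motivic side versus $\oplus_{2u-s\le n}$ on the $\Mfg$ side — this is where the harmless reindexing happens and where one must be slightly careful, but it only costs lower-order terms, which are absorbed into the $O(\log\log n\,\log(n)^2)$ error in the displayed asymptotic consequences).

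For the lower bound, consider the long exact sequence
\[ \cdots \to \pi_{s,*}^{\C}(\Ss_p) \xrightarrow{\tau} \pi_{s,*+1}^{\C}(\Ss_p) \to \pi_{s,*}^{\C}(C\tau) \xrightarrow{\partial} \pi_{s-1,*}^{\C}(\Ss_p) \to \cdots \]
Each element of $\pi_{*,*}(C\tau)$ is either in the image of the reduction map (so it comes from a class whose existence is witnessed by a generator or $\tau$-multiple thereof upstairs) or maps nontrivially under $\partial$ to $\pi_{*,*}\Ss_p$ (so it detects a $\tau$-torsion class, which again contributes to, or is built from, the generating set). Counting: the special-fiber rank through stem $n$ is bounded above by (rank of reduction image) $+$ (rank of $\partial$ image), and each of those two is bounded by $\varpimot(n)$ (after matching gradings), giving $\varpifg^c(n) \le 2\,\varpimot(n)$. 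Finally, the two displayed asymptotic bounds on $\log(\varpimot(n))$ follow immediately by sandwiching with \Cref{thm:mfg-size}: the factor of $2$ and the grading reindexing only change the additive constant, hence are swallowed by the $O(\log\log(n)\log(n)^2)$ term. The main obstacle I anticipate is not any deep input — it is the bookkeeping around the grading mismatch between $\varpimot$ (graded by the first motivic index $s$) and $\varpifg^c$ (graded by $2u-s$) together with the weight ($*$) summation, and making sure the $\Z_p[\tau]$-module rank is correctly related to the $\F_p$-rank of the mod-$\tau$ reduction; once that is pinned down the two inequalities are a direct diagram chase in the $\tau$-Bockstein long exact sequence.
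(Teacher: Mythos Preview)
Your proposal is correct and follows essentially the same approach as the paper: subadditivity of the valuation $\varpimot$ on the cofiber sequence $\Ss_p \to C\tau \to \Sigma^{1,-1}\Ss_p$ gives the lower bound $\varpifg^c(n) \leq \varpimot(n) + \varpimot(n-1) \leq 2\varpimot(n)$, and Morel connectivity (ruling out $\tau$-divisible elements) gives the upper bound via a Nakayama-type argument. Your anticipated obstacle is not one: under the identification $\pi_{s,w}^{\C}(C\tau) \cong \H^{2w-s}(\Mfg;\omega^{\otimes w})$ the motivic stem index $s$ equals the topological degree $2u-s'$ exactly, so $\varpimot(C\tau,n) = \varpifg^c(n)$ on the nose with no reindexing and no lower-order correction needed.
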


\begin{proof}
  Using subadditivity of valuations on the cofiber sequence
  \[ \Ss_p^{0,0} \to C\tau \to \Ss_p^{1,-1} \]
  we obtain the lower bound
  \[ \varpifg^c(n) = \varpimot ( C\tau ,\, n) \leq \varpimot(n) + \varpimot(\Ss_p^{1,-1},\, n) = \varpimot(n) + \varpimot(n-1) \leq 2\varpimot(n). \]
  The upper bound comes from the fact that modding out by $\tau$ and moving from considering rank over $\Z_p[\tau]$ to rank over $\Z_p$ cannot decrease rank unless there was a $\tau$-divisible element.
  As described in \Cref{rec:motivic}, Morel connectivity implies there cannot be $\tau$-divisible elements.
\end{proof}

Unlike in previous sections where the valuation we used was essentially the only reasonable choice, in $\SHC$ things are much more flexible. We end this section by discussing why although the choices made in defining $\varpimot$ were arbitrary they also had essentially no effect on the measured size of the motivic stable stems.

\begin{enumerate}
\item Before $p$-completion motivic homotopy groups over $\C$ are largely mysterious and often uncountably infinite.\footnote{This is a consequence of the rational $K$-theory of $\C$ being enormous.} This forces our hand in $p$-completing.
\item In defining $\varpimot$ we took the rank over $\Z_p[\tau]$.
  Although it might at first appear that the choice of algebra is important\footnote{For example $k[x,y]$ as a $k$-module might grow like $O(n)$, but grows like $O(1)$ as a $k[x]$-module} and could influence the perceived growth rate, this turns out to not be the case. The function $\exp(K \log(n)^3)$ grows sufficiently rapidly that considering rank over a larger, but still finitely generated, algebra will not change the leading order of the growth rate---an algebra with $k$ generators grows like $\exp(k \log(n))$ at the fastest.\footnote{It's worth mentioning that in the classical case there is another reason the choice of algebra wouldn't matter: Nishida nilpotence implies that every finitely generated algebra is finite.}
\item In defining $\varpimot$ we took a sum over $s \leq n$ in order to get a single variable function.
  We chose this line for its compatibility with Betti realization and so that there are no ``negative'' homotopy groups.
  Vanishing of negative groups holds when we sum along any line between the two extremal lines $s=0$ and $s=w$. 
  However, using the fact that the generators of the motivic stable stems as a $\Z_p[\tau]$-modules are concentrated in the wedge cut out by $2w \geq s$ and $s \geq w$ we see that a valuation produced by summing along a line will have the property that there is a constant $N$ such that the size is between $\varpimot(n/N)$ and $\varpimot(Nn)$.
  The function $\exp(K \log(n)^3)$ grows slowly enough that replacing $n$ by $n/N$ or $Nn$ has no effect on the leading order growth rate.\footnote{Note that this would not be the case if things were growing exponentially fast.}
\end{enumerate}

\section{The telescope conjecture and stable stems}
\label{sec:telescope}

Although we could end the paper here,
we will instead press onward in pursuit of answers,
passing into the realm of conjecture and speculation.
Specifically, we now bring the telescope conjecture into our discussion. 
At this point most experts believe that the telescope conjecture,
which asserts that $T(h)$-localization and $K(h)$-localization are the same,
is unlikely to be true at heights $2$ and above.
Despite this an anti-telescope conjecture which provides a simple, conceptual reason for why this should be the case has yet to be formulated.
The best current understanding of telescopic homotopy comes from the the triple loop space approach of Mahowald, Ravenel and Schick \cite{MRS}.
In this approach they provide a collection of carefully chosen examples where the (expected) computational picture is relatively clear.

A key insight from the triple loop space approach
is that the telescope conjecture cannot fail by a small amount.
It must either be correct or fail violently.
In this section we seek to make this insight quantitative
by investigating the size of the telescopic stems.
To this end we begin by defining a function $\varpitel(X,\, n)$
which measures the size of the $T(h)$-local homotopy groups of
a type $h$ spectrum $X$.
Unlike in previous sections of this paper,
the parameter $n$ which usually represents the topological degree\footnote{As telescopic stems are \emph{periodic} there is little gain in thinking in terms of topological degree.}
instead represents depth in the localized Adams filtration.
Ideally we would then prove that as $X$ varies
$\varpitel$ provides a valuation on type $h$ spectra.
We were not able prove this.
Despite this we can still consider the measurements of telescopic homotopy provided by $\varpitel$.

In the second and third subsection we review the various conjectures of \cite{MRS} and
use them to compute the expected value of $\varpitel$ in certain examples.
Assuming that $\varpitel$ behaves as a valuation, we extrapolate from these cases to give a general conjecture on the value of $\varpitel$.

In the final subsection we return to the unlocalized world,
observing that $\varpitel$ cannot be large without
the stable homotopy groups of spheres also being large.
If our conjectures are correct,
this mechanism produces so many classes in the homotopy groups of spheres
that it saturates the upper bound from \Cref{sec:steenrod}.
This leads to the most surprising conclusion of this paper:
The size of the stable stems is dominated by the fate of the telescope conjecture.







\subsection{Measuring the size of telescopic stems}\ 




The localized Adams sseq, first introduced in \cite{Miller},
is a full-plane sseq with signature
\[ \AE_r^{s,t}(X)[q_h^{-1}] \Longrightarrow \pi_{t-s}(X[v_h^{-1}]), \]
constructed as a localization of the Adams sseq,
which converges to telescopic homotopy groups.
Note that in order to construct this localization $X$ must type $h$ and good (in the sense defined below\footnote{or we must move to a more modern perspective on the Adams sseq.}).


  


\begin{dfn}
  $X$ admits a \emph{good $v_h$-self map} if it has a self-map $v$ which induces a power of $v_h$ on $K(h)$-homology and also has a lift to a map acting along a line of slope $|v_h|^{-1}$ in the Adams sseq for $X$.
  \tqed
\end{dfn}


\begin{wrn}  
  There are many finite spectra which are not good.
  For example $\Cof(4)$ does not admit a good $v_1$-self map as
  its Adams $E_2$-page is just two copies of the $E_2$-page for the sphere.
  \tqed
\end{wrn}


The localized Adams sseq induces a filtration on the telescopic homotopy groups of a good type $h$ spectrum. By measuring the depth at which classes are born in this filtration we obtain a way of measuring the size of telescopic homotopy groups. 


\begin{dfn}  
  Suppose that $X$ is a type $h$ spectrum with a good $v_h$-self map $v$.
  Let $\mathrm{F}^s\pi_kX[v_n^{-1}]$ denote the filtration on $T(h)$-local homotopy groups of $X$ induced by the localized Adams sseq.
  Then we define
  \[ \varpitel(X,\, n) \coloneqq \frac{1}{|v|} \sum_{j=0}^{|v|-1} \log_p \left|  \mathrm{F}^{\frac{j}{|v_h|} - n}\pi_jX[v^{-1}] \right|. \]  
  \tqed
\end{dfn}

\begin{rmk}
  In the definition of $\varpitel$ we have measured the depth of classes below the line of slope $1/|v_h|$ on which $v$ acts and averaged over the periodicity of $X$.
  
  Alternatively we could have given a definition in terms of the sphere of origin, i.e. the minimal $n$ such that $x \in \pi_0X[v^{-1}]$ lifts to a class in $x \in \pi_nX$.
  As it turns out these definitions are commensurate (see \Cref{lem:tel-makes-classes}), so we have picked the one which we find easier to work with.
  \tqed
\end{rmk}

We would like to say that $\varpitel(-,\, n)$ extends to a valuation on the category of compact $T(h)$-local spectra and we conjecture that it (nearly) does.
There are three issues with this idea.

\begin{enumerate}
\item Not all compact type $h$ spectra admit good $v_h$-self maps.
\item We do not know whether every compact $T(h)$-local spectrum is the $T(h)$-localization of a compact type $h$-spectrum.
\item The function $\varpitel(-,\, n)$ is not obviously subadditive.
\end{enumerate}

The first two issues are not particularly difficult to resolve.
The subadditivity issue is much more substantial.
In a cofiber sequence $X \to Y \to Z$ if $Z$ has arbitrarily long Adams differentials,
then the Adams filtration of the image of a class from $X$ might jump upwards arbitrarily far, now being detected by a class on the $E_2$-page whose image in $Z$ is killed by a long differential.
Behavior of this sort breaks subadditivity (and certainly occurs).
However, there is a fix.
If we alter the target group of $\varpitel$ appropriately (by taking a quotient) then we can restore subadditivity.
For example if we assume that the localized Adams sseq collapses at a finite page, then it is enough to pass to the quotient by functions of the form $(1 - \sigma^k) \cdot \varpitel(X,\,n)$.
Unfortunately a theorem of this form is far out of reach at the moment.



\subsection{The triple loop space approach}\ 

The most direct strategy for disproving the telescope conjecture
is to find an $X$ for which the $T(h)$-local and $K(h)$-local homotopy groups can be computed (and then presumably, observed to be different).
The crux of this strategy lies in picking an $X$ for which these computations are as easy as possible without losing all information.

The triple loop space approach of \cite{MRS} focuses its attention on a sequence of $\E_1$-algebras $y(h)$ which we presently introduce.
Recall that the Hopkins--Mahowald theorem tells us that the Thom spectrum of the stable spherical bundle
$ \Omega^2 S^3 \to \mathrm{BGL}_1(\Ss_{(p)}) $
corresponding to the free double loop map on $(1+p) \in (\pi_0\Ss_{(p)})^{\times}$ is equivalent to $\F_p$.
Rewriting the source of this map as $\Omega(\Omega \Sigma) S^2$ we can consider the sequence of $\E_1$-algebras
\[ \Omega J_0 S^2 \to \Omega J_{p-1} S^2 \to \Omega J_{p^2-1} S^2 \to \cdots \to \Omega J_{p^h-1} S^2 \to \cdots \to \Omega J_\infty S^2 \]
coming from the James filtration\footnote{Note that we have restricted the filtration to integers of the form $(p^h-1)$.} on $\Omega \Sigma$.
Thomifying we obtain a diagram of $\E_1$-algebras
\[ \Ss \to y(1) \to y(2) \to \cdots \to y(h) \to \cdots \to \F_p. \]
These algebras have a number of convenient properties which make their telescopic homotopy groups particularly amenable to computation.
For the sake of brevity we will only discuss some of these properties\footnote{Somewhat humorously this means we have chosen to omit discussion of the \emph{triple loop} aspect of the triple loop space approach.}.

\begin{enumerate}
\item $y(h)$ is $T(j)$-acyclic for $j < h$ and has a polynomial subalgebra in its homotopy ring generated by classes $v_h, v_{h+1}, \dots, v_{2h}$.
  This gives us access to a simple formula for the $T(h)$-localization coming from inverting $v_h$.
  In other examples the first step in computing telescopic homotopy groups would be determining which self maps exist, which can be a rather delicate matter (see \cite{v1v2DavisMahowald} and \cite{BHHM} for example).
  Where convenient we will use $R(h)$ to denote the algebra $\F_p[v_h^{\pm 1}, v_{h+1}, \dots, v_{2h}]$. 
\item The Adams--Novikov sseq for $y(h)[v_h^{-1}]$ converges to the homotopy groups of the $K(h)$-localization.
  The $E_2$-page of this sseq is isomorphic to $R(h)$ tensored with the cohomology of a small congruence subgroup in the Morava stabilizer group.
  The congruence subgroup in question is in fact sufficiently small that its cohomology is isomorphic to that of its Lie algebra.
  Concretely we have 
  \[ \ANE_2^{s,t}(y(h)[v_h^{-1}]) \cong R(h) \left\langle h_{h+i,j}\ |\ 1 \leq i \leq h \text{ and } 0 \leq j \leq h-1 \right\rangle \]
where $|h_{h+i,j}| = (1,\ 2p^{h+i+j} - 2p^{j})$. \todo{check}  
\item Multiplication by the class $v_h$ from (1) is a good $v_h$-self map.
  The $E_2$-page of the localized Adams sseq is given by the $q_h$-localized cohomology described in
  \Cref{prop:mrs-e2}.
\end{enumerate}





From the nilpotence theorem we can deduce that every element outside $R(h)$ in the homotopy ring of $y(h)$ is nilpotent.
The following conjecture gives the expected pattern of differential which truncate the polynomial generators $b_{h+i,j}$ on the localized Adams $E_2$-page.

\begin{cnj}[{MRS differentials conjecture \cite[Conjecture 3.14]{MRS}}] \label{conj:mrs-phase-1}
  In the localized Adams sseq for $y(h)$ each class
  $h_{2h+i-j,j}$ survives to the $(2p^{j})^{\mathrm{th}}$-page,
  $b_{h+i,j}$ survives to the $(2p^{h-1}+1)^{\mathrm{st}}$-page
  and there are differentials
  \[ d_{2p^j}(h_{2h+i-j,j}) = q_h b_{h+i,h-1-j}^{p^j} \]
  for each $i > 0$ and $0 \leq j \leq h-1$.
\end{cnj}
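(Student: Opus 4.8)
\emph{Proof proposal (plan of attack).} The plan is to split the conjecture into a \emph{multiplicative reduction} and the construction of a short list of \emph{primary differentials}, and then to obtain the latter from the $v_h$-Bockstein together with a power-operation (or EHP-type) propagation argument. Since this is exactly the point at which the fate of the telescope conjecture enters, I do not expect any purely formal argument to suffice: the real content lies in producing the primary differentials and showing they are no shorter than claimed.

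\emph{Step 1 (set-up and reduction).} By \Cref{prop:mrs-e2} the localized Adams $E_2$-page is free over $\F_p[q_h^{\pm 1}, q_{h+1}, \dots, q_{2h}]$ on an exterior algebra on the $h_{i,j}$ and a polynomial algebra on the $b_{i,j}$ (for $i > h$, $0 \le j \le h-1$; at $p=2$ the $h_{i,j}$ are instead polynomial and one reads $b_{i,j} = h_{i,j}^2$), and the classes $q_{h+k}$ are permanent cycles because they detect the polynomial generators $v_{h+k} \in \pi_* y(h)$. Since the localized Adams spectral sequence is multiplicative and linear over this polynomial ring, the Leibniz rule reduces the whole differential pattern of \Cref{conj:mrs-phase-1} to: (i) the \emph{primary differentials} $d_{2p^j}(h_{2h+i-j,j}) = q_h\, b_{h+i,h-1-j}^{p^j}$ on generators, and (ii) the claim that the classes which \Cref{conj:mrs-phase-1} declares to survive to a prescribed page --- the displayed $h_{i,j}$, each $b_{i,j}$, and the monomials in the $q$'s --- really are permanent cycles through that range. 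Part of (ii) is a sparseness argument: from the internal bidegrees recorded in \Cref{prop:mrs-e2} one checks by hand that in the relevant tridegree nothing is available to be the source or target of a differential shorter than $d_{2p^j}$, so once the primary differentials are known to be nonzero they are \emph{forced} to take the stated form and the survival claims follow for degree reasons.

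\emph{Step 2 (primary differentials).} I would treat $j=0$ first, where the claim is $d_2(h_{2h+i,0}) = q_h\, b_{h+i,h-1}$. In the cobar complex computing $\Ext$ over $H_* y(h)$ the classes $b_{i,j}$ are, up to a $v_h$-Bockstein, $p$-fold Massey products of the $h_{i,j}$, so the Bockstein attached to the cofiber sequence $y(h) \xrightarrow{v_h} y(h) \to y(h)/v_h$ should produce this $d_2$ directly, $h_{2h+i,0}$ being supported by $v_h$-torsion. For higher $j$, the pattern $d_{2p^j}(h_{2h+i-j,j}) = q_h\, b_{h+i,h-1-j}^{p^j}$ has precisely the shape of the result of applying a power operation (raise to the $p$-th power, then Bockstein) to the $j=0$ differential, with the length jumping $2 \mapsto 2p^j$ as in Bruner's formulas for the action of operations on Adams differentials. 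There are two natural ways to make this rigorous: via the $\E_1$-ring structure of $y(h)$ together with whatever power-operation structure the ``triple loop space'' presentation of \cite{MRS} endows the localized Adams spectral sequence with; or, more geometrically, by reading the differentials off the James filtration, using that $y(h)$ is the Thom spectrum of $\Omega J_{p^h-1}S^2 \to \mathrm{BGL}_1\Ss_{(p)}$, that the $p$-local orders of the successive James--Hopf attaching maps are classical, and that Thomifying the resulting EHP/Whitehead-product differentials should reproduce the pattern, with the $2p^j$ arising from the order of a $p$-local Whitehead product in $J_{p^h-1}S^2$.

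\emph{Step 3 (the main obstacle).} The crux is showing that each primary differential is \emph{nonzero} --- equivalently, that $h_{2h+i-j,j}$ is neither a permanent cycle nor the source of a longer differential. There is no formal reason these Bockstein-type products must be nonzero, and establishing it genuinely requires geometric input --- a relation in $\pi_* y(h)$, an EHP-type identity, or a power-operation formula --- rather than bookkeeping; this is precisely why the statement controls the telescope conjecture, since if these differentials were absent or shorter the $T(h)$-local homotopy of $y(h)$ would agree with its $K(h)$-local homotopy. The hard part will be to pin down the power-operation (or EHP) formula precisely enough in the \emph{localized}, non-connective setting to conclude that the result lands on the nose on $q_h\, b_{h+i,h-1-j}^{p^j}$ rather than on a proper multiple or on zero. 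By contrast, the sparseness and multiplicativity reductions of Step~1 should be routine.
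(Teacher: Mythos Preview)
The statement you are attempting to prove is a \emph{conjecture} in the paper, not a theorem: it is cited verbatim as \cite[Conjecture~3.14]{MRS} and the paper makes no attempt to prove it. There is therefore no ``paper's own proof'' to compare your proposal against. The paper uses \Cref{conj:mrs-phase-1} only as a hypothesis from which to deduce consequences (Lemmas~\ref{cor:yn-simple-size} and~\ref{lem:yn-conj-size}, and ultimately \Cref{thm:tel-to-sphere}); indeed the closing remark of \Cref{sec:telescope} explicitly says the stated differentials ``should not be taken literally but instead up to some filtration,'' and notes that only the height~$1$ odd-primary case is actually established, via Miller's computation in \cite{Miller}.

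Your outline is a sensible sketch of how one might attack the open problem, and you are right that the crux is Step~3. But you should be aware that what you have written is a research program, not a proof: the power-operation and EHP-type propagation arguments you invoke are exactly what Mahowald--Ravenel--Shick attempted in \cite{MRS,MRSold}, and the reason the statement remains conjectural is that those arguments have not been made to go through in the localized, non-connective setting. One small correction to your Step~3: the logical relationship to the telescope conjecture is not quite as you describe. If the differentials of \Cref{conj:mrs-phase-1} were \emph{absent}, the localized $E_\infty$-page would be larger, not smaller, and the telescope conjecture would fail more dramatically; it is the combination of \Cref{conj:mrs-phase-1} (these differentials occur) with \Cref{conj:mrs-collapse} (and no further ones do) that produces an $E_\infty$-page strictly between the $K(h)$-local answer and the full $E_2$-page.
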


The main result of \cite{MRSold} suggests that, conditional on the image of the map from the $T(2)$-local to the $K(2)$-local homotopy groups being as small as possible, the localized Adams sseq collapses at this point (i.e. at the $E_{2p}$-page).
On the other hand, \cite{MRS} is less committal, saying:

\begin{quote}
  [I]f in addition each $b_{n+i,j}$ were a permanent cycle, then we would have
  \begin{align}
    \label{eqn:yn-einf}
    E_\infty \cong R(n)_* \otimes E(h_{n+i,j}\ :\ i+j \leq n) \otimes P(b_{n+i,j})/(b_{n+i,j}^{p^{n-1-j}}).
  \end{align}
  For $n > 1$, this [...] is incompatible with the telescope conjecture.
  However, we cannot prove that each $b_{n+i,j}$ is a permanent cycle for $n>1$,
  and it seems unlikely to be true.
\end{quote}

There are two layers to the issue of whether $b_{h+i,j}$ is permanent.
We might have that instead of $b_{h+i,j}$ being permanent some other indecomposable class in that degree is permanent. This would be a relatively mild issue and wouldn't affect the considerations in this section. A more serious issue would be if no indecomposable class in this degree is permanent. Although we still lack a convincing argument that these classes should be permanent, in the twenty years since \cite{MRS} appeared, neither has any differential off of these classes been identified. For this reason we put forward the following conjecture:

\begin{cnj}[MRS collapse conjecture] \label{conj:mrs-collapse}  
  The classes $b_{h+i,j}$ (or similar indecomposable classes) are permanent cycles in the localized Adams sseq for $y(h)$.
\end{cnj}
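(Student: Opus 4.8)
This statement is flagged as a conjecture, and MRS---together with everyone in the two decades since---have been unable to settle it; so what follows is a proposal for an attack rather than a sketch of a completed argument. The plan is to separate the problem into a structural reduction and a hard analytic input, and to be honest about where the difficulty concentrates.

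First I would try to reduce the survival of all the $b_{h+i,j}$ to that of a single, most-structured subfamily. The localized Adams $E_2$-page of \Cref{prop:mrs-e2} is a graded-commutative algebra, so a differential out of a monomial is governed by Leibniz from differentials on its factors; it therefore suffices to control a generating set, and in particular one need only decide the fate of the $b_{h+i,j}$ themselves (the $h_{i,j}$ being handled by \Cref{conj:mrs-phase-1}). I would then exploit the $\E_1$-ring maps $y(h) \to y(h+1)$ coming from the James filtration, together with the identification of $y(h)$ as a Thom spectrum over $\Omega J_{p^h-1}S^2$, to relate $b$-classes across heights, aiming for a map of localized Adams spectral sequences that carries each $b_{h+i,j}$ onto a lowest-height $b$-class in which it originates. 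If this succeeds, the whole conjecture collapses to the survival of finitely many canonical low-height $b$-classes.

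For that reduced family I would look for actual homotopy classes detected by $b_{h+i,j}$, or---as the parenthetical in the conjecture allows---by some indecomposable-modulo-lower-filtration class in the same degree, such as a suitable $p$-power. The most promising source is the $K(h)$-local picture: the Adams--Novikov $E_2$-page of $y(h)[v_h^{-1}]$ is the cohomology of a small congruence subgroup of the Morava stabilizer group, which in this range agrees with Lie-algebra cohomology and is completely understood. I would identify, under the comparison from the localized Adams to the localized Adams--Novikov spectral sequence, the Morava-side classes that $b_{h+i,j}$ must hit, verify that they are nonzero permanent cycles there (essentially the Lie-algebra computation), and then try to lift this survival statement back along the comparison. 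This is exactly where a presumed failure of the telescope conjecture enters: the comparison is not an isomorphism, and ``$b_{h+i,j}$ survives $T(h)$-locally'' is precisely the assertion that the $T(h)$-local groups are strictly larger than the $K(h)$-local ones.

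The main obstacle---and the reason this remains a conjecture---is exactly this last lifting step. A class can be a permanent cycle $K(h)$-locally while supporting a differential $T(h)$-locally; the very phenomenon one hopes to witness is an asymmetry of this kind, so survival on the Morava side carries no formal weight. Closing the gap appears to require genuinely new input: either a direct construction of $v_h$-periodic families in $\pi_* y(h)$ landing in the predicted filtration (a $y(h)$-analogue of divided $\beta$-family constructions, or an EHP/Goodwillie-tower mechanism propagating telescopic classes up the tower $y(h) \to y(h+1)$), or, dually, a vanishing-line argument for the localized Adams spectral sequence forbidding every differential out of $b_{h+i,j}$. I expect producing such a construction, or proving such a vanishing line, to be the crux on which the entire conjecture turns.
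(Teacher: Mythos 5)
The statement you were asked about is a conjecture, and the paper does not prove it: it is defended only heuristically, by observing that in the twenty years since \cite{MRS} no differential off any $b_{h+i,j}$ has been identified and that the main theorem of \cite{MRSold} (conditional on certain expectations about $T(2)\to K(2)$) is consistent with collapse. Your recognition of this, and your framing of the proposal as an attack rather than a proof, is correct. You also correctly locate the crux in the gap between $T(h)$-local and $K(h)$-local information, which is the same gap the paper flags.

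That said, the middle step of your plan has a concrete misconception. You propose to ``identify, under the comparison from the localized Adams to the localized Adams--Novikov spectral sequence, the Morava-side classes that $b_{h+i,j}$ must hit, verify that they are nonzero permanent cycles there, and lift this survival back.'' But the $b_{h+i,j}$ have no nonzero image on the Morava side: as the paper records, the Adams--Novikov $E_2$-page for $y(h)[v_h^{-1}]$ is $R(h)\langle h_{h+i,j}\rangle$ with no polynomial $b$-generators at all, whereas the $b_{h+i,j}$ live only in the $q_h$-localized cohomology of \Cref{prop:mrs-e2}. The content of \Cref{conj:mrs-collapse}, as MRS themselves emphasize in the passage quoted just above it, is precisely that the $T(h)$-local $E_\infty$-page is \emph{strictly larger} than the $K(h)$-local answer, with the $b$-classes generating the excess. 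So there is nothing on the Morava side to verify or to lift; the classes you need are, by design, in the kernel of the comparison. Your final paragraph partly backs away from this, acknowledging that ``survival on the Morava side carries no formal weight,'' but the issue is stronger than a failure of formal weight --- the classes are simply absent there, so the entire detection-and-lifting mechanism has no starting point. Your first reduction (to a generating set via Leibniz, using the tower $y(h)\to y(h+1)$) also deserves caution, since as $h$ changes so does the self-map being inverted and the entire localized Adams spectral sequence, so there is no obvious map of localized spectral sequences along which the $b$-classes transport. The honest assessment, which the paper shares, is that neither a construction producing the requisite $v_h$-periodic families nor a vanishing-line argument for the localized Adams sseq is currently in sight, and that is why this remains a conjecture.
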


The use of the localized Adams sseq in computing telescopic stems in the triple loop space approach is forced by the fact that it is currently the \emph{only} known method for computing telescopic stems.
In view of the fact that these methods presently seem inadequate to compute telescopic homotopy groups we offer the following challenge:

\begin{challenge}
  Find a (usable) device for computing telescopically localized homotopy groups which is not a repackaging of the localized Adams sseq.
\end{challenge}

\subsection{A sketch of telescopic homotopy theory}\

Using Conjectures \ref{conj:mrs-phase-1} and \ref{conj:mrs-collapse} we can compute $\varpitel(y(h),\,n)$ and from there we are in a position to make a simple numerical conjecture quantifying the expected failure of the telescope conjecture at any finite type $h$ spectrum. 

\begin{lem} \label{cor:yn-simple-size}
  Assuming Conjectures \ref{conj:mrs-phase-1} and \ref{conj:mrs-collapse}
  we have that
  \[ \log \varpitel(y(h),\, n) = \left( \binom{h}{2} + h \right) \log(n) + O(1). \]
\end{lem}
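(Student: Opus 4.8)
The plan is to read off the $E_\infty$-page of the localized Adams spectral sequence for $y(h)$ from Conjectures \ref{conj:mrs-phase-1} and \ref{conj:mrs-collapse}, to rewrite $\varpitel(y(h),n)$ as a weighted count of monomials on that page, and then to estimate the count.

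First I would describe the $E_\infty$-page. Starting from the $q_h$-local $E_2$-page of \Cref{prop:mrs-e2} and imposing the differentials of \Cref{conj:mrs-phase-1} together with the permanence of the classes $b_{h+i,j}$ from \Cref{conj:mrs-collapse}, one obtains the description \eqref{eqn:yn-einf} with ambient height $h$:
\[ E_\infty \;\cong\; R(h)\otimes E\big(h_{h+i,j}\ :\ i\ge 1,\ 0\le j\le h-1,\ i+j\le h\big)\otimes P\big(b_{h+i,j}\big)/(b_{h+i,j}^{\,p^{\,h-1-j}}), \]
where $R(h)=\F_p[v_h^{\pm 1},v_{h+1},\dots,v_{2h}]$ and the $b_{h+i,j}$ range over $i\ge 1$, $0\le j\le h-1$; in particular the spectral sequence collapses at a finite page. (At $p=2$ the polynomial families $b_{h+i,j}$ are replaced by the relevant squares of the $h$-classes, and the bookkeeping below is unchanged.) Multiplication by the good self-map $v=v_h$ acts on $E_\infty$ with $(\mathrm{stem},\mathrm{filt})=(|v_h|,1)$, and quotienting by it leaves $E_\infty/v_h\cong\F_p[v_{h+1},\dots,v_{2h}]\otimes E(h_{h+i,j})\otimes P(b_{h+i,j})/(b_{h+i,j}^{\,p^{\,h-1-j}})$.

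Next I would unwind $\varpitel$. Since the localized Adams spectral sequence collapses at a finite page, $\log_p\big|\mathrm{F}^s\pi_k\, y(h)[v_h^{-1}]\big|$ equals the number of $\F_p$-basis elements of $E_\infty$ in stem $k$ of filtration $\ge s$. For such a basis element $x$ set $\phi(x)\coloneqq \mathrm{stem}(x)/|v_h|-\mathrm{filt}(x)$; this is additive, it is $v_h$-invariant, and the condition cutting out $\mathrm{F}^{k/|v_h|-n}\pi_k$ is precisely $\phi(x)\le n$. Averaging over one period of stems and using $v_h$-periodicity therefore gives
\[ \varpitel(y(h),\,n)\;=\;\frac{1}{|v_h|}\,\#\big\{\,\text{monomials of }E_\infty/v_h\ :\ \phi\le n\,\big\}. \]
By a straightforward weighted-monomial estimate in the style of \Cref{app:combo}, this count is, up to constants, the product of the counts for the three kinds of tensor factor. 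The polynomial algebra $\F_p[v_{h+1},\dots,v_{2h}]$ has $h$ generators, each of weight $\phi(v_k)=(p^k-p^h)/(p^h-1)>0$, and so contributes $\Theta(n^h)$ monomials of weight $\le n$. The exterior factor is finite-dimensional and contributes $O(1)$. For each fixed $j$, the truncated polynomial algebra on $\{b_{h+i,j}\}_{i\ge1}$ has positive weights $\phi(b_{h+i,j})\asymp p^{\,i+j+1}$ that grow geometrically in $i$ and truncation heights $p^{\,h-1-j}$, so only $\Theta(\log n)$ of its generators are relevant for $\phi\le n$ and it contributes $\Theta\big(n^{\,h-1-j}\big)$ monomials. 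Multiplying and using $\sum_{j=0}^{h-1}(h-1-j)=\binom{h}{2}$, we get $\varpitel(y(h),n)=\Theta\big(n^{\,\binom{h}{2}+h}\big)$, hence $\log\varpitel(y(h),n)=\big(\binom{h}{2}+h\big)\log(n)+O(1)$.

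The hard part is not the combinatorics — which is routine given \Cref{app:combo}, once one checks that the growing number of tensor factors (there are $h+2$ of them) and the truncation heights $p^{\,h-1-j}$ affect only the $O(1)$ term and not the coefficient of $\log n$ — but the homotopy-theoretic input: one must know that the full-plane localized Adams spectral sequence converges well enough that the associated graded of $\mathrm{F}^\bullet\pi_*\,y(h)[v_h^{-1}]$ is $E_\infty$, and that $v_h$ is a good $v_h$-self map so that \Cref{prop:mrs-e2} and \eqref{eqn:yn-einf} apply. The finite-page collapse granted by Conjectures \ref{conj:mrs-phase-1} and \ref{conj:mrs-collapse} is exactly what makes the convergence manageable, and the goodness of $v_h$ is part of the standard package of properties of $y(h)$ recalled in this subsection.
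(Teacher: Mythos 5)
Your proposal is correct and follows essentially the same approach as the paper: read off the conjectural $E_\infty$-page, discard the exterior factor as $O(1)$, and count monomials weighted by Adams filtration depth, recognizing that the truncated polynomial families together with the $h$ polynomial generators of $R(h)$ yield growth $\Theta(n^{\binom{h}{2}+h})$. The paper cuts the truncated polynomial generators into $\F_p[x]/x^p$ pieces and reassembles them globally into $\binom{h}{2}$ polynomial generators before invoking the $C\cdot n^w$ estimate, whereas you handle each $j$-block separately and get $\Theta(n^{h-1-j})$ per block; this is only an organizational difference, and your more explicit set-up of the $\phi$-weight and $v_h$-periodicity formula is a welcome clarification of what the paper leaves implicit.
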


\begin{proof}
  With our assumptions we have 
  \[ \AE^{s,t}_\infty(y(h))[v_h^{-1}] \cong R(h)_* \otimes E(h_{h+i,j}\ :\ i+j \leq h) \otimes P(b_{h+i,j})/(b_{h+i,j}^{p^{h-1-j}}) \]
  from \cref{eqn:yn-einf}.
  The exterior classes only modify the size by $O(1)$ so we can ignore them.
  The depth in the Adams filtration of $b_{h+i,j}$ is within a constant multiple of $p^{h+i+j}$, therefore it suffices to compute the rank growth with these degrees (see \Cref{cor:steenrod-flexible}).
  If we cut each truncated polynomial generator up into a copy of $\F_p[x]/x^p$, then we see $\binom{h}{2}$ new generators in each degree of the form $p^N$ for large $N$.
  We can then reassemble these generators into $\binom{h}{2}$ polynomial generators.
  Adding on the other polynomial generators from $R(h)$ we have $\binom{h}{2} + h$ total polynomial generators.
  To conclude we observe that the cumulative rank of a graded polynomial algebra on $w$ generators grows like $C \cdot n^w$ asymptotically.
\end{proof}



In order to extract a reasonable conjecture for the behavior of $\varpitel$ on a compact type $h$ spectrum we observe that $y(h)$ has a number of cells that grows like a polynomial of degree $h$. This corresponds to the extra polynomial generators $v_{h+1}, \dots, v_{2h}$ which would not appear in a compact object.


As a corollary of the thick subcategory theorem of \cite{NilpII}
we know that $\langle X[v_h^{-1}] \rangle = \Sp_{T(h)}^\omega$ for every compact type $h$ spectrum $X$.
Morita invariance (up to a scalar) of size functions,
then suggests that the growth rate of
$\varpitel(X,\, n)$ should be essentially the same for every choice of
compact type $h$ spectrum $X$.
Taking the non-finiteness of $y(h)$ into account we obtain the following conjecture.

\begin{cnj} \label{conj:unipotent-growth}
  The telescopic homotopy groups of a finite type $h$ spectrum grow like 
  \[ \log \left( \varpitel(X,\, n) \right) = \binom{h}{2} \log(n) + O(1). \]
\end{cnj}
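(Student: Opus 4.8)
The plan is to deduce this from \Cref{cor:yn-simple-size} together with the Morita-type invariance of size functions provided by \Cref{lem:ur-AH}, once $\varpitel$ is known to behave as a valuation. The key numerical input is the discrepancy between the $\binom{h}{2}+h$ appearing in \Cref{cor:yn-simple-size} and the $\binom{h}{2}$ predicted here: this gap of $h$ is exactly accounted for by the extra polynomial generators $v_{h+1},\dots,v_{2h}$ in the homotopy ring of $y(h)$, which reflect the fact that $y(h)$ is \emph{not} a compact spectrum. So the first step is to make precise the heuristic that passing from $y(h)$ to a compact type $h$ spectrum removes exactly these $h$ polynomial generators' worth of growth. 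Concretely, I would argue that $y(h)$, as a $T(h)$-local spectrum, is built from a compact type $h$ spectrum by a (co)filtered process whose associated graded introduces a free module on a set of cells growing polynomially of degree $h$ in the relevant filtration parameter $n$; applying subadditivity of $\varpitel$ along this filtration (and the analog of \Cref{cor:AH}/\Cref{lem:ur-AH} in the telescopic setting) converts the generator count into the multiplicative factor $\exp(O(h\log n)) = n^{O(1)}$, which is negligible against the $\binom{h}{2}\log n$ leading term only after we have separately established the matching lower bound.

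Second, I would invoke the thick subcategory theorem of \cite{NilpII} in the form already quoted in the excerpt: for any compact type $h$ spectrum $X$ one has $\langle X[v_h^{-1}]\rangle = \Sp_{T(h)}^\omega$, so any two such $X$ generate the same thick subcategory. Granting that $\varpitel(-,n)$ is a valuation (or behaves as one after the quotient fix described before \Cref{conj:mrs-phase-1}), \Cref{lem:ur-AH} then gives $P_1(\sigma),P_2(\sigma) \in \N[\sigma^{\pm 1}]$ with $\varpitel(X,n) \le P_1(\sigma)\cdot\varpitel(X',n)$ and symmetrically; since rotation in this setting only shifts $n$ by a bounded amount and $\exp(\binom{h}{2}\log n)$ is insensitive to $n \mapsto n \pm O(1)$, the leading-order growth rate of $\log\varpitel(X,n)$ is independent of the choice of compact type $h$ spectrum $X$. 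Combining with the first step, this common growth rate equals $\binom{h}{2}\log(n) + O(1)$, establishing the conjecture (conditionally on Conjectures \ref{conj:mrs-phase-1} and \ref{conj:mrs-collapse}, which feed \Cref{cor:yn-simple-size}).

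The main obstacle is the one the authors flag explicitly: $\varpitel(-,n)$ is not known to be subadditive, because in a cofiber sequence $X \to Y \to Z$ long differentials in the localized Adams spectral sequence for $Z$ can push the filtration of an image class arbitrarily high. So the honest content of the proof is the "fix": identifying the right quotient $\Gamma$ of the target group (e.g. by functions $(1-\sigma^k)\cdot\varpitel(X,n)$, which suffices if the localized Adams spectral sequence collapses at a finite page) so that subadditivity — and hence \Cref{lem:ur-AH} — genuinely applies. This collapse hypothesis is itself out of reach in general, so realistically the theorem can only be proven conditionally on (a) Conjectures \ref{conj:mrs-phase-1} and \ref{conj:mrs-collapse} and (b) finite collapse of the relevant localized Adams spectral sequences; under those hypotheses the argument above goes through, and I would present it as such. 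A secondary, more routine obstacle is verifying that every compact $T(h)$-local spectrum is the $T(h)$-localization of a compact type $h$ spectrum admitting a good $v_h$-self map, so that $\varpitel$ is even defined on all of $\Sp_{T(h)}^\omega$; this is the kind of statement the authors describe as "not particularly difficult to resolve," and I would handle it by a standard cell-structure argument combined with the existence of $v_h$-self maps from \cite{NilpII}.
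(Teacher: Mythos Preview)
This statement is a \emph{conjecture}, not a theorem: the paper does not prove it and does not claim to. What the paper offers in place of a proof is precisely the heuristic you have reconstructed---the Morita-invariance-of-valuations reasoning via the thick subcategory theorem, combined with the observation that the gap between $\binom{h}{2}+h$ (for $y(h)$, from \Cref{cor:yn-simple-size}) and $\binom{h}{2}$ (for a compact type $h$ spectrum) is accounted for by the $h$ polynomial cells of $y(h)$. Your proposal is a faithful expansion of that heuristic into a conditional argument, and you have correctly isolated the genuine obstruction: subadditivity of $\varpitel$ fails in general because of arbitrarily long localized Adams differentials, and the paper's suggested fix (quotienting by $(1-\sigma^k)\cdot\varpitel(X,n)$) only works under a finite-collapse hypothesis that is itself conjectural and not implied by Conjectures~\ref{conj:mrs-phase-1} and~\ref{conj:mrs-collapse} alone (those concern only $y(h)$, not an arbitrary compact $X$).

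So your assessment is accurate, but the framing should be adjusted: there is no paper proof to compare against, and even your conditional version assumes strictly more than what the paper's other conjectures provide. In particular, the step where you transport the growth rate from $y(h)$ to a compact type $h$ spectrum via \Cref{lem:ur-AH} requires subadditivity for cofiber sequences involving arbitrary compact type $h$ spectra, and finite collapse for $y(h)$ says nothing about that. The paper is deliberately noncommittal here---it says ``we conjecture that it (nearly) does'' extend to a valuation and leaves it at that.
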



Following \cite[Theorem 1.5]{MRSold} the reader should not imagine these excess classes as building a polynomial algebra on $\binom{h}{2}$ generators but instead as a semi-perfect algebra whose tilt is of Krull dimension $\binom{h}{2}$. In the case $h=2$ the algebra we expect to see is $\F_p[t^{1/p^\infty}]/(t)$. The failure of the telescope conjecture then corresponds to the failure of descent for the map\footnote{The target is $\colim \Cof( t^{1/p^n} )$ and $\epsilon$ lives in degree $1$.}
\[ \F_p[t^{1/p^\infty}]/(t) \to \F_p\langle \epsilon \rangle. \]
Conjecturally, for $y(h)$ the image of the map from $T(h)$-local to $K(h)$-local homotopy contains the $h_{h+i,j}$ with $i+j \leq h$ while the remaining exterior classes fail to lift, instead appearing as semi-perfect generators.

We conclude by offering an outline,
extrapolated from Conjectures \ref{conj:mrs-phase-1} and \ref{conj:mrs-collapse},
of how we expect the localized Adams sseq of a compact type $h$ spectrum to behave.

\begin{enumerate}
\item There exists an $r \gg 0$ such that the upper bound provided by the $E_{r}$-page of the localized Adams sseq implies the upper bound in \Cref{conj:unipotent-growth}.
  The reader should compare this step with the MRS differentials conjecture and its amplification in the upper bound of \Cref{cor:yn-simple-size}.
\item There exists an $r \gg 0$ such that the localized Adams sseq for $X$ collapses at the $E_r$-page. The reader should compare this step with the MRS collapse conjecture.
\item At the point when the localized Adams sseq collapses there is a sufficient supply of surviving permanent cycles to satisfy the lower bound in \Cref{conj:unipotent-growth}.
  The reader should compare this step with the form of the $E_\infty$-page in the MRS collapse conjecture  and its amplification in the upper bound of \Cref{cor:yn-simple-size}.
\end{enumerate}  
 


\subsection{Back to the sphere}\ 

At this point we are finally ready to close the loop, arguing in the next theorem that
Conjectures \ref{conj:mrs-phase-1} and \ref{conj:mrs-collapse}
can be used to produce a sufficient supply of non-trivial classes in the homotopy groups of spheres to
saturate the upper bound provided by the Adams $E_2$-page.
This provides possibly the simplest and most striking consequence of the fate of the telescope conjecture.\footnote{The author was delighted to find that a question as natural as that of the size of the stable stems might be resolved through an understanding of telescopic homotopy theory.} 



\begin{thm} \label{thm:tel-to-sphere}
  Assuming Conjectures \ref{conj:mrs-phase-1} and \ref{conj:mrs-collapse} we have the following lower bound on the size of the stable stems
  \[ \log \left( \varpi(n) \right) \geq K_1 \log(n)^3 + O(\log\log(n)\log(n)^2) \]
  where $K_1$ is the constant given in \cref{eqn:constants}.
\end{thm}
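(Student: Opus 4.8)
The plan is to produce many classes in $\pi_*\Ss$ by starting from the conjectural telescopic homotopy of $y(h)$ and lifting back to the sphere, carefully tracking filtrations so the count survives. First I would fix a type $h$ spectrum—say $y(h)$ itself, or a finite type $h$ subcomplex with a good $v_h$-self map $v$—and invoke \Cref{cor:yn-simple-size}, which under Conjectures \ref{conj:mrs-phase-1} and \ref{conj:mrs-collapse} gives $\log\varpitel(y(h),n) = (\binom{h}{2}+h)\log(n) + O(1)$. The key structural input is \Cref{lem:tel-makes-classes} (referenced but not yet stated in the excerpt): a class detected at depth $n$ below the $v_h$-line in the localized Adams spectral sequence has sphere of origin roughly $n$, i.e. it lifts to a genuine class in $\pi_{k}y(h)$ for $k$ in a controlled range. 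Composing such a lift with the inclusion of a bottom cell and using that $y(h)$ is built from the sphere by finitely many cells, these classes push forward to classes in $\pi_*\Ss$; the number of cells of $y(h)$ through degree $m$ grows only polynomially (degree $\sim h$), so by \Cref{cor:AH} this cellular bookkeeping costs only a factor $\exp(O(\log(n)^2))$, below leading order.

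Next I would convert the depth-$n$ count into a topological-degree count. Telescopic stems are periodic with period $|v|$, and a class born at depth $n$ appears unstably in a window of topological degrees of width comparable to $n\cdot p^{h}$ (the depth times the relevant slope). So the $\exp\big((\binom{h}{2}+h)\log n + O(1)\big)$ telescopic classes at depth $\le n$ spread over a range of degrees of size $N \approx n \cdot p^{O(h)}$ in $\pi_*y(h)$, hence over a comparable range in $\pi_*\Ss$ after the cellular reindexing. Solving for $n$ in terms of $N$, and then optimizing the choice of $h$ as a function of $N$—exactly as in the proof of \Cref{thm:steenrod-precise}(2) via \Cref{lem:yn-size} and \Cref{cor:yn-e2-flexible}—one finds the optimal $h$ grows like $\Theta(\log N/\log p)$, the $\binom{h}{2}\log n$ term dominates, and the bound comes out to $\log\varpi(N) \ge K_1\log(N)^3 + O(\log\log(N)\log(N)^2)$ with $K_1 = \tfrac{2}{75\log(p)^2}$ after the arithmetic of the optimization is carried out. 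The constant $K_1$ being smaller than $K_2$ reflects precisely the two losses incurred: dropping from $\binom{h}{2}+h$ effective generators to the $\binom{h}{2}$ that genuinely survive to the sphere (the $v_{h+1},\dots,v_{2h}$ do not lift), and the period-versus-depth conversion.

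The main obstacle I expect is the filtration-jumping issue flagged in the paper's discussion of subadditivity: when one lifts a telescopic class through the localized Adams spectral sequence and then maps it to $\Ss$, its image could in principle be killed, or its Adams filtration could jump, because of long differentials in the cofiber of $y(h)\to\Ss$ (dually, in the relative term). To control this I would not try to lift individual classes naively but instead work with the whole localized Adams $E_\infty$-page of $y(h)$ as computed in \cref{eqn:yn-einf}, extract a large polynomial (or semi-perfect) subalgebra of permanent cycles as in \Cref{lem:yn-size}, and argue that a positive-density sub-collection of these must have nonzero image in $\pi_*\Ss$—for instance by a pigeonhole/counting argument against the known subexponential \emph{upper} bounds on the size of the relevant Ext groups from \Cref{sec:steenrod}, so that the ``lost'' classes cannot outnumber the survivors at leading order. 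Making this last step rigorous—turning ``most classes survive'' into an honest inequality rather than a heuristic—is where the real work lies, and it is presumably why the theorem is stated conditionally on Conjectures \ref{conj:mrs-phase-1} and \ref{conj:mrs-collapse} only, with the survival-to-the-sphere packaged through \Cref{lem:tel-makes-classes} and \Cref{cor:AH} rather than through any hypothesis on the unstable behavior of the sphere's own Adams spectral sequence.
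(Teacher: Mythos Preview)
Your high-level strategy is right and matches the paper: use the conjectures plus \Cref{lem:tel-makes-classes} to produce many classes in the \emph{unlocalized} homotopy of $y(h)$, then pass to the sphere via \Cref{cor:AH}, and finally optimize over $h$. But you have misidentified where the difficulty lies, and this leads you to invent a phantom obstacle.

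The passage from $\pi_*y(h)$ to $\pi_*\Ss$ involves no ``pushing forward'' of classes and no map $y(h)\to\Ss$ whatsoever. \Cref{cor:AH} is a pure counting inequality: since $y(h)$ is built from shifted copies of $\Ss$, one has $\varpi^c(y(h),n)\leq h^c(y(h),n)\cdot\varpi^c(n)$, and because $h^c(y(h),n)\leq h^c(\F_p,n)=\exp(\Theta(\log(n)^2))$ this immediately gives a lower bound on $\varpi^c(n)$. No class needs to ``survive,'' no filtration can jump, and your entire ``main obstacle'' paragraph---pigeonholing against upper bounds, worrying about long differentials in a cofiber---is addressing a problem that does not exist. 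The only delicate lifting is from the \emph{localized} Adams $E_\infty$ to the unlocalized homotopy of $y(h)$, and that is handled cleanly by \Cref{lem:tel-makes-classes}(2) together with the isomorphism range on the $E_2$-page from \Cref{lem:yn-size}; this is packaged in the paper as \Cref{lem:yn-conj-size}.

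Two smaller points. First, the relevant combinatorial input is not \Cref{cor:yn-e2-flexible} (which yields $K_2$) but \Cref{cor:yn-einf-flexible}, since the lifted generators in $\pi_*y(h)$ have the degree pattern of \Cref{lem:yn-einf-basic}; this is exactly where $K_1$ rather than $K_2$ appears. Second, your proposed intermediary \Cref{cor:yn-simple-size} measures size in localized Adams \emph{depth}, not in topological degree; the paper instead goes directly through \Cref{lem:yn-conj-size}, which already records the topological degrees of the lifted generators and so avoids the depth-to-degree conversion you sketch.
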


The key point in the proof of this theorem is that every class in the telescopic homotopy groups has to come from some unlocalized class and we can use the localized Adams sseq to obtain bounds on how quickly these telescopic classes are born.
We make this formal in the next lemma.




\begin{lem} \label{lem:tel-makes-classes}
  Suppose that $X$ is a spectrum with a good $v_h$-self map $v$
  for which the localization map
  \[ \AE_r^{t,s}(X) \longrightarrow \left( \AE_r^{t,s}(X) \right)[q_h^{-1}] \]
  induces an isomorphism for $s \geq m(t-s) + c$ on the $E_2$-page.
  Then,
  \begin{enumerate}
  \item For every $r \leq \infty$
    the localization map induces a
    surjection for $s \geq m(t-s) + c$ and an
    isomorphism for $ s \geq m(t-s) + c + r $ on the $E_r$-page.
  \item The map
    $ \mathrm{F}^s\pi_kX \to \mathrm{F}^s\pi_kX[v_n^{-1}] $
    is surjective for $s \geq mk + c$ which implies that
    \[ \varpi(X,\, n) \geq |v| \cdot \varpitel \left( X,\, \left( m - \frac{1}{|v_n|} \right)(n-|v|) - c \right). \]
  \end{enumerate}  
\end{lem}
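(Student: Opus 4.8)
The plan is to leverage the hypothesis that the localization map on the $E_2$-page is an isomorphism above a line of slope $m$ and use a bootstrapping argument to propagate this control through the entire localized Adams spectral sequence, and then down to homotopy. For part (1), I would argue by induction on $r$. The key observation is that differentials in the spectral sequence move one unit in the $t-s$ direction (well, more precisely $d_r$ changes $(t,s)$ by $(-1, r)$ in the usual indexing, so $t-s$ decreases by $r+1$... let me be careful — a $d_r$ increases $s$ by $r$ and decreases $t-s$ by $1$), so a class in filtration $s$ with $s \geq m(t-s) + c$ has its incoming and outgoing differentials governed by classes which are also in the isomorphism range, provided we shift the constant. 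Concretely: if the localization is an isomorphism for $s \geq m(t-s)+c$ on $E_2$, then the source and target of a $d_r$ differential emanating from or hitting a class at $s \geq m(t-s)+c+ (\text{small linear correction in }r)$ both lie in the $E_r$-isomorphism region by induction, so the differential is determined, giving surjectivity for $s \geq m(t-s)+c$ and isomorphism for $s \geq m(t-s)+c+r$ (the loss of $r$ accounts for the fact that a class just below the isomorphism line could support a differential whose target is localization-trivial but whose source is not yet known to inject). Passing to $r = \infty$ is then a colimit argument, using that any fixed bidegree stabilizes after finitely many pages.

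For part (2), the surjectivity $\mathrm{F}^s\pi_k X \to \mathrm{F}^s\pi_k X[v^{-1}]$ for $s \geq mk + c$ follows from part (1) applied at $r = \infty$: the associated graded of the filtration on $\pi_k X$ in filtrations $s \geq mk+c$ surjects onto (indeed is isomorphic above $s \geq mk + c + \text{const}$ to) the associated graded of the filtration on $\pi_k X[v^{-1}]$, and a standard filtration-chase upgrades this to surjectivity of the filtered pieces themselves. The final inequality is then bookkeeping: by definition $\varpitel(X, N)$ is the averaged-over-periodicity logarithmic size of $\mathrm{F}^{\frac{j}{|v_h|} - N}\pi_j X[v^{-1}]$, and we want each such group to be in the surjectivity range, i.e. we need $\frac{j}{|v_h|} - N \geq m j + c$, equivalently $N \leq \left(m - \frac{1}{|v_h|}\right)(-j) - c$... this is going the wrong way, so one instead runs $j$ over a window of width $|v|$ near the appropriate value and uses periodicity of $X[v^{-1}]$ to slide the relevant telescopic class into a range of stems where it lifts; the factor $|v|$ in front of $\varpitel$ and the shift $(m - \frac{1}{|v_h|})(n - |v|) - c$ in the argument come precisely from this windowing. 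Each surviving telescopic class of filtration in the stated range lifts to an honest class in $\pi_*X$, and distinct classes (over the period) give distinct homotopy classes, yielding $\varpi(X,n) \geq |v| \cdot \varpitel(X, (m - \frac{1}{|v_h|})(n - |v|) - c)$.

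The main obstacle I anticipate is the careful handling of the constant shifts in part (1): one must track exactly how much the isomorphism/surjectivity line degrades at each page and confirm it degrades only linearly in $r$ (not faster), and then confirm that when $r \to \infty$ the relevant bidegrees have already stabilized so that "$s \geq m(t-s)+c+\infty$" is not the vacuous statement it naively appears to be — rather, for each fixed total degree only finitely many pages matter. The second delicate point is the translation in part (2) between "filtration detected at depth $\geq$ line" and "sphere of origin," i.e. making precise that a permanent cycle surviving in the localized spectral sequence at filtration $s$ with $s$ above the surjectivity line genuinely lifts to $\pi_k X$ rather than merely to some pro-object or to $\pi_k$ of a finite stage of the $v$-telescope; this is where the hypothesis that $v$ is a \emph{good} $v_h$-self map (so that it acts along a line of the correct slope on the Adams $E_2$-page, compatibly with the filtration) is essential, and I would want to state that compatibility explicitly before invoking it.
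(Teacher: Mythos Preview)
Your proposal is correct and follows essentially the same approach as the paper: induction on $r$ via naturality of the localization map for part (1), then reading off the filtration surjectivity at $r=\infty$ and translating it into the numerical inequality for part (2). The paper's proof is far terser—its entire argument for (1) is the single observation that a $q_h$-torsion class cannot support a differential killing a class which survives localization, which is exactly the mechanism underlying your more carefully tracked degradation of the isomorphism line—and it dismisses the inequality in (2) as ``just a numerical expression of the implicit bound,'' so your worries about the $r\to\infty$ passage and the windowing bookkeeping are points the paper simply does not spell out.
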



\begin{proof}
  The first claim follows by induction on $r$ together with the naturality of the map from the Adams sseq to the localized Adams sseq, which implies that a $v_n$-torsion class cannot support a differential killing a class which is non-trivial after $v_n$-localization.
  The first part of the second claim follows from the $r=\infty$ case of the first claim
  and the formula given is just a numerical expression of the implicit bound.
  %
  %
\end{proof}

\begin{rmk} \label{rmk:partial-partial}
  The idea behind 
  \Cref{lem:tel-makes-classes}
  can be pushed substantially further.
  For example,   
  suppose instead of having many classes on the $E_\infty$-page of the localized Adams sseq we knew only the weaker statement that there are many classes such that the lengths of the (potential) localized Adams differentials which kill them grow rapidly as a function of depth  in the localized Adams filtration. 
  Then, in the proof of \Cref{lem:tel-makes-classes} we could augment our argument with the information that a differential in the non-localized Adams sseq cannot have a source with $s$ negative. This would produce corresponding non-trivial classes in the homotopy groups of $X$, though not as many as produced by \Cref{lem:tel-makes-classes}. 
  \tqed
\end{rmk}

In effect \Cref{rmk:partial-partial} is saying that
\Cref{lem:tel-makes-classes}
can be made sufficiently flexible so that even partial progress towards disproving the telescope conjecture we will produce
large collections of classes in the homotopy groups of spheres.

\begin{lem} \label{lem:yn-conj-size}
  Assuming Conjectures \ref{conj:mrs-phase-1} and \ref{conj:mrs-collapse}
  we can use the telescopic homotopy groups of $y(h)$ to conclude that 
  the homotopy groups of $y(h)$ grow at least as quickly as a polynomial algebra on
  generators $v_{h+i}$ for $i=0,\dots,h$ in their usual degrees together with $j$ polynomial generators in degree $12p^{h+1+j}$ for $j = 1,\dots,n-1$.
\end{lem}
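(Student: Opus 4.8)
The plan is to feed the conjectural localized Adams $E_\infty$-page of $y(h)$ through \Cref{lem:tel-makes-classes}, reusing the explicit cohomology lifts already built in \Cref{lem:yn-size}. First I would set up the input to \Cref{lem:tel-makes-classes}: the spectrum $y(h)$ carries the good $v_h$-self map given by multiplication by $v_h$, and the proof of \Cref{lem:yn-size} (analyzing the May sseq for $y(h)/q_h$) shows that the localization map $\H^{s,t}(\A;\,y(h)) \to \H^{s,t}(\A;\,y(h))[q_h^{-1}]$ is an isomorphism once $s-1 \ge (2p^{h+1}-3)^{-1}(t-s)$. Thus the hypothesis of \Cref{lem:tel-makes-classes} holds with $m = (2p^{h+1}-3)^{-1}$ and $c = 1$, so that any class on the localized Adams $E_\infty$-page of $y(h)$ lying sufficiently far above the line $s = m(t-s)+c$ is the reduction of a genuine element of $\pi_*y(h)$, and linearly independent families of such classes lift to linearly independent families in $\pi_*y(h)$.

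Next I would harvest permanent cycles from \cref{eqn:yn-einf}. Under Conjectures \ref{conj:mrs-phase-1} and \ref{conj:mrs-collapse} the localized Adams sseq for $y(h)$ collapses at a finite page $r$ with $E_\infty$-page $\AE^{s,t}_\infty(y(h))[v_h^{-1}] \cong R(h)_* \otimes E(h_{h+i,j} : i+j\le h) \otimes P(b_{h+i,j})/(b_{h+i,j}^{p^{h-1-j}})$, where $R(h) = \F_p[v_h^{\pm1}, v_{h+1}, \dots, v_{2h}]$. The exterior factor only perturbs ranks by $O(1)$, so I would discard it. For any monomial $M$ in the non-negative powers of $v_h, \dots, v_{2h}$ and in the truncated classes $b_{h+i,j}$, each class $v_h^N M$ still survives to $E_\infty$; since $v_h$ acts along a line of slope $|v_h|^{-1} > m$, taking $N$ to be a suitable linear function of $\deg M$ pushes $v_h^N M$ into the range $s \ge m(t-s)+c+r$ where the localization map is an $E_\infty$-isomorphism, so $v_h^N M$ lifts to $\pi_* y(h)$. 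As $M$ ranges over the allowed monomials these lifts are linearly independent, and since $m|v_h|<1$ the auxiliary power $N$ dilates topological degree by only a bounded factor, so by \Cref{cor:steenrod-flexible} it does not affect the leading-order growth.

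Then I would read off the generators. The lifts of $v_h, v_{h+1}, \dots, v_{2h}$ are polynomial generators of $\pi_* y(h)$ in their usual degrees $2p^{h+i}-2$ (the polynomial subalgebra of $\pi_*y(h)$ noted in the triple loop discussion). For the rest, the lift of $q_h^{5p^{1+i-h+j}} b_{i,j}$ is exactly the class $x_{i,j}$ of \Cref{lem:yn-size}, of topological degree $12p^{1+i+j} - 10p^{1+i-h+j} - 2p^{1+j} - 2$, and these classes are truncated on $E_\infty$ just as the $b_{i,j}$ are; cutting each $\F_p[x_{i,j}]/(x_{i,j}^{p^{h-1-j}})$ into copies of $\F_p[x]/x^p$ as in \Cref{cor:yn-simple-size} and collecting terms by degree exhibits, for each $j = 1, \dots, h-1$, a block of $j$ polynomial generators at degree $\sim 12p^{h+1+j}$ (along with further blocks at larger degrees which we need not record). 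Assembling, $\pi_* y(h)$ contains an $\F_p$-subspace whose Hilbert series dominates that of the stated polynomial algebra.

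The main obstacle is purely bookkeeping: matching the abstract tensor factors of \cref{eqn:yn-einf} against the explicit degree formula of \Cref{lem:yn-size}, in particular verifying the count ``$j$ generators in degree $12p^{h+1+j}$'' and checking that the discarded blocks and the powers of $v_h$ introduced to reach the isomorphism range of \Cref{lem:tel-makes-classes} are both irrelevant at leading order (for the former, via the flexibility in \Cref{cor:yn-e2-flexible}). The conceptual step --- that collapse plus the MRS differential pattern leaves enough permanent cycles, which \Cref{lem:tel-makes-classes} then transports to honest homotopy --- is immediate.
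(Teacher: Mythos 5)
Your proposal is correct and follows essentially the same route as the paper's proof: read off the conjectural localized Adams $E_\infty$-page from \cref{eqn:yn-einf}, use the isomorphism range for the localization map established in \Cref{lem:yn-size} as the input to \Cref{lem:tel-makes-classes}, lift a suitable $v_h$-multiple of each monomial in $v_h,\dots,v_{2h}$ and the truncated $b_{h+i,j}$'s to $\pi_*y(h)$, and then cut the truncated generators into $\F_p[x]/x^p$ blocks as in \Cref{cor:yn-simple-size} to read off the stated polynomial algebra. The only nits: the flexibility you invoke at the end to absorb the bounded-factor dilation and discarded blocks should point to \Cref{cor:yn-einf-flexible} (or \Cref{cor:steenrod-flexible}) rather than \Cref{cor:yn-e2-flexible}, which concerns the $E_2$-page count from \Cref{lem:yn-e2-basic}; and in the statement itself the index range ``$j=1,\dots,n-1$'' is almost certainly a typo for ``$j=1,\dots,h-1$'' (giving $\binom{h}{2}$ truncated generators total), which you correctly read as $h-1$.
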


\begin{proof}
  From Conjectures \ref{conj:mrs-phase-1} and \ref{conj:mrs-collapse} we have that
  \[ \AE^{s,t}_\infty(y(h))[v_h^{-1}] \cong R(h)_* \otimes E(h_{h+i,j}\ :\ i+j \leq h) \otimes P(b_{h+i,j})/(b_{h+i,j}^{p^{h-1-j}}). \]
  In \Cref{lem:yn-size} we have already analyzed the degrees in which
  the localization map for $y(h)$ is an isomorphism on Adams $E_2$-pages.
  Using \Cref{lem:tel-makes-classes} and \Cref{lem:yn-size} we may therefore lift the classes
  $v_h^wb_{h+i,j}$ which live in topological degree
  \[ 12p^{h+1+i+j} - 10p^{1+i+j} - 2p^{1+j} - 2 \]
  to the un-localized homotopy of $y(h)$.
  We will use the subalgebra of the Adams $E_\infty$-page generated by these classes to bound the rank.
  Since increasing the degrees of the generators only decreases the cumulative rank we can bump the degree of $v_h^wb_{h+i,j}$ up to $12p^{h+1+i+j}$.
  At this point, as in \Cref{cor:yn-simple-size},
  the truncated generators behave the same as polynomial generators in the appropriate degrees.  
\end{proof}


\begin{proof}[Proof (of \Cref{thm:tel-to-sphere}).]
  This proof is very similar to the proof \Cref{thm:steenrod-precise}(2).
  \Cref{cor:AH} implies that
  \[ \varpi^c(y(h),\,n) \leq h^c(y(h),\,n) \cdot \varpi^c(n) \leq h^c(\F_p,\,n) \cdot \varpi^c(n) \]  
  Applying \Cref{cor:yn-einf-flexible} to the lower bounds on
  $\varpi(y(h),\,n)$ from \Cref{lem:yn-conj-size}
  we obtain 
  \[ K_1 \log(n)^3 + O(\log\log(n)\log(n)^2) \leq \log( \varpi^c(n)) + \log(h^c(\F_p)). \]
  \Cref{exm:steenrod-size} tells us that the error term $\log (h^c(\F_p))$ grows like $\Theta(\log(n)^2)$ which lets us conclude.
\end{proof}

\begin{rmk}
  Throughout this section we have made several conjectures which together provide a numerical picture of how we expect telescopic homotopy theory to behave.
  In various places these conjectures are likely too precise to be true as written.
  Although we hope history finds this specificity in speculation valuable, it is probably important to be flexible in interpretation.

  As a specific example:
  In \Cref{conj:mrs-phase-1} the stated differentials should not be taken literally but instead up to some filtration so that Miller's computation of the localized Adams sseq for a Moore spectrum \cite{Miller} is sufficient to say this conjecture is true for height 1 at odd primes\footnote{This brings up an interesting point, at height 1 although we know the $E_3$-page of the localized Adams sseq for a Moore spectrum, we do not actually have precise knowledge of the $d_2$ differentials. It would seem desirable to have refinement of Miller's result which provides a precise formula for these differentials.}.
    
  Above all, the reader is encouraged to treat this section as an invitation to further investigation and correction.
\tqed \end{rmk}
  


\appendix

\section{Unstable homotopy groups (joint with Andrew Senger)}
\label{sec:unstable}
\def\CU{\mathrm{A}}
\def\U{\mathcal{U}}
\def\V{\mathcal{V}}
\def\CA{\mathcal{CA}}
\DeclarePairedDelimiter\abs{\lvert}{\rvert}%
\makeatletter
\let\oldabs\abs
\def\abs{\@ifstar{\oldabs}{\oldabs*}}

In this appendix, we consider bounds on the rank and torsion of unstable homotopy groups.
We begin with the ranks of unstable homotopy groups, which we study in \Cref{sec:unstab-rk-bound}.
Given a simply-connected space $X$ of finite type, in \Cref{thm:unstab-bd} we establish a bound on $\rank_p (\pi_n (X))$ in terms of $\rank_{\F_p} (\H_n (\Omega X; \F_p))$ and $\rank_{\F_p [q_0]} (\Ext_{\mathcal{A}} ^{s,t} (\F_p, \F_p))$.
When combined with \Cref{thm:steenrod-precise}, this implies that the rank of the unstable homotopy groups of a sphere grows no faster than $\exp(O(\log(n)^3))$.
This is the first known subexponential bound on the rank of unstable homotopy groups of spheres.
The question of whether such a bound exists has been recently highlighted by Huang and Wu \cite[Question 1.7]{Huang-Wu}.
We are moreover able to verify a conjecture of Henn \cite[Conjecture on p. 237]{Henn-bound}, which states that the order of exponential growth of $\rank_p (\pi_n (X))$ and $\rank_{\F_p} (\H_n(\Omega X; \F_p))$ is equal.


What we have to say about torsion bounds, a subject with a rich history, is less original.
In contrast to the Cohen--Moore--Neisendorfer theorem, which provides an optimal bound on the $p$-torsion exponent of the unstable homotopy groups of spheres for odd $p$, we will focus in \Cref{sec:un-tor-bd} on results which apply to a general class of spaces.
We begin by recalling the current state-of-the-art, which is due to Barratt \cite{Barratt} and determines a bound on the $p$-torsion order of a suspension.
We then prove \Cref{thm:no-suspension-tor-bound}, which is a simple application of the Goodwillie calculus to unstable torsion bounds.

\subsection{Bounds on the rank}\label{sec:unstab-rk-bound}\ 

The main theorem of this section will be phrased in terms of certain generating series, which we define below.

\begin{dfn}
  Let $X$ denote a based space, $M$ denote a connective $\A$-comodule and $V$ denote a non-negatively graded $\F_p$-vector space. Then we define generating series:
  \begin{align*}
    \varpi(X;t) &\coloneqq \sum_{k \geq 0} \rank_p(\pi_k X) \cdot t^k, &
    h(X;t) &\coloneqq \sum_{k \geq 0} \rank_{\F_p} ( \H_k (X;\F_p)) \cdot t^k, \\
    \varpi_\A (M;t) &\coloneqq \sum_{k \geq 0} \rank_{\F_p [q_0]} \left( \bigoplus_{u-s = k} \Ext^{s,u} _{\A} (M) \right) \cdot t^k, &
    P(V ;t) &\coloneqq \sum_{k \geq 0} \rank_{\F_p} (V_k) \cdot t^k.
  \end{align*}
  In our definition of $\varpi_\A (M;t)$, $\F_p [q_0] \subseteq \Ext^{s,t} _{\A} (\F_p)$ is the polynomial subalgebra generated by the class $q_0 \in \Ext^{1,1} _{\A} (\F_p)$ detecting $p \in \pi _0 (\Ss)$.
  When $M = \F_p$, we will use the abbreviated notation $\varpi_\A (t) \coloneqq \varpi_\A (\F_p, t)$ where convenient.
  \tqed
\end{dfn}

Our main theorem bounds the rank of $\pi_* (X)$ in terms of the rank of $\H_*(\Omega X; \F_p)$:

\begin{thm} \label{thm:unstab-bd}
  Let $X$ denote a simply-connected space of finite type. Then we have a bound:
  \[\varpi(X;t) \cdot t^{-1} = \varpi(\Omega X;t) \leq 2 \cdot P(\A; t) \cdot \varpi_\A (t) \cdot h(\Omega X; t). \]
  Combining this bound with \Cref{thm:steenrod-precise} and \Cref{cor:steenrod-flexible}, we obtain the bound:
  \[\log\left(\rank_p( \pi_n X )\right) \leq \log \left( \rank_{\F_p} (\oplus_{i \leq n-1} \H_{i} (\Omega X; \F_p)) \right) + O(\log(n)^3).\]
\end{thm}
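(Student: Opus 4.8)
The plan is to reduce the computation of $\pi_*(\Omega X)$ to the stable setting, where the ``Adams--Hilbert'' bound \Cref{cor:AH} applies, by means of an EHP-type device: the Goodwillie tower of the identity functor evaluated at $X$. Since $X$ is simply connected this tower converges, with $n$-th layer $D_nX = \Omega^\infty L_n(X)$ where $L_n(X) = \partial_n(\mathrm{Id})\wedge_{h\Sigma_n} X^{\wedge n}$. Because $X$ is $1$-connected, $\partial_n(\mathrm{Id})$ is a wedge of copies of $S^{1-n}$, and homotopy orbits do not decrease connectivity, each $L_n(X)$ is $n$-connected; hence $D_nX$ is $n$-connected, $\pi_k(D_nX) = \pi_k(L_n(X))$, and the tower is eventually constant in each degree. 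The long exact sequences of the layers then give, coefficient-wise, $\varpi(X;t) \le \sum_{n\ge 1}\varpi(L_n(X);t)$, so that $\varpi(\Omega X;t) = t^{-1}\varpi(X;t) \le t^{-1}\sum_{n\ge1}\varpi(L_n(X);t)$.

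First I would bound each layer. Each $L_n(X)$ is connective and of finite type, hence almost compact, so \Cref{cor:AH}, together with the observation that the Adams $E_2$-page of the sphere dominates $\pi_*\Ss$ coefficient-wise, yields $\varpi(L_n(X);t) \le h(L_n(X);t)\cdot\varpi_\A(t)$. Summing, $\varpi(\Omega X;t) \le t^{-1}\varpi_\A(t)\cdot\sum_{n\ge1}h(L_n(X);t)$, so the theorem is reduced to the homological estimate $\sum_{n\ge1}h(L_n(X);t) \le 2t\cdot P(\A;t)\cdot h(\Omega X;t)$.

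The hard part will be this last estimate. The homology of a layer is computed by the homotopy-orbit spectral sequence $\H_*\bigl(\Sigma_n;\, \H_*(\partial_n(\mathrm{Id}))\otimes \H_*(X;\F_p)^{\otimes n}\bigr)\Rightarrow \H_*(L_n(X);\F_p)$, so $\sum_n h(L_n(X);t)$ is governed by the homology of the symmetric groups together with the homology of $\bigvee_n \partial_n(\mathrm{Id})\wedge_{h\Sigma_n} X^{\wedge n}$ --- the free spectral Lie algebra on $\Sigma^\infty X$. The two inputs I would assemble are: (i) $\H_*(B\Sigma_n;\F_p)$ is, uniformly in $n$, bounded by the dual Steenrod algebra, which is what produces the factor $P(\A;t)$; and (ii) a comparison of the Goodwillie tower of the identity with the James/EHP filtration --- underpinned by the bar-construction identity $\Sigma^\infty_+ X \simeq B(\Ss,\Sigma^\infty_+\Omega X,\Ss)$ --- which identifies the total homology of the layers, once the $\Sigma_n$-homology is accounted for, with $\H_*(\Omega X;\F_p)$, producing the factor $t\cdot h(\Omega X;t)$. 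Making these two bounds hold simultaneously and uniformly in $n$, with all the Steenrod-operation bookkeeping, is the crux of the argument; the factor $2$ is slack, absorbing the discrepancy between reduced and unreduced homology and the low-$n$ terms.

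Finally, the logarithmic bound follows by feeding the body of the paper into the coefficient-wise inequality just established. By \Cref{thm:steenrod-precise}, $\log\varpi_\A^c(n) = O(\log(n)^3)$, while $P(\A;t)$, the Poincar\'e series of the Steenrod algebra, has cumulative coefficients of size only $\exp(\Theta(\log(n)^2))$ by the estimates behind \Cref{cor:steenrod-flexible}, which is subdominant. Hence $\rank_p(\pi_nX) \le \bigl(\sum_{i\le n-1}\rank_{\F_p}\H_i(\Omega X;\F_p)\bigr)\cdot\exp(O(\log(n)^3))$, and taking logarithms gives the stated bound.
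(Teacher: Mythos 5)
Your proposal takes a genuinely different route from the paper. The paper's proof stays entirely in algebra: it runs the unstable Adams spectral sequence for $\Omega X$, uses Miller's Grothendieck spectral sequence to reduce to Ext in the categories $\U$/$\V$ of unstable comodules (where the Milnor--Moore theorem forces the spectral sequence to be concentrated in two lines, which is where the factor~$2$ comes from), and then uses the \emph{algebraic} EHP sequence to propagate the stable bound from \Cref{thm:steenrod-precise} downward. The combinatorics are controlled by the generating series $\CU(n;t)$ for completely unadmissible sequences, together with the elementary observation $\CU(n;t) \leq P(\A;t)$ (\Cref{lem:cu-bd}). Your proposal instead works with the Goodwillie tower of the identity evaluated at $X$, sums the layer inequalities $\varpi(X;t) \le \sum_n \varpi(L_n(X);t)$, applies \Cref{cor:AH} to each layer, and then tries to bound $\sum_n h(L_n(X);t)$.

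The first part of your argument is sound: the connectivity of the layers is correct ($L_n(X)$ is $n$-connected since $X$ is $1$-connected, because $\partial_n(\Id)$ is non-equivariantly a wedge of $(n-1)!$ copies of $S^{1-n}$ and smash powers add connectivity), so the tower converges and the coefficient-wise inequality follows; the layers are connective and of finite type, so \Cref{cor:AH} applies. The genuine gap is the homological estimate
\[\sum_{n\ge 1} h(L_n(X);t) \leq 2t\cdot P(\A;t)\cdot h(\Omega X;t),\]
which you correctly identify as the crux but do not establish. Neither of your ``two inputs'' is a precise statement with a proof. Input~(i), that $\H_*(B\Sigma_n;\F_p)$ is ``uniformly bounded by the dual Steenrod algebra,'' is not a standard fact and is not literally what is needed; what is needed is control over the $\Sigma_n$-homotopy orbits of $\partial_n(\Id)\wedge X^{\wedge n}$, where the equivariant structure of $\partial_n(\Id)$ (a complicated, non-free $\Sigma_n$-spectrum---the dual of the partition complex, not a wedge of spheres equivariantly) is essential and is precisely what makes the problem hard. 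Input~(ii), the identification of the total homology of the layers with $\H_*(\Omega X;\F_p)$ after ``accounting for $\Sigma_n$-homology,'' conflates the Koszul-duality relationship between $\Sigma^\infty X$ and $\Sigma^\infty_+\Omega X$ with a concrete statement about the additive size of $\H_*(L_n(X))$; the free spectral Lie algebra on $\Sigma^\infty X$ has Dyer--Lashof-type operations which produce far more homology than $\H_*(\Omega X)$, and exactly this surplus must be bounded by $P(\A;t)$. It is worth noting that the paper itself flags this route: the remark after the definition of $\CU(n;t)$ observes that these sequences ``correspond to operations of Dyer--Lashof-type on spectral Lie algebras'' and conjectures that \Cref{prop:good-bd} ``may also be obtained from an as-yet unstudied algebraic Goodwillie spectral sequence.'' So the direction you are proposing is believed plausible, but it is acknowledged in the paper to be open, and your sketch does not supply the missing estimate on the homology of the layers for a general simply-connected finite-type $X$. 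You would need to either construct and analyze the algebraic Goodwillie spectral sequence the authors gesture at, or reduce to the algebraic EHP route the paper actually uses.
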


Since $\pi_* (S^k)_{(p)}$ is of finite exponent \cite[Theorem 8.10]{TTor} \cite[Corollary 1.22]{JTor}, we may apply \Cref{thm:unstab-bd} to the case $X = S^k$ to obtain the following corollary:

\begin{cor}
  There is a bound
  \[\log_p ( \abs{\pi_n (S^k)_{(p)}} ) \leq \exp(O(\log(n)^3)).\]
\end{cor}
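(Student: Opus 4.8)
The plan is to feed $X = S^k$ into the second displayed inequality of \Cref{thm:unstab-bd} and then pair the resulting rank bound with the finite exponent theorem of James and Toda. All of the substantive work has already been done in \Cref{thm:unstab-bd}, \Cref{thm:steenrod-precise} and \Cref{cor:steenrod-flexible}; the remaining inputs are classical.

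First I would compute the loop space homology. For $k \geq 2$ we have $S^k \simeq \Sigma S^{k-1}$, so by the Bott--Samelson theorem $\H_*(\Omega S^k; \F_p) \cong T(\tilde{\H}_*(S^{k-1};\F_p))$ is the tensor algebra on a one-dimensional graded vector space concentrated in degree $k-1$; that is, a polynomial ring $\F_p[x]$ with $|x| = k-1$. Consequently
\[ \rank_{\F_p}\Bigl( \bigoplus_{i \leq n-1} \H_i(\Omega S^k; \F_p) \Bigr) = \Bigl\lfloor \tfrac{n-1}{k-1} \Bigr\rfloor + 1 = O(n), \]
so its logarithm is $O(\log n)$. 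Substituting into the second bound of \Cref{thm:unstab-bd} gives $\log(\rank_p(\pi_n S^k)) \leq O(\log n) + O(\log(n)^3) = O(\log(n)^3)$, equivalently $\rank_p(\pi_n S^k) \leq \exp(O(\log(n)^3))$.

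Finally, by \cite[Theorem 8.10]{TTor} and \cite[Corollary 1.22]{JTor} there is a constant $N = N(k)$ with $p^N \cdot \pi_m(S^k)_{(p)} = 0$ for every $m > 0$. A finite abelian $p$-group of $p$-rank $r$ that is annihilated by $p^N$ has order at most $p^{Nr}$, hence
\[ \log_p \bigl| \pi_n(S^k)_{(p)} \bigr| \leq N \cdot \rank_p(\pi_n S^k) \leq N \cdot \exp(O(\log(n)^3)) = \exp(O(\log(n)^3)), \]
the constant factor $N$ being absorbed into the exponential error. The one place to take care is precisely this last step: it is the uniform-in-degree torsion exponent for a fixed sphere that lets a bound on $p$-rank be promoted to a bound on order. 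There is no genuine obstacle beyond that, since the merely polynomial growth of $\H_*(\Omega S^k;\F_p)$ renders the loop-space term negligible against the $O(\log(n)^3)$ contributed by the cohomology of the Steenrod algebra.
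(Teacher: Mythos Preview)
Your proposal is correct and follows essentially the same approach as the paper: apply \Cref{thm:unstab-bd} to $X=S^k$, note that $\H_*(\Omega S^k;\F_p)$ grows only polynomially, and convert the resulting rank bound to an order bound using the James--Toda finite-exponent theorem. The paper's own proof is the one-line remark preceding the corollary, and you have simply filled in the routine details.
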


As mentioned above, this is the first known subexponential bound on the size of the unstable homotopy groups of spheres.
More generally, \Cref{thm:unstab-bd} implies that the exponential growth of $\rank_p( \pi_n (X)_{(p)})$ is controlled by the exponential growth of $\rank_{\F_p} (\H_{n} (\Omega X; \F_p))$, which is easier to understand.
As a consequence, we are able to deduce a conjecture of Henn \cite[Conjecture on p. 237]{Henn-bound}.

\begin{ntn}
  Given a formal power series $F(t) \in \mathbb{R}[\![t]\!]$, let $R_{F(t)} \in [0,\infty]$ denote the radius of convergence of $F(t)$.
  \tqed
\end{ntn}

\begin{cor}[{{\cite[Conjecture on p. 237]{Henn-bound}}}]\label{cor:Henn}
  Let $X$ denote a simply-connected space of finite type. Then
  \[\min(1, R_{\varpi(X;t)}) = \min(1, R_{h(\Omega X; t)}).\]
  Moreover, if $X$ is a simply-connected finite space, then
  \[R_{\varpi(X;t)} = R_{h(\Omega X;t)}.\]
\end{cor}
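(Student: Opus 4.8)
The plan is to deduce both statements directly from the two inequalities packaged in \Cref{thm:unstab-bd}, by tracking how each factor affects the radius of convergence of the relevant generating series. Recall that for a power series $F(t) = \sum a_k t^k$ with non-negative coefficients, $R_F = \left(\limsup_k a_k^{1/k}\right)^{-1}$, and that $R_{FG} \geq \min(R_F, R_G)$ for a product (with equality if, say, one factor has strictly larger radius, or more simply because all coefficients are non-negative one always has $R_{FG} \geq \min(R_F,R_G)$). The key auxiliary input is that $P(\A;t)$ and $\varpi_\A(t)$ both have radius of convergence $1$: indeed $\rank_{\F_p}(\A_n)$ grows subexponentially (it is a polynomial algebra whose generators $\xi_i$, resp.\ $\tau_i$, have exponentially growing degrees, so the number of monomials in degree $n$ is $\exp(\Theta(\log(n)^2))$ by \Cref{cor:steenrod-flexible}), hence $R_{P(\A;t)} = 1$; and by \Cref{thm:steenrod-precise} the coefficients of $\varpi_\A(t)$ grow like $\exp(\Theta(\log(n)^3))$, again subexponentially, so $R_{\varpi_\A(t)} = 1$.

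First I would prove the upper-direction inequality $\min(1, R_{\varpi(X;t)}) \geq \min(1, R_{h(\Omega X; t)})$. From \Cref{thm:unstab-bd} we have $\varpi(\Omega X; t) \leq 2 P(\A;t)\,\varpi_\A(t)\, h(\Omega X;t)$ coefficientwise, so $R_{\varpi(\Omega X;t)} \geq \min(R_{P(\A;t)}, R_{\varpi_\A(t)}, R_{h(\Omega X;t)}) = \min(1, R_{h(\Omega X;t)})$. Since $\varpi(X;t) = t\cdot\varpi(\Omega X;t)$ multiplying by $t$ does not change the radius of convergence, so $R_{\varpi(X;t)} = R_{\varpi(\Omega X;t)} \geq \min(1, R_{h(\Omega X;t)})$, and therefore $\min(1, R_{\varpi(X;t)}) \geq \min(1, R_{h(\Omega X;t)})$. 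For the reverse inequality, I would invoke the result of Iriye \cite{Iriye} cited in the introduction (the footnote to \Cref{thm:app-main}), which gives a bound in the opposite direction of the form $h(\Omega X;t) \leq (\text{subexponential factor})\cdot \varpi(X;t)$, or more directly a comparison $\rank_{\F_p}\H_n(\Omega X;\F_p) \lesssim$ something controlled by the $\rank_p \pi_k X$ for $k \leq n$ together with a subexponential correction; feeding this through the same radius-of-convergence bookkeeping yields $\min(1, R_{h(\Omega X;t)}) \geq \min(1, R_{\varpi(X;t)})$. Combining the two gives the first equality.

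For the second statement, suppose $X$ is a simply-connected finite space. Then $h(\Omega X;t)$ has $R_{h(\Omega X;t)} \leq 1$: the loop space homology of a finite simply-connected complex grows at least polynomially (in fact, unless $X$ is rationally elliptic of a very restricted type, $\H_*(\Omega X;\F_p)$ grows exponentially by the Félix--Halperin--Thomas dichotomy, but in all cases the radius is $\leq 1$ since $\H_*(\Omega X;\F_p) \neq 0$ in infinitely many degrees for $X$ not contractible). Hence $\min(1, R_{h(\Omega X;t)}) = R_{h(\Omega X;t)}$. Likewise, since $X$ is finite and non-contractible, the $p$-local homotopy is non-trivial in infinitely many degrees (by the dichotomy theorem again, or simply because $\H_*(\Omega X;\Q) \neq 0$ in infinitely many degrees forces $\pi_*(X)\otimes\Q \neq 0$ infinitely often when $X$ is rationally hyperbolic, and in the elliptic case one uses $p$-torsion), so $R_{\varpi(X;t)} \leq 1$ and $\min(1, R_{\varpi(X;t)}) = R_{\varpi(X;t)}$. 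Thus the first equality upgrades to $R_{\varpi(X;t)} = R_{h(\Omega X;t)}$.

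The main obstacle I anticipate is the reverse inequality in the first statement: \Cref{thm:unstab-bd} only provides one direction, and closing the loop genuinely requires citing Iriye's theorem (or reproving an analogue of it) in a form that is compatible with the radius-of-convergence argument. A secondary subtlety is the claim $R_{h(\Omega X;t)} \leq 1$ and $R_{\varpi(X;t)} \leq 1$ for finite $X$; this needs the rational dichotomy theorem of Félix--Halperin--Thomas to rule out the degenerate possibility that these series are polynomials, but for simply-connected finite $X$ that is not contractible this is standard. Everything else is the elementary bookkeeping lemma $R_{FG} \geq \min(R_F, R_G)$ applied to series with non-negative coefficients, plus the two computations $R_{P(\A;t)} = R_{\varpi_\A(t)} = 1$ which follow immediately from the subexponential growth bounds already established in \Cref{sec:steenrod}.
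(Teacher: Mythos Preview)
Your approach is essentially the same as the paper's: one inequality comes from \Cref{thm:unstab-bd} together with $R_{P(\A;t)} = R_{\varpi_\A(t)} = 1$, the other from Iriye, and for finite $X$ one argues both radii are at most $1$. The only imprecision is your justification in the finite case: the F\'elix--Halperin--Thomas dichotomy is a rational statement and does not directly yield that $\H_*(\Omega X;\F_p)$ and $\pi_*(X)_{(p)}$ are nonzero in infinitely many degrees---the paper instead invokes theorems of Serre and Umeda for the $p$-local homotopy groups and a separate lemma for the mod $p$ loop space homology.
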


\begin{proof}
  The inequality
  \[\min(1, R_{\varpi(X;t)}) \leq \min(1, R_{h(\Omega X; t)})\]
  was proven by Iriye \cite{Iriye}, so it suffices to prove the opposite inequality.
  This is a consequence of \Cref{thm:unstab-bd}, \Cref{thm:steenrod-precise} and \Cref{cor:steenrod-flexible}.
%
%

  If $X$ is simply-connected and finite, then $\pi_n (X)_{(p)}$ is nonzero for infinitely many $n$ by a theorem of Serre and Umeda \cite{SerreEM, Umeda}. Similarly, it follows from \cite[Lemma 5.12]{DW} that $\H_n (\Omega X; \F_p)$ must be nonzero for infinitely many $n$.
  Thus $R_{\varpi(X;t)} \leq 1$ and $R_{h(\Omega X; t)}\leq 1$, so we may conclude.
\end{proof}

\begin{rmk}
  Henn works with the generating series for $\rank_p (\pi_* (X; \Z/p))$ instead of $\rank_p (\pi_* (X))$. As far as radii of convergence go, this makes no difference, since $\rank_p (\pi_{i} (X)) \leq \rank_p (\pi_i (X; \Z/p)) \leq \rank_p (\pi_i(X)) + \rank_p (\pi_{i-1} (X))$.
  \tqed
\end{rmk}

\begin{rmk}\label{rmk:Henn-result}
  In \cite[Theorem 1]{Henn-bound}, Henn proved the following weakened version of \Cref{cor:Henn}. Given a simply-connected space $X$ of finite type, there is an inequality
  \[\min(1/2,R_{h (\Omega X;t)}) \leq R_{\varpi (X;t)}.\]
  Boyde has improved this to the inequality \cite[Corollary 1.3]{Boyde}:
  \[\min((1/2)^{\frac{1}{p-1}},R_{h (\Omega X;t)}) \leq R_{\varpi (X;t)}.\]
  \tqed
\end{rmk}

Our proof of \Cref{thm:unstab-bd} follows the strategy Henn used to prove \cite[Theorem 1]{Henn-bound}.
As a first step, Henn combines the unstable Adams spectral sequence of $\Omega X$ with a Grothendieck spectral sequence studied by Miller to reduce to proving a bound on the size of Ext groups of unstable $\A$-modules.
To obtain this estimate, Henn utilizes an inductive argument which is based on the algebraic EHP sequence and uses the known unstable Ext groups of $\Sigma \F_p$ as a base case.
Our argument will follow a similar approach, with the key difference being that we use the bound of \Cref{thm:steenrod-precise} on the size of Ext groups of stable $\A$-modules as our base case.
In other words, we use the algebraic EHP sequence in the opposite direction as Henn:
while he uses it to propagate the unstable Ext of $\Sigma \F_p$ upwards, we use it to propagate the stable Ext groups downward.

\subsection{A recollection on the unstable Adams sseq}\ 

We begin by recalling the spectral sequences we will use, as well as the algebraic categories which govern their $\mathrm{E}_2$-pages. We start with the unstable Adams spectral sequence.

\begin{dfn}
  Let $\CA$ denote the category of connected unstable coalgebras without unit over the dual Steenrod algebra---see e.g. \cite{Sullivan} for the full definition.
  \tqed
\end{dfn}

The unstable Adams spectral sequence for $\Omega X$ takes the form 
\[\Ext^{s} _{\CA} (\Sigma^t \F_p, \overline{\H}_* (\Omega X; \F_p)) \Rightarrow \pi_{t-s} (\Omega X)^{\wedge} _{p}.\]
Since $X$ is simply-connected and of finite type, it follows that $\Omega X$ is connected, nilpotent and of finite type, so that the spectral sequence converges.

To bound the $\mathrm{E}_2$-page of the unstable Adams spectral sequence, we will make use of Miller's Grothendieck spectral sequence for Ext groups in $\CA$, which was constructed in \cite{Sullivan,SullivanCor}.\footnote{Note that the treatment of this spectral sequence in the odd-primary case was incorrect in \cite{Sullivan}---the category $\V$ below was introduced by Miller in the correction \cite{SullivanCor}.}
This spectral sequence is based on certain categories $\U$ (at the prime $2$) and $\V$ (at odd primes) of unstable comodules over the dual Steenrod algebra.

\begin{dfn}
  At the prime $2$, let $\U$ denote the category of comodules over the dual Steenrod algebra whose induced right action of the Steenrod algebra satisfies
  \[x \Sq^n = 0 \text{ if } \abs{x} \leq 2n-1.\]
  At an odd prime $p$, let $\V$ denote the category of comodules over the dual Steenrod algebra whose induced right action of the Steenrod algebra satisfies
  \[x \mathrm{P}^n = 0 \text{ if } \abs{x} \leq 2pn.\]
  \tqed
\end{dfn}

Given $C \in \CA$, let $R^t P (C)$ denote the right derived functors of the coalgebra primitives functor.
Then $\Sigma^{-1} R^t P (C) \in \U$ at the prime $2$, and $\Sigma^{-1} R^t P(C) \in \V$ at odd primes.
Miller's spectral sequences then take the form:
\[\Ext^s _{\U} (M, \Sigma^{-1} R^t P (C)) \Rightarrow \Ext^{s+t} _{\CA} (\Sigma M, C)\]
for $M \in \U$ and $C \in \CA$ at the prime $2$, and
\[\Ext^s _{\V} (N, \Sigma^{-1} R^t P (C)) \Rightarrow \Ext^{s+t} _{\CA} (\Sigma N, C)\]
for $N \in \V$ and $C \in \CA$ at odd primes $p$.

\begin{rmk}
  Following Henn \cite{Henn-bound}, the reason that we work with the unstable Adams spectral sequence of $\Omega X$ instead of that of $X$ is that the Milnor--Moore theorem \cite[Theorem 7.11]{MM} allows us to deduce that Miller's spectral sequence for $\Omega X$ is concentrated in degrees $t=0,1$.
  \tqed
\end{rmk}

To bound Ext groups in the categories $\U$ and $\V$, we will combine the stable bound proven in \Cref{thm:steenrod-precise} with the algebraic EHP sequence, which we will now recall.
The 2-primary version of this sequence appears in \cite{Henn-bound} after the proof of Lemma 3, and the odd-primary version appears as 3(b) on p.143 of \cite{Henn-bound}.
Let $M(p)$ denote the $\A$-comodule which is equal to $\F_p$ in degrees $0$ and $1$ with a Bockstein connecting them.

\begin{ntn}
  Given $M \in \U$, we let
  $\Ext^{s,t}_{\mathcal{A}} (M) \coloneqq \Ext^s _{\mathcal{A}} (\Sigma^t \F_2, M).$
  Given $N \in \V$, we define $\Ext^{s,t} _{\mathcal{A}} (N)$ similarly.
\end{ntn}

\begin{thm}[Algebraic EHP]
  At the prime $2$, there are long exact sequences:
%
%
%
  \[\cdots \xrightarrow{\H} \Ext^{s-2,t}_{\U} (\Sigma^{2k+1} \F_2) \xrightarrow{P} \Ext^{s,t} _{\U} (\Sigma^k \F_2) \xrightarrow{E} \Ext^{s,t+1}_{\U} (\Sigma^{k+1} \F_2) \xrightarrow{\H} \Ext^{s-1,t}_{\U} (\Sigma^{2k+1} \F_2) \xrightarrow{P} \cdots.\]
  On the other hand, at odd primes $p$ there are equivalences
  \[\Ext_\V ^{s,t} (\Sigma^{2k-1} \F_p) \cong \Ext_\V ^{s,t+1} (\Sigma^{2k} \F_p) \]
  and long exact sequences
  \[\cdots \xrightarrow{\H} \Ext^{s-2,t}_\V (\Sigma^{kp} M(p)) \xrightarrow{P} \Ext^{s,t}_\V (\Sigma^{2k} \F_p) \xrightarrow{E} \Ext^{s,t+1}_\V (\Sigma^{2k+1} \F_p) \xrightarrow{\H} \Ext^{s-1,t}_\V (\Sigma^{kp} M(p)) \xrightarrow{P} \cdots.\]
\end{thm}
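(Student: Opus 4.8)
The asserted long exact sequences are classical --- they are the algebraic shadow of the $p$-local James--EHP fibration $S^k \to \Omega S^{k+1} \to \Omega S^{2k+1}$ and of its odd-primary analogue --- and in this precise form they are due to Miller and are recorded by Henn \cite{Henn-bound}, so in the body of the paper the plan is simply to cite them after matching indexing conventions. Still, here is how one would prove them. The unstable Ext groups $\Ext_\U^{s,t}(\Sigma^k\F_2)$ and $\Ext_\V^{s,t}(\Sigma^k\F_p)$ occurring above are computed by the \emph{unstable lambda algebra} $\Lambda(k)$ (at $p=2$) and by its odd-primary counterpart --- the once-looped variant adapted to the categories $\U$ and $\V$ introduced in this appendix. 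The structural fact one needs is that $\Lambda(k)$ sits inside $\Lambda(k+1)$ as a subcomplex, and that the admissibility relations pin down the quotient $\Lambda(k+1)/\Lambda(k)$: it is a homological-and-internal shift of the sphere complex $\Lambda(2k+1)$ at the prime $2$, and of the degree-$kp$ Moore complex built from the two-cell comodule $M(p)$ at odd primes.

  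First I would establish this filtration-quotient identification by the usual combinatorial bookkeeping on admissible monomials: a monomial in $\Lambda(k+1)$ not lying in $\Lambda(k)$ begins with the new leading generator, and deleting it leaves an admissible monomial whose leading index is constrained exactly as in the complex $\Lambda(2k+1)$ at $p=2$, and exactly as in the degree-$kp$ Moore complex at odd primes; in the odd case the fact that the lambda-algebra generators come in Bockstein pairs $(\lambda_i,\mu_i)$ is what forces the two-cell comodule $M(p)$, rather than $\F_p$, to appear as coefficients. Granting this, the long exact cohomology sequence associated to the short exact sequence of complexes
  \[ 0 \longrightarrow \Lambda(k) \longrightarrow \Lambda(k+1) \longrightarrow \Lambda(k+1)/\Lambda(k) \longrightarrow 0 \]
  is precisely the EHP sequence, with $E$ the inclusion-induced suspension, $H$ the projection-induced James--Hopf map, and $P$ the connecting homomorphism; the homological shift carried by the quotient complex is exactly what produces the jump from $s-2$ to $s$ in the $P$-maps as written. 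The separate odd-primary identification $\Ext_\V^{s,t}(\Sigma^{2k-1}\F_p) \cong \Ext_\V^{s,t+1}(\Sigma^{2k}\F_p)$ is even more elementary: in the odd-primary unstable lambda algebra the complexes $\Lambda(2k-1)$ and $\Lambda(2k)$ coincide after a single internal degree shift, because the one extra leading generator distinguishing them carries no admissibility freedom and so contributes nothing new.

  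The step I expect to be the only genuinely non-formal one --- and the reason we will cite \cite{Henn-bound} (and Miller \cite{Sullivan, SullivanCor}) rather than reprove it --- is the identification of $\Lambda(k)$, and of its odd-primary version, with a complex that actually computes $\Ext$ in the \emph{once-looped} unstable comodule categories $\U$ and $\V$: these are not the classical categories of unstable modules, and one must check that the lambda-algebra differential together with its James filtration is compatible with the comodule structure feeding Miller's Grothendieck spectral sequence. Beyond that, the remaining work needed in order to apply this result here is purely notational: aligning Henn's indexing of the algebraic EHP sequence with the $(s,t)$-bigrading and with the normalization of $q_0 \in \Ext^{1,1}_\A(\F_p)$ fixed in \Cref{thm:steenrod-precise}, so that the base cases of the downward induction in \Cref{thm:unstab-bd} are recorded with the correct degree shifts.
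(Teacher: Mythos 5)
You correctly observe that the paper does not prove this theorem but presents it as a recollection and cites it from Henn \cite{Henn-bound} (after the proof of Lemma~3 for $p=2$, item~3(b) on p.~143 for odd $p$), with Miller's \cite{Sullivan, SullivanCor} as the underlying technology. Your lambda-algebra sketch is a sound outline of the classical argument behind the cited result, and the caveat you flag --- that the genuinely non-formal step is matching the James filtration of the unstable lambda algebra with the once-looped comodule categories $\U$ and $\V$ --- is exactly the right thing to worry about; but none of this is spelled out in the paper, which simply quotes the statement in the indexing it needs.
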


\subsection{The proof of \Cref{thm:unstab-bd}}\ 

We will now put together the above tools to prove \Cref{thm:unstab-bd}.
First, we need some notation for the generating series of Ext groups in $\U$ and $\V$.

\begin{dfn}
  Given an object $M$ of $\U$, we let $\varpi_\U (M;t)$ denote
  \[\varpi_\U (M;t) \coloneqq \sum_{k \geq 0} \rank_{\F_p [q_0]} \left( \bigoplus_{u-s = k} \Ext^{s,u} _{\U} (M) \right) \cdot t^k.\]
  We define $\varpi_\V (M,t)$ analogously for objects $M$ of $\V$.
  When $M = \Sigma^n \F_p$, we abbreviate to $\varpi_\U (n,t)$ and $\varpi_\V (n,t)$.
  \tqed
\end{dfn}

We begin by recording some consequences of the unstable Adams spectral sequence for $\Omega X$ and Miller's Grothendieck spectral sequences.

\begin{prop} \label{prop:sseq-consq}
  Given a simply-connected space $X$ of finite type, there is an inequality
  \[\varpi(\Omega X; t) \leq \varpi_{\U} (P (\overline{\H}_{*} (\Omega X));t) + \varpi_{\U} (R^1 P (\overline{\H}_{*} (\Omega X));t) \cdot t^{-1}\]
  at the prime $2$, and an inequality
  \[\varpi(\Omega X; t) \leq \varpi_{\V} (P (\overline{\H}_{*} (\Omega X));t) + \varpi_{\V} (R^1 P (\overline{\H}_{*} (\Omega X));t) \cdot t^{-1}\]
  at odd primes $p$.
  Furthermore, there are inequalities
  \[P(P (\overline{\H}_{*} (\Omega X));t) \leq P(\overline{\H}_* (\Omega X),t)\]
  and
  \[P( R^1 P (\overline{\H}_{*} (\Omega X));t) \leq P(\overline{\H}_* (\Omega X),t) \cdot t.\]
\end{prop}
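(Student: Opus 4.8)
The plan is to splice the unstable Adams spectral sequence for $\Omega X$ with Miller's Grothendieck spectral sequence, tracking the associated $\F_p[q_0]$-rank valuations at each stage. Write $C \coloneqq \overline{\H}_*(\Omega X; \F_p)$. Since $X$ is simply-connected of finite type, $\Omega X$ is connected, nilpotent and of finite type, so the unstable Adams spectral sequence $\Ext^s_{\CA}(\Sigma^u\F_p, C) \Longrightarrow \pi_{u-s}(\Omega X)^\wedge_p$ converges. First I would record, exactly as in the stable case, that the associated graded of $\pi_k(\Omega X)^\wedge_p$ for the Adams filtration is $\bigoplus_s E_\infty^{s,k+s}$, so that taking $\F_p[q_0]$-ranks (minimal numbers of generators, with $q_0$ detecting multiplication by $p$) on both sides and using monotonicity of $\rank_{\F_p[q_0]}$ along the spectral sequence gives $\rank_p(\pi_k \Omega X) \le \rank_{\F_p[q_0]}\!\big(\bigoplus_s \Ext^s_{\CA}(\Sigma^{k+s}\F_p, C)\big)$, i.e.\ $\varpi(\Omega X; t)$ is bounded above, coefficientwise, by the generating series of the right-hand side in $k$.

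Next I would feed in Miller's spectral sequence $\Ext^s_{\U}(\Sigma^{u-1}\F_p,\, \Sigma^{-1}R^tP(C)) \Longrightarrow \Ext^{s+t}_{\CA}(\Sigma^u\F_p, C)$, and its odd-primary analog with $\V$ in place of $\U$. The crucial structural input is that, $\H_*(\Omega X; \F_p)$ being a connected cocommutative Hopf algebra, the Milnor--Moore theorem \cite{MM} forces $R^tP(C) = 0$ for $t \ge 2$; hence this spectral sequence lives in the two columns $t = 0, 1$, and monotonicity of $\rank_{\F_p[q_0]}$ under subquotients shows that $\rank_{\F_p[q_0]}\Ext^n_{\CA}(\Sigma^u\F_p, C)$ is at most $\rank_{\F_p[q_0]}\Ext^n_{\U}(\Sigma^{u-1}\F_p, \Sigma^{-1}P(C)) + \rank_{\F_p[q_0]}\Ext^{n-1}_{\U}(\Sigma^{u-1}\F_p, \Sigma^{-1}R^1P(C))$, using $R^0P = P$. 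Reindexing via the identification $\Ext^s_{\U}(\Sigma^{u-1}\F_p, \Sigma^{-1}N) \cong \Ext^s_{\U}(\Sigma^u\F_p, N)$ and summing over $n$ with the stem $k = u - n$ fixed, the $t = 0$ column contributes the coefficient of $t^k$ in $\varpi_{\U}(P(C); t)$, while the $t = 1$ column --- carrying one extra cohomological degree --- contributes the coefficient of $t^{k+1}$ in $\varpi_{\U}(R^1P(C); t)$. Combining with the first paragraph gives the claimed $\varpi(\Omega X; t) \le \varpi_{\U}(P(C); t) + t^{-1}\varpi_{\U}(R^1P(C); t)$, and identically with $\V$ at odd primes.

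For the Poincar\'e-series bounds, the first is immediate: the primitives form a graded subspace $P(C) \subseteq C$, so $\dim_{\F_p}(P(C))_k \le \dim_{\F_p}(C_k)$ for all $k$. For the second, I would use the vanishing $R^tP(C) = 0$ for $t \ge 2$ to obtain a length-one injective resolution $0 \to C \to I^0 \to I^1 \to 0$ in $\CA$, exhibiting $R^1 P(C)$ as a quotient of $P(I^1) \subseteq \overline{I^1}$, and then invoke the computation of the functors $R^* P$ for $C$ the reduced homology of a loop space from \cite{Sullivan, SullivanCor} --- the same estimate underlying Henn's argument in \cite{Henn-bound} --- to conclude that $R^1 P(C)$ is a subquotient of a single degree-one shift of $C$. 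This gives $\dim_{\F_p}(R^1 P(C))_k \le \dim_{\F_p}(C_{k-1})$, i.e.\ $P(R^1 P(C); t) \le t\cdot P(C; t)$.

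The routine but fiddly step is the degree bookkeeping through Miller's spectral sequence: keeping track of the $\Sigma^{-1}$ desuspensions and of the shift between $M$ and $\Sigma M$, which is exactly what produces the factor of $t^{-1}$ in front of $\varpi_{\U}(R^1 P(C); t)$. The only genuinely non-formal ingredient is the estimate $\dim_{\F_p}(R^1 P(C))_k \le \dim_{\F_p}(C_{k-1})$ on the first derived primitives; this fails for a general unstable coalgebra and depends essentially on $C$ being the reduced homology of a loop space, via the Milnor--Moore theorem. Everything else is a formal consequence of the monotonicity of the valuations $\varpi,\, \varpi_{\U},\, \varpi_{\V}$ under spectral sequences set up earlier.
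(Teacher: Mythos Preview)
Your overall strategy matches the paper's exactly: combine the unstable Adams spectral sequence with Miller's Grothendieck spectral sequence, reduce via Milnor--Moore to the vanishing $R^t P(C) = 0$ for $t \geq 2$, and then handle the Poincar\'e bounds on $P(C)$ and $R^1 P(C)$ separately. The degree bookkeeping producing the factor $t^{-1}$ is also correct.

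The one step that does not work as written is your argument for $P(R^1 P(C); t) \leq t \cdot P(C; t)$. The vanishing of $R^t P(C)$ for $t \geq 2$ does \emph{not} imply that $C$ admits a length-one injective resolution in $\CA$ --- it only says $C$ is $P$-acyclic above degree one, which is much weaker than having injective dimension $\leq 1$. And even if such a resolution existed, injectives in $\CA$ are large objects, so exhibiting $R^1 P(C)$ as a quotient of $P(I^1) \subseteq \overline{I^1}$ would not give a bound in terms of $C$ itself. The paper's argument (which is what you gesture at in your final clause anyway) is more direct: Milnor--Moore \cite[Theorem 7.11]{MM} decomposes $\H_*(\Omega X; \F_p)$ as a coalgebra into a tensor product of divided power coalgebras, exterior coalgebras, and duals of truncated polynomial algebras, and one simply computes $P$ and $R^1 P$ explicitly for each factor. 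Both Poincar\'e inequalities then follow immediately from these standard formulas. You should drop the injective-resolution detour and invoke this computation directly.
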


\begin{proof}
  Making use of the unstable Adams spectral sequence for $\Omega X$ and Miller's Grothendieck spectral sequences, the first half of the proposition reduces to the claim that $R^t P(\overline{\H}_{*} (\Omega X)) = 0$ for $t \geq 2$.

  It follows from \cite[Theorem 7.11]{MM} that, as a coalgebra, $\H_* (\Omega X)$ is a tensor product of divided power coalgebras, exterior coalgebras, and duals of truncated polynomial algebras.
  The proposition then follows from the standard computation of the derived primitives of these coalgebras.
\end{proof}

%
%
%
%

Our goal is now to bound $\varpi_\U (M;t)$ and $\varpi_\V (M;t)$ in terms of stable Ext over the dual Steenrod algebra. For this, we make use of the algebraic EHP sequence, which has the following immediate corollary:

\begin{cor}
  At the prime $2$, we have
  \[\varpi_\U (n;t) \leq \varpi_\U (n+1; t) \cdot t^{-1} + \varpi_\U (2n+1; t)\cdot t^{-2}.\]
  At odd primes, we have
  \[\varpi_{\V} (2k-1;t) = \varpi_{\V} (2k;t)\cdot t^{-1}\]
  and
  \[\varpi_{\V} (2k;t) \leq \varpi_\V (2k+1; t) \cdot t^{-1} + \varpi_\V (2pn;t) \cdot t^{-2} + \varpi_\V (2pn+1;t) \cdot t^{-2}\]
\end{cor}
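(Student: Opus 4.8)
The plan is to deduce the corollary directly from the two algebraic EHP long exact sequences recalled above, by breaking them into short exact sequences, applying subadditivity of $\rank_{\F_p[q_0]}$, and tracking internal degrees to read off the powers of $t^{-1}$.

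First I would record two elementary facts about $\rank_{\F_p[q_0]}(M) = \dim_{\F_p}(M/q_0 M)$, valid because $\F_p[q_0]$ is a graded principal ideal domain and all the $\Ext$-modules occurring are bounded below and finitely generated over $\F_p[q_0]$: for a short exact sequence $0 \to A \to B \to C \to 0$ one has $\rank_{\F_p[q_0]}(B) \le \rank_{\F_p[q_0]}(A) + \rank_{\F_p[q_0]}(C)$ (lift generators of $C$ and adjoin images of generators of $A$), and for any submodule or quotient $N$ of $M$ one has $\rank_{\F_p[q_0]}(N) \le \rank_{\F_p[q_0]}(M)$ (a graded submodule of a graded free module over a PID is free of no larger rank). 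Since $q_0$ raises $(s,t)$ by $(1,1)$ it preserves the stem $t-s$, so each $\bigoplus_{t-s=k}\Ext^{s,t}_\U(\Sigma^n\F_2)$ is an $\F_p[q_0]$-module, and the maps $E$, $P$, $\H$ in the algebraic EHP sequence are $\F_p[q_0]$-linear, being maps of modules over $\Ext_\A(\F_p,\F_p)$.

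For the prime $2$, I would fix $k$ and take the direct sum over all bidegrees with $t-s=k$ of the middle term of the algebraic EHP sequence; exactness at $\Ext^{s,t}_\U(\Sigma^n\F_2)$ then produces a short exact sequence of $\F_p[q_0]$-modules
\[ 0 \to \mathrm{im}(P) \to \textstyle\bigoplus_{t-s=k}\Ext^{s,t}_\U(\Sigma^n\F_2) \to \mathrm{im}(E) \to 0, \]
in which $\mathrm{im}(P)$ is a quotient of the stem-$(k+2)$ part of $\Ext^{*,*}_\U(\Sigma^{2n+1}\F_2)$ (as $P\colon \Ext^{s-2,t}\to\Ext^{s,t}$ lowers the stem by $2$) and $\mathrm{im}(E)$ is a submodule of the stem-$(k+1)$ part of $\Ext^{*,*}_\U(\Sigma^{n+1}\F_2)$ (as $E\colon\Ext^{s,t}\to\Ext^{s,t+1}$ raises the stem by $1$). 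Applying the two facts above, multiplying by $t^k$, summing over $k\ge 0$ and re-indexing (which only discards non-negative terms) yields $\varpi_\U(n;t) \le t^{-1}\varpi_\U(n+1;t) + t^{-2}\varpi_\U(2n+1;t)$. At odd primes, the same stem-by-stem argument applied to the displayed isomorphism gives the equality $\varpi_\V(2k-1;t) = t^{-1}\varpi_\V(2k;t)$, and applied to the odd-primary EHP long exact sequence — after resolving $\Sigma^{kp}M(p)$ by the short exact sequence $0 \to \F_p \to M(p) \to \Sigma\F_p \to 0$ and passing to the associated long exact sequence in $\Ext_\V$ — yields the remaining inequality, the fact that $M(p)$ is an extension of one shift of $\F_p$ by another accounting for the two $\varpi_\V$-terms on its right-hand side. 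Since the statement is a formal consequence of the EHP sequences, the only point demanding real care — and hence the main obstacle, such as it is — is the internal-degree bookkeeping that pins down the exact exponents of $t^{-1}$, together with checking that the EHP maps are $\F_p[q_0]$-linear so that the rank estimates may be applied one stem at a time.
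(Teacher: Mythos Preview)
Your proposal is correct and is exactly the argument the paper has in mind: the paper gives no proof at all, presenting the statement as ``the following immediate corollary'' of the algebraic EHP sequence, and what you have written is a careful unpacking of that immediacy. Your only addition beyond the paper is making explicit the $\F_p[q_0]$-linearity of the EHP maps and the subadditivity/monotonicity of $\rank_{\F_p[q_0]}$, which the paper takes for granted.
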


We now introduce certain integer sequences which control the combinatorics of the above recurrences.

\begin{dfn}
  Let 
  \[I (n) = \{(i_1, \dots, i_k) \vert k \geq 0 , \. i_k \geq n \text{ and } i_s > 2 i_{s+1}\}.\]
  Given a sequence $J = (i_1, \dots i_k) \in I(n)$, we let $\dim(J) = \sum_{s=1} ^k (i_s -1)$.

  At odd primes, let
  \[I(n) = \{(\epsilon_1, i_1, \dots, \epsilon_k, i_k) \vert k \geq 0 ,  \epsilon_s \in \{0,1\},  2i_k \geq n \text{ and } i_s > p i_{s+1} - \epsilon_{s+1}\}.\]
  Given $J = (\epsilon_1, i_1, \dots, \epsilon_k, i_k) \in I(n)$, we set $\dim(J) = \sum_{s=1} ^k (2(p-1)i_s-\epsilon_s-1)$.

  We now set
  \[\CU(n;t) = \sum_{J \in I(n)} t^{\dim(J)}.\]
  \tqed
\end{dfn}

\begin{rmk}
  The sequences in $I(n)$ are the completely unadmissible sequences of excess $n$ considered by Behrens in \cite{BehrensEHP}.
  They correspond to operations of Dyer--Lashof-type on spectral Lie algebras.
  In this guise they play a role in the Goodwillie tower of the identity.
  We suspect that the bound which we prove in \Cref{prop:good-bd} may also be obtained from an as-yet unstudied algebraic Goodwillie spectral sequence.
  \tqed
\end{rmk}

The following lemma is immediately verified from the definitions:

\begin{lem}
At the prime $2$, we have the following relation:
\[\CU(n;t) = \CU(n+1;t) + \CU(2n+1; t)\cdot t^{n-1}.\]
On the other hand, at odd primes we have:
\[\CU(2n-1;t) = \CU(2n;t)\]
and
\[\CU(2n;t) = \CU(2n+1;t) + \CU (2pn; t) \cdot t^{2(p-1)n-2} + \CU(2pn+1;t) \cdot t^{2(p-1)n-1}.\]
\end{lem}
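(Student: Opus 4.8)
The plan is to prove all three identities by the same device: partition the index set $I(n)$ according to the value of its last entry, and recognize each resulting piece as a dimension-shifted copy of $I$ evaluated at a larger index.

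First I would handle $p=2$. The key observation is that $I(n+1) \subseteq I(n)$, with complement $I(n) \setminus I(n+1)$ consisting exactly of the nonempty sequences $(i_1,\dots,i_k)$ whose last entry is $i_k = n$. Deleting this last entry sends such a sequence to $(i_1,\dots,i_{k-1})$, and the constraint $i_{k-1} > 2 i_k = 2n$ is precisely $i_{k-1} \geq 2n+1$; so this deletion is a bijection onto $I(2n+1)$, where the empty sequence corresponds under the inverse map to the length-one sequence $(n)$. Since the deleted entry $i_k = n$ contributes $n-1$ to $\dim$, summing $t^{\dim(-)}$ over the partition $I(n) = I(n+1) \sqcup (I(n) \setminus I(n+1))$ gives $\CU(n;t) = \CU(n+1;t) + t^{n-1}\CU(2n+1;t)$.

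At odd primes the first identity $\CU(2n-1;t) = \CU(2n;t)$ comes for free: since $2i_k$ is always even, the defining inequalities $2i_k \geq 2n-1$ and $2i_k \geq 2n$ cut out the same set, so $I(2n-1) = I(2n)$ on the nose. For the second identity I would run the same argument, now partitioning $I(2n) \setminus I(2n+1)$ --- the nonempty sequences with last entry $i_k = n$ --- further according to whether $\epsilon_k = 0$ or $\epsilon_k = 1$. Deleting the last pair $(\epsilon_k, n)$ turns the constraint $i_{k-1} > pn - \epsilon_k$ into $i_{k-1} \geq pn + 1$ when $\epsilon_k = 0$ and into $i_{k-1} \geq pn$ when $\epsilon_k = 1$, giving bijections onto $I(2pn+1)$ and $I(2pn)$ respectively. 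The deleted pair $(0,n)$ contributes $\dim = 2(p-1)n - 1$ and $(1,n)$ contributes $\dim = 2(p-1)n - 2$, so assembling the pieces yields
\[\CU(2n;t) = \CU(2n+1;t) + t^{2(p-1)n-2}\CU(2pn;t) + t^{2(p-1)n-1}\CU(2pn+1;t).\]

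There is no real obstacle here --- this is a bookkeeping exercise, which is why the lemma is stated as immediate. The only points that need a moment's care are the edge cases involving the empty sequence (which I would check maps correctly to the length-one sequences appearing after truncation) and the integrality/parity step used to identify $I(2n-1)$ with $I(2n)$.
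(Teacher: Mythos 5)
Your argument is correct and is precisely the bookkeeping the paper has in mind (the paper simply asserts the lemma is ``immediately verified from the definitions'' and gives no proof). The partition of $I(n)$ by last entry, the truncation bijections, the parity observation $I(2n-1)=I(2n)$, and the bookkeeping of how the deleted entry shifts $\dim$ all check out, including the edge cases with the empty sequence.
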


We are now able to give a bound on the size of unstable Ext groups in terms of $\CU(n;t)$ and stable Ext groups.

\begin{prop} \label{prop:good-bd}
  We have
  \[\varpi_{\U} (n;t) \leq \CU(n;t) \cdot \varpi_\A (t) \cdot t^n\]
  and
  \[\varpi_{\V} (n;t) \leq \CU(n;t) \cdot \varpi_\A (t) \cdot t^n.\]
\end{prop}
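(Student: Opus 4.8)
The plan is to prove both inequalities by induction, comparing the two sides coefficientwise as power series in $t$. Work at the prime $2$ first. Write $c_n(j)$ for the coefficient of $t^{n+j}$ in $\varpi_\U(n;t)$ — so $c_n(j)=0$ for $j<0$, since the unstable $\Ext$ of $\Sigma^n\F_2$ is concentrated in stems $\ge n$ — and write $\alpha(j)$ for the coefficient of $t^j$ in $\varpi_\A(t)$. Expanding the right-hand side, the coefficient of $t^{n+j}$ in $\CU(n;t)\cdot\varpi_\A(t)\cdot t^n$ is $D_n(j)\coloneqq\sum_{J\in I(n)}\alpha(j-\dim(J))$, so the claim becomes: $c_n(j)\le D_n(j)$ for all $n\ge 1$ and all $j$. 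The induction will be on the ``excess'' $e\coloneqq j-n$.

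The base case is stabilisation. Iterating the algebraic EHP long exact sequence, the suspension map $E\colon\Ext^{s,t}_\U(\Sigma^n\F_2)\to\Ext^{s,t+1}_\U(\Sigma^{n+1}\F_2)$ has kernel and cokernel that are subquotients of $\Ext_\U(\Sigma^{2n+1}\F_2)$, which vanishes in stems $<2n+1$; hence $E$ induces an isomorphism on the stem-$(n+j)$ part whenever $n+j<2n-1$, i.e. whenever $n\ge j+2$. Passing to the colimit in $n$, which computes stable $\Ext$, this gives $c_n(j)=\alpha(j)$ for $n\ge j+2$. On the other side, any nonempty $J=(i_1,\dots,i_k)\in I(n)$ has $\dim(J)=\sum_s(i_s-1)\ge i_1-1\ge n-1>j$ in this range, so only the empty sequence contributes and $D_n(j)=\alpha(j)$ as well. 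Thus $c_n(j)=D_n(j)$ whenever $e\le -2$, which is the base case of the induction.

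For the inductive step, suppose $e\ge -1$ and that the inequality is known for all pairs of smaller excess. The corollary of the algebraic EHP sequence reads $c_n(j)\le c_{n+1}(j)+c_{2n+1}(j-n+1)$, and multiplying the recurrence $\CU(n;t)=\CU(n+1;t)+\CU(2n+1;t)\,t^{n-1}$ through by $t^n\varpi_\A(t)$ and reading off the coefficient of $t^{n+j}$ produces the exactly matching identity $D_n(j)=D_{n+1}(j)+D_{2n+1}(j-n+1)$. The pair $(n+1,j)$ has excess $e-1$ and the pair $(2n+1,j-n+1)$ has excess $e-2n\le e-2$, both strictly below $e$, so the inductive hypothesis bounds both terms on the right and $c_n(j)\le D_{n+1}(j)+D_{2n+1}(j-n+1)=D_n(j)$. (When $j-n+1<0$ the $\Sigma^{2n+1}$ terms vanish on both sides, so there is nothing to verify there.)

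The odd-primary statement follows the same template. One uses the parity identities $\varpi_\V(2k-1;t)=\varpi_\V(2k;t)\,t^{-1}$ and $\CU(2k-1;t)=\CU(2k;t)$ to collapse the $\V$-version of the algebraic EHP corollary to a single three-term recurrence in even indices, with a parallel three-term recurrence for $\CU$, and then runs the same stabilisation-plus-induction argument on the excess. The one genuinely new ingredient is that the error term is now the two-cell comodule $M(p)$ rather than a sphere; this is handled either by carrying the coefficients of $\varpi_\V(\Sigma^{kp}M(p);t)$ along in the induction, or by first bounding them via the short exact sequence $\Sigma^{kp}\F_p\to\Sigma^{kp}M(p)\to\Sigma^{kp+1}\F_p$ and subadditivity of $\varpi_\V$. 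The step I expect to be the main obstacle is lining up the degree shifts between the $\Ext$-recurrences coming from EHP and the combinatorial recurrences defining $\CU(n;t)$ — once these two systems are matched the induction is immediate — along with pinning down the stabilisation range precisely enough that the EHP base case sits exactly at $e\le -2$, so that it dovetails with the combinatorial base case $\dim(J)\ge n-1$.
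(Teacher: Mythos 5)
Your proof is correct and follows essentially the same route as the paper's: use stabilization to get equality of coefficients well below the line $j=n$, then run an induction that dovetails the three-term algebraic EHP inequality for $\varpi_\U$ with the identical three-term recurrence defining $\CU(n;t)$. The only difference is bookkeeping — the paper indexes the induction by $k$ and proves the inequality modulo $t^{2n+k}$ for all $n$ simultaneously, which is precisely a reparameterization of your induction on the excess $e=j-n$ — so the two arguments coincide.
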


\begin{proof}
  We assume that $p=2$---the case of $p$ odd is similar.
  First, note that
  \[\varpi_{\U} (n;t) \equiv \varpi_{\A} (t) \cdot t^n \equiv \CU(n;t) \cdot \varpi_\A (t) \cdot t^n \mod t^{2n-1}\]
  by the stabilization of unstable Ext.\footnote{This may be read off from the algebraic EHP sequence and the fact that $\varpi_{\U} (n;t) \equiv 0 \mod t^{n-1}$, for example.}
  We will show that 
  \begin{align} \label{eq:app}
    \varpi_{\U} (n;t) \leq \CU(n;t) \cdot \varpi_\A (t) \cdot t^n \mod t^{2n+k}
  \end{align}
  for all $n$ and $k$ by induction on $k$.
  It follows from the above that (\ref{eq:app}) holds for $k=-1$, which we take as our base case.

  Assume that (\ref{eq:app}) holds for $k-1$.
  We have the inequality
  \[\varpi_\U (n;t) \leq \varpi_\U (n+1; t) \cdot t^{-1} + \varpi_\U (2n+1; t)\cdot t^{-2}.\]
  and the equality
  \[\CU(n;t) \cdot \varpi_{\A} (t) \cdot t^n = \CU(n+1;t) \cdot \varpi_{\A} (t) \cdot t^n + \CU(2n+1; t) \cdot \varpi_{\A} (t) \cdot t^{2n-1}.\]
  The inequalities
  \[\varpi_{\U} (n+1;t) \cdot t^{-1} \leq \CU(n+1;t) \cdot \varpi_\A (t) \cdot t^{n+1} \cdot t^{-1} \mod t^{2n+k}\]
  and
  \[\varpi_{\U} (2n+1;t) \cdot t^{-2} \leq \CU(2n+1;t) \cdot \varpi_\A (t) \cdot t^{2n+1} \cdot t^{-2} \mod t^{2n+k}\]
  hold by the induction hypothesis, so we are done.
\end{proof}

We next record a convenient bound on $\CU(n;t)$ which is uniform in $n$.

\begin{lem} \label{lem:cu-bd}
  We have
  \[\CU(n;t) \leq P(\A; t)\]
  for $n \geq 2$ at the prime $2$, and for $n \geq 3$ at odd primes.
\end{lem}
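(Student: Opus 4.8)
The plan is to reduce the inequality to a single value of $n$, exhibit $\CU(n;t)$ as a sum of shifted partial Poincar\'e series of $\A$, and then bound that sum against $P(\A;t)$ by an inductive ``absorption'' argument.

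\emph{Reduction.} The recurrences recorded just above have non-negative coefficients, so $\CU(n;t)$ is non-increasing in $n$: at the prime $2$, $\CU(n;t)=\CU(n+1;t)+t^{n-1}\CU(2n+1;t)\ge\CU(n+1;t)$, while at odd primes $\CU(2n;t)\ge\CU(2n+1;t)$ and $\CU(2n-1;t)=\CU(2n;t)$. It therefore suffices to prove the bound for $\CU(2;t)$ at $p=2$ and for $\CU(3;t)=\CU(4;t)$ at odd $p$.

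\emph{The case $p=2$.} Unwinding the recurrence (equivalently, sorting $I(2)$ by length) gives $\CU(2;t)=\sum_{k\ge0}t^{c_k}Q_k$, where $Q_k=\prod_{l=1}^{k}(1-t^{2^l-1})^{-1}$ is the Poincar\'e series of $\F_2[\zeta_1,\dots,\zeta_k]\subseteq\A$ and $c_k=3(2^k-1)-2k$ is the topological degree of the shortest length-$k$ sequence in $I(2)$. Since $Q_k\to P(\A;t)$ and, in each fixed degree, only finitely many terms contribute (as $c_k\to\infty$), it is enough to show $\sum_{k=0}^{K}t^{c_k}Q_k\le Q_K$ for every $K$ and then let $K\to\infty$. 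One inducts on $K$, the case $K=0$ being $1\le1$. Using $Q_{K+1}=(1-t^{2^{K+1}-1})^{-1}Q_K$, the inductive step reduces to $t^{c_{K+1}}Q_{K+1}\le t^{2^{K+1}-1}Q_{K+1}$, i.e.\ to $(1-t^{a})Q_{K+1}\ge0$ coefficientwise, where $a=c_{K+1}-(2^{K+1}-1)\ge0$ by an elementary estimate. This last inequality holds because the degree-one generator $\zeta_1$ contributes a factor $(1-t)^{-1}$ to $Q_{K+1}$, so $(1-t^{a})Q_{K+1}=(1+t+\dots+t^{a-1})\prod_{l\ge2}(1-t^{2^l-1})^{-1}$ has non-negative coefficients.

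\emph{Odd $p$, and the main obstacle.} The same outline applies: write $\CU(4;t)=\sum_{k\ge0}t^{E_k}R_k$, where $R_k=\prod_{l=1}^{k}(1-t^{2p^l-2})^{-1}\prod_{j=0}^{k-1}(1+t^{2p^j-1})$ is the Poincar\'e series of $\F_p[\xi_1,\dots,\xi_k]\otimes E(\tau_0,\dots,\tau_{k-1})$ and $E_k=4(p^k-1)-2k$ is the minimal degree of a length-$k$ element of $I(4)$, so that $R_k\to P(\A;t)$. One again wants $\sum_{k=0}^{K}t^{E_k}R_k\le R_K$ by induction, with inductive step $t^{E_{K+1}}R_{K+1}\le R_{K+1}-R_K=R_K\,(t^{2p^{K}-1}+t^{2p^{K+1}-2})(1-t^{2p^{K+1}-2})^{-1}$; that is, one must absorb the shift $t^{E_{K+1}}$ into the two factors $(1-t^{2p^{K+1}-2})^{-1}$ and $(1+t^{2p^{K}-1})$ introduced in passing from $R_K$ to $R_{K+1}$. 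This is the delicate point and where I expect the real work to lie: unlike at $p=2$, the degree-one generator $\tau_0$ is \emph{exterior}, so the ``slide-up'' trick is unavailable and one must argue more carefully---comparing the supports of the two sides residue class by residue class modulo $2p^{K+1}-2$, choosing a more efficient order in which to introduce the polynomial and exterior generators, or relaxing the inductive hypothesis by a bounded multiplicative factor that washes out as $K\to\infty$. Granted such an inductive inequality, passing to the limit and combining with the reduction step yields the lemma at every prime.
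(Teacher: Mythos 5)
Your reduction to $n=2$ (resp.\ $n=3$) via monotonicity of $\CU(n;t)$ in $n$ is correct; in fact the inclusion $I(n+1)\subseteq I(n)$ makes it immediate without unwinding the recurrences. Your $p=2$ argument is also correct and complete: the decomposition $\CU(2;t)=\sum_k t^{c_k}Q_k$ (separating by length and writing each $i_j$ as its minimum plus free non-negative increments) is right, and since $a=c_{K+1}-(2^{K+1}-1)=2^{K+2}-2(K+1)-2\geq0$, the factor $(1-t)^{-1}$ in $Q_{K+1}$ absorbs $1-t^a$ and the induction closes. The $K=0$ step is an equality, so this is as tight as it can be.

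At odd primes, however, you have a genuine gap, which you yourself flag. The inductive step you would need is $R_K\,\bigl(t^b+t^c-t^{E_{K+1}}-t^{E_{K+1}+c}\bigr)\geq0$ coefficientwise, with $b=2p^{K+1}-2$, $c=2p^K-1$ and $E_{K+1}\geq b>c$. The only degree-one factor of $R_K$ is the exterior $(1+t)$, so the slide-up fails; and the polynomial factor $(1-t^{2p-2})^{-1}$ can absorb a factor $1-t^m$ only when $(2p-2)\mid m$, which for the relevant exponents $m=E_{K+1}$ and $m=E_{K+1}-b$ holds only when $(p-1)\mid(K+1)$. None of the three mitigations you sketch is actually carried out, and the third (a bounded multiplicative slack) would prove only $\CU(3;t)\leq C(t)\,P(\A;t)$, which is strictly weaker than the stated lemma. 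As written, the odd-primes case is unproved.

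For comparison, the paper's proof is a single combinatorial observation rather than a generating-function induction: at $p=2$ the map $J=(i_1,\dots,i_k)\mapsto(i_1-1,\dots,i_k-1)$ is a degree-preserving injection of $I(n)$, for $n\geq 2$, into the set of admissible sequences indexing the monomial basis of $\A$, since $i_s>2i_{s+1}$ shifts to the admissible inequality $i_s-1\geq2(i_{s+1}-1)$; the odd-primary case is stated to be similar. This requires no power series estimates and no induction. If you want to complete your argument at odd primes, constructing such an explicit degree-preserving injection $I(n)\hookrightarrow\{\text{admissible monomials}\}$ is likely to be much less delicate than repairing the absorption.
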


\begin{proof}
  At the prime $2$, sending $J=(i_1, \dots, i_k)$ to $J-1= (i_1-1, \dots, i_k-1)$ gives a bijection between $I(n)$ and the set $I'(n) = \{(i_1, \dots, i_k) \vert k \geq 0 , \. i_k \geq n-1 \text{ and } i_s > 2 i_{s+1}+1\}$ for which $\dim(J) = \abs{J-1} \coloneqq \sum_s (i_s-1)$.

  A basis for $\A$ is given by the duals of products $\Sq^{i_1}\cdots \Sq^{i_k}$ of Steenrod squares indexed by the admissible sequences $(i_1, \dots, i_k)$ with $i_s \geq 2 i_{s+1}$.
  The element $\Sq^{i_1}\cdots \Sq^{i_k}$ lies in grading $\sum_s i_s$.
  The result now follows from the fact that $I'(n)$ is a subset of the admissible monomials when $n \geq 2$.

  The proof at odd primes is similar.
\end{proof}

We are now ready to state and prove our  main bounds on $\varpi_{\U} (M;t)$ and $\varpi_{\V} (M;t)$.

\begin{cor} \label{cor:un-ext-m-bd}
  We have
  \[\varpi_{\U} (M;t) \leq P(\A;t) \cdot \varpi_\A (t) \cdot P(M;t)\]
  for any connected $M \in \U$.
  Similarly, we have
  \[\varpi_{\V} (M;t) \leq P(\A;t) \cdot \varpi_\A (t) \cdot P(M;t)\]
  for any connected $M \in \V$.
\end{cor}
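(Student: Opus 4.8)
The plan is to bootstrap the single-cell estimate of \Cref{prop:good-bd} to an arbitrary connected comodule by a d\'evissage, using subadditivity of the generating series $\varpi_\U$ and $\varpi_\V$.

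First I would reduce to the case $M = \Sigma^n\F_p$. The dual Steenrod algebra is a connected graded coalgebra, hence conilpotent, so any connected comodule of finite type is exhausted by an increasing filtration by subcomodules whose successive quotients are shifts $\Sigma^n\F_p$ of the trivial comodule, with precisely $\rank_{\F_p}(M_n)$ quotients isomorphic to $\Sigma^n\F_p$. The instability conditions defining $\U$ (resp.\ $\V$) are inherited by subcomodules and quotients, so this filtration takes place inside $\U$ (resp.\ $\V$). Any short exact sequence of comodules induces a long exact sequence of $\Ext_\U$- (resp.\ $\Ext_\V$-) groups, so $\varpi_\U(-;t)$ and $\varpi_\V(-;t)$ are subadditive; summing over the filtration yields
\[ \varpi_\U(M;t)\ \leq\ \sum_n \rank_{\F_p}(M_n)\,\varpi_\U(n;t) \]
and likewise for $\V$. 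Since $P(M;t)=\sum_n\rank_{\F_p}(M_n)\,t^n$, it suffices to prove $\varpi_\U(n;t)\leq P(\A;t)\,\varpi_\A(t)\,t^n$, and similarly for $\varpi_\V$, for each individual $n$.

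For $n\geq 2$ at the prime $2$ and $n\geq 3$ at odd primes this is immediate: \Cref{prop:good-bd} gives $\varpi_\U(n;t)\leq \CU(n;t)\,\varpi_\A(t)\,t^n$ (resp.\ for $\V$), and $\CU(n;t)\leq P(\A;t)$ by \Cref{lem:cu-bd}. There remain only the finitely many cells below the stable range ($n=0,1$ for $p=2$; $n=0,1,2$ for $p$ odd), where \Cref{lem:cu-bd} no longer applies and the EHP recursion underlying \Cref{prop:good-bd} is too lossy. Here I would instead compute $\varpi_\U(n;t)$ and $\varpi_\V(n;t)$ directly: in this range the unstable $\Ext$ groups of $\Sigma^n\F_p$ are concentrated on the line $u-s=n$, where they reproduce the $\F_p[q_0]$-tower detecting $\pi_n$ of a low-dimensional sphere, so that $\varpi_\U(n;t)=\varpi_\V(n;t)=t^n\leq P(\A;t)\,\varpi_\A(t)\,t^n$ since $P(\A;t)\,\varpi_\A(t)$ has constant term $1$. (Under a convention in which ``connected'' already forces $M$ to be concentrated in degrees $\geq 2$, resp.\ $\geq 3$, this last step is unnecessary.)

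The main difficulty is bookkeeping rather than conceptual: checking that a connected finite-type comodule over the dual Steenrod algebra really does admit such a composition series inside $\U$ (resp.\ $\V$), and pinning down the few low-degree cells, for which the general bound of \Cref{prop:good-bd} is not sharp enough. Granting \Cref{prop:good-bd} and \Cref{lem:cu-bd}, everything else is formal.
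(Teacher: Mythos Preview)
Your proposal is correct and follows essentially the same approach as the paper: reduce to $M=\Sigma^n\F_p$ by d\'evissage, apply \Cref{prop:good-bd} and \Cref{lem:cu-bd} for $n\geq 2$ (resp.\ $n\geq 3$), and handle the remaining low-degree cells by the explicit computation $\Ext_\U^{s,t}(\Sigma\F_2)\cong\F_2[q_0]$ and $\Ext_\V^{s,t}(\Sigma\F_p)\cong\Ext_\V^{s,t+1}(\Sigma^2\F_p)\cong\F_p[q_0]$. The only cosmetic difference is that the paper filters $M$ by degree (so the associated graded has trivial Steenrod action all at once), whereas you pass through a composition series; both give the same subadditivity estimate. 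Note that ``connected'' here means concentrated in degrees $\geq 1$, so the case $n=0$ never arises.
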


\begin{proof}
  Consider the spectral sequence associated to the filtration of $M$ by degree.
  The associated graded is equal to $M$ as a graded $\F_p$-module but has trivial Steenrod action.
  This reduces us to the case where $M = \Sigma^n \F_p$.
  When $n \geq 2$ (at $p=2$) or $n \geq 3$ (at odd $p$), the desired bound follows from \Cref{prop:good-bd} and \Cref{lem:cu-bd}.
  On the other hand, we have
  \[\Ext_{\U} ^{s,t} (\Sigma \F_2) \cong \F_2 [q_0]\]
  and
  \[\Ext_{\V} ^{s,t} (\Sigma \F_p) \cong \Ext_{\V} ^{s,t+1} (\Sigma^{2} \F_p) \cong \F_p [q_0].\]
  which are clearly smaller than the stable Ext.
\end{proof}

Putting everything together, we are finally able to prove the main theorem:

\begin{proof}[Proof of \Cref{thm:unstab-bd}]
  Combine \Cref{prop:sseq-consq} with \Cref{cor:un-ext-m-bd}.
%
\end{proof}

\subsection{Torsion bounds}\label{sec:un-tor-bd}\ 

On a random space $X$ we expect that there are infinitely many non-trivial rational homotopy groups.
Thus, in order to have a discussion of torsion exponents we must restrict the class of spaces under consideration.
A natural collection of spaces we might work with are those which are finite, simply-connected and rationally trivial.
Note that in the stable case this is enough for there to be a \emph{uniform} torsion bound.
However, unstably things are not known to be this simple.
The present state-of-the-art is the following theorem of Barratt.

\begin{thm}[Barratt \cite{Barratt}] \label{thm:Barratt}
  Suppose we are given an $(s-1)$-connected space $X$ with $s \geq 1$
  such that the identity map $\Sigma X \to \Sigma X$ is $p^m$-torsion.
  Then $p^{mk} \cdot \pi_n(\Sigma X) = 0$ for $n \leq 2^k s$.
  Moreover, if $X$ is itself a suspension, then $p^{m+k} \cdot \pi_n(\Sigma X) = 0$ for $n \leq p^{k+1}s$.
\end{thm}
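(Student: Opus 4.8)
The plan is to prove both statements by induction on $k$, treating $k=1$ (resp.\ $k=0$ for the suspension refinement) as a base case that isolates the mechanism, and then propagating it through the James construction. First I would reformulate the hypothesis: using the canonical co-$H$ structure $\Sigma X = S^1 \wedge X$, the $p^m$-fold sum of $\mathrm{id}_{\Sigma X}$ in the group $[\Sigma X,\Sigma X]$ equals $[p^m]_{S^1} \wedge \mathrm{id}_X$, where $[p^m]_{S^1}\colon S^1 \to S^1$ has degree $p^m$, so the assumption says precisely that $[p^m]_{S^1} \wedge \mathrm{id}_X \simeq \ast$. For the base case, $X$ is $(s-1)$-connected, so Freudenthal makes $\Sigma\colon \pi_{n-1}(X) \to \pi_n(\Sigma X)$ surjective for $n \le 2s$; thus every $\alpha \in \pi_n(\Sigma X)$ with $n \le 2s$ is of the form $\mathrm{id}_{S^1}\wedge\beta$ for some $\beta\colon S^{n-1}\to X$. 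Since $n \ge 2$ and $[S^n,S^n]=\Z$, the degree-$p^m$ self-map of $S^n = S^1\wedge S^{n-1}$ is homotopic to $[p^m]_{S^1}\wedge\mathrm{id}_{S^{n-1}}$, and functoriality of the smash product gives
\[
  p^m\cdot\alpha = (\mathrm{id}_{S^1}\wedge\beta)\circ([p^m]_{S^1}\wedge\mathrm{id}_{S^{n-1}}) = ([p^m]_{S^1}\wedge\mathrm{id}_X)\circ(\mathrm{id}_{S^1}\wedge\beta) \simeq \ast ,
\]
so $p^m\cdot\pi_n(\Sigma X)=0$ for $n \le 2s$, which is the case $k=1$.

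For the inductive step I would use the James--Hopf invariants. The crucial observation is that $\Sigma(X^{\wedge 2}) = S^1\wedge X\wedge X$ again has identity $p^m$-torsion --- smash the nullhomotopy $[p^m]_{S^1}\wedge\mathrm{id}_X\simeq\ast$ with $\mathrm{id}_X$ --- while $X^{\wedge 2}$ is $(2s-1)$-connected. Given $\alpha\in\pi_n(\Sigma X)$ with $n \le 2^k s = 2^{k-1}\cdot(2s)$, the second James--Hopf invariant $H_2(\alpha)\in\pi_n(\Sigma X^{\wedge 2})$ is annihilated by $p^{m(k-1)}$ by the inductive hypothesis applied to $X^{\wedge 2}$; in the metastable range, where the EHP exact sequence makes $H_2$ a homomorphism with $\ker H_2 = \im(\Sigma)$, this forces $p^{m(k-1)}\alpha$ to be a suspension, and the base-case mechanism then gives $p^m\cdot(p^{m(k-1)}\alpha)=0$, i.e.\ $p^{mk}\alpha = 0$.

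The step I expect to be the main obstacle is that the EHP sequence is exact only in the metastable range $n \lesssim 3s$, whereas the induction must reach $n = 2^k s$. Handling this is the technical heart of Barratt's argument, and I would follow him: work with the full James filtration $X = J_1 X \subset J_2 X \subset \cdots$ of $\Omega\Sigma X$, whose successive fibrations $J_{q-1}X \to J_q X \to \Omega\Sigma X^{\wedge q}$ relate $\pi_*(\Sigma X)$ to $\pi_*(\Sigma X^{\wedge q})$, with $X^{\wedge q}$ being $(qs-1)$-connected and $\Sigma X^{\wedge q}$ again of characteristic $p^m$. One descends this tower starting from level $q\approx n/s$, where the obstruction groups $\pi_n(\Sigma X^{\wedge q})$ vanish for connectivity reasons, down to level $1$, applying the inductive hypothesis to each $X^{\wedge q}$ and cashing in a final factor of $p^m$ at the bottom via the base case; a careful count shows the powers of $p^m$ accumulate once per doubling of the connectivity rather than once per filtration level, which yields $p^{mk}\cdot\pi_n(\Sigma X) = 0$ for $n \le 2^k s$.

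Finally, for the refinement when $X = \Sigma W$ is itself a suspension, $\Sigma X = \Sigma^2 W$ is a double suspension: all Whitehead products in $\pi_*(\Sigma X)$ vanish, the low James--Hopf invariants do not obstruct desuspension, and the primary obstruction becomes the $p$-th James--Hopf invariant $H_p$, landing in $\pi_*(\Sigma X^{\wedge p})$ with $X^{\wedge p}$ being $(ps-1)$-connected. Re-running the induction with $H_p$ in place of $H_2$ --- so that the base case combines the characteristic with the first $p$-worth of the EHP sequence available for double suspensions --- enlarges the range by a factor of $p$ at each stage (hence $p^{k+1}s$); and because on a double suspension the obstruction at each stage is a single James--Hopf class, which the inductive exponent already annihilates, rather than a full Hilton--Milnor sum, one loses only a single extra factor of $p$ per stage, giving the sharper exponent $p^{m+k}$ in place of $p^{mk}$.
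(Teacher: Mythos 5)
The paper does not prove Theorem~\ref{thm:Barratt}; it states the result with a citation to Barratt and immediately moves on, so there is no in-paper argument to compare against and your proposal must be measured against Barratt's original.

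Your base case is correct: Freudenthal gives surjectivity of $\Sigma\colon\pi_{n-1}(X)\to\pi_n(\Sigma X)$ for $n\le 2s$, and the identification of $p^m\cdot\mathrm{id}_{\Sigma X}$ with $[p^m]_{S^1}\wedge\mathrm{id}_X$ converts the torsion hypothesis into a nullhomotopy of $p^m\alpha$ for every suspension class $\alpha$. The inductive step, however, is only a sketch and contains inaccuracies that would have to be repaired before it became a proof. First, the James filtration yields \emph{cofiber} sequences $J_{q-1}X\to J_qX\to X^{\wedge q}$, not the ``fibrations'' $J_{q-1}X\to J_qX\to\Omega\Sigma X^{\wedge q}$ you write; passing from cofiber data to homotopy-group data is exactly where the Blakers--Massey range and the James--Hopf maps intervene, and making the exponent accumulate ``once per doubling of connectivity'' rather than once per filtration level is the whole content of Barratt's estimate. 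You correctly identify this as the crux, but you defer it to Barratt, which means the proposal does not actually contain the proof of the first statement. Second, the claim that ``all Whitehead products in $\pi_*(\Sigma X)$ vanish'' when $\Sigma X=\Sigma^2 W$ is a double suspension is false: $S^{2n}=\Sigma^2 S^{2n-2}$ carries $[\iota_{2n},\iota_{2n}]$ of infinite order. The refinement when $X$ is itself a suspension does exploit the co-$H$ structure on $X$, but via the compatibility of the degree-$p$ map with the $p$-th James--Hopf invariant (and the extra connectivity of $X^{\wedge p}$), not via a global vanishing of Whitehead products; that part of your sketch rests on a false premise and would need to be reworked.
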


For spaces which are suspensions this theorem is quite powerful as it implies that the torsion order grows logarithmically as a function of $n$.
On the subject of a general space Barratt asserts a bound of the order $p^{2^{mk}}$ on $\pi_n(X)$ for $n \leq 2^k(n-1)$ \cite[p.125]{Barratt} and proves that some sort of bound must exist \cite[Lemma 1.3]{Barratt}. Unfortunately the present authors were unable to determine which argument Barratt had in mind in this remark. For this reason we include a proof of a bound of a similar sort.

\begin{thm} \label{thm:no-suspension-tor-bound}
  If $X$ is $s$-connected with $s \geq 1$
  and the identity map on $\Sigma^\infty X$ has order $p^m$, then
  \[ \tors_p\left( \pi_n(X) \right) \leq \frac{m+1}{s} \cdot n. \]
\end{thm}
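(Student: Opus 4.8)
The plan is to feed the stable hypothesis into the Goodwillie tower of the identity functor evaluated at $X$. Since $X$ is $s$-connected with $s\ge 1$ it is simply connected, so by Goodwillie's convergence theorem the Taylor tower $\{P_n(\Id)(X)\}_{n\ge 1}$ converges to $X$. Write $D_n(X)$ for the $n$-th layer, so that there are fibre sequences $D_n(X)\to P_n(\Id)(X)\to P_{n-1}(\Id)(X)$ together with the standard identification
\[ D_n(X)\;\simeq\;\Omega^\infty\bigl(\partial_n(\Id)\otimes_{h\Sigma_n}(\Sigma^\infty X)^{\wedge n}\bigr)\;=:\;\Omega^\infty E_n. \]
The idea is to see that $\pi_k(X)$ is built out of finitely many subquotients of the groups $\pi_k(E_n)$, to bound the $p$-torsion of each such group by $p^m$, and to bound the number of them which are nonzero in a fixed stem by roughly $k/s$.

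First I would check that $p^m$ annihilates $\pi_\ast(E_n)$ for every $n\ge 1$. By hypothesis $p^m\cdot\id_{\Sigma^\infty X}=0$; since $(-)\otimes Z$ is an exact, hence additive, endofunctor of $\Sp$ for any $Z$, smashing together $n$ copies gives $p^m\cdot\id_{(\Sigma^\infty X)^{\wedge n}}=0$, hence $p^m$ annihilates the identity of the $\Sigma_n$-spectrum $\partial_n(\Id)\otimes(\Sigma^\infty X)^{\wedge n}$, and — the homotopy-orbit functor being exact — it also annihilates $\id_{E_n}$. Thus $p^m\cdot\pi_\ast(E_n)=0$, so $\tors_p(\pi_k(D_n(X)))=\tors_p(\pi_k(E_n))\le m$ for all $k$ and all $n$.

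Next I would pin down the connectivity of the layers. As $X$ is $s$-connected, $(\Sigma^\infty X)^{\wedge n}$ is $(n(s+1)-1)$-connected, and since the derivative $\partial_n(\Id)$ is $(1-n)$-connective the spectrum $E_n$ is $ns$-connected; in particular $\pi_k(E_n)=0$ whenever $ns\ge k$. Because the tower is eventually constant in each fixed degree (so the relevant $\lim^1$-terms vanish) we get $\pi_k(X)\cong\pi_k(P_N(X))$ for $N\gg 0$, and an induction along the fibre sequences $D_n\to P_n\to P_{n-1}$ — using the elementary fact that $p^aA=0$ and $p^cC=0$ force $p^{a+c}B=0$ for $0\to A\to B\to C\to 0$ — shows that $\pi_k(X)$ is annihilated by $p^{mL}$, where
\[ L\;=\;\#\{\,n\ge 1:\pi_k(E_n)\ne 0\,\}\;\le\;\#\{\,n\ge 1:ns<k\,\}\;=\;\lceil k/s\rceil-1\;\le\;k/s. \]
Hence $\tors_p(\pi_k(X))\le mL\le\tfrac{m}{s}k\le\tfrac{m+1}{s}k$, which is the asserted inequality once $k$ is renamed $n$; the constant is written as $m+1$ rather than $m$ simply to leave comfortable slack for the off-by-one in the count of layers.

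The argument is essentially formal once these two inputs are assembled — this is the "simple consequence of Goodwillie calculus" the introduction advertises — so I do not anticipate a real obstacle. The one step meriting care is the connectivity bookkeeping for the Goodwillie layers: one must combine the negative connectivity $1-n$ of $\partial_n(\Id)$ with the connectivity $n(s+1)-1$ of $(\Sigma^\infty X)^{\wedge n}$ correctly, so as to be sure that only $O(n/s)$ layers contribute to the stem $\pi_n(X)$, and one should confirm the convergence of the tower and the vanishing of the $\lim^1$ terms so that the finite-filtration description of $\pi_n(X)$ is valid.
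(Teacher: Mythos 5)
Your proposal follows the same broad outline as the paper---Goodwillie tower of the identity, connectivity of the layers, bound the torsion of each layer, count how many layers contribute to a fixed stem---and your connectivity bookkeeping ($(\Sigma^\infty X)^{\wedge n}$ is $(n(s+1)-1)$-connected, $\partial_n(\Id)$ is $(1-n)$-connective, so $E_n$ is $ns$-connected) is correct. However, there is a genuine gap in the torsion step.

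You argue: $p^m\cdot\id_{\Sigma^\infty X}=0$ implies $p^m\cdot\id_{(\Sigma^\infty X)^{\wedge n}}=0$, and then assert this annihilates the identity ``of the $\Sigma_n$-spectrum'' $\partial_n(\Id)\otimes(\Sigma^\infty X)^{\wedge n}$, so that applying $(-)_{h\Sigma_n}$ gives $p^m\cdot\id_{E_n}=0$. The first implication is fine in $\Sp$, but it does \emph{not} give the $\Sigma_n$-equivariant statement you need. The nullhomotopy of $p^m\cdot\id_{(\Sigma^\infty X)^{\wedge n}}$ you produce---writing $p^m(\id^{\wedge n})=(p^m\id)\wedge\id^{\wedge(n-1)}\simeq 0$---is manifestly not $\Sigma_n$-equivariant, since it treats the first tensor factor differently from the others. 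To apply the homotopy-orbit functor $(-)_{h\Sigma_n}\colon\Sp^{B\Sigma_n}\to\Sp$ to a nullhomotopy, that nullhomotopy must live in $\Sp^{B\Sigma_n}$. Equivalently: the identity is a class in $\pi_0\bigl(\End_{\Sp}(X^{\wedge n})^{h\Sigma_n}\bigr)$, and the forgetful map to $\pi_0\End_{\Sp}(X^{\wedge n})$ kills $p^m\cdot\id$, but that does not force $p^m\cdot\id$ to vanish in the source.

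This is precisely what the paper's \Cref{lem:norm-order} addresses: it proves, via an explicit Burnside-ring computation reducing to the $p$-Sylow subgroup and wreath products, that the \emph{equivariant} order of $\id_{(\Sigma^\infty X)^{\otimes n}}$ in $\Sp^{B\Sigma_n}$ is only $p^{m+v_p(n)}$, not $p^m$. The extra $v_p(n)$'s are not harmless bookkeeping: the paper's final estimate is
\[
\tors_p\bigl(\pi_n(X)\bigr)\;\le\;\sum_{0<sk<n}\bigl(m+v_p(k)\bigr)\;\le\;(m+1)\,\frac{n}{s},
\]
and the ``$+1$'' in $m+1$ comes from $\sum_{k<n/s}v_p(k)\le n/s$, i.e.\ from accumulating the equivariant corrections. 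Your proof, taken at face value, would yield the strictly stronger bound $m\cdot n/s$, which is a sign that something was lost. To repair the argument you need to prove an equivariant annihilation statement, and some version of the paper's norm lemma (or an equivalent computation) is required.
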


\begin{proof}
  We prove the theorem by analyzing the Goodwillie tower of the identity on the category of spaces evaluated at $X$.
  Since $X$ is $s$-connected and $s \geq 1$,
  this is a convergent tower of principal fibrations
  \begin{center}
    \begin{tikzcd}[sep=small]
      & \Omega^\infty (\partial_3(\Id) \otimes (\Sigma^\infty X)^{\otimes 3})_{h\Sigma_3} \ar[d] & \Omega^\infty (\partial_2(\Id) \otimes (\Sigma^\infty X)^{\otimes 2})_{h\Sigma_2} \ar[d] & \Omega^\infty  \Sigma^\infty X \ar[d, equal] \\
      X \to \cdots \ar[r] & P_3(X) \ar[r] & P_2(X) \ar[r] & P_1(X)
    \end{tikzcd}
  \end{center}
  where the term $\Omega^\infty (\partial_k(\Id) \otimes (\Sigma^\infty X)^{\otimes k})_{h\Sigma_k}$
  is $ks$-connected.
  In \Cref{lem:norm-order} we will show that the identity map on $(\Sigma^\infty X)^{\otimes k}$ in $\Sp^{\Sigma_k}$ has order $p^{m+v_p(k)}$.
  This implies that
  \[ \tors_p\left( \pi_n(X) \right) \leq \sum_{0 < sk < n} m + v_p(k) \leq (m+1) \frac{n}{s}. \]
\end{proof}

\begin{lem} \label{lem:norm-order}
  If a map $f : X \to Y$ of spectra is $p^m$-torsion,
  then the $\Sigma_n$-equivariant\footnote{In this lemma we work Borel equivariantly.} map $f^{\otimes n}$ is $p^{m+v_p(n)}$-torsion.
\end{lem}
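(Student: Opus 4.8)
The plan is to prove, by strong induction on $n$, the following statement: whenever $g\colon X\to Y$ is a map of spectra with $p^{m}g=0$, the $n$-fold tensor power $g^{\otimes n}$, regarded as a morphism $X^{\otimes n}\to Y^{\otimes n}$ in $\Sp^{B\Sigma_{n}}$ (where $X^{\otimes n},Y^{\otimes n}$ carry the permutation action), is $p^{\,m+v_{p}(n)}$-torsion. The case $n=1$ is the hypothesis, and the Lemma is the case $g=f$.

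The engine of the induction is a ``binomial'' decomposition of the $n$-th tensor power of a sum of maps. Writing $\sum_{j=1}^{N}g_{j}=\nabla\circ(\oplus_{j}g_{j})\circ\Delta$ and distributing $\oplus$ through $(-)^{\otimes n}$ gives a $\Sigma_{n}$-equivariant splitting of $(\sum_{j}g_{j})^{\otimes n}$ indexed by words $w\colon\{1,\dots,n\}\to\{1,\dots,N\}$; collecting words into $\Sigma_{n}$-orbits rewrites this as $\sum_{[w]}\mathrm{tr}^{\Sigma_{n}}_{\mathrm{Stab}(w)}\bigl(g_{w(1)}\otimes\cdots\otimes g_{w(n)}\bigr)$, where the monomial carries the evident $\mathrm{Stab}(w)$-equivariant structure and $\mathrm{tr}$ is the transfer for the finite-index subgroup $\mathrm{Stab}(w)\subseteq\Sigma_{n}$. (These transfers exist and are additive because finite-group homotopy fixed points on $\Sp$ are ambidextrous, and the displayed identity is the standard orbit decomposition of a symmetric power; verifying it is a routine if slightly fiddly manipulation with norm maps.) Now take $N=p^{m}$ and every $g_{j}=f$: the left side is $(p^{m}f)^{\otimes n}=0$, the $p^{m}$ constant words contribute exactly $p^{m}f^{\otimes n}$, and the remaining orbits, sorted by the partition $\lambda\vdash n$ recording the color-multiplicities of $w$, contribute $\sum_{\lambda\ne(n)}N_{\lambda}\cdot\mathrm{tr}^{\Sigma_{n}}_{\Sigma_{\lambda}}\bigl(\bigotimes_{j}f^{\otimes\lambda_{j}}\bigr)$, where $\Sigma_{\lambda}=\prod_{j}\Sigma_{\lambda_{j}}$ is the corresponding Young subgroup and $N_{\lambda}=\binom{p^{m}}{k_{\lambda}}\binom{k_{\lambda}}{\mu_{1},\dots,\mu_{r}}$ with $k_{\lambda}$ the number of parts of $\lambda$ and $\mu_{s}$ the number of parts equal to $s$.

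It then remains to check that multiplying by $p^{v_{p}(n)}$ kills each remaining term. For $\lambda\ne(n)$ every part $\lambda_{j}$ is $<n$, so the inductive hypothesis makes $f^{\otimes\lambda_{j}}$ a $p^{\,m+v_{p}(\lambda_{j})}$-torsion $\Sigma_{\lambda_{j}}$-equivariant map; hence $\bigotimes_{j}f^{\otimes\lambda_{j}}$, and so its transfer, is $p^{\,m+a_{\lambda}}$-torsion with $a_{\lambda}=\min_{j}v_{p}(\lambda_{j})$. Thus it suffices to prove $v_{p}\bigl(p^{v_{p}(n)}N_{\lambda}\bigr)\ge m+a_{\lambda}$. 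Using the two elementary $p$-adic facts $v_{p}\binom{p^{m}}{k}=m-v_{p}(k)$ for $1\le k\le p^{m}$ (and $\binom{p^{m}}{k}=0$ otherwise, so that term simply drops out) and $\min_{j}v_{p}(\mu_{j})+v_{p}\binom{k}{\mu_{1},\dots,\mu_{r}}\ge v_{p}(k)$ (which follows from $\binom{k}{\mu_{j}}=\tfrac{k}{\mu_{j}}\binom{k-1}{\mu_{j}-1}$), this reduces to $v_{p}(n)\ge a_{\lambda}+\min_{j}v_{p}(\mu_{j})$, which holds because $v_{p}(n)=v_{p}\bigl(\sum_{s}s\mu_{s}\bigr)\ge\min_{s}\bigl(v_{p}(s)+v_{p}(\mu_{s})\bigr)\ge\min_{s}v_{p}(s)+\min_{s}v_{p}(\mu_{s})$. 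Hence $p^{\,m+v_{p}(n)}f^{\otimes n}=0$, completing the induction and the proof.

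The only real subtlety is conceptual rather than technical: the monomials attached to a shape-$\lambda$ orbit carry the Young-subgroup action $\prod_{j}\Sigma_{\lambda_{j}}$, not the trivial action, so one genuinely needs the statement for all smaller $n$ (and for the various symmetric groups $\Sigma_{\lambda_{j}}$), which forces the whole argument into a single strong induction over $n$ rather than a prime-by-prime treatment. The improvement over the trivial bound $p^{mn}f^{\otimes n}=0$ enters exactly through the two $p$-adic estimates above, and this is where the factor $v_{p}(n)$ — rather than, say, $v_{p}(n!)$ — comes from.
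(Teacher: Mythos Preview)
Your argument is correct, and it takes a genuinely different route from the paper's.

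The paper first reduces from $\Sigma_n$ to a $p$-Sylow subgroup $W$ using that the composite $\Ss \to \Ss[\Sigma_n/W] \to \Ss$ is a $p$-local equivalence. Since $W$ is an iterated wreath product $W_k = W_{k-1} \wr C_p$, this reduces everything to a single inductive step: if $g$ is $p^m$-torsion in $\Sp^G$, then $g^{\otimes p}$ is $p^{m+1}$-torsion in $\Sp^{G \wr C_p}$. That step is then handled by a short Burnside-ring computation, namely $[S^{\times p}] = p^m[e] + (p^{mp-1}-p^{m-1})[C_p]$ in $A(G\wr C_p)$ together with the observation that multiplication by $[C_p]$ factors through $\Sp^{G^{\times p}}$, where $p^m g^{\otimes p}=0$ trivially.

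Your approach instead runs a direct strong induction on $n$ using the multinomial expansion of $(p^m f)^{\otimes n}=0$ and $p$-adic estimates on the coefficients $N_\lambda$. This avoids the Sylow reduction and the Burnside ring entirely, trading them for the two valuation identities $v_p\binom{p^m}{k}=m-v_p(k)$ and $v_p\binom{k}{\mu_1,\dots,\mu_r}\ge v_p(k)-\min_j v_p(\mu_j)$. The two arguments meet in spirit at the case $n=p$: your orbit decomposition there has exactly two shapes, $(p)$ and $(1^p)$, and recovers the paper's Burnside identity. What the paper's wreath-product reduction buys is that only this single case is needed; what your approach buys is that no structure theory of $p$-Sylows or Burnside rings enters, at the cost of bookkeeping the full multinomial combinatorics. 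Both are clean, and yours has the minor advantage of never leaving $\Sp^{B\Sigma_n}$.
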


\begin{proof}
  We begin by observing that the $\Sigma_n$-equivariant composite
  \[ \Ss \to \Ss[\Sigma_n/W] \to \Ss \]
  where $W$ is a $p$-Sylow subgroup
  is multiplication by $|\Sigma_n/W|$ on underlying and
  therefore a $p$-local equivalence in $\Sp^{\Sigma_n}$.
  Thus, it suffices to prove that $f^{\otimes n}$ is $W$-equivariantly $p^{m+v_p(n)}$-torsion.
    
  The group $W$ is a product $W_1^{\times a_1} \times \cdots \times W_k^{\times a_k}$
  where $n = a_kp^k + \cdots + a_0p^0$ and $W_1 = C_p$ and $W_{i+1} = W_i \wr C_p$.
  As a $W$-equivariant map $f^{\otimes n}$ is an external product of iterated $C_p$-norms of $f$ in the same way.
  This reduces us to proving the following claim:

  \begin{itemize}
  \item If $g$ is $p^m$-torsion in $\Sp^G$, then $g^{\otimes p}$ is $p^{m+1}$-torsion in $\Sp^{G \wr C_p}$.
  \end{itemize}

  For this we make the following computation:
  let $S$ be a $G$-set of cardinality $p^m$ with trivial action.
  In the Burnside ring of $G \wr C_p$ we have
  \[ [S^{\times p}] = p^m[e] + (p^{mp-1} - p^{m-1})[C_p] \]
  where $[e]$ is the trivial $G \wr C_p$-set
  and $[C_p]$ is the $G \wr C_p$-set of order $p$ corresponding to the quotient map $G \wr C_p \to C_p$.
  Norming the equation $0 = p^m g$ up we obtain
  \[ 0 = (p^mg)^{\otimes p} = [S^{\times p}] g^{\otimes p} = \left( p^m + (p^{mp-1} - p^{m-1})[C_p] \right) g^{\otimes p} \]
  Since multiplication by $[C_p]$ factors through the $G^{\times p}$-equivariant category
  we also have $p^m[C_p]g^{\otimes p} = 0$.
  Together these equations imply that $p^{m+1}g^{\otimes p} = 0$ as desired.    
\end{proof}

The reader might wonder whether any of these bounds are close to optimal.
On this point we highlight the following conjecture of Moore.

\begin{cnj}[Moore]
  A simply-connected finite space $X$ has a uniform $p$-torsion bound on its homotopy groups
  if and only if $\pi_*(X) \otimes \Q$ is finite dimensional.
\end{cnj}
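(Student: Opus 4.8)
\textit{Sketch of a possible approach.} Moore's conjecture splits into two implications of quite different flavour, and I would attack them separately. For the ``only if'' direction I would argue by contraposition: if $\pi_*(X) \otimes \Q$ is infinite-dimensional then, by the F\'elix--Halperin--Thomas dichotomy, $X$ is rationally hyperbolic, so $\sum_{i \leq n}\dim_\Q \pi_i(X)$ grows exponentially in $n$, and the job becomes to upgrade this exponential supply of \emph{rational} homotopy into $p$-torsion of unbounded order. A free summand does not violate a torsion bound, so the real content is the production of torsion of growing height; this is the direction in which the most is already known---McGibbon--Wilkerson and F\'elix--Halperin--Thomas establish the failure of a homotopy exponent for rationally hyperbolic finite complexes away from a finite set of primes---and what keeps the conjecture open here is uniform control over those finitely many exceptional primes.

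For the ``if'' direction---the genuinely hard one---I would use the structure theory of rationally elliptic spaces: such an $X$ is rationally an iterated principal fibration built from odd-sphere pieces $S^{2n+1}_{\Q}$ and even Eilenberg--MacLane-type pieces $K(\Q, 2n)$, with the ranks of $\pi_{\mathrm{even}}$ and $\pi_{\mathrm{odd}}$ constrained by Halperin's inequalities. The plan is to induct on the number of rational homotopy generators, with base case the exponent theorem for odd spheres of Cohen--Moore--Neisendorfer \cite{CMN1, CMN2, N3} (together with the easy statements for Eilenberg--MacLane spaces and truncated Postnikov pieces), and inductive step the claim that a homotopy exponent is inherited by the total space of a (twisted) principal fibration whose base and fibre both have homotopy exponents.

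\textbf{The main obstacle} is exactly this inductive step, and it is the reason Moore's conjecture is still open: homotopy exponents are not preserved under fibrations in general, since the relevant $k$-invariants can manufacture torsion of unbounded order in the total space even when fibre and base are well-behaved. The quantitative bounds of this appendix do not bridge the gap: \Cref{thm:no-suspension-tor-bound} only gives the linear bound $\tors_p(\pi_n X) \leq \frac{m+1}{s} n$, whereas the elliptic case asks for $O(1)$, and \Cref{thm:unstab-bd} is an upper bound on ranks that is consistent with both the elliptic and the hyperbolic picture. A realistic first step would be to try to improve \Cref{thm:no-suspension-tor-bound} from a linear to a logarithmic bound under a rational-triviality or ellipticity hypothesis---for instance by feeding the elliptic structure into the Goodwillie-tower argument of \Cref{thm:no-suspension-tor-bound} in place of the crude connectivity estimates used there, since the sphere and Eilenberg--MacLane base cases already obey logarithmic bounds and the only obstruction is, once again, the passage through fibrations.
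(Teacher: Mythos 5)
The item you are addressing is a \emph{conjecture}, not a theorem: the paper records Moore's conjecture as an open problem and immediately remarks that ``very few cases of this conjecture are known.'' There is no proof in the paper for you to match, and your sketch---correctly labelled as a sketch---does not claim to close it either. As a survey of the landscape your text is accurate: the F\'elix--Halperin--Thomas elliptic/hyperbolic dichotomy, the McGibbon--Wilkerson results on the absence of homotopy exponents for rationally hyperbolic finite complexes away from a finite set of primes, the Cohen--Moore--Neisendorfer exponent theorem as the natural base case, and the central unsolved obstruction that homotopy exponents are not inherited through fibrations are all correctly placed, and you are right that this last point is exactly why the conjecture is open.

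One caution on your proposed ``realistic first step.'' The hypothesis of \Cref{thm:no-suspension-tor-bound} is that the identity on $\Sigma^\infty X$ has order $p^m$; this forces $\H_*(X;\Q) = 0$, i.e.\ $X$ must be \emph{rationally trivial}, not merely rationally elliptic. A rationally elliptic finite complex with nonzero rational homotopy (a sphere, say) does not satisfy that hypothesis, so the theorem as stated cannot be fed an ``ellipticity hypothesis''---it already lives strictly inside the rationally trivial case. Even there, the gap between the theorem's linear-in-$n$ bound and Moore's conjectured $O(1)$ bound is not a matter of sharpening constants: the Goodwillie-tower argument sheds a factor of $p^{m + v_p(k)}$ at every layer and has no visible mechanism for the cancellations a uniform bound would require. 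So you have correctly diagnosed both what is known and why the inductive step through fibrations is the genuine missing idea.
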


Very few cases of this conjecture are known,
and it seems like an interesting question deserving more attention.

\section{Combinatorics}
\label{app:combo}

In this appendix we pay our due and prove the combinatorial lemmas we've deferred throughout the paper.
Each lemma involves estimating the asymptotic growth of the rank of some graded algebra over a field.
The proofs are straightforward, if not tedious. 

\begin{lem}[Mahler] \label{lem:steenrod-basic}
  Let $R$ be a graded polynomial algebra with a single generator in each degree of the form $p^n$ for $n \geq 0$.
  Then, there is a constant $C$ such that 
  \[ \left| \log (\mathrm{rank}(R_n)) - \frac{1}{2\log(p)} \log(n)^2 \right| \leq  C\max\left(1,\, \log\log(n) \log(n) \right) \]
\end{lem}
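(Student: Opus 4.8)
The plan is to recognize $\mathrm{rank}(R_n)$ as a classical arithmetic function and then estimate it by elementary extremal counting. Write $R = k[x_0,x_1,x_2,\dots]$ over the ground field $k$ with $\deg(x_i) = p^i$. A monomial $\prod_i x_i^{a_i}$ of total degree $n$ is precisely a tuple $(a_i)_{i\ge 0}$ of nonnegative integers, almost all zero, with $\sum_i a_i p^i = n$, i.e.\ a partition of $n$ into powers of $p$. Hence $\mathrm{rank}(R_n) = b_p(n)$, the $p$-ary partition function, and the lemma is exactly the statement that $\log b_p(n) = \tfrac{1}{2\log p}(\log n)^2 + O(\log n\log\log n)$. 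I would prove the two inequalities separately.

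For the upper bound: in any $p$-ary partition of $n$ only parts $p^i$ with $p^i \le n$ occur, and the multiplicity of $p^i$ is at most $\lfloor n p^{-i}\rfloor$, so $b_p(n) \le \prod_{i\ge 0}(\lfloor n p^{-i}\rfloor + 1) \le \prod_{i=0}^{M}(2n p^{-i})$ with $M = \lfloor\log_p n\rfloor$, since the factors with $n p^{-i} < 1$ equal $1$. Taking logarithms gives $\log b_p(n) \le (M+1)\log(2n) - \tfrac{M(M+1)}{2}\log p$, and substituting $M = \log n/\log p + O(1)$ collapses this to $\tfrac{1}{2\log p}(\log n)^2 + O(\log n)$. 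This step is routine.

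The substantive step is the lower bound, where one must exhibit enough partitions. I would set $M' := \lfloor\log_p(n/\log_p n)\rfloor$ and, for $1\le i\le M'$, take the cap $c_i := \lfloor n/(M'p^i)\rfloor$; a short check shows $c_i \ge 1$ and $\sum_{i=1}^{M'} c_i p^i \le M'\cdot (n/M') = n$. Then every choice of integers $0\le a_i\le c_i$ for $1\le i\le M'$ completes uniquely to a $p$-ary partition of $n$ by declaring $a_0 := n - \sum_{i=1}^{M'} a_i p^i \ge 0$ and all higher multiplicities zero, and distinct choices give distinct partitions. Hence $b_p(n) \ge \prod_{i=1}^{M'}(c_i+1) > \prod_{i=1}^{M'} \tfrac{n}{M'p^i} = (n/M')^{M'} p^{-M'(M'+1)/2}$. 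Taking logarithms and using $M' = \log n/\log p + O(\log\log n)$ and $\log M' = \log\log n + O(1)$, the term $M'\log(n/M')$ contributes $\tfrac{(\log n)^2}{\log p} + O(\log n\log\log n)$ while $\tfrac{M'(M'+1)}{2}\log p$ contributes $\tfrac{(\log n)^2}{2\log p} + O(\log n\log\log n)$, leaving $\log b_p(n) \ge \tfrac{1}{2\log p}(\log n)^2 - O(\log n\log\log n)$. Combining the two bounds and absorbing small $n$ and the gap between $O(\log n)$ and $O(\log n\log\log n)$ into the constant $C$ and the $\max(1,\cdot)$ yields the statement.

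The main obstacle is purely the calibration in the lower-bound construction: the cutoff $M'$ and the caps $c_i$ must be chosen so that the budget constraint $\sum c_i p^i \le n$ holds while $\prod(c_i+1)$ stays of optimal order. The choice equalizing the contributions, $c_i p^i \approx n/M'$, is what forces the two quadratic-in-$\log n$ terms to appear with coefficients $1/\log p$ and $1/(2\log p)$ and hence to combine to the asserted $1/(2\log p)$; any cruder choice loses a constant factor in the leading term. (Sharper expansions of $b_p(n)$, pinning down the $\log n\log\log n$ and $\log n$ terms, are due to de Bruijn, but are not needed here.)
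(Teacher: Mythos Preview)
Your proof is correct. The paper, by contrast, does not give an argument at all: it simply identifies $\mathrm{rank}(R_n)$ with the $p$-ary partition function and cites Mahler's 1940 paper, noting that Mahler in fact proves a much sharper asymptotic (error term $O(1)$) than what is recorded in the lemma.

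So the comparison is that the paper treats this as a black-box citation to a classical analytic result, whereas you supply a self-contained elementary box-counting argument. What you gain is independence from the literature and a proof whose mechanism is transparent; what you lose is sharpness of the error term, though that costs nothing since only the $O(\log n\log\log n)$ bound is used downstream. Your lower-bound construction---equalizing the budgets $c_i p^i \approx n/M'$ and soaking up the slack in the multiplicity of the part $1$---is exactly the right idea and is essentially the combinatorial core underlying the classical analytic proofs, stripped of the generating-function machinery. The attribution to de Bruijn for the finer terms is also apt.
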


\begin{proof}
  The rank of $R_n$ in degree $n$ can be described as the number of partitions of $n$ into powers of $p$.
  In \cite{MR2921}, Mahler proves the bound\footnote{In fact, the estimate obtained in loc. cit. is substantially sharper than the one described here (his approximation has an error term of order $O(1)$)}
  \[ \log (\mathrm{rank}(R_n)) = \frac{1}{2\log(2)} \log(n)^2 + O(\log\log(n) \log(n)). \]
  Since we will need an estimate valid for small $n$, we have expressed this in a slightly different form.
\end{proof}

The function in the lemma above grows slowly enough that the precise degrees in which the generators are placed isn't particularly important, only their asymptotic distribution.

\begin{cor} \label{cor:steenrod-flexible}
  Given a fixed constant $c$,
  suppose $R$ is a graded polynomial algebra with generators $x_1,x_2,\dots$
  such that the degree of $x_i$ lies in the range $[c^{-1}p^i, cp^i]$.
  Then, we have the following bound on the growth of $R$,
  \[ \log \left( \mathrm{rank}(R_{\leq n}) \right) = \frac{1}{2\log(p)} \log(n)^2 + O(\log\log n \log n). \]
\end{cor}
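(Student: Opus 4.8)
The plan is to sandwich $\mathrm{rank}(R_{\le n})$ between an elementary upper bound obtained by counting monomials and a lower bound obtained by transporting the estimate of \Cref{lem:steenrod-basic} through a bounded rescaling of generator degrees. Since both sides will come out as $\exp\bigl(\tfrac{1}{2\log p}\log(n)^2 + O(\log\log n\log n)\bigr)$, the corollary follows. The only general principle in play is that multiplying all generator degrees by a constant, or reindexing the generators, changes $\log(\mathrm{rank})$ by $O(\log n)$, which is absorbed into the stated error term.

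\textbf{Upper bound.} Here $\mathrm{rank}(R_{\le n})$ is exactly the number of monomials $\prod_i x_i^{a_i}$ of total degree $\le n$. Since $\deg x_i \ge c^{-1}p^i$, only generators with $i \le L := \lfloor\log_p(cn)\rfloor$ can appear, and for those $a_i \le n/\deg x_i \le cnp^{-i}$; hence
\[ \mathrm{rank}(R_{\le n}) \le \prod_{i=1}^{L}\bigl(1 + cnp^{-i}\bigr). \]
I would then take $\log$ and split the sum at $i_0 := L - \lceil\log_p\log n\rceil$: for $i\le i_0$ the quantity $cnp^{-i}$ is $\ge \log n$, so $\log(1+cnp^{-i}) = \log(cn) - i\log p + O(1/\log n)$ and $\sum_{i=1}^{i_0}(\log(cn)-i\log p) = \tfrac{1}{2\log p}\log(n)^2 + O(\log n)$ by a direct computation; for $i_0 < i \le L$ there are only $O(\log\log n)$ terms, each at most $\log(1 + \log n) = O(\log\log n)$. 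Summing gives $\log\mathrm{rank}(R_{\le n}) \le \tfrac{1}{2\log p}\log(n)^2 + O(\log\log n\log n)$.

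\textbf{Lower bound.} Let $\bar R = \F_p[y_0,y_1,\dots]$ with $|y_j| = p^j$ be the algebra of \Cref{lem:steenrod-basic}. The monomial assignment $y_j \mapsto x_{j+1}$ is injective (as $R$ is a polynomial algebra) and sends a monomial of degree $m$ in $\bar R$ to a monomial of degree $\le cp\cdot m$ in $R$, so $\mathrm{rank}(R_{\le cpn}) \ge \mathrm{rank}(\bar R_{\le n}) \ge \mathrm{rank}(\bar R_n)$. Replacing $n$ by $\lfloor n/(cp)\rfloor$ and invoking \Cref{lem:steenrod-basic}, together with $\log(n/(cp)) = \log n + O(1)$, yields $\log\mathrm{rank}(R_{\le n}) \ge \tfrac{1}{2\log p}\log(n)^2 + O(\log\log n\log n)$, matching the upper bound.

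There is no genuine obstacle: all of the analytic content sits in \Cref{lem:steenrod-basic}, and what remains is purely the bookkeeping indicated above — the slight awkwardness being to make sure the split in the upper bound and the rescalings in both bounds really do cost only $O(\log\log n\log n)$. If one prefers to phrase everything in terms of ranks in a single degree, one may use $\mathrm{rank}(R_{\le n}) = \mathrm{rank}\bigl((R\otimes\F_p[y])_n\bigr)$ for $|y|=1$, but this identity is not needed for either estimate above.
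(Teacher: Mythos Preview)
Your argument is correct. The lower bound is essentially the paper's approach: the paper observes that increasing all generator degrees can only decrease the cumulative rank, so replacing each $|x_i|$ by $cp^i$ gives an algebra which is a rescaling of the Mahler algebra of \Cref{lem:steenrod-basic}; your monomial injection $y_j\mapsto x_{j+1}$ is the same monotonicity principle written out explicitly. Where you diverge is in the upper bound. The paper proceeds symmetrically---it also decreases each $|x_i|$ to $c^{-1}p^i$, obtaining a second rescaled Mahler algebra, and then invokes \Cref{lem:steenrod-basic} on both sides of the sandwich. Your upper bound instead bypasses Mahler entirely with the elementary box count $\prod_i(1+cnp^{-i})$ and a direct estimate of the resulting sum. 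This is more self-contained (the upper direction no longer depends on Mahler's asymptotic) at the cost of a few more lines of arithmetic; the paper's version is shorter and more uniform, since both directions reduce to the single observation that rescaling degrees by a constant shifts $\log n$ by $O(1)$ and hence perturbs $\tfrac{1}{2\log p}\log(n)^2$ by only $O(\log n)$.
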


\begin{proof}

  The key idea is that if we increase the degrees of the generators of $R$, then the rank can only go down. Since we're looking at the cumulative rank there's no need to restrict the grading to only integer degrees. Now let $R'$ denote the algebra where we've dropped the degrees of the generators down to $c^{-1}p^i$ and $R''$ denote the algebra where we've bumped the same degrees up to $cp^i$. 
  Then, we have bounds
  \[ \mathrm{rank}(R_{\leq n}'') \leq  \mathrm{rank}(R_{\leq n}) \leq \mathrm{rank}(R_{\leq n}') . \]
  Up to rescaling the degrees $R'$ and $R''$ are algebras to which we can apply \Cref{lem:steenrod-basic}. To conclude we observe that for a constant $k$ 
  \[ \frac{1}{2\log(p)} \log(kn)^2 + O(\log\log (kn) \log (kn) ) \]  
  simplifies to the desired bound.

\end{proof}

As an example of how the flexibility provided by \Cref{cor:steenrod-flexible} is useful for us,
consider the dual Steenrod algebra.

\begin{exm} \label{exm:steenrod-size}
  At $p=2$ the dual Steenrod algebras is polynomial on classes $\zeta_n$ in degree $2^{n} - 1$.
  This means that with $c=2$ we can treat this like the generators are in $2^{n}$ and \Cref{cor:steenrod-flexible} lets us conclude that
  \[ \log \h^c(\F_2,\, n) =  \frac{1}{2\log(2)} \log(n)^2 + O(\log\log n \log n). \]

  At odd primes things are a little more complicated since the dual steenrod algebra has both the exterior generators $\tau_n$ and the polynomial generators $\xi_n$.
  Using only the $\xi_n$'s we obtain a lower bound on the rank of this algebra.
  For an upper bound we observe that for each $n$ the pair of generators $\tau_n, \xi_{n+1}$ generates an algebra which grows slower than an algebra on a polynomial generator in the degree of $\tau_n$.
  We can then apply \Cref{cor:steenrod-flexible} to this auxiliary algebra.
  Together these bounds give
  \[ \log \h^c(\F_p,\, n) =  \frac{1}{2\log(p)} \log(n)^2 + O(\log\log n \log n). \]
  \tqed
\end{exm}

From here we move into the main body of this appendix where the proofs becomes somewhat more involved.
Before diving into things we give a sequence of remarks which introduce key simplifications.

\begin{rmk} \label{rmk:point-sampling}
  Suppose that $r(n)$ is a non-decreasing function
  (such as the log of the cumulative rank of a graded algebra)
  for which we want to prove that
  \[ r(n) \geq K \log(n)^c + O(\log\log(n) \log(n)^{c-1}). \]

  Then it in fact suffices to prove the bound for a collection of points which have constant logarithmic density. As an example, suppose we know that
  \[ r(p^m) \geq \log(p^m)^c, \]
  then for $ p^m \leq n \leq p^{m+1}$ we have
  \begin{align*}
    r(n)
    &\geq r(p^{m})
    \geq \log(p^{m})^c 
    = \log(p)^cm^c  \\
    &\geq  \log(p)^c(m+1)^c - O(m^{c-1}) 
    \geq  \log(n)^c - O(\log(n)^{c-1}) 
  \end{align*}
  Similar logic applies for upper bounds as well.
  \tqed
\end{rmk}

\begin{rmk} \label{rmk:v2-sampling}
  Expanding on the previous remark,
  the collection of points $m^ap^m$ is sufficiently dense (for constant $a$)
  that the estimate
  \begin{align*}
    \log(  m^a p^m )^c = \log(p^m)^c + O(\log\log(p^m)O(\log(p^m))^{c-1})
  \end{align*}
  allows us to get by with proving the seemingly weaker bound
  \[ r(m^a p^m) \geq \log(p^m)^c + O(\log\log(p^m)O(\log(p^m))^{c-1}). \]
  \tqed
\end{rmk}

\begin{rmk} \label{eqn:tensor-pieces}
  Often the algebras we are working with can be described as a tensor product of several smaller algebras. In bounding these algebras we will use the containments of graded vector spaces
  \[ \left( A^1 \otimes \cdots \otimes  A^h \right)_{\leq n}
    \subseteq A^1_{\leq n} \otimes \cdots \otimes A^h_{\leq n}
    \subseteq \left(A^1 \otimes \cdots \otimes A^h\right)_{\leq hn}. \]
  \tqed
\end{rmk}

\begin{lem} \label{lem:may-basic}
  Let $R$ be the graded polynomial algebra with $n$ generators in each degree of the form $p^n$.
  Then, we have the following bound on the growth of $R$,
  \[ \log (\mathrm{rank}(R_{\leq n})) = \frac{1}{6\log (p)^2} \log(n)^3 + O\left(\log\log(n)\log(n)^2 \right)  \]
\end{lem}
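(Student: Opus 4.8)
The plan is to reduce the statement to Mahler's estimate (Lemma~\ref{lem:steenrod-basic}, in the cumulative form recorded in Corollary~\ref{cor:steenrod-flexible}) by means of a tensor decomposition. For $l \geq 1$ let $R'_l$ denote the graded polynomial algebra with a single generator in each degree $p^k$ for $k \geq l$. The first observation is that $R \cong \bigotimes_{l \geq 1} R'_l$: in degree $p^k$ the right-hand side acquires exactly one generator from each factor $R'_l$ with $1 \leq l \leq k$, for a total of $k$ generators, matching $R$. Each $R'_l$ is a rescaling of $R'_0$; precisely, a monomial of degree $\leq n$ in $R'_l$ has degree a multiple of $p^l$, and dividing by $p^l$ sets up a bijection $\rank\big((R'_l)_{\leq n}\big) = \rank\big((R'_0)_{\leq \lfloor n/p^l \rfloor}\big)$. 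Since $R'_0$ has one generator in each degree $p^0, p^1, p^2, \dots$, Corollary~\ref{cor:steenrod-flexible} (applied with $c = p$, placing the $i$-th generator in degree $p^{i-1}$) gives $\log \rank\big((R'_0)_{\leq N}\big) = \tfrac{1}{2\log p}\log(N)^2 + O(\log\log N \cdot \log N)$ for $N \geq 2$, and $\log \rank\big((R'_0)_{\leq N}\big) = O(1)$ for $N < 2$.

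Next I would invoke the sandwich from Remark~\ref{eqn:tensor-pieces}. Set $m = \lfloor \log_p n \rfloor$; the factors $R'_l$ with $l > m$ have no generators in degree $\leq n$, so $R_{\leq n} \subseteq \bigotimes_{l=1}^{m}(R'_l)_{\leq n} \subseteq R_{\leq mn}$. Taking ranks and logarithms, everything comes down to estimating $\sum_{l=1}^{m} \log\rank\big((R'_l)_{\leq n}\big) = \sum_{l=1}^{m} \log\rank\big((R'_0)_{\leq \lfloor n/p^l\rfloor}\big)$. Feeding in the Mahler estimate with $\log\lfloor n/p^l\rfloor = \log n - l\log p + O(1)$, the $O(\log\log N \cdot \log N)$ errors sum to $O\big(m \cdot \log\log n \cdot \log n\big) = O(\log(n)^2 \log\log n)$ (here the point is that $m = O(\log n)$, which is exactly why $l \leq m$ is the correct range of summation), while the main terms contribute $\tfrac{1}{2\log p}\sum_{l=1}^{m}(\log n - l\log p)^2$. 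Writing $a = \log p$, $L = \log n$ and reindexing by $j = m - l$, this sum is $a^2 \sum_{j=0}^{m-1}(j a + O(1))^2 = \tfrac{a^2 m^3}{3} + O(a^2 m^2)$, so the main term equals $\tfrac{a m^3}{6} + O(a m^2)$; since $m = L/a + O(1)$ this is $\tfrac{\log(n)^3}{6\log(p)^2} + O(\log(n)^2)$. In total, $\sum_{l=1}^{m}\log\rank\big((R'_l)_{\leq n}\big) = \tfrac{\log(n)^3}{6\log(p)^2} + O(\log(n)^2\log\log n)$.

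The upper inclusion $R_{\leq n} \subseteq \bigotimes_{l=1}^m (R'_l)_{\leq n}$ now yields $\log\rank(R_{\leq n}) \leq \tfrac{\log(n)^3}{6\log(p)^2} + O(\log(n)^2\log\log n)$ immediately, and the lower inclusion gives $\log\rank(R_{\leq mn}) \geq \tfrac{\log(n)^3}{6\log(p)^2} + O(\log(n)^2\log\log n)$. To finish I would upgrade the latter to a lower bound valid at every argument: since $\log(mn) = \log n + O(\log\log n)$, the stated right-hand side is, up to the order of its error term, unchanged under replacing $n$ by $mn$, so the inequality reads $\log\rank(R_{\leq mn}) \geq \tfrac{\log(mn)^3}{6\log(p)^2} + O(\log(mn)^2\log\log(mn))$. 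The function $\rank(R_{\leq \cdot})$ is non-decreasing and the values $mn$ (as $n$ varies) have relative gaps tending to $0$, hence in particular form a set of at least the logarithmic density needed in Remark~\ref{rmk:point-sampling}, which then promotes the inequality to all arguments. Combining the two bounds proves the lemma. The only genuinely delicate point in the argument is the error-term bookkeeping, and specifically keeping the sum of $m$ copies of the Mahler error at size $O(\log(n)^2\log\log n)$; everything else is routine.
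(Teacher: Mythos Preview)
Your argument is correct, but it uses a different tensor decomposition than the paper's. You split $R$ as $\bigotimes_{l\ge 1} R'_l$ where each $R'_l$ is a shifted copy of the Mahler algebra, and then invoke the Mahler estimate (\Cref{lem:steenrod-basic}/\Cref{cor:steenrod-flexible}) on each factor. The paper instead decomposes $R$ all the way down to single-variable polynomial algebras $k[x_{p^i}]$, one per generator, so that the rank of each factor is the elementary count $\rank\bigl(k[x_{p^i}]_{\le p^m-1}\bigr)=p^{m-i}$; the sum $\sum_{i=0}^{m-1} i\cdot(m-i)\log p = \log(p)\binom{m+1}{3}$ then gives the leading term directly, with no appeal to Mahler. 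The paper's route is thus slightly more elementary and self-contained, while yours has the virtue of matching exactly the style of the proof of \Cref{lem:yn-e2-basic} (which does decompose into shifted Mahler algebras $S^k$), so the two lemmas become formally parallel.

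One cosmetic slip: where you write ``this sum is $a^2\sum_{j=0}^{m-1}(ja+O(1))^2$'' the leading $a^2$ should not be there; the reindexed sum is simply $\sum_{j=0}^{m-1}(ja+O(1))^2 = \tfrac{a^2 m^3}{3}+O(am^2)$, and after dividing by $2a$ you recover $\tfrac{am^3}{6}+O(m^2)$ as you then state. The final answer is unaffected.
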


\begin{proof}
  Let $r(n) = \log (\mathrm{rank}(R_{\leq n}))$.
  Describing $R$ as a tensor product of algebras with a single polynomial generator each we can use 
  \Cref{eqn:tensor-pieces} to estimate the growth of $R$.

  \begin{align*}
    r(p^m-1)
    &\leq \sum_{i=0}^{m-1} i \cdot \log(\mathrm{rank}( (k[x_{p^i}])_{\leq p^m-1} )) 
    = \sum_{i=0}^{m-1} i \cdot \log(p^{m-i}) \\
    &= \sum_{i=0}^{m-1} \log(p) i(m-i) 
    = \log(p)\binom{m+1}{3} \\
    &= \frac{1}{6\log(p)^2} \log(p^m)^3 + O(\log(p^m)^2)
  \end{align*}
  Using \Cref{rmk:point-sampling} this suffices to prove the desired upper bound.
  Similarly, for the lower bound we have
  \begin{align*}
    r\left( \binom{m}{2} ( p^m - 1 ) \right)
    \geq \sum_{i=0}^{m-1} i \cdot \log(p^{m-i})
    = \frac{1}{6\log(p)^2} \log(p^m)^3 + O(\log(p^m)^2)    
  \end{align*}
  which by Remarks \ref{rmk:point-sampling} and \ref{rmk:v2-sampling} is sufficient to conclude.

\end{proof}

Again, we observe that the the precise degrees in which the generators are placed isn't critical to the bound in question.

\begin{cor} \label{cor:may-flexible}
  If we allows the degrees of the generators in \Cref{lem:may-basic} to be flexible by a constant factor $c$, then the same bound continues to hold.

\end{cor}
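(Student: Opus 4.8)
The plan is to mirror, essentially verbatim, the proof of \Cref{cor:steenrod-flexible}: the only structural feature we use is that raising the degree of a polynomial generator can only decrease the cumulative rank of the resulting graded algebra, so a ``flexible'' algebra is squeezed between two rigid ones to which \Cref{lem:may-basic} applies directly.

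Concretely, first I would set up the sandwich. Write $R$ for the flexible algebra, whose generators — indexed, as in \Cref{lem:may-basic}, by pairs $(i,\ell)$ with $1 \leq \ell \leq i$ — sit in real degrees $d_{i,\ell} \in [c^{-1}p^i,\, cp^i]$. Let $R'$ be obtained from $R$ by lowering every $d_{i,\ell}$ to $c^{-1}p^i$, and $R''$ by raising every $d_{i,\ell}$ to $cp^i$. Since we compare cumulative ranks and there is no harm in allowing real-valued gradings, lowering a generator's degree does not decrease $\mathrm{rank}((-)_{\leq n})$ and raising it does not increase it, so
\[ \mathrm{rank}(R''_{\leq n}) \leq \mathrm{rank}(R_{\leq n}) \leq \mathrm{rank}(R'_{\leq n}). \]

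Next I would identify $R'$ and $R''$ with rescalings of the algebra in \Cref{lem:may-basic}. Multiplying all degrees of $R'$ by the constant $c$ yields a polynomial algebra with exactly $i$ generators in each degree $p^i$, i.e.\ precisely the algebra of \Cref{lem:may-basic}; likewise $R''$ after rescaling by $c^{-1}$. Applying \Cref{lem:may-basic} to each, and using that $\log(cn)^3 = \log(n)^3 + O(\log(n)^2)$ for the fixed constant $c$ (so the rescaling is swallowed by the error term, exactly as in \Cref{cor:steenrod-flexible} and \Cref{rmk:point-sampling}), both outer quantities equal $\frac{1}{6\log(p)^2}\log(n)^3 + O(\log\log(n)\log(n)^2)$. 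The sandwich then forces the same estimate on $\log(\mathrm{rank}(R_{\leq n}))$, which is the claim.

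The only point needing a little care — and the nearest thing to an obstacle — is the bookkeeping of multiplicities: after collapsing the window $[c^{-1}p^i,\, cp^i]$ to a single degree I must be sure the number of generators attributed to index $i$ is still exactly $i$, so that the rescaled algebras really are the ones covered by \Cref{lem:may-basic} and not a variant with a different generator count. This is immediate from the meaning of ``flexible by a constant factor'': although the windows for consecutive $i$ may overlap when $c^2 > p$, each of the $i$ generators originally placed at $p^i$ is simply moved within its own window, so the count per index is unchanged and no genuine difficulty arises.
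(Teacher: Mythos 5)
Your proposal is correct and matches the argument the paper intends: the paper itself leaves this corollary without an explicit proof because it is the verbatim analogue of the sandwich argument written out for \Cref{cor:steenrod-flexible} (lower all generator degrees to $c^{-1}p^i$, raise them all to $cp^i$, apply \Cref{lem:may-basic} to each after rescaling, and absorb the resulting $\log(cn)^3 = \log(n)^3 + O(\log(n)^2)$ shift into the error term). Your extra remark on multiplicity bookkeeping is a harmless and correct addition.
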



\begin{lem} \label{lem:yn-e2-basic}
  Let $R^h$ be the graded polynomial algebra with $\min(h, n-h+1)$ generators in each degree of the form $p^n$.
  Then, we have the following bound
  \[ \log \left( \max_{h} (\mathrm{rank}(R_{\leq n}^h)) \right) = K_2 \log(n)^3 + O\left(\log\log(n)\log(n)^2 \right) \]
\end{lem}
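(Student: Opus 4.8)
The plan is to imitate \Cref{lem:may-basic}: write $R^h$ as a tensor product of polynomial algebras on a single generator, reduce $\log(\mathrm{rank}(R^h_{\leq n}))$ to a double sum, and then optimize over $h$. It suffices to evaluate everything at $n = p^m$ by Remarks \ref{rmk:point-sampling} and \ref{rmk:v2-sampling} (for the lower bound one replaces $p^m$ by $m^a p^m$ for suitable $a$, so that the $O(\log(n)^2)$ single-generator factors of degree $\leq n$ can be absorbed when passing through the containment of \Cref{eqn:tensor-pieces}). For one polynomial generator in degree $p^i$ with $i \leq m$ one has $\mathrm{rank}((k[x])_{\leq p^m}) = p^{m-i}+1$, hence $\log$ of this equals $(m-i)\log p + O(1)$, while generators of degree exceeding $p^m$ contribute nothing. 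Combining these estimates with \Cref{eqn:tensor-pieces} in both directions, both the upper and lower bounds on $\log(\mathrm{rank}(R^h_{\leq n}))$ reduce, up to an additive $O(\log\log(n)\log(n)^2)$, to estimating $\log(p)\cdot S_h$ where
\[ S_h := \sum_{i=h}^{m} \min(h,\, i-h+1)\,(m-i), \qquad m = \log(n)/\log(p). \]

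\textbf{Evaluating the sum.} Next I would compute $S_h$ in closed form. Substituting $i = h+k$ and splitting at $k = h-1$, where $\min(h, k+1)$ switches from $k+1$ to $h$, a routine evaluation of the two resulting polynomial sums gives, uniformly for $h \leq m/2$,
\[ S_h = \tfrac{1}{2}h m^2 - \tfrac{3}{2}m h^2 + \tfrac{7}{6} h^3 + O(m^2), \]
while for $h > m/2$ one gets $S_h = \tfrac{1}{6}(m-h)^3 + O(m^2)$, whose maximum over that range is attained at $h = m/2$. So it suffices to maximize the cubic $f(h) = \tfrac{1}{2}h m^2 - \tfrac{3}{2}m h^2 + \tfrac{7}{6} h^3$ over $h \in [0, m/2]$. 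Its critical points solve $7h^2 - 6mh + m^2 = 0$, i.e. $h = \tfrac{(3\pm\sqrt{2})m}{7}$; the root $h^\ast = \tfrac{(3-\sqrt{2})m}{7}$ lies in $(0, m/2)$, is a local maximum since $f''(h^\ast) = -\sqrt{2}\,m < 0$, and satisfies $f(h^\ast) > f(m/2)$, so it is the global maximum. Plugging in and simplifying $\tfrac{a}{2} - \tfrac{3a^2}{2} + \tfrac{7a^3}{6}$ at $a = \tfrac{3-\sqrt 2}{7}$ yields
\[ f(h^\ast) = \frac{9 + 4\sqrt{2}}{294}\, m^3. \]
Rounding $h^\ast$ to the nearest integer perturbs $f$ by only $O(m^2)$, since $f' = O(m^2)$ on $[0,m]$.

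\textbf{Conclusion and main obstacle.} Putting the two steps together gives
\[ \log\!\left( \max_h \mathrm{rank}(R^h_{\leq p^m}) \right) = \log(p)\cdot\frac{9+4\sqrt{2}}{294}\,m^3 + O(\log\log(n)\log(n)^2) = K_2 \log(n)^3 + O(\log\log(n)\log(n)^2), \]
using $m = \log(n)/\log(p)$ and the value $K_2 = \frac{9+4\sqrt 2}{294\log(p)^2}$ from \cref{eqn:constants}; Remarks \ref{rmk:point-sampling} and \ref{rmk:v2-sampling} promote this from $n = p^m$ (resp. $m^a p^m$) to all $n$. I expect the main obstacle to be purely the bookkeeping: checking that the tensor-product slack of \Cref{eqn:tensor-pieces}, the point-sampling error of Remarks \ref{rmk:point-sampling} and \ref{rmk:v2-sampling}, the $O(1)$-per-generator discrepancy between $\log(p^{m-i}+1)$ and $(m-i)\log p$, and the integer rounding of $h^\ast$ all stay within the claimed $O(\log\log(n)\log(n)^2)$, together with confirming the arithmetic identity $f(h^\ast) = \frac{9+4\sqrt 2}{294}m^3$. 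The structurally interesting point—that the optimal $h$ is proportional to $\log_p(n)$ with the specific constant $\frac{3-\sqrt 2}{7}$—is a one-line calculation once $S_h$ is in closed form.
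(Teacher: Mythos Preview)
Your proposal is correct and arrives at exactly the same cubic optimization $\frac{7}{6}h^3 - \frac{3}{2}h^2m + \frac{1}{2}hm^2$ with maximizer $h^\ast = \tfrac{(3-\sqrt{2})m}{7}$ as the paper. The route differs in one respect worth noting: the paper groups the generators of $R^h$ into $h$ subalgebras $S^h,\dots,S^{2h-1}$ (where $S^k$ is polynomial on one generator in each degree $p^n$ for $n\geq k$) and then invokes Mahler's estimate \Cref{lem:steenrod-basic} for each $S^k$, obtaining $\sum_{k=h}^{2h-1}\tfrac{\log p}{2}(m-k)^2$ directly. You instead decompose all the way down to single-generator factors and sum $(m-i)\log p$ over all generators, which bypasses Mahler entirely and is arguably more elementary. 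The cost is that your tensor product has $O(hm)$ factors rather than $h$, so the slack in \Cref{eqn:tensor-pieces} for the lower bound forces you to evaluate at $m^2 p^m$ rather than $h\,p^m$; as you correctly note, \Cref{rmk:v2-sampling} absorbs this difference into the stated error term. The paper also handles the restriction to $h \lesssim m/2$ by observing that $R^{h-1}$ dominates $R^h$ in low degrees, whereas you compute $S_h \approx \tfrac{1}{6}(m-h)^3$ for $h>m/2$ and check it is smaller than $f(h^\ast)$; both work.
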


\begin{proof}
  Let $S^k$ denote the graded polynomial algebra with a generator in each degree of the form $p^n$ for $n \geq k$. Then we can describe $R^h$ as the tensor product
  \[ R^h \cong S^h \otimes \cdots \otimes S^{2h-1}. \]
  Let $r^h(n) = \log( \mathrm{rank}(R_{\leq n}^h) )$,
  let $s(n) = \log( \mathrm{rank}(S^0_{\leq n}) )$,
  and let $r(n) = \max_h(r^h(n))$.
  We estimated the growth of $s(n)$ in \Cref{lem:steenrod-basic}.
  In particular, there exists an error term $\epsilon(n)$ which is monotonically increasing, positive and grows like $O(\log\log(n)\log(n))$ such that
  \[ \left| s(n) - \frac{1}{2\log(p)} \log(n)^2 \right| \leq \epsilon(n). \]
  In order to estimate the growth of $r^h(n)$ in terms of the growth of $s(n)$ we use \Cref{eqn:tensor-pieces} noting that $S^k$ grows according to $s(n/p^k)$.

  We start with the lower bound.
  For each $m \geq 2h-1$ we have
  \begin{align*}
  r^h(h p^m)
    &\geq \sum_{i=h}^{2h-1} s(p^{m-i}) 
      \geq \sum_{i=h}^{2h-1} \left( \frac{1}{2\log(p)} \log(p^{m-i})^2  - \epsilon(p^{m-h}) \right) \\
    &\geq \sum_{i=h}^{2h-1} \left( \frac{\log(p)}{2} (m-i)^2  - \epsilon(p^{m}) \right) \\
    &\geq  \frac{\log(p)}{2} \left( \frac{7}{3} h^3 - 3h^2m + hm^2  - \frac{3}{2}h^2 + hm + \frac{h}{6} \right) - h \epsilon(p^{m}) \\
    &\geq  \frac{\log(p)}{2} \left( \frac{7}{3} h^3 - 3h^2m + hm^2 \right) - \left( \frac{3\log(p)}{4}m^2  + m \epsilon(p^{m}) \right) \\
    &=  \frac{\log(p)}{2} \left( \frac{7}{3} h^3 - 3h^2m + hm^2 \right) - \epsilon_2(p^m)
  \end{align*}
  where $\epsilon_2(n)$ is an error term
  which is independent of $h$ and grows like
  $O(\log\log(n)\log(n)^2)$.
  
  The cubic term in parentheses achieves its maximum (subject to the constraint $m \geq 2h-1$)
  at $h^{\mathrm{max}} = m(3 - \sqrt{2})/7$.
  However, since $h$ must be an integer we instead evaluate at the floor of this number, which we denote $c$.
  Using that the cubic term is monotonically increasing in $h$ when we are just below its local maximum
  we can simplify and obtain the desired bound.  
  \begin{align*}
    r\left( c p^m \right)
    &\geq r^c(c p^m)
    \geq \frac{\log(p)}{2} \left( \frac{7}{3} c^3 - 3c^2 m +  c m^2 \right) - \epsilon_2(p^m)  \\
    &\geq \frac{\log(p)}{2} \left( \frac{7}{3} \left( h^{\mathrm{max}} - 1\right)^3 - 3\left( h^{\mathrm{max}} - 1\right)^2 m +  \left( h^{\mathrm{max}} - 1\right) m^2 \right) + O(\log(m)m^2) \\
    &\geq \frac{(9 + 4\sqrt{2})\log(p)}{294} m^3 + O(\log(m)m^2)
  \end{align*}
  As in the previous lemma Remarks \ref{rmk:point-sampling} and \ref{rmk:v2-sampling} let us conclude.

  Now we prove the upper bound.
  As before we use \Cref{eqn:tensor-pieces} to reduce to $s(n)$.
  For each $m \geq 2h-1 $ we have
  \begin{align*}
  r^h(p^m)
    &\leq \sum_{i=h}^{2h-1} s(p^{m-i}) 
    \leq \sum_{i=h}^{2h-1} \left( \frac{1}{2\log(p)} \log(p^{m-i})^2  + \epsilon(p^{m-h}) \right) \\
    &\leq  \frac{\log(p)}{2} \left( \frac{7}{3} h^3 - 3h^2m + hm^2 \right) + \epsilon_3(p^m)
  \end{align*}
  where $\epsilon_3(n)$ is much like $\epsilon_2(n)$. 
  This is the same cubic term as before.
  Evaluating at its maximum 
  (recall again that $m \geq 2h-1$)
  we obtain a suitable bound on $r^h(p^m)$
  \begin{align*}
    r^h(p^m) \geq \frac{(9+4\sqrt{2})\log(p)}{294} m^3 + O(\log(m)m^2).
  \end{align*}
  
  Examining the tensor product decomposition of $R^h$ we see that $R^{h-1}$ contains every polynomial generator $R^h$ contains until the beginning of $S^{2h-1}$ in degree $p^{2h-1}$.
  This means that we can drop the terms in the maximum where $h$ is too large.
  \begin{align*}
    r(p^m)
    &\leq \max_{h} r^h(p^m)
      = \max_{m \geq 2h-1} r^h(p^m) 
      \leq \frac{(9+4\sqrt{2})\log(p)}{294} m^3 + O(\log(m)m^2)
  \end{align*}
  Again the numbers of the form $p^m$ are sufficiently dense that \Cref{rmk:point-sampling} lets us conclude.
  \qedhere

\end{proof}

\begin{cor} \label{cor:yn-e2-flexible}
  If we allows the degrees of the generators in \Cref{lem:yn-e2-basic} to be flexible by a constant factor $c$, then the same bound continues to hold.
\end{cor}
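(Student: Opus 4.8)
The plan is to run, essentially verbatim, the sandwiching argument by which \Cref{cor:steenrod-flexible} is deduced from \Cref{lem:steenrod-basic} and \Cref{cor:may-flexible} from \Cref{lem:may-basic}. The one fact doing all the work is monotonicity of cumulative rank: since we measure $\mathrm{rank}(R_{\leq N})$, lowering the degree of any polynomial generator can only increase the rank in every degree, while raising it can only decrease the rank. Thus, given an algebra whose generators sit within a constant factor $c$ of their nominal degrees, it is squeezed between two algebras whose generators sit at fixed degrees, to which \Cref{lem:yn-e2-basic} directly applies.

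Concretely, write $R^h$ for the algebra of \Cref{lem:yn-e2-basic}, in which the $\min(h,\, n-h+1)$ generators indexed by $n$ sit in degree exactly $p^n$, and let $\widehat{R}^h$ be any algebra with the same number of generators for each $n$ but with those generators allowed to have arbitrary degrees in the interval $[c^{-1}p^n,\, cp^n]$. Let $S^h$ be obtained from $\widehat{R}^h$ by pushing every generator down to the bottom of its interval (so the generators indexed by $n$ have degree $c^{-1}p^n$), and let $T^h$ be obtained by pushing every generator up to the top (degree $cp^n$). Monotonicity gives, for every $h$ and every $N$,
\[ \mathrm{rank}\big( T^h_{\leq N} \big) \ \leq\ \mathrm{rank}\big( \widehat{R}^h_{\leq N} \big) \ \leq\ \mathrm{rank}\big( S^h_{\leq N} \big). \]
Rescaling the grading by the factor $c$ identifies a monomial of $S^h$-degree $\leq N$ with a monomial of $R^h$-degree $\leq cN$, and a monomial of $T^h$-degree $\leq N$ with one of $R^h$-degree $\leq N/c$; hence $\mathrm{rank}(S^h_{\leq N}) = \mathrm{rank}(R^h_{\leq cN})$ and $\mathrm{rank}(T^h_{\leq N}) = \mathrm{rank}(R^h_{\leq N/c})$.

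Taking the maximum over $h$ — harmless since $c$ is uniform in $h$ — and invoking \Cref{lem:yn-e2-basic} at the arguments $cN$ and $N/c$ yields
\[ K_2 \log(N/c)^3 + O\big(\log\log(N)\log(N)^2\big) \ \leq\ \log\Big( \max_h \mathrm{rank}\big( \widehat{R}^h_{\leq N} \big) \Big) \ \leq\ K_2 \log(cN)^3 + O\big(\log\log(N)\log(N)^2\big). \]
Since $\log(cN)^3 = \log(N)^3 + O(\log(N)^2)$ and likewise $\log(N/c)^3 = \log(N)^3 + O(\log(N)^2)$, both bounds collapse to $K_2\log(N)^3 + O(\log\log(N)\log(N)^2)$, which is the assertion. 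I do not anticipate any real obstacle; the only points needing a moment's care are that the flexibility constant is the same for all $h$ (so that $\max_h$ commutes with the rescaling step) and that the $O(\cdot)$ error term genuinely absorbs the passage from $N$ to $cN$ and to $N/c$ — which it does, precisely because the leading term is only polylogarithmic in $N$ rather than a power of $N$.
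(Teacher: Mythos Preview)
Your proposal is correct and follows exactly the approach the paper intends: the paper does not write out a proof for this corollary, but the implicit argument is the same sandwiching-and-rescaling trick spelled out for \Cref{cor:steenrod-flexible}, which you have reproduced faithfully and adapted correctly to the $\max_h$ setting.
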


\begin{lem} \label{lem:yn-einf-basic}
  Let $R^h$ be the graded polynomial algebra with $\binom{h}{2}$ polynomial generators distributed so that there is one generator in degree $p^{h+1}$, two generators in degree $p^{h+2}$, three generators in degree $p^{h+3}$, etc. Then, as $h$ varies we have the following bound
  \[ \log \left( \max_h ( \mathrm{rank} ( R^h_{\leq n} ) ) \right) = \frac{2}{75 \log(p)^2} \log(n)^3 + O(\log\log(n)\log(n)^2).\]    
\end{lem}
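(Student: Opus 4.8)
The plan is to mimic the proofs of \Cref{lem:may-basic} and \Cref{lem:yn-e2-basic}: reduce to the lattice of points $n = p^m$ via Remarks \ref{rmk:point-sampling} and \ref{rmk:v2-sampling}, split $R^h$ into simple tensor factors, estimate each factor, sum over $j$, and finally optimize $h$ as a function of $m$. Note that since $1 + 2 + \cdots + (h-1) = \binom{h}{2}$, the generators of $R^h$ are exactly: $j$ of them in degree $p^{h+j}$ for each $j = 1, \dots, h-1$.

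Concretely, $R^h$ is the tensor product over $j = 1, \dots, h-1$ of its factors, the $j$-th factor being the graded polynomial algebra on $j$ generators all placed in degree $p^{h+j}$; this factor has cumulative rank $\binom{\lfloor N/p^{h+j}\rfloor + j}{j}$ through degree $N$. By \Cref{eqn:tensor-pieces} it therefore suffices to estimate these binomial coefficients at $N$ of size $p^m$ (the constant and logarithmic rescalings of $N$ appearing in \Cref{eqn:tensor-pieces} being absorbed by \Cref{rmk:v2-sampling}). Stirling's formula gives
\[ \log\binom{p^{m-h-j}+j}{j} = j(m-h-j)\log(p) + O(j\log j) \qquad\text{when } m \geq h+j, \]
and $0$ otherwise, so that, using $\sum_{j=1}^{h-1} j = \binom{h}{2}$ and $\sum_{j=1}^{h-1} j^2 = \tfrac{(h-1)h(2h-1)}{6}$,
\[ \log\bigl(\mathrm{rank}(R^h_{\leq p^m})\bigr) = \log(p)\sum_{j=1}^{\min(h-1,\,m-h)} j(m-h-j) + O(m^2\log m). \]
For $2h-1 \leq m$ the sum equals $\tfrac{m h^2}{2} - \tfrac{5 h^3}{6} + O(m^2)$, while for $2h-1 > m$ it is at most $\tfrac{(m-h)^3}{6} + O(m^2) \leq \tfrac{m^3}{48} + O(m^2)$.

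Next I would optimize. The cubic $f(h) = \tfrac{m h^2}{2} - \tfrac{5 h^3}{6}$ is maximized at $h^{\mathrm{max}} = \tfrac{2m}{5}$, where $f(h^{\mathrm{max}}) = \tfrac{2m^3}{75}$; since $\tfrac{2}{75} > \tfrac{1}{48}$ this beats the contribution from the regime $2h-1 > m$, and since $2 h^{\mathrm{max}} - 1 \leq m$ the maximizer lies in the regime where the estimate above applies (so the relevant truncation of the $j$-sum is at $j = h-1$, not at $j = m-h$, and every term $j(m-h-j)$ with $j \leq h-1$ is then positive). This immediately yields the upper bound $\log(\mathrm{rank}(R^h_{\leq p^m})) \leq \tfrac{2\log(p)}{75} m^3 + O(m^2\log m)$ for all $h$. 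For the lower bound I would take $h = \lfloor 2m/5\rfloor$ and use that $f$ is increasing just below its maximum (exactly as in \Cref{lem:yn-e2-basic}) to absorb the integrality loss into the error term. Passing back from $\{p^m\}$ to all $n$ via \Cref{rmk:point-sampling} and substituting $m = \log_p(n)$ converts $\tfrac{2\log(p)}{75} m^3$ into $\tfrac{2}{75\log(p)^2}\log(n)^3$, the claimed leading term; the flexible version \Cref{cor:yn-einf-flexible} then follows by the same degree-rescaling argument used for \Cref{cor:may-flexible}.

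I expect the only real subtlety to be the two-parameter bookkeeping in the optimization: one must confirm that the unconstrained maximizer $h \approx \tfrac{2m}{5}$ of $f$ falls inside the range $2h-1 \leq m$ where the tensor-factor estimate is valid, and that the complementary range $2h-1 > m$ contributes a strictly smaller exponent ($\tfrac{1}{48}$ versus $\tfrac{2}{75}$). Everything else---the Stirling estimates for the binomial coefficients, the two elementary summation identities, and the point-sampling reductions---is routine and closely parallels the arguments already carried out for Lemmas \ref{lem:may-basic} and \ref{lem:yn-e2-basic}.
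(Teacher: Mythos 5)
Your proposal is correct and takes essentially the same route as the paper: reduce to $n = p^m$ by point-sampling, split $R^h$ into tensor factors of a single degree each, estimate each factor, obtain the cubic $\tfrac{mh^2}{2} - \tfrac{5h^3}{6}$, and optimize at $h \approx \tfrac{2m}{5}$ to get the constant $\tfrac{2}{75}$. The one place you are actually more careful than the paper is the regime $2h-1 > m$: the paper writes the sum $\sum_{i=1}^{h-1} i\,\log(p^{m-h-i})$ for all $h < m$ without noting that some terms become negative (so the stated inequality $r^h(p^m-1)\leq \cdots$ is not literally valid there), whereas you truncate the sum at $\min(h-1,m-h)$ and explicitly check that this complementary regime contributes at most $\tfrac{1}{48}m^3 < \tfrac{2}{75}m^3$ --- a small but genuine clean-up of the argument.
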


\begin{proof}
  Let $r^h(n) = \log( \mathrm{rank}(R_{\leq n}^h) )$
  and let $r(n) = \max_h(r^h(n))$.
  As in the previous lemma we use \Cref{eqn:tensor-pieces} to obtain
  upper and lowers bounds on the size of $r^h$.
  This time we think of $R^h$ as a tensor product of algebras on a single polynomial generator each.

  We start with the upper bound.
  Using the fact that $R^h_{\leq p^m-1}$ has only the unit for $h \geq m-1$ we have 
  \begin{align*}
    r(p^m -1)
    &\leq \max_h r^h(p^m -1)
      \leq \max_{h < m} r^h(p^m - 1)
      \leq \max_{h < m}\left( \sum_{i=1}^{h-1} i \cdot \log \left( p^{m-h-i} \right) \right) \\
    &= \max_{h < m} \frac{\log(p)}{6}\left( 3h^2m - 5h^3 + 6h^2 - 3hm - h \right) \\
    &\leq \max_{h < m}  \frac{\log(p)}{6} \left( 3h^2m - 5h^3 + 6m^2 \right) \\
    &\leq \frac{\log(p)}{6} \left( 3\left( \frac{2}{5} m \right)^2m - 5\left( \frac{2}{5} m \right)^3 + 6m^2 \right) \\
    &\leq \frac{2\log(p)}{75}m^3 + \log(p)m^2
  \end{align*}
  which by \Cref{rmk:point-sampling} is sufficient to conclude.
  
        
  

  The lower bound is proved following the same pattern as the previous lemma.
  Again restricting to $ h < m $ we have 
  \begin{align*}
    r^h\left( \binom{h}{2} p^m \right)
    &\geq \sum_{i=1}^{h-1} i \cdot \log ( p^{m-h-i} )       
      = \frac{\log(p)}{6} \left( 3h^2m - 5h^3 + 6h^2 - 3hm - h \right) \\
    &\geq \frac{\log(p)}{6} \left( 3h^2m - 5h^3  \right) + O(m^2).   
  \end{align*}
  Evaluating at $h = \left\lfloor 2m/5 \right\rfloor$, simplifying and
  using Remarks \ref{rmk:point-sampling} and \ref{rmk:v2-sampling}
  then gives the desired conclusion.
  \qedhere

\end{proof}
  
\begin{cor} \label{cor:yn-einf-flexible}
  If we allows the degrees of the generators in \Cref{lem:yn-e2-basic} to be flexible by a constant factor $c$, then the same bound continues to hold.

\end{cor}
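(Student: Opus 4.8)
The plan is to run the same sandwiching argument used to pass from \Cref{lem:steenrod-basic} to \Cref{cor:steenrod-flexible} (and from \Cref{lem:may-basic} and \Cref{lem:yn-einf-basic} to \Cref{cor:may-flexible} and \Cref{cor:yn-e2-flexible}). In \Cref{lem:yn-einf-basic} the algebra $R^h$ has, for each $1 \leq j \leq h-1$, exactly $j$ polynomial generators placed in degree $p^{h+j}$. Under the flexibility hypothesis each such generator instead sits in some degree lying in the interval $[c^{-1}p^{h+j},\, cp^{h+j}]$. I would write $(R')^h$ for the algebra obtained by moving every generator of this flexible $R^h$ down to the left endpoint $c^{-1}p^{h+j}$ of its interval, and $(R'')^h$ for the one obtained by moving every generator up to the right endpoint $cp^{h+j}$. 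Since lowering the degree of a polynomial generator can only increase the cumulative rank in each degree, and raising it can only decrease it, for every fixed $h$ and every $n$ we get
\[ \mathrm{rank}((R'')^h_{\leq n}) \leq \mathrm{rank}(R^h_{\leq n}) \leq \mathrm{rank}((R')^h_{\leq n}), \]
and these inequalities survive taking the maximum over $h$.

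Next I would note that, after rescaling the grading by a constant, $(R')^h$ and $(R'')^h$ are literally instances of the algebra in \Cref{lem:yn-einf-basic}: multiplying all degrees of $(R')^h$ by $c$ turns its generators into $j$ generators in each degree $p^{h+j}$, and multiplying all degrees of $(R'')^h$ by $c^{-1}$ does the same. As in the proof of \Cref{cor:steenrod-flexible}, because we are only ever looking at cumulative ranks there is no need to keep the grading integral when rescaling, so \Cref{lem:yn-einf-basic} applies verbatim and gives
\[ \log\bigl(\max_h \mathrm{rank}((R')^h_{\leq n})\bigr) = \frac{2}{75\log(p)^2}\log(cn)^3 + O\bigl(\log\log(cn)\log(cn)^2\bigr), \]
together with the analogous identity for $(R'')^h$ with $cn$ replaced by $c^{-1}n$.

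Finally I would absorb the constant factor into the error term: for fixed $c$ one has $\log(cn)^3 = \log(n)^3 + O(\log(n)^2)$ and $\log\log(cn)\log(cn)^2 = O(\log\log(n)\log(n)^2)$ (and likewise with $c^{-1}n$), so both the lower bound coming from $(R'')^h$ and the upper bound coming from $(R')^h$ collapse to $\frac{2}{75\log(p)^2}\log(n)^3 + O(\log\log(n)\log(n)^2)$, which by the sandwich is exactly the bound claimed for $\max_h \mathrm{rank}(R^h_{\leq n})$. There is no genuine obstacle here; the only point worth stating carefully — exactly as in the proof of \Cref{cor:steenrod-flexible} — is that one works with real (non-integer) gradings after the rescaling by $c$, rather than trying to arrange that $cp^{h+j}$ be an honest power of $p$, so that the rescaled algebras are genuine instances of \Cref{lem:yn-einf-basic}.
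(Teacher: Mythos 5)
Your argument is correct and is exactly the sandwiching argument the paper expects the reader to run, mirroring the written proof of \Cref{cor:steenrod-flexible}; the paper in fact leaves \Cref{cor:yn-einf-flexible} without its own proof, so you are simply filling in the intended details. One incidental point: the statement as printed cites \Cref{lem:yn-e2-basic}, which is plainly a copy-paste slip for \Cref{lem:yn-einf-basic} (compare the label and the use in the proof of \Cref{thm:tel-to-sphere}); you read it the right way.
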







\bibliographystyle{alpha}
\bibliography{bibliography}

\end{document}